\newtheorem{theorem}{Theorem}[section]
\newtheorem{lemma}[theorem]{Lemma}
\newtheorem{proposition}[theorem]{Proposition}
\theoremstyle{remark}
\newtheorem{remark}[theorem]{Remark}
\numberwithin{equation}{section}
\def\bh{\mathbf{h}}
\def\rp#1{^{\!(#1)}}
\def\dn{{\Delta_0}}
\def\Hb{\underline{\H}}
\def\bb{{\mathbf{b}}}
\def\Er{\mbox{Er}}
\def\sn{{\slashed{\nabla}}}
\def\Q{\mathcal{Q}}
\def\sQ{\mathscr{Q}}
\def\bQ{{\textbf{Q}}}
\def\bR{{\textbf{R}}}
\def\ti{\tilde}
\def\bg{\mathbf{g}}
\def\I{{\mathcal I}}
\def\beaa{\begin{eqnarray*}}
\def\eeaa{\end{eqnarray*}}
\def\ba{\begin{array}}
\def\ea{\end{array}}
\def\be#1{\begin{equation} \label{#1}}
\def \eeq{\end{equation}}
\newcommand{\nn}{\nonumber}
\def\l{\langle}
\def\r{\rangle}
\def\cir{\overset\circ}
\def\nn{\nonumber}
\def\S{{\mathcal S}}
\def\ud#1{\underline{#1}}
\def\S2{{\mathbb S}^2}
\def\E{{\mathcal E}}
\def\W{{\mathcal W}}
\def\ub{\underline{u}}
\def\Lb{\underline{L}}
\def\D{{\mathcal D}}
\def\H{{\mathcal H}}
\def\N{{\mathcal N}}
\def\F{{\mathcal F}}
\def\c{\cdot}
\def\a{\alpha}
\def\b{\beta}
\def\ep{{\epsilon}}
\def\ve{{{\textbf{$\varepsilon$}}}}
\def\l{\langle}
\def\r{\rangle}
\def\ga{\gamma}
\def\O{\mathcal{O}}
\def\p{\partial}
\def\F{{\mathcal{F}}}
\def\C{{\mathcal C}}
\def\Lb{{\underline{L}}}
\def\Tr{\mbox{Tr}}
\def\f14{\frac{1}{4}}
\def\f12{{\frac{1}{2}}}
\def\t1a{t^{-\frac{1}{a}}}
\def\bm{{\bf m}}
\def\sn{{\slashed{\nabla}}}
\def\bR{{\emph{\bf{R}}}}
\def\ti{\tilde}
\def\I{{\mathcal I}}
\def\beaa{\begin{eqnarray*}}
\def\eeaa{\end{eqnarray*}}
\def\ba{\begin{array}}
\def\ea{\end{array}}
\def\be#1{\begin{equation} \label{#1}}
\def \eeq{\end{equation}}
\def\nn{\nonumber}
\def\l{\langle}
\def\r{\rangle}
\def\cir{\overset\circ}
\def\nn{\nonumber}
\def\S{{\mathcal S}}
\def\S2{{\mathbb S}^2}
\def\E{{\mathcal E}}
\def\ub{{\underline{u}}}
\def\Lb{\underline{L}}
\def\D{{\mathcal D}}
\def\H{{\mathcal H}}
\def\Nb{{\underline{\mathcal{N}}}}
\def\F{{\mathcal F}}
\def\c{\cdot}
\def\a{\alpha}
\def\b{\beta}
\def\l{\langle}
\def\r{\rangle}
\def\ga{\gamma}
\def\p{\partial}
\def\F{{\mathcal{F}}}
\def\Lb{{\underline{L}}}
\def\Tr{\mbox{Tr}}
\def\f14{\frac{1}{4}}
\def\f12{{\frac{1}{2}}}
\def\t1a{t^{-\frac{1}{a}}}
\def\bm{{\bf m}}
\newcommand{\bea}{\begin{eqnarray}}
\newcommand{\eea}{\end{eqnarray}}
\def\nn{\nonumber}
\newcommand{\les}{\lesssim}
\def\S{\mathcal{S}}
\def\cir#1{\stackrel{\circ}{#1}}
\def\sE{\mathscr{E}}
\def\sF{\mathscr{F}}
\def\sP{{\mathscr{P}}}
\def\sQ{{\mathscr{Q}}}
\def\BA#1{\mbox{BA}_#1}
\def\str#1#2{\stackrel{\{#1\}}{#2}}
\def\ud#1{\underline{#1}}
\def\ssm{{\smallsetminus}}
\begin{document}
\title[]
{On the exterior stability of nonlinear wave equations}
\author{Qian Wang}
\address{
Oxford PDE center, Mathematical Institute, University of Oxford, Oxford, OX2 6GG, UK}
  \email{qian.wang@maths.ox.ac.uk}
%\AtEndDocument{\bigskip{\footnotesize%
 % \addvspace{\medskipamount}
  %\textsc{Oxford PDE Center, Mathematical Institute, University of Oxford, Oxford, OX2 6GG, UK} \par
  %\textit{E-mail address}: \texttt{qian.wang@@maths.ox.ac.uk}\par
  %\addvspace{\medskipamount}
  %\textsc{Beijing International Center for Mathematical Research, Peking University, Beijing, China} \par
  %\textit{E-mail address}: \texttt{shiwuyang@math.pku.edu.cn}
%}}
  \date{\today}
\begin{abstract}
We consider a general class of nonlinear wave equations, which admit trivial solutions and not necessarily verify  any form of null conditions.  They typically include various John's examples \cite{John, John2}, the reduced Einstein equations under wave coordinates  and  the irrotational fluids. For compactly supported small data, one can only have a semi-global result \cite{KJ, SKal1}, which states that the solutions are well-posed  upto a finite time-span depending on the size of the Cauchy data.  For  some of the equations of the class, the solutions blow up within a finite time  for the compactly supported data of any size.  For data prescribed on ${\mathbb R}^3\setminus B_R$ with small weighted energy,   without some form of null conditions on the nonlinearity,  the exterior stability is not expected to hold in the full domain of dependence, due to the known results of formation of shocks with  data on annuli. The classical method can only give the well-posedness upto a finite time.

 In this paper, we prove that, there exists a constant $R(\ga_0)\ge 2$, depending on the fixed weight exponent $\ga_0>1$ in the weighted energy norm, if the norm of the data are sufficiently small on  ${\mathbb R}^3\setminus B_R$ with the fixed number $R\ge R(\ga_0)$, the solution exists and is unique in the entire exterior of a schwarzschild cone initiating from $\{|x|=R\}$ (including the boundary) with small negative mass $-M_0$. $M_0$  is  determined according to the size of the initial data.  In this exterior region, by constructing the schwarzschild cone foliation, we can improve the linear behavior of wave equations in particular on the transversal derivative $\Lb \phi$. 
Such improvement enables us to control the nonlinearity violating the null condition without loss, and thus  show the solutions converge to the trivial solution. For semi-linear equations,  such stability region can be any close  to $\{|x|-t>R\}$ if the weighted norm of the data is sufficiently small on $\{|x|\ge R\}$.

The other interesting aspect of our method lies in  that it treats the massless and massive wave operator in a uniform way. Thus it works for equations with nonnegative variable potentials and an equation system with different  potentials.  As a quick application, we give the exterior stability result for Einstein (massive and massless) scalar fields. We prove the solution converges to a small static solution, stable in the entire exterior of a schwarzschild cone with positive mass, which then is  patchable to the interior results.
\end{abstract}
\maketitle
\section{Introduction}
In this paper, we consider nonlinear wave equations in ${\mathbb R}^{3+1}$  of the following form
\begin{equation}\label{eqn_1}
\left\{\begin{array}{lll}
\bg^{\a\b}(\phi, \p \phi) \p_\a \p_\b \phi=\N^{\a\b}(\phi) \p_\a \phi \p_\b \phi+q(x)\phi\\
\phi|_{t=0}=\phi_0, \quad \p_t \phi|_{t=0}=\phi_1
\end{array}\right.
\end{equation}
with the smooth function  $0\le q(x)\le 1$. $\bg(\phi, \p \phi)$ is a Lorentzian metric.   $\bg(y, \mathbf{P})$  is  smooth on  variables $y\in {\mathbb R}$ and ${\mathbf P}\in {\mathbb R}^4$.  $\bg^{\a\b}(0, \mathbf{0})=\bm^{\a\b},$ where $\bm$ denotes the Minkowski metric. \begin{footnote}{ We fix the convention that, in the Einstein summation convention, a Greek letter
is used for index taking values $0, 1, 2, 3.$ $x^0=t$ and $\p_0 =\p_t$. %and a little Latin letter is used for index taking values $1,2,3$.
}\end{footnote} The functions   $\N^{\a\b}(y)$   are  smooth  for $y\in {\mathbb R}$.
\begin{footnote}
{Our proof still works if  $\N^{\a\b}$  also smoothly depend on $\p \phi$ and $q$ also depends on $t$  with nearly no modification. We keep them in the simple form  for ease of exposition.}
\end{footnote}

The most important case for us is $q\equiv0$. For convenience, we  assume the derivatives of $q$ satisfy \begin{footnote}{For a differential operator $P$, $P\rp{n}$ means applying  $P$  to the $n$-th order,  $P\rp{\le n}=\sum_{0\le i\le n }P\rp{i}$, and  $P\rp{0}=id$.}\end{footnote}
\begin{equation}\label{potential}
 |r^i \p\rp{i}  q|\les r^{-2-\eta}  \mbox{ with } i=1,\cdots n, \, \, r=|x|
\end{equation}
where $\eta>0$ is any fixed constant. \begin{footnote}{ $A\les B$ means $A\le c B$ with the constant $c\ge 1$. $A\approx B$ means $A\les B$ and $B\les A$.}\end{footnote}

Throughout this paper we set $H^{\a\b}=\bg^{\a\b}-\bm^{\a\b}$ and define $\bg^{\a\b}\p_\a \p_\b=\widetilde{\Box}_\bg$.
 In case  $\bg\equiv \bm$, the quasilinear wave equation (\ref{eqn_1}) becomes a semilinear equation.
%We assume that $H^{\a\b}$ and $\N^{\a\b}$ are locally bounded in the sense that,
%there is a sufficiently large integer $N$ such that for any $\La_0>0$ there exists a constants $C=C(\La_0)>1$ such that
%\begin{equation}\label{g9}
%\begin{split}
%&C^{-1}|\xi|^2\le g^{ij}(y)\xi_i \xi_j \le C |\xi|^2,\qquad \forall \,|y|\le \Lambda_0,\\
%&\sup_{|y|\le \Lambda_0} \left|\Big(\frac{d}{dy}\Big)^l H^{\a\b}(y)\right|\le C,\qquad \forall \,0\le l\le N,\\
%&\sup_{|y|\le  \Lambda_0}\left|\Big(\frac{d}{dy}\Big)^l \N^{\a\b}(y)\right|\le C,\qquad \forall\, 0\le l\le N.
%\end{split}
%\end{equation}
\subsection{Main problem}
   The discussion in this part will  mainly focus on the case that $q\equiv 0$.  Incorporating  the nontrivial potential in the equation (\ref{eqn_1}) is mainly for  applying our method to  an equation system with various potentials. We emphasize that there is no assumption of any form of special structure on the quadratic nonlinear term on the righthand side of (\ref{eqn_1}). For the general class of wave equations   (\ref{eqn_1}), we consider to construct the global-in-time classical solution for   the generic Cauchy  data with small  weighted energy on  $\{|x|\ge R\}$.

Throughout this paper, we assume the initial data are not compactly supported in ${\mathbb R}^3$.   We first give the definition of the weighted norm for the initial data.
   Let $1< \ga_0<2$ be a fixed constant and  $q_0=\sup_{\{|x|\ge R\} } q(x)$.
We denote
\begin{equation}\label{3.21.4.18}
\begin{split}
\mathcal{E}_{k, \ga_0, R, q_0}&=\int_{\Sigma_0\cap\{ r\ge R\}} (1+r)^{\ga_0-2} |\phi_0|^2 dx\\
&+\sum\limits_{l\leq k}\int_{\Sigma_0\cap \{r\ge R\}}(1+r)^{\ga_0+2l}( |\p\p^l\phi_0|^2+|\p^l\phi_1|^2+ q_0|\p^l\phi_0|^2 )dx,
\end{split}
\end{equation}
where $\Sigma_0=\{t=0\}$.  The $q_0$ in the subindex  may be dropped if we only consider one single equation, instead of an equation system.
We may use $\E_{k, \ga_0}$ as a short-hand notation whenever there occurs no confusion.
   Here $R$ is a fixed constant and  $R\ge 2$.

For initial data with compact support, in either the semilinear or the quasilinear case, there holds only a semi-global existence  result, with the time-span of the solution depending on the size of the small data (See \cite{KJ,SKal1}.)  The finite life-span of the solution therein is actually sharp. In \cite{John} and \cite{John2}, examples of equations of the type (\ref{eqn_1}) are constructed which  does not admit global solution  for data of any size.  The potential flow of fluid equations are also typical examples of (\ref{eqn_1}) which do not verify null condition.  In the work of Christodoulou \cite{shock_dm} on the relativistic Euler equation  and the work of Speck \cite{shock_Speck} for the geometric wave equation, when a set of  small cauchy data is prescribed in an annulus,  the singularity  of the characteristic surfaces is formed  within  finite time. The semi-global well-posedness of the solution holds until  the formation of the shock.  Based on these well-known facts, one should not expect to have any global-in-time result if the data is compactly supported. For the generic data prescribed on $\{|x|\ge R\}$, due to the results of \cite{shock_dm} and \cite{shock_Speck},  one should not expect a global-in-time result in the full domain of dependence, which is exterior to the outgoing characteristic cone initiated from $\{|x|=R\}$ upto the boundary, no matter how small the data is, since the weighted norm (\ref{3.21.4.18}) of the small annuli-data can be sufficiently small.

When the data is non-compactly supported, the known energy method can only give the local-in-time result of well-posedness, even if the data are small on $\{|x|\ge R\}$. See Figure \ref{fig4} for the regions of well-posedness in the semilinear case by using the time foliation or the double null foliations. The $T(\ep)$ and $v_*(\ep)$ are both some finite numbers depending on the bound $\E_{N,\ga_0,R, 0}(\phi[0])\le \ep$.
We raise the question  that for the generic data with finite weighted energy, if the blow-up of the solution can only occur in a region interior to certain cone initiated from a sphere $\{|x|=R\}$. This is trivially true if the datum is compactly supported due to the standard argument of the finite speed of propagation. For the non-compactly supported data,  we give an affirmative answer by showing  the following result of exterior stability.
   \begin{theorem}[A rough statement of main result]\label{7.30.3.18}
Let $1<\ga_0<2$ be fixed.   Consider (\ref{eqn_1}). There exist a  universal constant\begin{footnote}{ Throughout the paper, a universal constant means a constant that depends only  on the constant $\eta$ and the bound in (\ref{potential}),  the bounds of  $|D\rp{\le 3} \bg|$ and $|D\rp{\le 3}\N|$ on small compact domains. }\end{footnote} $C\ge 1$, a small constant $\delta_1>0$ and a constant $R(\ga_0, C)\ge 2$ such that,
if $\phi[0]=(\phi_0, \phi_1)$ verifies  $\E_{3, \ga_0, R}\le \delta_1$ with  $R\ge R(\ga_0, C)$,  then with $M_0=C \delta^\f12_1$, exterior to a schwarzschild  cone of mass $-M_0$ \begin{footnote}{See the definition of the metric in (\ref{metric}).}\end{footnote} initiated from  $\{|x|=R\}$ (including the cone itself),  there exists a unique global-in-time solution which converges to the trivial solution  as $r\rightarrow \infty$ for any $t> 0$. The solution  not only has the standard  asymptotic behavior of the free wave, but also has improved global decay properties.
\begin{figure}[ht]
\centering
\includegraphics[width = 0.48\textwidth]{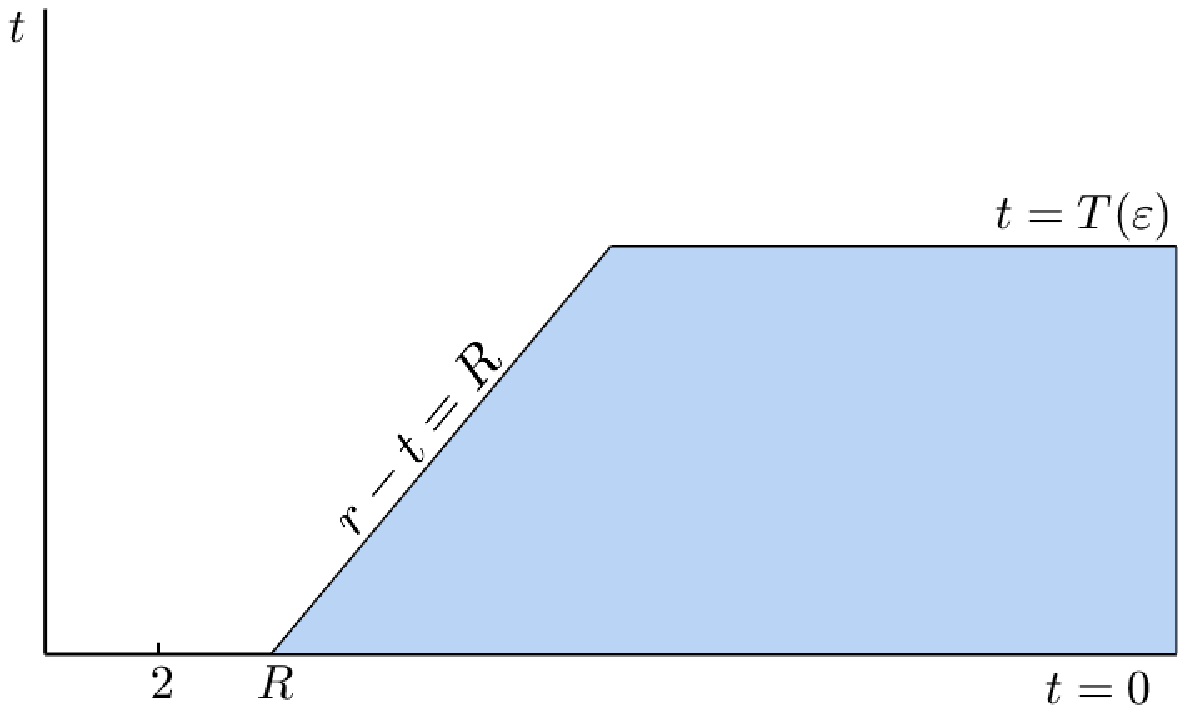}
  \includegraphics[width = 0.48\textwidth]{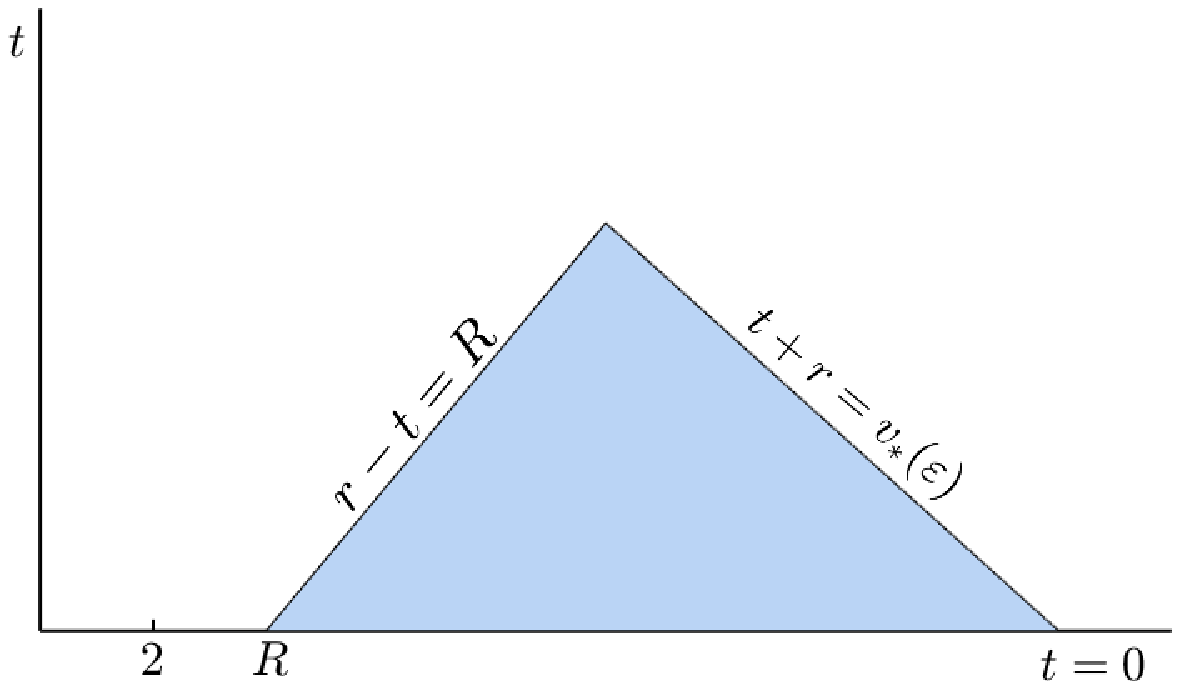}
  \vskip -0.7cm
  \caption{Illustration of the classical semi-global results}\label{fig4}
\end{figure}
   \end{theorem}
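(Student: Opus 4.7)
The plan is to solve this problem by a large bootstrap argument on the exterior of a suitably chosen Schwarzschild cone, using a novel geometric foliation that improves the linear decay of the transversal derivative $\Lb \phi$ enough to absorb the non-null quadratic terms. First I would construct, outside $\{|x|=R\}$, the family of outgoing null hypersurfaces of the Schwarzschild metric with mass $-M_0$, where $M_0 = C\,\delta_1^{1/2}$ is chosen small so that the background geometry is a perturbation of flat space, and parametrize them by an optical function $u$ with $u|_{t=0,\,|x|=R}=0$. Together with the dual incoming optical function $\ub$, this furnishes an adapted null frame $(L,\Lb,e_1,e_2)$, and I would measure the true metric $\bg$ by the perturbation $H^{\a\b}=\bg^{\a\b}-\bm^{\a\b}$ relative to this Schwarzschild background rather than to Minkowski.

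Next I would set up weighted energy currents generated by the vector fields $T=\tfrac12(L+\Lb)$ and a Morawetz multiplier of the type $K=\tau_+^{\ga_0}\,L + \tau_-^{\ga_0}\,\Lb$ (with $\tau_\pm$ the optical functions), together with the rotation fields $\Omega_{ij}$ constructed from the Schwarzschild sphere foliation. Commuting with these fields and with $L$ to gain $r$-weights, and integrating $\Ddiv(Q(\phi)\cdot X)$ over the region bounded by an outgoing Schwarzschild cone $C_u$ and a time slab, I would obtain weighted $L^2$ estimates of the form $\|r^{\ga_0/2} L\nab\rp{\le k}\phi\|_{L^2(C_u)}^2 + \tau_+^{\ga_0}\|\sn\nab\rp{\le k}\phi\|^2 + (\text{flux terms}) \les \E_{k,\ga_0}[\phi(0)] + \int \err$. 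The error terms $\err$ consist of the nonlinear contributions $\N(\phi)\p\phi\p\phi$, the quasilinear corrections coming from $H^{\a\b}\p^2\phi$, the deformation tensors ${}^{(X)}\pi$ for $X\in\{T,K,\Omega,L\}$, and the potential term $q\phi$.

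The heart of the proof, and the main obstacle, is extracting an improved bound on the transversal derivative $\Lb\phi$ strong enough that $\N(\phi)(\Lb\phi)^2$ — the term that violates the null condition — becomes integrable against the weighted energy. Here the choice of \emph{negative} mass is crucial: the Schwarzschild outgoing cones are outward-convex, so the trace $\tr\chi$ of the null second fundamental form acquires a strictly positive correction $\tfrac{2}{r}+\tfrac{2M_0}{r^2}+\ldots$, while $\tr\chib$ gets a correspondingly better sign. Translating this into the transport equation satisfied by $\Lb\phi$ along $L$, namely $L(r\Lb\phi)+\tfrac12\tr\chi(r\Lb\phi)=r\bigl(\sD\phi - \tfrac12\tr\chib\, L\phi + \text{nonlinear}\bigr)$, I would integrate from the initial cone $\{|x|=R\}$ to obtain a pointwise decay $|\Lb\phi|\les\tau_+^{-1}\tau_-^{-\ga_0/2}$ (roughly) inside the exterior region, rather than the flat-space rate $\tau_+^{-1/2}\tau_-^{-1/2-\ga_0/2}$. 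The gain comes from the $M_0/r^2$ term in $\tr\chi$ dominating the would-be logarithmic losses coming from the low-decay data.

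With this improvement in hand I would run a bootstrap on (i) pointwise bounds for $\phi,\p\phi$, (ii) weighted $L^2$ bounds for $\nab\rp{\le k}\phi$ up to some $k\le 3$, and (iii) the connection coefficients $\chi,\chib,\zeta,\ldots$ and deformation tensors ${}^{(X)}\pi$, which must be shown to remain close to their Schwarzschild values. Plugging the improved $\Lb\phi$ decay into the nonlinear error terms, each offending quadratic pairing $(\Lb\phi)^2$, $(\Lb\phi)(L\phi)$, etc., is shown to be integrable with a small factor $\delta_1^{1/2}$ multiplying the bootstrapped energy, so all bootstrap constants are recovered. A continuity argument then pushes the solution to all $\{u\le u_*\}$ with $u_*\to\infty$, yielding global existence in the whole exterior of the initial Schwarzschild cone; Sobolev embedding on the spheres delivers the claimed pointwise asymptotics and decay to the trivial solution. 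The chief technical difficulty, beyond the derivation of the improved $\Lb\phi$ estimate, lies in the coupled control of the geometric quantities associated to the \emph{true} metric $\bg$ relative to the Schwarzschild background while simultaneously closing the nonlinear energy bootstrap; this requires carefully tracking the $M_0$-dependent terms so that the negative mass is used where it helps and does not degrade the estimates elsewhere.
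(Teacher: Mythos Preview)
Your proposal correctly identifies the broad architecture (Schwarzschild-cone foliation with mass $-M_0\sim\delta_1^{1/2}$, bootstrap on weighted energies, need for improved control of $\Lb\phi$ to absorb the non-null quadratic terms), but the core mechanism you propose for the $\Lb\phi$ improvement is not the one that drives the paper's argument, and as stated it would not close.

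You locate the gain in a transport equation for $r\Lb\phi$ along $L$, with the negative-mass correction $\tr\chi=\tfrac{2}{r}+\tfrac{2M_0}{r^2}+\cdots$ providing extra damping. But the correction $M_0/r^2$ is integrable along outgoing rays and yields only a bounded multiplicative factor, not additional decay in $r$ or $u$; it cannot upgrade the flat-space rate for $\Lb\phi$ to the one you claim. The paper's mechanism is different and more elementary: because $M_0>0$, the Schwarzschild outgoing cones $\H_u$ are slightly \emph{spacelike} with respect to the Minkowski metric, so the standard energy flux on $\H_u$ contains a coercive term $\tfrac{M_0}{r}(\Lb\phi)^2$ in addition to $(L\phi)^2+|\sn\phi|^2$ (see the computation of $Q(\partial_t,\N)$ in (\ref{7.4.2.18}) and the resulting flux (\ref{7.14.3.18})). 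Since the weighted data give $E[\phi](\H_u)\lesssim |u|^{-\ga_0}\E_{0,\ga_0}$, this immediately yields the improved integrated control $\int_{\H_u}\tfrac{M_0}{r}(\Lb\phi)^2\lesssim |u|^{-\ga_0}$, which is then combined with the paper's Sobolev inequalities on $\H_u,\Hb_\ub$ to produce the estimates (\ref{7.14.4.18})--(\ref{7.14.6.18}). In the quasilinear case one must first arrange $r(h-H^{\Lb\Lb})\ge M/3$ via the choice of $M_0$ (assumption (\ref{6.10.4.18})) so that this flux term survives after the metric perturbation; this is where $M_0\sim\delta_1^{1/2}$ enters.

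A second point: you propose an intrinsic approach---measuring $H$ against the Schwarzschild background, building rotations from the Schwarzschild spheres, and bootstrapping the connection coefficients $\chi,\chib,\zeta$ of the true metric $\bg$. The paper deliberately avoids this. It works entirely with the \emph{Minkowski} null frame $L=\partial_t+\partial_r$, $\Lb=\partial_t-\partial_r$, the Minkowski rotations $\Omega_{ij}$, and commutes only with $\{\partial,\Omega_{ij}\}$; the Schwarzschild structure enters solely through the foliation by level sets of $u=t-r_*(M_0,r)$. The multipliers are $\partial_t$ for the standard energy and (a quasilinear modification of) $rL$ for the $r$-weighted energy, not a conformal vector field of the type $\tau_+^{\ga_0}L+\tau_-^{\ga_0}\Lb$. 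Since (\ref{eqn_1}) carries no null structure, the characteristic surfaces of $\bg$ can degenerate in finite time, so an intrinsic geometric bootstrap of the kind you sketch is precisely what one cannot expect to close here; the paper's extrinsic Minkowski-vector-field approach sidesteps this entirely.
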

   \begin{figure}[ht]
\centering
\includegraphics[width = 0.48\textwidth]{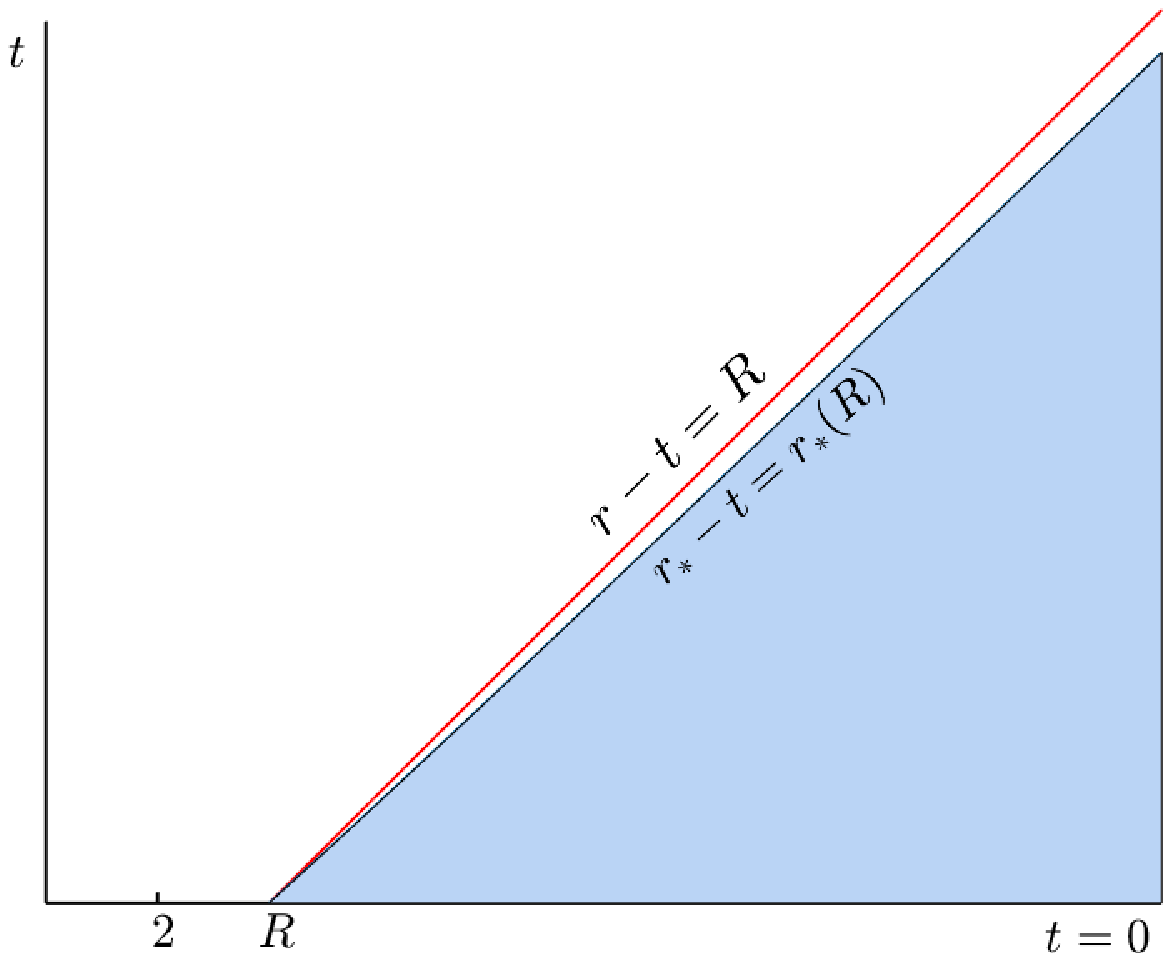}
  \includegraphics[width = 0.46\textwidth]{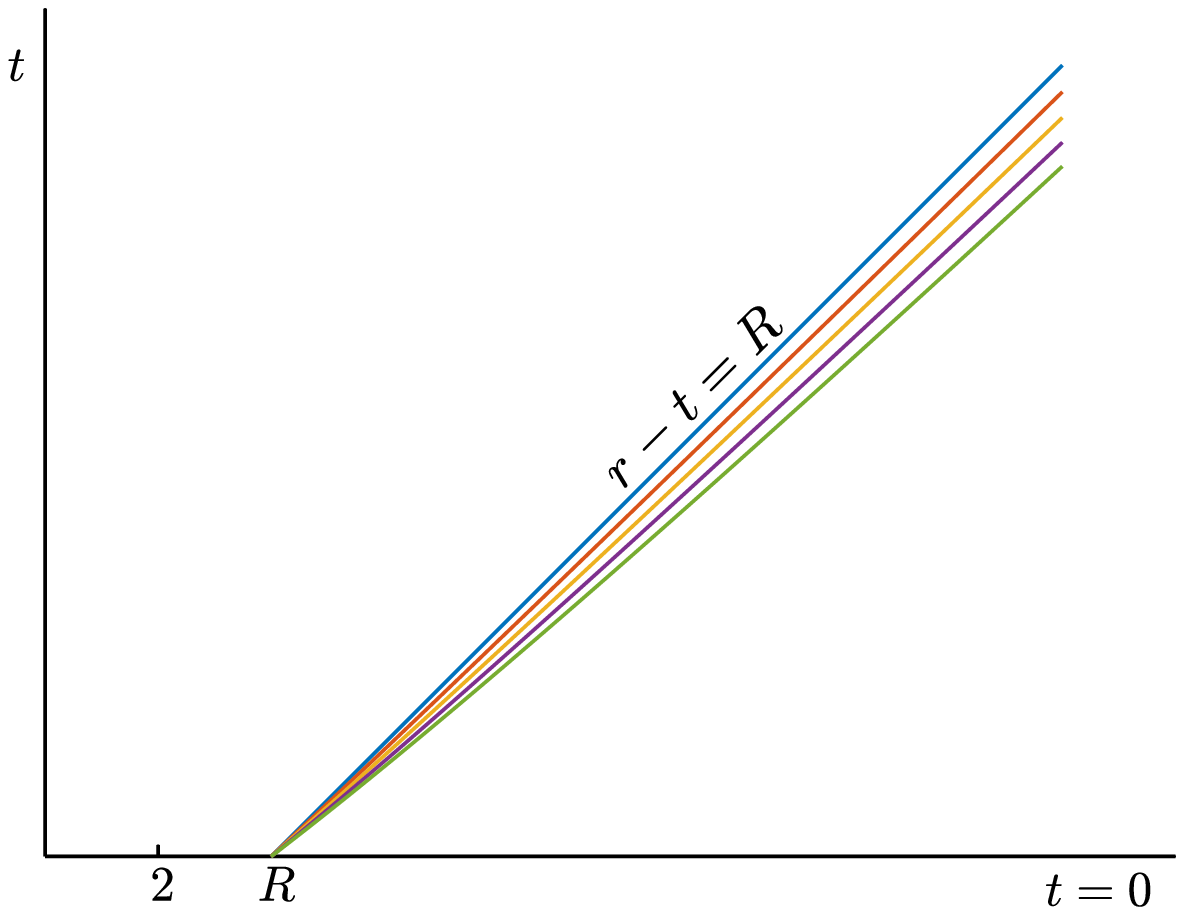}
  \vskip -0.7cm
  \caption{Illustration of the stability region of the main result for the semilinear equations.}\label{fig2}
\end{figure}
    One can directly apply the result to the generic  data with bounded weighted energy, provided that $R$ is sufficient large.
Such schwarzschild cone divides the spacetime into a stability region at its exterior, while all the singularities can only be formed in its  open interior, which are compactly supported for all $t>0$. This  gives the affirmative answer to our question.

  We have several remarks on the above rough statement of our result.
  \begin{enumerate}
\item[(1)]  In this result,   the mass of the schwarzschild metric is $-M_0$  with $|M_0|\ll \frac{1}{10}$ chosen according to the size of the initial data.  The definition of the metric  can be found in (\ref{metric}). For  the general equation (\ref{eqn_1}), we choose $M_0>0$. Correspondingly, the boundary of the exterior region is  slightly spacelike\begin{footnote}{Throughout the paper, spacelike, null or timelike are  in terms of the Minkowski metric.}\end{footnote}. For the semilinear case and Einstein scalar fields, we can have better results than the above statement. For the semilinear case, the stability region can be any close to $\{r> t+R\}$.  For Einstein equations, we can choose $M_0\le 0$. The corresponding schwarzschild cone is  timelike or null. This makes the exterior result patchable  with an interior result based on  the Minkowskian hyperboloidal foliation if we further assume the smallness of the initial data in $B_R$. \begin{footnote}{See a semilinear result \cite{FWY} for an example of such direct patching.}\end{footnote} We refer the readers to the main theorems, Theorem \ref{thm2}, Theorem \ref{thm_quasi} and Theorem \ref{eins_thm}, for detailed statements.

\item[(2)] If the data are compactly supported, the choice of $M_0=C \delta_1^{\f12}$ leads to a semi-global result. The life-span coincides with the standard almost global result \cite{KJ}.

\item[(3)] The sharp local-wellposedness result \cite{tata_smith, roughgeneral}  implies (\ref{eqn_1}) is local-in-time wellposed for data  $\phi[0]=(\phi_0, \phi_1)$  in the normed space $H^{3+\ep}({\mathbb R}^3)\times H^{2+\ep}({\mathbb R}^3),\, \ep>0$. In terms of the order of derivative, our data is at the level of  $H^4({\mathbb R}^3)\times H^3({\mathbb R}^3)$. In a standard regime of  commuting vector fields approach, it is optimal in terms of regularity.
    \end{enumerate}

\subsection{Review of history and inspiration}
In general, in ${\mathbb R}^{3+1}$, one can construct global-in-time solution of (\ref{eqn_1}) for generic small data only when the quadratic nonlinearity $\N^{\a\b}(\phi) \p_\a \phi \p_\b\phi$  verifies certain null condition.  The standard null condition, since it  was raised by Klainerman \cite{SK},  has been  deeply exploited for proving global existence results for various equations with such structures.  The case for the general equation (\ref{eqn_1}) with $q>0$ being a fixed constant is also  studied in  \cite{KKL} for small data with compact support. One can refer to \cite{SKNull, DM} for the results for quasilinear wave equations verifying null conditions. The null conditions, which are important algebraic cancelation structures can be found in  various important geometric or  physical field equations, such as wave maps,  Maxwell-Klein-Gordon equations,  Yang-Mills equations and Einstein equations. For the semilinear case, one can refer to \cite{Lindsterb, Yang2015, YangYu:MKG} for the global results of the massless Maxwell-Klein-Gordon equations, to \cite{mMaxwell, Psarelli} for the massive case, and to \cite{Moncrief1, Moncrief2} for the result of Yang-Mills equations. One can find in    \cite{MKGkl, YMkl, KriegerMKG4, OhMKG4}  the global well-posedness results with low or optimal regularity with large data for  Maxwell-Klein-Gordon equations and Yang-Mills equations.

The Einstein equation system is  an important example of the system of quasilinear equations that verifies null conditions in an intrinsic geometric framework, relative to the maximal foliation gauge. Under this gauge, the small data global-existence result was proved  by Klainerman and Christodoulou in the monumental work \cite{CK}. Under  the wave coordinate gauge, the reduced Einstein equation system takes the form of (\ref{eqn_1}), with the righthand side verifying a so-called weak null condition.

Let us compare the simplest example of equations with the weak null condition,
\begin{equation}\label{wn}
\Box_\bm \phi_1=-(\p_t\phi_2 )^2, \quad \Box_\bm \phi_2=0
\end{equation}
with the example constructed by John in \cite{John} which does not have a global solution for compactly supported data of any size
\begin{equation}\label{j1}
\Box_\bm \phi=-(\p_t\phi)^2.
\end{equation}
With $L=\p_t+\p_r$, $\Lb =\p_t-\p_r$, we can  decompose $2\p_t=L+\Lb$. Thus  the term of $(\Lb f)^2$ appears in the quadratic term of both  (\ref{wn}) and (\ref{j1}). The difference lies in that such bad term in $(\ref{wn})$ can be controlled by the better part of the system, since $\phi_2$ is actually a free wave solution. Although, one can only obtain the weaker decay property
\begin{equation}\label{7.14.2.18}
(r+t+1)|\phi_1|\les \ln t,
\end{equation}
 due to the appearance of such bad term, the solution $(\phi_1, \phi_2)$ exists globally. Typically, the weak null system consists of good equations which verify null conditions, and bad equations which formally  have the terms of $ \Lb f\c \Lb f$. It is important that the function $f$ verifies the good equations. Einstein equations under wave coordinates verify such weak null property (see \cite{Lindrod1} and \cite{Lindrod2}). The global results for small data are proved by Lindblad and Rodnianski. Clearly (\ref{j1}) gives an example that $\Lb f\c \Lb f$ appears in the equation of $f$ itself, which does not satisfy the weak null condition.

 For ease of discussion, let us consider the data which have compact support. In the domain of influence, by running a  standard energy argument for (\ref{j1}), we have
 \begin{equation}\label{7.17.1.18}
 \| \p\p\rp{n} \phi(t,\cdot)\|_{L^2({\mathbb R}^3)}\les  \| \p\p\rp{n} \phi(0,\cdot)\|_{L^2({\mathbb R}^3)}+ \int_0^t \|\p \rp{n} ((\p_t \phi)^2)(t',\cdot)\|_{L^2({\mathbb R}^3)} dt'.
 \end{equation}
 For simplicity we only consider one of the terms  in $\p \rp{n}( (\p_t \phi)^2) =\sum_{a+b=n}\p \rp{a}\p_t\phi\p\rp{b} \p_t\phi$, which is
 \begin{equation}\label{7.18.1.18}
\p_t \phi\c \p\rp{n}\p_t\phi.
 \end{equation}
 Note the standard decay of the free wave  for $\p_t \phi$ with small data is
     $$(|t-r|+1)^\f12(t+r+1)|\p \phi|\les \ep^\f12 .$$
 By a direct substitution, $ \| \p\p\rp{n} \phi(t,\cdot)\|_{L^2({\mathbb R}^3)}\les  t^{C\ep^\f12} \| \p\p\rp{n} \phi(0,\cdot)\|_{L^2({\mathbb R}^3)}$. However, with such energy  growth, one can not recover the linear behavior to $\p_t\phi$ without loss of decay in $t$-variable. With a weaker decay for $\p_t\phi$, we can not achieve the boundedness of energy even allowing growth. The only way to obtain the boundedness of energy is by setting $t\le T(\ve)<\infty$, which implies the semi-global result.

 If the quadratic nonlinearity verifies null conditions such as the null forms
 $$\bQ_0(\phi, \psi)=\p^\mu \phi \p_\mu \psi; \qquad  \bQ_{\a\b}(\phi,\psi)= \p_\a \phi \p_\b \psi-\p_\a \psi \p_\b \phi.$$

  Due to
 \begin{equation*}
 |\bQ(\phi, \psi)|\les| \bar \p \phi|\c |\p \psi|+|\bar\p \psi||\p \phi|
 \end{equation*}
 where $\bar \p =(L ,\sn)$  and  $\sn_i=\p_i-\frac{x^i}{r}\p_r, \, i=1,2,3$,   since the above structure is almost preserved if differentiated by the invariant vector fields of the Minkowski space,  and since the decay of $\bar \p\phi$ can be improved  to $(1+r+t)^{-2}$ by using the commuting vector fields approach, we can obtain an additional $(1+r+t)^{-1}$ decay in the error integral in (\ref{7.17.1.18}) compared with the case  for the equation (\ref{j1}). This implies the boundedness of energy easily.

Based on the above examples, clearly the presence of the  $\Lb \phi_1\c \Lb \phi_2$ type term in  the quadratic nonlinearities significantly changes the asymptotic behavior of the solution. It either does not allow the local-in-time solution to be extended, or causes the global solution to lose the sharp decay, which is the case for the simplest system (\ref{wn}). In the case that $\N^{\a\b} \equiv 0,\, q\equiv 0$,   Lindblad (\cite{Lind1}) proves the stability result for (\ref{eqn_1}) if the metric $\bg$ does not depend on $\p \phi$, where the loss of sharp decay occurs for $\phi$. (See also \cite{Alinhac1}.) So if losing the structure of null conditions either in the semilinear quadratic terms or the quasilinear terms, one should not expect the solution has the standard global linear behavior of a wave equation without loss.
 %Our quadratic nonliearity is a very general form, which  does include the bad-bad interaction $\p_t \phi\c\p_t \phi$.
%There is no chance  of taking advantage of the specialty  either from the data or from the quadratic nonlinearity for the purpose of constructing the global in time solution.

 Prescribing data  on $\{r\ge R\}$ does not improve decay property in terms of $r$ parameter either. In  the standard exterior stability  results,  \cite{KN, KlN2} for Einstein equations and  \cite{mMaxwell} for the massive Maxwell-Klein-Gordon equation with arbitrary charge, the null conditions of the equations have played a crucial role.  If both the quasilinear and semilinear nonlinearity verify the null condition, the solution of (\ref{eqn_1}) should be  well-posed  in the entire domain of dependence of $\{r\ge R\}$ up to the characteristic boundary  if the data is sufficiently small.  To prove this standard result, one may have to check if the extrinsic approach of \cite{Lind1, Lindrod1, Lindrod2} is strong enough to capture the behavior of the characteristic surfaces in this situation, since it is not in the Einsteinian case. It is possible that one has to adopt the intrinsic approach such as in \cite{KN}.   Without any assumption on the nonlinear structures in (\ref{eqn_1}), one should not expect the solution exists in the entire $\{r\ge R+t\}$ for semilinear case, nor in the whole exterior of  the global  characteristic surfaces (upto the boundary) for the quasilinear case.
 For the quasilinear equations, the characteristic surface can be singular in finite time. Nevertheless, it is possible that the solution  remains regular in the majority of the domain of dependence, meanwhile concentrates in the remaining part.
This inspires us to extend the solution in subregions of the domain of dependence. We require such subregion to be  global in time. In the semilinear case, we  require the region can be  any close to $\{r\ge t+R\}$.

Note that the John's example in \cite{John2},
\begin{equation*}
\Box_\bm \phi=-\p_t\phi \Delta \phi
\end{equation*}
in the spherical symmetry case can be reduced to a Burgers' equation.
  For the latter, we know that shock can be  formed  once  certain monotonicity condition of data is broken and the shock can  occur along  any characteristic, no matter the support of the data is compact or not.
  Thus, it seems meaningless to discuss where singularities of the solutions are distributed.  However,  exactly due to this example, we can imagine that  if the classical solution is extended globally in time, their life-span along the  regular part of each characteristic surface may still be finite.
   Geometrically, the lightcone of schwarzschild spacetime  intersects with any lightcone of Minkowski space within finite time. So in the semilinear case,
if we use a schwarzschild cone initiated from $\{|x|=R\}$ as the boundary surface, this region matches the expectation indicated by the Burgers' equation. Such cone has to be spacelike since we need to obtain the positive  energy  flux on the boundary. Note that the region bounded by the schwarzschild cones with the small negative mass $-M_0$ can exhaust  $\{r> t+R\}$ by letting $M_0\rightarrow 0_+$. So such region can be any close to $\{r>t+R\}$, and identical to $\{r\ge t+R\}$ if and only if $M_0=0$. (See the second picture in  Figure \ref{fig2}.)

Physically, we separate the region where the solution may concentrate the most, along a schwarzschild cone, away from the domain of dependence. We then ask if we can achieve  any  improvement over the standard linear behavior of wave equations in the remaining region, and if the improvement is strong enough to control the nonlinearities.

\subsection{Strategy of the proofs}
The framework of our approach can be clearly seen in the proof for the semilinear case. The main idea is  based on a good combination of the boundedness of  the standard energy  and $r$-weighted energy along the foliations of  schwarzschild cones, which leads to an  improved set of asymptotic behavior  by applying the very flexible version of the weighted Sobolev inequalities in \cite{mMaxwell}.
In this part, we will  discuss the following aspects of our approach in this paper.
\begin{enumerate}
\item [(1)] The control  of the nonlinearity on the right of (\ref{eqn_1})  and the improvements compared with the known standard linear behaviors.
\item[(2)] The influence of the variable potential $q$ to our approach.
\item[(3)] The difficulties in the  quasilinear case caused by the nontrivial influence of the metric $\bg(\phi, \p \phi)$.
\item[(4)] The complexities in the application to Einstein scalar fields.
\end{enumerate}
We first explain how we treat the quadratic nonlinearity to achieve the boundedness of energy.
%It is known that for non-compactly supported data,  without certain control on the decay property at the spatial infinity, one may construct solutions for free wave equation which blow up  with $t\rightarrow \infty$. The bounded weighted energy  (\ref{3.21.4.18}) is a very general assumption to  control the asymptotic behavior of the data at the spatial infinity.
For the free wave equation and data $\phi[0]$ with $\E_{2,\ga_0,R}\le \ep$,  we can apply commuting vector fields to derive the standard decay property in $\{r\ge t+R\}$,
   \begin{equation}\label{7.13.2.18}
   r|\p \phi|\les \ep^\f12(r-t+1)^{-\f12\ga_0-\f12}.
   \end{equation}
       Under the null frame $L=\p_t+\p_r$, $\Lb =\p_t-\p_r$, and with $\sn_i=\p_i-\frac{x^i}{r}\p_r$,   the Cartesian component of covariant derivative on sphere $S_{t,r}$,
    the decay for $L \phi$ and $\sn \phi$ can be improved. Nevertheless the decay rate in terms of $r$ is unimprovable for $\Lb \phi$ in the region $\{r\ge t+R\}$.  If we run the standard energy argument for (\ref{j1}), we would still end up with having a finite-in-time result.

     We now consider to improve the asymptotic behavior of $\Lb \phi$  exterior to a slightly spacelike schwarzschild cone.
Let $u_0(M_0)=-r_*(M_0, R)$ where $r_*(M_0, r)$ is defined in  (\ref{5.2.3.18}). We may denote $u_0=u_0(M_0)$ for convenience. Let $u=t-r_*(M_0, r)$ and $\ub=t+r_*(M_0, r)$.
    In the region with $\{u\le u_0\}$, we  adopt foliations by level sets of $u$ and $\ub$, denoted by $\H_u$ and $\Hb_\ub$ respectively . The   standard energy for $\phi$ on $\H_u$ takes the form of
    \begin{equation}\label{7.14.3.18}
    \int_{\H_u} \frac{M_0}{r} |\Lb \phi|^2+|L \phi|^2+|\sn \phi|^2 d\mu_\H \les |u|^{-\ga_0} \E_{0,\ga_0, R},\, \ga_0>1
\end{equation}
where the $d\mu_\H$ is the area element of $\H_u$, comparable to $r^2 d\ub' d \mu_{{\mathbb S}^2}$, and $\E_{0,\ga_0, R}$ is defined in  (\ref{3.21.4.18}) for the initial data.

Next, we explain how such improvement  can essentially help us to control the error integral in the standard energy estimate for (\ref{j1}).  Let $E[\p \rp{n} \phi](\Sigma)$ denote the standard energy on the hypersurfaces $\Sigma$.   Let $ -\ub_1\le u_1\le u_0$ and $\ub_1$ be arbitrarily large. The energy argument gives
\begin{align}
&E[\p\rp{n}\phi](\H_{u_1}^{\ub_1})+E[\p \rp{n}\phi](\Hb_{\ub_1}^{u_1})\label{7.14.7.18}\\
& \les  \| \p\p\rp{n} \phi(0,\cdot)\|^2_{L^2(\Sigma_0\cap \{u\le u_1\})}+ \int_{\{ -\ub_1\le-\ub< u\le u_1 \}} |\p \rp{n} (\p_t \phi \p_t \phi) \p\p\rp{n} \phi  |.\nn
\end{align}
Here  the truncated hypersurfaces $\H_{u_1}^{\ub_1}$ and $\H_{\ub_1}^{u_1}$ are subsets of $\H_{u_1}$ and $\Hb_{\ub_1}$ respectively, both of which  are defined in (\ref{7.30.1.18}) in Section \ref{setup}.

Again we consider only the term  (\ref{7.18.1.18}) in the error terms. If we can recover the linear behavior (\ref{7.13.2.18}) with $r-t$  replaced by $|u|$ to $\p \phi$ under the assumption  that the  data verify $\E_{2,\ga_0,R}\le \ep$, we have
\begin{align}\label{7.30.2.18}
\int_{\{-\ub_1\le-\ub < u\le u_1\}} |\p \rp{n} \p_t \phi \c \p_t \phi  \c \p \p\rp{n} \phi  | \les  M_0^{-1} \ep^\f12 \int_ {-\ub_1}^{u_1} |u|^{-\f12\ga_0-\f12} E[\p\rp{n}\phi](\H_u^{\ub_1}) du.
\end{align}
  With $\ga_0>1$, we can achieve the boundedness of energy by Gronwall's inequality. For the nonlinear problem itself, we certainly can not directly use the linear behavior  to control error. The analysis will be based on a bootstrap argument. With $\ep \le C M_0^2$ and $C\ge 1$, we can close the bootstrap argument if $c_0 R^{1-\ga_0}< C^{-1}$, where $C$ is a fixed constant,  $c_0\ge 1$ is a universal constant. This is achievable if $R(\ga_0)$, the lower bound of $R$, satisfies the inequality.

Thus it is crucial for our nonlinear analysis  to achieve the linear behavior of (\ref{7.13.2.18}) with $r-t$ replaced by $|u|$,  without loss of the decay in $r$-variable. This  requires us to perform our analysis in a no-loss regime. The analysis of the full nonlinearities is  more involved than the sample term in (\ref{7.30.2.18}). We explain our basic principle below.

If we write the spacetime error integral, such as the last term of (\ref{7.14.7.18})  symbolically as
\begin{equation}\label{7.22.1.18}
\int |B_1||B_2| |B_3| dx dt.
\end{equation}
The known procedure for bounding the  error integral uniformly in the upper limit of $t$ is to identify one of the factors $|B_1|$ as $|G|$  which has a stronger decay in $r$ than (\ref{7.13.2.18}), if the standard null conditions are satisfied; or using the better feature of the equation (system) to guarantee  $|B_1|$ can achieve the standard global linear behavior, such as the pointwise decay  $(t+r+1)^{-1}$, which implies a control with growth of $t^{C\ep^\f12}$. The latter occurs when the weak null structure is available.  Without any of the extra structure, we rely on the sets of decay estimates in Section \ref{decay} and the energy flux of the schwarzschild cone foliation to form  the hierarchy of the analysis. For these bad terms $B_i$, we manage to recover the standard linear behavior, with the bound comparable to the size of the data, $\dn^\f12\approx \ep^\f12$.  They offer  good bounds $\|\cdot \|_{G}$. We also achieve a set of integrated estimates with the bound $\dn^\f12 M_0^{-\f12}$, denoted by $\|\c \|_{S}$, which are  stronger than the standard linear behavior. When applying H\"{o}lder's inequality, our basic principle  is  to bound the worst nonlinearity (\ref{7.22.1.18}) by
\begin{equation*}
\int |B_1||B_2||B_3| dx dt\les \|B_1\|_{G}\|B_2\|_{S}\|B_3\|_{S},
\end{equation*}
 which allows us to improve  the bootstrap argument and achieve the boundedness of  the norms $\|\cdot\|_{G}$ and $\|\cdot\|_{S}$ for the bad terms.

To derive the set of improved estimates $\|\cdot\|_S$ for bad terms,  we note  the linear behavior (\ref{7.14.3.18}) shows that once  the energy flux on $\H_u$ can be bounded in terms of the initial data, since the weighted energy of data is bounded, the flux  automatically  decays nicely in $|u|$. We can combine this property with the weighted Sobolev inequalities in \cite{mMaxwell} to  obtain
more improvements, in particular, on the integrated decay estimates. Below we list some of such improved estimates, which are important to the results in this paper.
\begin{align}
&\|r^\f12 \p Z\rp{b}\phi\|^2_{L_\ub^2 L_u^\infty L_\omega^4(u\le u_1)}\les \ep M_0^{-1} |u_1|^{-\ga_0+2\zeta(Z^b)}, \quad b\le n-1\label{7.14.6.18}\\
& \|r^\f12\p Z\rp{l} \phi\|^2_{L_\ub^2 L^\infty(u\le u_1)}\les \ep M_0^{-1}|u_1|^{-\ga_0+2\zeta(Z^l)},\quad l\le n-2\label{7.14.4.18}\\
&\| r^{-\f12} Z\rp{a}\phi\|^2_{L^2(\H_{u_1})}\les \ep M_0^{-1} |u_1|^{-\ga_0+2+2\zeta(Z^a)},\quad a\le n,\label{7.14.5.18}
 \end{align}
where $u_1\le u_0$ and $\E_{n,\ga_0, R}(\phi[0])\le \ep,\, n= 2,3 $. Here we denote the ordered product of vector fields as $Z^k= Z_1 \cdots Z_k$, with $Z\rp{k}$ the corresponding differential operator of $k$-th order, where  $Z_l\in \{\Omega_{ij}, \p\}$,  $\Omega_{ij}=x_i\p_j-x_j \p_i,\,  1\le i<j\le 3$.  $Z\rp{0}=id$.  The signature function is defined by
  $$\zeta(Z^k)=\sum_{i=1}^k\zeta(Z_i), \qquad \zeta(\Omega_{ij})=0, \qquad \zeta(\p)=-1.$$
  See Proposition \ref{5.20.1.18}, Proposition \ref{6.17.5.18} and Lemma \ref{6.25.2.18}   for the proofs of (\ref{7.14.6.18})-(\ref{7.14.5.18}) and more improved estimates.

  (\ref{7.14.6.18}) are crucial for treating other terms in (\ref{7.14.7.18}) so as to achieve the boundedness of energy without any loss.  The other two estimates are important for treating the quasilinear problem  without loss. (\ref{7.14.4.18}) is used to for proving the weighted energy estimate. (\ref{7.14.5.18}) is used to give an improved Hardy's inequality (\ref{6.17.2.18}), which is crucial to treat $[\widetilde\Box_{\bg}, Z\rp{n}]\phi$.

 Next  we comment on the influence of the potential $q$ to our approach.
\begin{enumerate}
\item[(1)] The scaling vectorfield  $S=t\p_t+r\p_r$ can not be used as a commuting vectorfield, which is similar to the massive case i.e.  $q>0$ is a fixed constant.
\item[(2)] The asymptotic behavior of the solution is similar to  the massless case i.e. $q=0$. Such set of decay is weaker than the standard decay of the massive  case.
\end{enumerate}
The problem with variable potential takes the essential difficulties from both the massive and massless wave equations.
To treat the potential term, we take the spirit of  the multiplier approach developed in \cite{mMaxwell},  and yet have to make  further improvements since the asymptotic behavior of the solution is much weaker.
We adopt merely $\{\Omega_{ij}, \p\}$ to obtain very good decay property for $\p \phi$,
\begin{equation}\label{8.5.1.18}
|u|^\f12 r |\Lb \phi|+ r^\frac{3}{2}|\sn \phi|+ r^\frac{3}{2} |L \phi|\les \ep^\f12 |u|^{-\frac{\ga_0}{2}}
\end{equation}
with decay of higher order derivatives included in Section \ref{setup}, Section \ref{decay} and Section \ref{quasi}.
If $q\equiv q_0$ where the constant $q_0>0$, we achieve
\begin{equation*}
q_0  r^\frac{5}{2} |\phi|^2\les \ep  u_+^{-\ga_0+\f12}.
\end{equation*}
The sharp decay for $\sn \phi$ is achieved when we commute rotation vectorfields $\Omega_{ij}$ upto the third order.  We can obtain sharp decay for $L \phi$ if we also employ the boost vector field up to the third order, which also improves the decay for $\phi$ to sharp if $q_0>0$ is a constant.\begin{footnote}{It works as well if $q\ge q_0$. }\end{footnote} Such improvements are not necessary for proving the main results.

      For the quasilinear problems, besides the difficulty caused by the semilinear quadratic error terms,  we have to solve the difficulties caused by the metric  $\bg(\phi, \p \phi)$.  In the sequel, we will explain our approach for solving the following issues.
      \begin{enumerate}
      \item [(1)]
        Due to the influence of the metric, how to define  the exterior region of  the spacetime  is actually a fundamental problem if not using the characteristic surfaces.
        \item [(2)] Technically, due to the influence of the metric, we have to modify the energy momentum tensor appropriately.
      \item [(3)] We use a multiplier approach to recover the linear behavior for the nonlinear solution. However it requires stronger decay property on $H(\phi, \p \phi)$ than the decay  for the free wave, in order to obtain the bounded $r$-weighted energy.
        \end{enumerate}
            None of these issues  arises in an intrinsic approach  such as \cite{KN}, which relies on the foliations of characteristic surfaces. They all are shifted to controlling the evolution of the geometry of the characteristic surfaces.   As explained, such intrinsic approach is not suitable for our problem.

  The first issue is linked to the positivity of the energy flux on the boundary $\H_{u_0}$.
           %If  consider the Einstein scalar fields with data sufficiently small on $r\ge R$, one may not be able to carry out the analysis based on the %Minkowskian geometry  or the schwarzschild geometry  (see \cite{Lind1}, \cite{Lindrod1}, \cite{Lindrod2} and \cite{Lind2}). Since such approach 5does not capture the property  of the characteristic of the equation.  An energy method has to run with surfaces along which the energy is %nonnegative.
           Even for equations verifying the standard null condition, one would encounter the same issue. By using a modified energy momentum tensor, we can compute that the energy density along $\H_u$ takes the form of
           \begin{equation}\label{7.14.1.18}
           (\frac{M_0}{r}-H^{\Lb \Lb})(\Lb \phi) ^2+H\p \phi \c \bar\p\phi+|\bar\p \phi|^2 .
           \end{equation}
            ( See the calculation in Lemma \ref{6.10.6.18}.)
   For Einstein equations, with $M_0=0$, the positivity of energy flux can be achieved if the data\begin{footnote}{See Theorem \ref{eins_thm} and Section \ref{Eins} for the set-up and the meaning of the data.}\end{footnote} are sufficiently small, since due to the positive mass theorem, 
   $$\lim_{|x|\rightarrow \infty} rH^{\Lb\Lb}(x, 0)=-\f12 m_0, \quad \quad m_0>0.$$
   In Section \ref{Eins}, we will take advantage of this fact to prove the improved result, Theorem \ref{eins_thm}.

    In general, (\ref{7.14.1.18}) is not coercively positive along the Minkowski cone, i.e.  $M_0=0$.
    To achieve the positive energy flux, in Section \ref{quasi}, we  choose $M_0$ according to the size of the data, so that there exists a small constant $M$ such that
    \begin{equation}\label{8.5.2.18}
    r(\frac{M_0}{r}-H^{\Lb\Lb})>M>0,  \mbox { and } |M_0|\les M.
    \end{equation}
    Other error terms in (\ref{7.14.1.18})  can then be absorbed. This treatment actually needs the smallness of $|r H^{\Lb \Lb }|$. Although one can see from (\ref{7.14.2.18}) that even for the system (\ref{wn}), which  has better structure  than the general case (\ref{eqn_1}), the solution $\phi$ does not have the sharp decay. But we manage to  achieve it in the region $\{u\le u_0\}$. There is a similar issue with the energy density on $\H_\ub$, which can be solved in the same way.

Therefore with a suitable choice of the mass $-M_0$ for  the boundary cone, we can  gain the control of $ M r^{-1}|\Lb \phi|^2 $ in the energy flux along the level set of $u(M_0)$, i.e. (\ref{7.14.3.18}) holds with $M_0$ replaced by the small constant $M>0$.
The choice of $M_0$ depends on the bound for the data,  so does the size of $M$. This allows us to follow the treatment for (\ref{j1}) to control the quadratic nonlinearity.

In \cite{Lind2}, in order to improve the asymptotic behavior  for the solution of  Einstein equations, the asymptotic schwarzschild coordinates and $r_*$  were employed  in the commuting vector field approach.  It was used for better approximating the wave operator,  since $\bg$ itself approaches a schwarzschild metric.  Our  foliation is  chosen to dominate over the influence of $H^{\Lb\Lb}$, thus is always slightly away from the characteristic surfaces of the asymptotic equations (See the definition in \cite{Lindrod2}). The schwarzschild cone foliation is used throughout the paper, even for treating the flat wave operator.  We use the  Minkowskian vector fields and Minkowski metric throughout the paper, so as to take advantage of the difference between the Minkowski geometry and the schwarzschild geometry.

Next, we discuss the other two issues.
Schematically, for both the semilinear and quasilinear cases, the main task is to bound the standard energy and $r$-weighted energy  in terms of initial data. For ease of discussion, we assume $q\equiv 0$ in the sequel. In the semilinear case, in terms of the standard energy momentum tensor $Q_{\a\b}=\p_\a \varphi\c \p_\b \varphi-\f12 \bm_{\a\b} \p^\mu \varphi \p_\mu \varphi$,
the standard energy is defined by $\int_\Sigma Q_{\a\b} X^\b {\mathbf n}^\a d\mu_\Sigma$, where $X=\p_t$ and  ${\mathbf n}^\a$  denotes the surface normal of $\Sigma$.
The $r$-weighted energy is defined by using $X=rL$ with suitable modifications in the energy current. The energy estimates are based on the following calculation
$$\p^\a (Q_{\a\b}X^\b)=\Box_\bm \varphi X \varphi+Q_{\a\b} \p^\a X^\b.$$
For the quasilinear operator, we have to make a modification, otherwise the righthand side contains $\p^2 \varphi$. One may adopt the intrinsic version,
\begin{equation*}
\p_\a \varphi \p_\b \varphi-\f12 \bg_{\a\b} \bg^{\mu\nu} \p_\mu \varphi\p_\nu\varphi
\end{equation*}
and lift or lower the indices by the metric $\bg$.  We construct the  energy momentum  tensor as follows,
\begin{equation*}
\tilde{\sQ}_{\a\b}[\varphi]=\p_\a\varphi\p_\b \varphi-\f12 \bm_{\a\b} \bg^{\rho \sigma} \p_\rho \varphi \p_\sigma \varphi+H_\a^\ga \p_\ga \varphi \p_\b \varphi
\end{equation*}
which is not  symmetric, nevertheless gives nice structures in the energy density under   the Minkowski background.

In the quasilinear case, the form of the energy momentum tensor, the choice of multiplier and the modification to the energy current are all very sensitive for proving the $r$-weighted energy estimates. Typically, bounding $r$-weighted energy requires more decay  than a free wave verifies. See \cite[Section 1 (3)]{Yang2015}, where an additional $r^{-\ve}$ decay is assumed, with $\ve>0$, even for the equations with null condition therein.
Our improvement is  however weaker than this assumption.
Our proof of the inequality for the weighted energy is a very delicate one. Since $\Lb \varphi$ term can not take the weight of $r$, we need to treat terms of  $f(H, \p H)(\Lb \varphi)^2$ carefully. We choose $X=r(L-H^{\Lb\Lb}\Lb),$ which is influenced by considering the asymptotic equation (see \cite{Lind1, Lindrod2, Yang2015}).  In Lemma \ref{4.15.1.18}, it turns out the construction of  energy current in (\ref{4.14.1.18}) leads to a good structure in the error terms.  Undesirable terms, such as
$
\int_{\{u\le u_0\}} r |L H^{\Lb \Lb} (\Lb \varphi)^2| dx dt,
$
are cancelled. \begin{footnote}{Decay in (\ref{8.5.1.18}) for $LH$  is not strong enough to control this term.}\end{footnote}
 We manage to use the estimate of $\|r^\f12\p H(\phi, \p \phi)\|_{L_\ub^2 L^\infty}$ in (\ref{7.14.4.18}) and the fluxes along schwarzschild cones to cope with error terms.

At last we comment on the treatment of the Einstein scalar fields. In comparison with Theorem \ref{7.30.3.18} (or Theorem \ref{thm_quasi}), $H$ converges to a small static solution instead of $0$. The static part slows down the decay properties of $H$. Fortunately for Theorem \ref{thm_quasi}, the derivation of the inequalities of energy and weighted energy relies more on the decay of  $\p H$, which is barely influenced. The framework of Section \ref{quasi} still works through. However, borderline terms appear in the commutator $[\widetilde\Box_\bg, Z\rp{n}]$, since $H$ has less decay in $|u|$. They are proved to be  harmless, when we show the boundedness of energies for $Z\rp{n}(\bh^1, \phi)$ \begin{footnote}{See Theorem \ref{eins_thm} for the definitions of $\bh^1, \phi$.}\end{footnote}with an induction on the signature $\zeta(Z^n)$ from $-n$.

 As future extensions of this work,  we believe the approach can be applied to give the global result for   the quasilinear wave systems with weak form of null conditions, if the small weighted data are prescribed throughout the initial slice. We also believe the result of Theorem \ref{thm_quasi} can be generalized to fluids with nontrivial vorticity. It would be also interesting to ask if there is any  global-in-time interior stability result for the equation (\ref{eqn_1}) with  small compactly supported data.

\subsection{Structure of the paper}In Section \ref{setup}, we give the details of the geometric set-up and introduce the main theorems, which are  Theorems \ref{thm2}, Theorem \ref{thm_quasi} and Theorem \ref{eins_thm}. In  Section \ref{prel}, we introduce the weighted Sobolev inequalities and derive some consequences of bounded standard and $r$-weighted energies, including some sharp $L^p$ type estimates in Lemma \ref{6.25.2.18}. In Section \ref{decay}, under the assumption of bounded energies upto $n$th-order, with $n=2$ or $3$, we derive the full set of decay properties in Proposition \ref{5.20.1.18} and Proposition \ref{6.17.5.18}. In Section \ref{semi}, we consider the semilinear case of (\ref{eqn_1}) and prove Theorem \ref{thm2} and \ref{thm1}. This section gives the main framework of our approach. Schematically, we divide it into three steps. We first derive the energy inequalities. Under the bootstrap  assumption of the smallness of energies up to $n=2$, we then employ the decay results in Section \ref{decay} to analyse the error $(\Box_\bm-q)Z\rp{n}\phi$. The final step is to achieve the boundedness theorem for the energies by substituting the   error estimates into the energy inequalities. In Section \ref{quasi}, we prove Theorem \ref{thm_quasi}. Due to the influence of metric, we need to make (\ref{8.5.2.18}) hold in $\{u\le  u_0\}$ which makes the bootstrap argument   more involved. We also need to obtain higher order energy control for treating  commutators $[\widetilde{\Box}_{\bg}, Z\rp{n}]$. We still run the same three steps as for the semilinear case. The analysis is more delicate in each part. In Section \ref{Eins}, we prove Theorem \ref{eins_thm}. We need to show the metric difference, which is one part of the solution is  convergent to a small static solution, while the scalar field converges to $0$ as $r\rightarrow \infty$.  By a simple reduction, we still solve the problem with data convergent  to $({\mathbf 0},0)$ at the spatial infinity.  However,  the static part in  $H$ has slower decay property. This may change the behavior of the wave operator. In Proposition \ref{7.28.3.18},  we confirm the inequalities for energy and weighted energy still hold under such background metric. We then analyze commutators in Lemma \ref{7.28.2.18}, which contain borderline terms. For the error terms not included  in (\ref{eqn_1}), we treat them in Lemma \ref{7.8.15.18}. At last we combine the  error estimates in Section \ref{quasi} to complete the proof.

\textbf{Acknowledgments.} The author is partially supported by RCF fund from the University of Oxford. The author would like to thank Pin Yu who mentioned the Klein-Gordon equation with small mass to the author around 2015, and wishes to thank Shiwu Yang for friendly and enlightening conversations.

\section{Set-up and main results}\label{setup}
We first construct the foliations that will play a very crucial role in improving the asymptotic behavior in this paper. We also need the construction to determine the stability region in the main results.

Let $|M_0|\ll 1$ be a constant. We set
$$h=M_0/r, \qquad  L'=L-h \Lb, \qquad \Lb'=\Lb-hL, \quad \forall\,  r\ge 1 .$$
%We will check the  $L'$ is tangent to the outing going null geodesic  of the metric (\ref{metric})  initiated at  $(0, r, \omega)$, $\omega\in{\mathbb S}^2$, and $\Lb'$ is tangent to the incoming null geodesic from $(0, r,\omega)$, $\omega\in {\mathbb S}^2$.
We first give the optical function of the following metric
\begin{equation}\label{metric}
\mathfrak{g}=-\frac{r+M_0}{r-M_0}dt^2+\frac{r-M_0}{r+M_0} dr^2+ r^2 (d\theta^2+\sin ^2\theta d\phi^2).
\end{equation}
Note this metric has the same  lightcones initiated from $\{|x|=r, t=0\}$ as  the schwarzschild metric of the mass $-M_0$
\begin{equation*}
\bg_s=-\frac{r+M_0}{r-M_0}dt^2+\frac{r-M_0}{r+M_0} dr^2+ (r-M_0)^2 (d\theta^2+\sin^2\theta d\phi^2).
\end{equation*}
Suppose $u$ is an optical function of the metric $\mathfrak{g}$.
\begin{equation*}
\p_t u =\pm\frac{r+M_0}{r-M_0} \p_r u.
\end{equation*}
Let $t=\ga(r)$ be the null geodesic initiated from the sphere of radius $R_1$ at $t=0$. $u(\ga(r), r)=C$.
\begin{equation*}
\p_t u \dot{\ga}(r)=\p_r u.
\end{equation*}
Thus
\begin{equation*}
\dot{\ga}{}^\pm(r)=\pm\frac{r-M_0}{r+M_0}.
\end{equation*}
Then $\ga^\pm(r)=\ga^\pm(R_1)\pm (r-R_1-2M_0\ln \frac{r+M_0}{R_1+M_0})$. For convenience, we can regard $R_1=1$. Thus by setting
\begin{equation}\label{5.2.3.18}
r_*(M_0,r)=r-2M_0 \ln \frac{r+M_0}{1+M_0}
\end{equation}
we have $\ga^+(r)=\ga^+(1)+ r_*(r)-1$. And we can  regard the outgoing lightcone of the metric (\ref{metric}), initiated from $\{r=R_1\}$  as a ruled surface generated by $t=\ga^+(r)$,  $\forall\, \omega\in {\mathbb S}^2$. Identically, it  also is the level set of $t-\ga^+(r)=0.$  %It is straightforward to check that $L+h\Lb $ is orthogonal to $L'$ and hence the lightcone in terms of Minkowski metric.  We can perform the similar construction for the integral curves of the other vector field $\Lb'$.
We can set up the foliation of schwarzschild lightcones in $({\mathbb R}^{3+1}, \bm)$ with the help of the pair of  optical functions of (\ref{metric})
 $$ u=t-r_*,  \qquad \ub=t+r_*.$$
 Similarly, we can check that $\{\ub=C\}$ is the incoming null cone of $\mathfrak{g}$, which is a smooth ruled surface by incoming null geodesics.
Clearly, $u(0,1)=-1$ and $-u(0,r)\approx r$ when $r\ge 2$.\begin{footnote}{We assume $r\ge 2$ throughout the paper if not stated otherwise.}\end{footnote}
It is direct to compute the generators of the outgoing and incoming null geodesics
\begin{equation}\label{7.15.1.18}
-\mathfrak{g}^{\a\b}\p_\a u\p_\b=(1+h)^{-1} L', \qquad  -\mathfrak{g}^{\a\b} \p_\a \ub \p_\b= (1+h)^{-1} \Lb'.
\end{equation}
which are tangent to $\H_u$ and $\Hb_\ub$ respectively, and coincide with our construction.
%\Thus $L'$ is tangent to $\H_u$ and $\Lb'$ is tangent to $\Hb_\ub$.
%On each $u=C\le u_0$,   along the null geodesic curve $X^+(t,\omega)$ of the metric (\ref{metric}), $\p_t= (1-h)^{-1} L'$; and similarly, along the null %geodesic curve $X^-(t,\omega), $ and $\p_t= (1-h)^{-1} \Lb'$.
We denote by $\N$, $\Nb$  the surface normals of $\H_u$ and $\Hb_\ub$ in terms of the Minkowski metric, which are normalized  in terms of  $\l \N, \p_t\r=-1$ and $\l \Nb, \p_t\r=-1$.  In view of (\ref{7.15.1.18}), it is easy to compute that
\begin{equation}\label{7.11.3.18}
\N=(1+h)^{-1} (L+h\Lb), \qquad \Nb=(1+h)^{-1}(\Lb+h L).
\end{equation}
Let
\begin{equation}\label{7.17.2.18}
 u_0(M_0)=u_{M_0}(0,R)=-r_*(M_0,R)
 \end{equation}
 with the fixed constant $R\ge 2$.  In case there occurs no confusion, we may use $u_0$ as a shorthand notation. We now consider the region in $({\mathbb R}^{3+1}, \bm)$ where $u\le u_0$.  By setting  $1+\bb^{-1}=\Lb (u)$, we can easily calculate the lapse function
 \begin{equation}\label{6.28.2.18}
\bb^{-1} =\frac{1-h}{1+h}.
 \end{equation}
%$\frac{9}{11}< \bb<\frac{11}{9} $ if $|h|<\frac{1}{10}$.   %
Instead of using $t$ to parameterize $\H_u$ and $\Hb_\ub$, we will use $\ub$ and $u$.
%We also note that
%$$
%[L', \Lb]= \Lb h \Lb= M r^{-2}\Lb.
%$$
It is straightforward to compute
 \begin{eqnarray*}
Lu =\Lb \ub=\frac{2M_0}{r+M_0}=1-\bb^{-1}&&L \ub =\Lb u = 2-\frac{2M_0}{r+M_0}=1+\bb^{-1}.
\end{eqnarray*}
Thus we can obtain for $ u_1\le u_0$, $-\ub_1\le u_0$ and $\omega\in{\mathbb S}^2$,
\begin{equation}\label{4.30.3.18}
\frac{d}{d\ub}= \frac{r}{2(r-M_0)} L' \mbox{ on } \H_{u_1}, \quad \frac{d}{du}= \frac{r}{2(r-M_0)} \Lb' \mbox{ on } \Hb_{\ub_1}.
\end{equation}
This implies
\begin{equation}\label{7.19.3.18}
\p_\ub  r=\f12 \bb \mbox{ on } \H_{u_1}\qquad \p_u r=-\f12 \bb \mbox{ on } \Hb_{\ub_1}.
\end{equation}
 By using $u, \ub$ level sets to foliate the spacetime, the standard area element is
$$d xdt= (2{r_*}'(r))^{-1} r^2 du d\ub d\omega= \f12 \bb r^2 du d\ub d\omega,$$
where $d\omega$ denotes the standard surface measure on the unit sphere ${\mathbb S}^2$.
Thus in view of (\ref{6.28.2.18}), the  area elements of $\H_u$ and $\Hb_\ub$ are
\begin{equation}\label{7.4.3.18}
d\mu_\H=\f12\bb r^2 d\ub d\omega, \qquad d\mu_{\Hb}=\f12 \bb r^2 du d\omega.
\end{equation}
Let $S_{u,\ub}=\H_u\cap \H_\ub$. where $\ub\ge -u$. For smooth functions $f$,   $\int_{S_{u,\ub}} f = \int_{S_{u,\ub}} r^2 f d\omega$.
\begin{footnote}{We may hide the standard area elements for the integral on the corresponding hypersurfaces or spheres, and hide the area element $dx dt$ if the integral is in a domain of the spacetime.}\end{footnote}   Clearly, $\bb$ is an increasing function of $h$,  $\bb=1+O(h)$. Thus
the area elements in (\ref{7.4.3.18})  are comparable to $\f12 r^2 d\ub d\omega$ and $\f12 r^2 du d\omega$.
Note that $\p_r u=-\bb^{-1}=-\p_r \ub$ on $\Sigma_t$, the area element on $\Sigma_t$ is $\bb r^2 du d\omega$. Thus on $\Sigma_t$, $ r^2 d\ub d\omega \approx dx\approx r^2 du d\omega$.

By the definition of $u$,  $ 1-\frac{t-u}{r}=M_0 O(\frac{\ln r}{r})$.
 Thus  we can derive
  \begin{equation}\label{6.28.3.18}
  r(S_{u, -u})=-u(1+o(M_0)), \qquad r(S_{-\ub,\ub})=\ub(1+o(M_0)).
  \end{equation}
  where the second identity  is an application of the first one, based on the fact that $\H_{-\ub}$ is  initiated from $S_{-\ub,\ub}$.

  We also have the basic fact that $r(u,\ub,\omega)$ is increasing about $\ub$ for fixed $(u, \omega)$ and decreases as $u$ increases if $(\ub, \omega)$ is fixed. Note also that  $r_*(r)\le \ub \le 2 r_*(r)-r_*(R)$. These two facts together with (\ref{6.28.3.18}) imply
  \begin{equation}\label{7.3.1.18}
  -u \les r_{\min}(\H_u), \qquad \ub \approx r\approx r_*(r).
  \end{equation}
 (\ref{6.28.3.18}), (\ref{7.3.1.18}) and the fact that $\ub\ge -u$   will be frequently used in our analysis, probably  without being  mentioned.

We  use $\H_{u}^{\ub}$ and $\Hb_{\ub}^{u}$ to denote the truncated level sets of $u$ and $\ub$ respectively.
\begin{equation}\label{7.30.1.18}
\begin{split}
&\H_{u}^{\ub}:=\{(t, x):  -u\le  \ub'\leq \ub\},\quad \Hb_{\ub}^{u}:=\{(t, x): -\ub \le u'\le u \},\\
&\D_u^\ub=\{(t,x):  -\ub\le-\ub'\le  u'\le u \},
\end{split}
\end{equation}
where $-\ub\le u\le u_0 $.

%We will fix $h>0$ in Section \ref{semi} and \ref{quasi}, and emphasize that in such situation, the foliation by $\{\H_u, u\le u_0(R)\}$ is spacelike. We gain stronger control on $\Lb$ derivative of a solution of wave equation. The cost is that the foliation can not cover the whole region $\{t\ge r+R, t\ge 0\}$ by a small logarithmic loss. Such loss is expected and is not improvable for equations without some form of  null condition.
%We remark that for quasilinear wave equations, particularly, for Einstein equations,  $h$ can be chosen to be  negative, which then gives us a foliation of schwartzchild cones for the metric with positive mass. This set of hypersurfaces are all time-like.
We  denote
$$\Sigma_0^{u_1, \ub_1}=\{-\ub_1\le  u \le u_1,  t=0\}=\{-u_1\le \ub\le \ub_1, t=0\},$$  and  may drop $\ub_1$ when  $\ub_1=\infty$.

We denote by  $E[f](\Sigma)$ and $\W_1[f](\Sigma)$  the   energy (flux) and weighted energy (flux) of the smooth function $f$ on the hypersurface $\Sigma$.   For the hypersurfaces  of interest to us,
\begin{equation}\label{3.19.1}
\begin{split}
 &E[f](\Sigma_0^{u, \ub})=\int_{\Sigma_0^{u, \ub}}|\p f |^2+q f^2 dx,  \\
  &E[f](\H_u^\ub)=\int_{\H_u^\ub}\f12 r^2\big(|L f|^2+\frac{M}{r} |\Lb f|^2+|\sn f|^2+q f^2 \big) d\ub' d\omega ,\\
 &E[f](\Hb_\ub^u)=\int_{\Hb_\ub^u}\f12r^2 (|\Lb f|^2+\frac{M}{r} |L f|^2+|\sn f|^2+q f^2) du' d\omega,\\
 &\W_1[f](\H_u^\ub)=\int_{\H_u^\ub} \f12 \big(r(L(r f))^2+r^3 \frac{M}{r}(|\sn f|^2+q f^2)\big)  d\ub' d\omega,\\
 &\W_1[f](\Hb_\ub^u)=\int_{\Hb_\ub^u}\f12  r^3 \big(|\sn f|^2+\frac{M}{r}|L f|^2 +q f^2\big) du' d\omega,\\
 & \W_1[f](\D_u^\ub)=\int_{\D_u^\ub} r^{-2}|L(r f)|^2+|\sn f|^2 dx dt,\\
 &\W_1[f](\Sigma_0^{u,\ub})=\int_{\Sigma_0^{u,\ub}} r^{-1} (L(r f))^2 + r(|\sn f|^2+q f^2) dx,
\end{split}
\end{equation}
where $M>0$ is a fixed constant to be specified.
Throughout the paper, $M$ and $h$ are chosen such that
\begin{equation}\label{7.1.1.18}
r|h|\le  M.
\end{equation}

Throughout the paper,  we set $u_+=-u$ and let  $Z\rp{n}$ be the $n$-th order differential operator $Z_1\cdots Z_n$, with  each $Z_m\in \{\p, \Omega_{ij}, 1\le i<j\le 3\}$.
  We are ready to state the main results of this paper.
\begin{theorem}\label{thm2}
Consider
\begin{equation}\label{3.18.1.18}
 \Box_\bm \phi=\N^{\a\b}(\phi) \p_\a \phi  \p_\b \phi+q(x)\phi
 \end{equation}
 with $0\le q \le 1$  satisfying (\ref{potential}) for $n=2$.
Let $1<\ga_0<2$  and $C>1$ be fixed constants. There exist  a small constant $0< \delta_1\ll\frac{1}{100}$ and a  universal constant $R(\ga_0, C)\ge 2$ such that  for  any  $0<M\le  \delta_1^\f12$,  if the initial data set $\phi[0]$ on $\{r\ge R\}$ with $R\ge R(\ga_0, C)$  verifies  $\E_{2,\ga_0, R}\le C M^2$,
 there exists a unique solution  in the entire region $\{u(M)\le u_0(M)\}$ for all $t>0$.  Here the function  $u(M)= t-r_*(M,r)$, with $r_*(M,r)$ defined in (\ref{5.2.3.18}), and $u_0(M)$ is defined in (\ref{7.17.2.18}). There hold for any    $-\ub\le u\le u_0(M)$  that
\begin{equation}\label{6.28.5.18}
\begin{split}
&E[Z\rp{n} \phi](\H_u^\ub)+E[Z\rp{n }\phi](\Hb_\ub^u)\les \E_{2,\ga_0, R}u_+^{-\ga_0+2\zeta(Z^n)},\\
&\W_1[Z\rp{n} \phi](\H_u^\ub)+\W_1[Z\rp{n} \phi](\Hb_\ub^u)+\W_1[Z\rp{n} \phi](\D_u^\ub)\les \E_{2,\ga_0, R}u_+^{1-\ga_0+2\zeta(Z^n)},
\end{split}
\end{equation}
where $n\le 2$,  $u$ and $\ub$ are shorthand notations for $u(M)$ and $\ub(M)$, $Z\in \{\Omega_{ij}, \p\}$.  There also hold the pointwise estimates
\begin{equation*}
r^3 |\sn \phi|^2+ r^2 u_+|\Lb  \phi|^2+r^3|L \phi|^2\les \E_{2, \ga_0, R} u_+^{-\ga_0}.
\end{equation*}

\end{theorem}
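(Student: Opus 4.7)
The plan is to establish (\ref{6.28.5.18}) by a continuity/bootstrap argument on regions of the form $\{u_1 \le u \le u_0(M)\}$, letting $u_1 \to -\infty$; the pointwise bounds then fall out of the decay machinery of Section \ref{decay}. First I would postulate (\ref{6.28.5.18}) on such a region with the right-hand sides multiplied by a large universal bootstrap constant $C'$, the existence of a nontrivial region of validity being guaranteed by local well-posedness and smallness of the data.

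Next, for each string $Z\rp{n}$ with $Z\in\{\Omega_{ij},\p\}$ and $n\le 2$, I would derive the two energy identities by integrating $\nab^\a(Q_{\a\b}[Z\rp{n}\phi]X^\b)$ on $\D_u^\ub$, taking $X=\p_t$ for the standard energy and $X=rL$ together with a modification absorbing the $r^{-1}L(rf)^2$ density for the $r$-weighted energy. The frame decompositions in Section \ref{setup} yield directly that the flux on $\H_u$ carries the weight $\frac{M}{r}|\Lb Z\rp{n}\phi|^2$ exactly as in (\ref{3.19.1}) --- this is the whole point of the schwarzschild-cone foliation and is not available on Minkowski light cones. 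Since $\Omega_{ij}$ and $\p$ both commute with $\Box_\bm$, the only commutator appearing in $(\Box_\bm-q)Z\rp{n}\phi$ is the $q$-commutator, which is handled using the decay of $\p\rp{\le n}q$ from (\ref{potential}) together with a Hardy inequality for the zeroth-order term.

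The delicate step is bounding the trilinear spacetime error $\int|B_1||B_2||B_3|$ coming from $Z\rp{n}(\N^{\a\b}(\phi)\p_\a\phi\p_\b\phi)\cdot XZ\rp{n}\phi$ and its weighted analogue. Under the bootstrap, Section \ref{decay} supplies the pointwise decay (of the type (\ref{7.13.2.18}) with $r-t$ replaced by $u_+$) for the lower-order factors $\p Z\rp{l}\phi$, $l\le n-2$, together with the integrated estimates (\ref{7.14.6.18})--(\ref{7.14.5.18}), whose gain of $M^{-1/2}$ on every $\Lb$-factor is what makes the whole scheme work. Following the author's principle $\int|B_1||B_2||B_3|\les\|B_1\|_G\|B_2\|_S\|B_3\|_S$, I would place the lowest-order factor in the pointwise $G$-norm (size $\ep^{1/2}$) and distribute the other two into the integrated $S$-norms, obtaining for the worst term a bound of the shape
\begin{equation*}
M^{-1}\ep^{1/2}\int_{u}^{u_0}|u'|^{-\ga_0/2-1/2}E[Z\rp{n}\phi](\H_{u'}^{\ub})\,du',
\end{equation*}
exactly as in (\ref{7.30.2.18}). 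The main obstacle is precisely the quadratic piece $\p\rp{n}\phi\cdot\Lb\phi\cdot\Lb\phi$, where the null condition fails and the free-wave $r$-decay of $\Lb\phi$ is intrinsically unimprovable; what saves it is the $M/r$-weight in the flux, which absorbs both $\Lb$-factors through the $S$-norm, while the remaining low-order $\p\phi$ factor supplies the pointwise $u_+^{-\ga_0/2-1/2}$ decay.

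Finally, since $\ga_0>1$ makes $|u'|^{-\ga_0/2-1/2}$ integrable on $(-\infty,u_0]$, Gronwall's inequality closes the loop with a final constant of the form $c_0 R^{1-\ga_0}$, where $c_0$ is universal and I have used $\E_{2,\ga_0,R}\le CM^2$ to absorb the $M^{-1}\ep^{1/2}$ factor into a $C^{1/2}$. Choosing $R\ge R(\ga_0,C)$ so that $c_0 C^{1/2} R^{1-\ga_0}\le (2C')^{-1}$ improves the bootstrap constant by a factor $1/2$, closing the continuity argument and yielding (\ref{6.28.5.18}) globally in $u$. The pointwise estimates are then immediate consequences of Proposition \ref{5.20.1.18}.
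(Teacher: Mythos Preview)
Your proposal is essentially correct and follows the same strategy as the paper: bootstrap the energies in (\ref{6.28.5.18}), derive the energy and $r$-weighted energy inequalities with multipliers $\p_t$ and $rL$, exploit the crucial $\frac{M}{r}|\Lb\varphi|^2$ flux term on $\H_u$, control $Z\rp{n}(\N\p\phi\p\phi)$ via the $G$--$S$--$S$ splitting using Section~\ref{decay}, and close using $\ga_0>1$ and $R$ large.

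One correction: the continuity parameter should be $\ub_*\to\infty$, not $u_1\to-\infty$.  The region $\{u_1\le u\le u_0\}$ is unbounded in time for every $u_1$, so local well-posedness does not give you a nontrivial starting slab there; the paper instead assumes the bootstrap on $\{-\ub_*\le -\ub\le u\le u_0\}$ (bounded $\ub$, hence bounded $t$) and extends $\ub_*$.  Also, the paper does not literally apply Gronwall at the end: it uses the Cauchy--Schwarz-with-small-parameter absorption of Proposition~\ref{5.16.5.18}, which after squaring produces the smallness factor $M^{-2}\dn^2 u_+^{1-\ga_0}$ in Proposition~\ref{3.25.3.18} and hence the condition $(\frac{R}{2})^{1-\ga_0}<\frac{7}{16C_3^2 C}$ in (\ref{5.28.1.18}); your direct Gronwall would yield the weaker exponent $R^{(1-\ga_0)/2}$, which still works but is not what the paper records.
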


\begin{theorem}\label{thm1}
Consider (\ref{3.18.1.18}) with $0\le q\le 1$ satisfying  (\ref{potential}) for $n=2$.
Let $R\ge 2$ and $1<\ga_0<2$ be fixed. There exist small constants $0< \delta_1\ll\frac{1}{100}$, $\delta_0>0$ such that for any $0<M\le \delta^\f12_1$ ,  if the initial data set verifies
\begin{equation}\label{6.28.6.18}
\E_{2,\ga_0, R}\le \delta_0 M^2,
\end{equation}
 there exists a unique solution in  the entire region of  $\{u(M)\le u_0(M)\}$ for all $t>0$.
There hold the same set of energy estimates as in (\ref{6.28.5.18}) and the pointwise estimates for any  $-\ub\le u\le u_0(M)$.
\end{theorem}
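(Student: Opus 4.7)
The plan is to run a continuity/bootstrap argument in the region $\{u(M)\le u_0(M)\}$ with both the standard energies $E[Z\rp{n}\phi]$ on $\H_u^\ub$, $\Hb_\ub^u$ and the $r$-weighted energies $\W_1[Z\rp{n}\phi]$ as target quantities, for $n\le 2$ and $Z\in\{\Omega_{ij},\p\}$. Concretely, I assume a bootstrap
\begin{equation*}
E[Z\rp{n}\phi](\H_u^\ub)+E[Z\rp{n}\phi](\Hb_\ub^u)\le A\,\E_{2,\ga_0,R}\,u_+^{-\ga_0+2\zeta(Z^n)},
\end{equation*}
with the analogous bound for $\W_1[Z\rp{n}\phi]$ on $\H_u^\ub$, $\Hb_\ub^u$ and $\D_u^\ub$ (with the $u_+^{1-\ga_0+2\zeta(Z^n)}$ weight), where $A\gg 1$ is a fixed large universal constant. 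Standard local well-posedness and the finite speed of propagation ensure the solution exists up to some first time at which the bootstrap is saturated, and the goal is to improve it to a bound with $A/2$.

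First, I would derive the two energy identities by the multiplier method. For the standard energy I use $X=\p_t$ against the Minkowskian energy--momentum tensor $Q_{\a\b}[f]=\p_\a f\p_\b f-\f12\bm_{\a\b}\p^\mu f\p_\mu f+\f12 qf^2\bm_{\a\b}$; thanks to (\ref{7.11.3.18}) and (\ref{7.1.1.18}), the flux through $\H_u$ is coercive and carries the decisive lower bound $\tfrac{M}{r}|\Lb f|^2$ that is visible in the definition of $E[f](\H_u^\ub)$ in (\ref{3.19.1}). For the $r$-weighted energy I use $X=rL$ with the standard angular/Morawetz modification to the current, which produces the interior spacetime control $\int_{\D_u^\ub}(r^{-2}|L(rf)|^2+|\sn f|^2)$. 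Applying these identities to $Z\rp{n}\phi$ for $n\le 2$ reduces everything to controlling the right-hand side $(\Box_\bm-q)Z\rp{n}\phi=[\Box_\bm,Z\rp{n}]\phi+Z\rp{n}(\N^{\a\b}(\phi)\p_\a\phi\p_\b\phi)+Z\rp{n}(q\phi)$.

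Second, I would feed the bootstrap into Proposition \ref{5.20.1.18}, Proposition \ref{6.17.5.18} and Lemma \ref{6.25.2.18} to get the full hierarchy of pointwise and integrated decay estimates; in particular the sharp pointwise bounds announced in the theorem and the crucial improved $L^2_\ub L^\infty$/$L^2_\ub L^\infty_u L^4_\omega$ estimates of the form (\ref{7.14.6.18})--(\ref{7.14.5.18}), with $\ep$ replaced by $A\E_{2,\ga_0,R}$. With these in hand the nonlinear error integral over $\D_u^\ub$ is attacked by the guiding principle
\begin{equation*}
\int_{\D_u^\ub}|B_1||B_2||B_3|\,dx\,dt\les \|B_1\|_G\|B_2\|_S\|B_3\|_S,
\end{equation*}
where one factor is bounded by its linear pointwise decay $\|\cdot\|_G$ (size $\E_{2,\ga_0,R}^{1/2}$) and the other two by the improved schwarzschild-cone estimates $\|\cdot\|_S$ (size $\E_{2,\ga_0,R}^{1/2}M^{-1/2}$). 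For the worst contribution, in which $\N^{\a\b}$ pairs two $\Lb$ derivatives, one factor of $\tfrac{M}{r}|\Lb Z\rp{n}\phi|^2$ is supplied by the $\H_u$ flux itself; combined with the pointwise decay $r|\Lb\phi|\les \E_{2,\ga_0,R}^{1/2}u_+^{-(\ga_0+1)/2}$, this reduces to a Gr\"onwall loop of the form (\ref{7.30.2.18}), which converges because $\ga_0>1$. The potential terms $Z\rp{n}(q\phi)$ are handled by (\ref{potential}), which gives an $r^{-2-\eta}$ weight that trivially integrates against the existing fluxes, and the commutator $[\Box_\bm,Z\rp{n}]$ produces only lower order terms because each $Z\in\{\Omega_{ij},\p\}$ commutes with $\Box_\bm$.

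Finally, collecting all the nonlinear and potential error bounds into the energy and weighted-energy identities yields inequalities of the schematic form
\begin{equation*}
E[Z\rp{n}\phi](\H_u^\ub)+E[Z\rp{n}\phi](\Hb_\ub^u)\le \E_{2,\ga_0,R}\,u_+^{-\ga_0+2\zeta(Z^n)}\Bigl(c_0+c_0\,A^{3/2}\,\delta_0^{1/2}\,R^{1-\ga_0}\Bigr)
\end{equation*}
(and the analogous weighted version), where $c_0$ is a universal constant. Choosing $A=4c_0$ and then $\delta_0$ small enough that $c_0A^{3/2}\delta_0^{1/2}R^{1-\ga_0}\le \f12$ for the fixed $R\ge 2$ (this is where the assumption $\E_{2,\ga_0,R}\le \delta_0 M^2$ with a single small $\delta_0=\delta_0(\ga_0,R)$ is used) improves the bootstrap and hence extends the solution to the entire region $\{u(M)\le u_0(M)\}$; the stated pointwise decay then follows a posteriori from Proposition \ref{5.20.1.18}. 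The main obstacle is the $(\Lb\phi)^2$ term, which violates any form of null condition and is the reason the classical commuting-vector-field approach produces only a $t^{C\ep^{1/2}}$ growth; its control hinges entirely on the joint use of the $\tfrac{M}{r}|\Lb\cdot|^2$ coercivity along $\H_u$ and the improved $L^2_\ub$ integrated estimates (\ref{7.14.6.18})--(\ref{7.14.4.18}), whose combination is the novelty that makes the proof no-loss and drives the choice of the schwarzschild cone foliation.
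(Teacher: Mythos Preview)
Your proposal is correct and follows essentially the same route as the paper: Theorem~\ref{thm1} is obtained in the paper as an immediate by-product of the bootstrap argument for Theorem~\ref{thm2}, using the energy inequality of Proposition~\ref{5.16.5.18}, the weighted energy inequality of Proposition~\ref{6.12.2.18}, the decay hierarchy of Propositions~\ref{5.20.1.18} and~\ref{6.17.5.18}, and the error estimates of Proposition~\ref{6.2.5.18} and Lemma~\ref{5.25.2.18}, closed exactly by choosing the bootstrap constant first and then $\delta_0$ small.

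One minor accounting point: in the paper the nonlinear error enters as $M^{-1}\|r^{1/2}\sF^\sharp\|_{L^2(\D_{u_1}^{\ub_1})}^2$ after Cauchy--Schwarz and absorption of the unknown energy (Proposition~\ref{5.16.5.18}), and with $\|r^{1/2}\str{n}\F\|_{L^2}\les \dn M^{-1/2}{u_1}_+^{-\ga_0+\zeta(Z^n)}$ from Proposition~\ref{6.2.5.18} this yields a normalized error of order $M^{-2}\dn^2{u_1}_+^{1-\ga_0}\le A^2\delta_0 R^{1-\ga_0}$ rather than your $A^{3/2}\delta_0^{1/2}R^{1-\ga_0}$. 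The discrepancy is harmless for the closing argument, but it reflects that the third factor in your trilinear heuristic is the unknown $\p Z\rp{n}\phi$ and is absorbed, not estimated by an $S$-norm.
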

\begin{remark}
 Let $S_M=\{u(M)\le -r_*(R, M)\}$, which is the exterior region to the schwarzschild outgoing cone initiated from $\{r=R\}$ including the boundary.  Clearly $S_{M_1}\subset \S_{M_2}$ if $M_1> M_2$.
Note that as $M\rightarrow 0$, $u(M)\rightarrow t-r$ and $S_M$ approaches the entire open exterior of $\{r-t=R\}$.  Then Theorem \ref{thm2} indicates that, for any $0<M\le \delta_1^\f12$ ,
%and the data with $\E_{2,\ga_0}\le CM^2$ with $C>0$ independent of $M$, the solution of the equation  is well-posed in  $S_M$.
the stability result can always holds in $S_M$ for  the set of non-compactly supported data  with the norm (\ref{3.21.4.18}) bounded by $M^2$, provided that  $R\ge R(\ga_0)$.   See Figure \ref{fig2}. \end{remark}

\begin{remark}
 Theorem \ref{thm1} is a consequence of the proof of Theorem \ref{thm2} under  a stronger assumption on data. One can refine it by assuming part of the energy norm of data verifies (\ref{6.28.6.18}).
\end{remark}

%For the equation system satisfying  null conditions, we have the standard exterior stability result. For simplicity, we state it for a single equation,
%\begin{theorem}\label{5.7.1.18}
%For the equation (\ref{4.5.1.18}) with the quadratic nonlinearity satisfying null conditions,
% there exists $\ep_0>0$, for  $0<\ep\le \ep_0$,  if $\E_{n,\ga_0}\le \ep$,   there exists a unique solution in the region $r-t\ge R$, which  satisfies
% \begin{equation*}
%E[Z^n \phi]+u_+^{-1} \W_1[Z^n \phi]\les \E_{n,\ga_0}u_+^{-\ga_0+2\zeta(Z\rp{n})}
%\end{equation*}
%where $n\ge 2$, and $Z\in \{\Omega_{ij}, \p\}$.
%\end{theorem}
\begin{theorem}\label{thm_quasi}
Consider (\ref{eqn_1})  which verifies (\ref{potential}) for $n=3$.  There exist   a universal constant $C\ge 1$,   a small constant  $0< \delta_1<\frac{1}{100}$ and a constant  $R(\ga_0,C)\ge 2$, such that, if the initial data set $\phi[0]$ satisfies that
%\begin{equation*}
%\sup_{\Sigma_0\cap\{r\ge R\}}|rH^{\Lb\Lb}, r H^{LL}|\le M_0,
%\end{equation*}
\begin{equation}\label{5.2.4.18}
\begin{split}
& \E_{3,\ga_0, R} \le \delta_1, \quad  \mbox{ with } R\ge R(\ga_0, C)
% &  (r+1)^{\ga_0}\{E[\Omega_{ij}\rp{\le 1}\phi](\Sigma_0\cap \{r\ge R\})+r^2 E[\p \Omega_{ij}\rp{\le 1}\phi]\} (\Sigma_0\cap\{r\ge R\})\le \delta_0 M_0^2
   \end{split}
\end{equation}
    there  exists a unique solution  in the entire region   $\{ u(M)\le u_0(M)\}$ with  $M= C\delta_1^\f12$. The solution verifies the following energy estimates for $-\ub \le u\le u_0(M),$
  \begin{equation*}
\begin{split}
&E[Z\rp{n} \phi](\H_u^\ub)+E[Z\rp{n} \phi](\Hb_\ub^u)\les \E_{3,\ga_0, R}u_+^{-\ga_0+2\zeta(Z^n)},\\
&\W_1[Z\rp{n} \phi](\H_u^\ub)+\W_1[Z\rp{n} \phi](\Hb_\ub^u)+\W_1[Z\rp{n }\phi](\D_u^\ub)\les \E_{3,\ga_0, R}u_+^{1-\ga_0+2\zeta(Z^n)},
\end{split}
\end{equation*}
where $n\le 3$, $Z\in \{\Omega_{ij}, \p\}$ and  verifies the decay estimates
\begin{equation*}
r^3 |\sn Z\rp{l}\phi|^2+ r^2 u_+|\Lb Z\rp{l} \phi|^2+r^3|LZ\rp{l} \phi|^2\les \E_{3, \ga_0} u_+^{-\ga_0+2\zeta(Z^l)}, l\le 1.
\end{equation*}
\end{theorem}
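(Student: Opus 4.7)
The plan is to run a continuity/bootstrap argument in the upper limit $\ub_1$ on the bundle of inequalities
\[
E[Z\rp{n}\phi](\H_{u_1}^{\ub_1}) + E[Z\rp{n}\phi](\Hb_{\ub_1}^{u_1}) \le A\,\E_{3,\ga_0,R}\,u_+^{-\ga_0+2\zeta(Z^n)}, \qquad n\le 3,
\]
together with the analogous weighted-energy bounds from (\ref{3.19.1}), where $A$ is a large universal constant to be fixed. The mass parameter $M=C\delta_1^{1/2}$ is selected so that, under the bootstrap, $|rH^{\Lb\Lb}(\phi,\p\phi)|\le M/2$ on the schwarzschild cone $\H_{u_0(M)}$; this makes the coercivity condition (\ref{8.5.2.18}) self-consistent, guarantees the positivity of the fluxes in (\ref{3.19.1}), and fixes the stability region $\{u(M)\le u_0(M)\}$. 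Hyperbolicity of $\bg(\phi,\p\phi)$ on this region follows from the same smallness.

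The energy identities I would produce by contracting the asymmetric tensor $\widetilde{\sQ}_{\a\b}[\varphi]$ against the multipliers $X=\p_t$ (standard energy) and $X=r(L-H^{\Lb\Lb}\Lb)$ ($r$-weighted energy), together with an energy-current modification specifically tuned so that the otherwise uncontrollable bulk contribution $\int r|LH^{\Lb\Lb}|(\Lb\varphi)^2$ cancels against a cross term, in the spirit of Lemma \ref{4.15.1.18} applied with the current (\ref{4.14.1.18}). Commuting $Z\rp{n}=Z_1\cdots Z_n$, $Z_i\in\{\Omega_{ij},\p\}$, through $\widetilde\Box_\bg$ produces the inhomogeneity $Z\rp{n}(\N\p\phi\p\phi+q\phi)+[\widetilde\Box_\bg,Z\rp{n}]\phi$, and the problem reduces to bounding the associated spacetime triple-product integrals in terms of the bootstrap right-hand side with a prefactor to be absorbed.

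Under the bootstrap, Proposition \ref{5.20.1.18} and Proposition \ref{6.17.5.18} yield the pointwise decay (\ref{8.5.1.18}) for $\p Z\rp{l}\phi$ with $l\le 1$, while Lemma \ref{6.25.2.18} supplies the integrated estimates (\ref{7.14.6.18})--(\ref{7.14.5.18}) of size $\delta_1^{1/2}M^{-1/2}$. Each trilinear error $|B_1||B_2||B_3|$ is split according to the $G/S/S$ hierarchy described in the introduction: the factor carrying the weakest decay (typically a full-order derivative or a $\Lb$-component) is measured in the standard flux/pointwise norm of size $\delta_1^{1/2}$, while the remaining two factors absorb the $r$-weight through the stronger $L_{\ub}^2 L^\infty$ and $L_{\ub}^2L_u^\infty L_\omega^4$ norms. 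After H\"older in $(\ub,u,\omega)$ and integration in $u$, every error is bounded by a multiple of the right-hand side of the bootstrap with a gain $\les c_0 R^{1-\ga_0}$; choosing $R(\ga_0,C)$ so that this gain beats $A^{-1}$ permits Gr\"onwall to close the argument and recovers the pointwise estimates by Sobolev embedding.

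The principal obstacle is the top-order commutator $[\widetilde\Box_\bg,Z\rp{n}]\phi$ at $n=3$ paired with the weighted-energy inequality. Because $\Lb\phi$ cannot absorb the extra $r$-weight, the dangerous terms of schematic form $f(H,\p H)(\Lb\varphi)^2$ must be swallowed by the positive flux contribution $Mr^{-1}|\Lb\phi|^2$, an option available only thanks to the strict coercivity (\ref{8.5.2.18}); pairing this flux with the $L_{\ub}^2 L^\infty$ estimate (\ref{7.14.4.18}) for $r^{1/2}\p H$ leaves essentially no slack and is the deepest technical point of the section. The remaining top-order commutator pieces are handled through the improved Hardy inequality (\ref{6.17.2.18}), itself a consequence of (\ref{7.14.5.18}). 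Because the hypothesis $|rH^{\Lb\Lb}|\le M/2$ must be propagated alongside the energy improvement at every step of the continuity argument, the full ladder $n\le 3$ has to advance simultaneously rather than in sequence, and this coupling is precisely what forces the specific scaling $M=C\delta_1^{1/2}$.
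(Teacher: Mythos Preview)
Your proposal is correct and follows essentially the same approach as the paper: the bootstrap on (\ref{6.10.4.18}) together with $(\BA3)$, the asymmetric tensor $\tilde\sQ_{\a\b}$ with the multipliers $\p_t$ and $r(L-H^{\Lb\Lb}\Lb)$, the cancellation of $\int r\,LH^{\Lb\Lb}(\Lb\varphi)^2$ via the current (\ref{4.14.1.18}), the improved Hardy inequality (\ref{6.17.2.18}) for the commutator, and closure by taking $R(\ga_0,C)$ large. One small clarification: the dangerous $f(H,\p H)(\Lb\varphi)^2$ bulk terms you flag in the last paragraph arise already in the derivation of the weighted-energy inequality (Lemma \ref{4.15.1.18}/Proposition \ref{4.29.5.18}) rather than from the top-order commutator itself; the commutator is then handled separately in Proposition \ref{6.17.4.18} via (\ref{6.17.2.18}) and the $L^4$ estimates, exactly as you indicate.
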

As an application, we provide the following exterior stability results for Einstein scalar fields under the wave coordinates.
\begin{theorem}\label{eins_thm}
Consider the Einstein scalar field system
\begin{equation}\label{7.12.2.18}
\left\{\begin{array}{lll}
&\bR_{\a\b}(\bg)=\p_\a \phi  \p_\b \phi+\f12 q_0 \, \bg_{\a\b} \phi^2,\\
&\Box_\bg \phi=q_0\phi
\end{array}\right.
\end{equation}
where the constant $q_0\ge 0$. Under the wave coordinate gauge, we set $\bh^1_{\mu\nu}=\bg_{\mu\nu}-\bm_{\mu\nu}-\frac{m_0}{r}\delta_{\mu\nu}$. For constants $1<\ga_0<2$ and $C_0\ge 1$, there exist a small constant $\delta_1>0$ and a constant $R(\ga_0,C_0)\ge 2$, such that,
  if the initial data set $(\bh^1[0], \phi[0])$ \begin{footnote}{We assume they satisfy the constraint equations.  See details of the data construction in \cite[(2.3)-(2.5),Page 1410]{Lindrod2}.}\end{footnote} verifies $$\E_{3, \ga_0, R, 0}(\bh^1[0])+ \E_{3, \ga_0, R, q_0}(\phi[0]) \le C_0 m^2_0, \quad 0<m_0<\delta_1$$
  where $R\ge R(\ga_0, C_0)$,
  then there exists a unique solution  $\Psi=(\bh^1, \phi)$ for (\ref{7.12.2.18})  in the entire region of $\{u(-m)\le u_0(-m)\}$, where $0\le m<\f12 m_0$ is a fixed constant.\begin{footnote}{In our proof, we fix $m=\frac{m_0}{20}$ for convenience.}\end{footnote}  With $u, \ub$ the shorthand notations for $u(-m)$ and $\ub(-m)$, for $-\ub\le u\le u_0(-m)$, there  hold
    \begin{equation*}
\begin{split}
&E[Z\rp{n} \Psi](\H_u^\ub)+E[Z\rp{n} \Psi](\Hb_\ub^u)\les m_0^2 u_+^{-\ga_0+2\zeta(Z^n)},\\
&\W_1[Z\rp{n }\Psi](\H_u^\ub)+\W_1[Z\rp{n }\Psi](\Hb_\ub^u)+\W_1[Z\rp{n }\Psi](\D_u^\ub)\les m_0^2 u_+^{1-\ga_0+2\zeta(Z^n)},
\end{split}
\end{equation*}
where $n\le 3$ and $Z\in \{\Omega_{ij}, \p\}$. There also hold the decay estimates,
\begin{align*}
&r^3 |\sn Z\rp{l}\Psi|^2+ r^2 u_+|\Lb Z\rp{l} \Psi|^2+r^3|LZ\rp{l} \Psi|^2\les m_0^2 u_+^{-\ga_0+2\zeta(Z^l)}, l\le 1,\\
&q_0|r^\frac{5}{4} Z\rp{l}\phi|^2 \les m_0^2u_+^{-\ga_0+2\zeta(Z^l)+\frac{1}{2}},  l\le 1.
\end{align*}
The constants in the above inequalities are independent of $q_0^{-1}$.
\end{theorem}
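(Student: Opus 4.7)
The plan is to reduce the Einstein scalar field system \eqref{7.12.2.18} under wave coordinates to a quasilinear wave system of the form close to \eqref{eqn_1}, then to adapt the framework of Section \ref{quasi} to accommodate the non-decaying (in $|u|$) static part $\frac{m_0}{r}\delta_{\mu\nu}$ of the metric. Writing $\bg_{\mu\nu}=\bm_{\mu\nu}+\frac{m_0}{r}\delta_{\mu\nu}+\bh^1_{\mu\nu}$ and unfolding the reduced Ricci tensor together with the scalar-field equation under the wave coordinate gauge, the system for $\Psi=(\bh^1,\phi)$ takes essentially the shape of \eqref{eqn_1} acting on $\Psi$, up to: (i) the background metric has the Schwarzschildean tail $\frac{m_0}{r}\delta_{\mu\nu}$; (ii) the scalar field carries the potential $q_0$ and the extra source $\frac{1}{2}q_0\bg_{\a\b}\phi^2$; (iii) there are cubic and harmless self-interaction terms arising from the curvature expansion. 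Since $\bh^1[0]$ and $\phi[0]$ now decay at spatial infinity, one can regard the data as of the same class handled in Theorem \ref{thm_quasi} with size $\ep\approx m_0^2$.

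My first step is to fix $M=m$ in \eqref{8.5.2.18} by exploiting the positive mass theorem: as $r\to\infty$, $rH^{\Lb\Lb}\to -\frac{1}{2}m_0+o(1)$, so choosing the boundary cone $\H_{u_0(-m)}$ with $0<m<\tfrac12 m_0$ guarantees $r(\frac{-m}{r}-H^{\Lb\Lb})\ge M>0$ on $\{u\le u_0(-m)\}$, producing a coercive energy density on the schwarzschild cone foliation with \emph{positive} mass parameter (thus the boundary is timelike/null and patchable). Second, I would verify that the basic energy and weighted energy identities of Section \ref{quasi}, namely the analogues of Lemma \ref{4.15.1.18} applied with the modified tensor $\tilde\sQ$, continue to hold. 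The key check is that the contributions of the static piece $\frac{m_0}{r}\delta_{\mu\nu}$ to $\p^\a(\tilde\sQ_{\a\b}X^\b)$ with multipliers $X=\p_t$ and $X=r(L-H^{\Lb\Lb}\Lb)$ are either cancelled by the construction of the energy current \eqref{4.14.1.18} or are integrable, using only the Minkowski decay of $\p(\frac{m_0}{r})$ and the fact that $L(\frac{m_0}{r})$, $\sn(\frac{m_0}{r})$ carry a full $r^{-2}$.

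Third, I would bound the nonlinear error using the hierarchy of estimates $\|\cdot\|_G$ and $\|\cdot\|_S$ from Section \ref{quasi}, i.e.\ \eqref{7.14.6.18}--\eqref{7.14.5.18} together with Proposition \ref{5.20.1.18} and \ref{6.17.5.18}. The extra Einstein-specific terms (cubic semi-linear terms and the $\frac12 q_0\bg\phi^2$ source) are either lower-order or estimated by the decay package in Lemma \ref{6.25.2.18}; this routine bookkeeping I would package as a Lemma analogous to Lemma \ref{7.8.15.18}. The sharp decay for $\phi$ in the massive direction, namely $q_0|r^{5/4}Z\rp{l}\phi|^2\lesssim m_0^2 u_+^{-\ga_0+2\zeta(Z^l)+1/2}$, follows after the energy/weighted energy bounds are established by feeding them into the weighted Sobolev embedding of Section \ref{prel}, together with the $q_0$-term absorbed into $E[\cdot]$ via the definition \eqref{3.19.1}; crucially the implicit constants depend only on the bound of $q(x)$ in \eqref{potential}, not on $q_0^{-1}$.

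The main obstacle, and the reason a direct application of Theorem \ref{thm_quasi} fails, is the commutator $[\widetilde\Box_\bg,Z\rp{n}]\Psi$: the static piece of $H$ decays only like $r^{-1}$ and carries \emph{no} gain in $|u|$, so terms such as $\p\bigl(\tfrac{m_0}{r}\bigr)\cdot \p Z\rp{n}\Psi$ appearing after commutation are borderline. To handle these I would perform an induction on the signature $\zeta(Z^n)$ from $-n$ upward: at each signature level the borderline contribution is estimated against the already-closed energy at strictly smaller signature, so after summing over signatures the weights $u_+^{-\ga_0+2\zeta(Z^n)}$ on the right of the target inequality absorb the loss. Coupled with the improved Hardy inequality \eqref{6.17.2.18} obtained from \eqref{7.14.5.18}, this closes the bootstrap in $\{u\le u_0(-m)\}$ and yields the stated energy, weighted energy, and pointwise decay bounds, concluding Theorem \ref{eins_thm}.
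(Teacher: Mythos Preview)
Your proposal is correct and follows essentially the same approach as the paper: reduction of the system under wave coordinates to the form (ES), use of the positive mass to secure coercivity of the flux on the schwarzschild cone foliation with $M_0=-m_0/20$ (Lemma \ref{7.29.4.18}), verification that the energy and weighted energy inequalities of Section \ref{quasi} survive the static $\frac{m_0}{r}$ tail (Proposition \ref{7.28.3.18}), treatment of the Einstein-specific source terms (Lemma \ref{7.8.15.18}) and the improved $q_0$-weighted decay for $\phi$ (estimate \eqref{7.8.3.18}), and finally the induction on the signature $\zeta(Z^n)$ to absorb the borderline commutator terms isolated in Lemma \ref{7.28.2.18}. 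One small clarification: in the paper's normalization the parameter $M$ in \eqref{8.5.2.18} is taken to be $m_0$ itself (not $m$), while $M_0=-m=-m_0/20$; and the wave coordinate condition \eqref{wave_coord} is invoked to dispense with the smallness hypothesis \eqref{6.10.3.18} in Proposition \ref{4.29.4.18}, a point worth making explicit in your write-up.
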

\begin{figure}[ht]
\centering
\includegraphics[width = 0.48\textwidth]{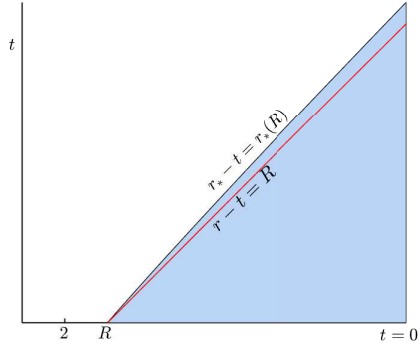}
  \vskip -0.7cm
  \caption{Illustration of the stability zone of Theorem \ref{eins_thm}}\label{fig3}
\end{figure}
\begin{remark}
The energy estimates and pointwise decay in the above four theorems  hold true if $Z$ belongs to the generators of Poincar\'{e} group. If $q\equiv 0$, one can also extend the result to the set of vector fields $\{ \p, x^\mu \p_\mu, \Omega_{\mu\nu}\}$.\begin{footnote}{$\Omega_{\mu\nu}=x_\mu\p_\nu-x_\nu\p_\mu, \quad 0\le  \mu< \nu\le3 \mbox{ where }x_\mu=\bm_{\mu\nu}x^\nu.$}\end{footnote}
\end{remark}
\begin{remark}
We do not rely on the weak null condition of the Einstein scalar field equation (\ref{7.12.2.18}) to prove Theorem \ref{eins_thm}.
If we further use the precise weak null structure of the reduced Einstein equation, the result can be proved upto $R= 2$ under a weak extra smallness assumption of data.
\end{remark}
\section{Preliminary estimates}\label{prel}
 In this section, we adapt the Sobolev inequalities developed for the canonical null hypersurfaces in \cite{mMaxwell} to $u$ and $\ub$ foliations.  With the help of this set of Sobolev inequalities, we provide preliminary estimates in the region of $\{u\le u_0(M_0)\}$ for functions  bounded in terms of the energy norms in (\ref{3.19.1}). Some of the estimates, such as (\ref{5.22.5.18}) and (\ref{6.14.1.18}) in Lemma \ref{6.25.2.18}  are stronger than the known estimates for the free wave. They are crucial for the proof of boundedness of energies.
We also provide estimates on the initial slice in this section.

For ease of exposition, we denote by  $\Omega$  any of the rotation vector fields in $\{\Omega_{ij}, 1\le i<j\le 3\}$ and by $\Omega\rp{k} f$ any of the $k$-th order derivatives by the rotation vector fields. $|\Omega f|^2=\sum_{1\le i<j\le 3}|\Omega_{ij} f|^2$.  $|\Omega\rp{ k}f|^2$ is the sum of all the combinations of $k$-th order derivatives by rotation vector fields. The same convention applies to $|P\Omega\rp{l} f|$  if  $P$ is a linear differential operator.

We adapt from \cite[Section 2.1]{mMaxwell} to obtain the following Sobolev inequalities.
\begin{lemma}[Sobolev inequalities]
For any smooth function $f$ and constants verifying the relation $2\ga=\ga_0'+2\ga_2$, we have, for all $-\ub_1\le -\ub\le  u\le u_0(M_0)$,
\begin{align}
\sup_{S_{u, \ub}}|r^{\ga} f|^4
&\les \sum_{l\leq 1} \int_{S_{u, -u}}|r^\ga \Omega\rp{l} f|^4 r^{-2}+\sum_{k\leq 2}\int_{\H_u^{ \ub_1}}r^{2\ga_2} |\Omega\rp{k} f|^2 r^{-2}\nn\\
&\cdot \sum_{l\leq 1}\int_{\H_u^{ \ub_1}} r^{\ga_0'}|L' \Omega\rp{l}(r^{\ga} f)|^2 r^{-2},\label{6.24.11.18}\\
\int_{S_{u, \ub}}|r^\ga f|^4 r^{-2}
&\les \int_{S_{u, -u}}|r^\ga f|^4 r^{-2} + \int_{\H_u^{ \ub_1}}r^{\ga_0'}|L'(r^\ga f)|^2 r^{-2} \cdot \sum_{l\le 1}\int_{\H_u^{ \ub_1}}r^{2\ga_2}|\Omega\rp{l}  f|^2r^{-2}.\label{6.24.12.18}
\end{align}
The same estimates hold by using the incoming null hypersurface $\Hb_\ub^u$. In this case $L'$ is replaced by $\Lb'$, and the initial sphere is $S_{-\ub, \ub}$.
\end{lemma}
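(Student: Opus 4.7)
The plan is to adapt the proof of the Sobolev inequalities in \cite[Section 2.1]{mMaxwell} to our schwarzschild-cone foliation. The key structural input is that on $\H_u$ the null direction tangent to the cone is $L'$, and by (\ref{4.30.3.18}) one has $d/d\ub\big|_{\H_u} = \tfrac{r}{2(r-M_0)}\,L'$; together with $\p_\ub r=\tfrac12\bb$ from (\ref{7.19.3.18}), $\bb=1+O(h)$, and the comparability $r^2 d\ub\,d\omega\approx d\mu_{\H}$ following from (\ref{7.4.3.18}), all computations reduce modulo universal constants to the Minkowskian case with $L$ replaced by $L'$.

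First I would record the standard Gagliardo–Nirenberg inequality on the round sphere, pulled back to the physical sphere $S_{u,\ub}$ of radius $r$: for any smooth function $h$ on $S_{u,\ub}$,
\begin{align*}
\|h\|_{L^\infty(S_{u,\ub})}^4 &\lesssim \sum_{k\le 2}\Bigl(\int_{S_{u,\ub}} |\Omega^k h|^2 r^{-2}\Bigr)\cdot\Bigl(\int_{S_{u,\ub}} |h|^2 r^{-2}\Bigr),\\
\int_{S_{u,\ub}}|h|^4 r^{-2} &\lesssim \sum_{l\le 1}\Bigl(\int_{S_{u,\ub}}|\Omega^l h|^2 r^{-2}\Bigr)\cdot\Bigl(\int_{S_{u,\ub}}|h|^2 r^{-2}\Bigr).
\end{align*}
Applied with $h=r^\gamma f$ or $h=r^\gamma\Omega^k f$, these supply the sphere-tangential $\Omega^k$-derivatives and the $r^{-2}$ weights appearing on the right-hand sides of (\ref{6.24.11.18}) and (\ref{6.24.12.18}).

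Next I would convert the remaining $L^2$-type sphere integral into an integral on $\H_u^{\ub_1}$ by a fundamental theorem of calculus in $\ub$ starting from the initial sphere $S_{u,-u}$. For a smooth function $h$ on $\H_u$, let $F(\ub):=\int_{S_{u,\ub}}|h|^2\,r^{-2}=\int_{S^2}|h|^2\,d\omega$; using $d/d\ub=\tfrac{r}{2(r-M_0)}L'$ and $\p_\ub r=\tfrac12\bb$,
\begin{align*}
F(\ub) - F(-u) = \int_{-u}^{\ub}\!\!\int_{S^2}\Bigl(\tfrac{r}{r-M_0}\,h\cdot L'h + O(r^{-1})\,h^2\Bigr)\,d\omega\,d\ub'.
\end{align*}
Splitting the product via Cauchy–Schwarz with the balanced exponents $2\gamma=\gamma_0'+2\gamma_2$, and using $r^2 d\ub\,d\omega\approx d\mu_{\H}$,
\begin{align*}
F(\ub) \le F(-u) + C\Bigl(\int_{\H_u^{\ub_1}} r^{\gamma_0'}|L'h|^2\, r^{-2}\Bigr)^{1/2} \Bigl(\int_{\H_u^{\ub_1}} r^{2\gamma_2}|h|^2\, r^{-2}\Bigr)^{1/2},
\end{align*}
after absorbing the zeroth-order contribution via a Hardy-type argument (or Grönwall, using that the geometric coefficients are bounded by a constant in $\{u\le u_0(M_0)\}$ because $r|h|\le M\ll 1$ by (\ref{7.1.1.18})). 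Taking $h=r^\gamma f$ together with $h=r^\gamma\Omega^l f$ for $l\le 1$ (for (\ref{6.24.12.18})) or $l\le 2$ (for (\ref{6.24.11.18})), and combining with the sphere-Sobolev inequalities of the first step, yields both (\ref{6.24.11.18}) and (\ref{6.24.12.18}). The incoming version on $\Hb_\ub^u$ is verbatim the same, using $d/du\big|_{\Hb_\ub}=\tfrac{r}{2(r-M_0)}\Lb'$ from (\ref{4.30.3.18}) to integrate from $S_{-\ub,\ub}$ in $u$.

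The only substantive bookkeeping is the weight balance $2\gamma=\gamma_0'+2\gamma_2$ in the Cauchy–Schwarz split, which is precisely the hypothesis of the lemma; beyond this I expect no real obstacle, since in $\{u\le u_0(M_0)\}$ the schwarzschild conformal factors $\bb$ and $r/(r-M_0)$ differ from $1$ by $O(h)$ with $|h|\ll 1$, so all weighted norms are equivalent to their Minkowskian counterparts and one inherits the proof structure of \cite[Section 2.1]{mMaxwell} without modification.
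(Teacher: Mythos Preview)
Your proposal is correct and takes essentially the same approach as the paper: the paper's own proof is a single sentence deferring to \cite[Section 2.1]{mMaxwell} together with (\ref{4.30.3.18}) and $|M_0|\ll 1$, and you have correctly identified exactly these adaptation ingredients (the tangential derivative $d/d\ub=\tfrac{r}{2(r-M_0)}L'$ on $\H_u$, sphere Sobolev/Gagliardo--Nirenberg, the weight balance $2\gamma=\gamma_0'+2\gamma_2$ in the Cauchy--Schwarz split, and the fact that $\bb,\,r/(r-M_0)=1+O(h)$). One small simplification: since $F(\ub)=\int_{S^2}|h|^2\,d\omega$ is taken against the fixed round measure, differentiating in $\ub$ produces only the $hL'h$ term and no additional $O(r^{-1})h^2$ contribution, so the Hardy/Gr\"onwall absorption step you mention is not actually needed.
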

This lemma can be proved similarly as in \cite{mMaxwell} with the help of (\ref{4.30.3.18}) and $|M_0|\ll 1$.

Let (\ref{7.1.1.18}) hold. We first give the following results in the initial slice.
\begin{proposition}\label{6.23.4.18}
Let $1<\ga_0<2$ and the constant $R\ge 2$ be fixed. With $n\le 3$,  there hold on $\Sigma_0\cap \{r\ge R\}$ the following estimates.

 (1)
\begin{align}
%& \int_{S_r} r^{6+2\ga_0-2\zeta(Z^i)}|\p Z\rp{i} \phi|^4 d\omega \les \E^2_{i+1, \ga_0}, \mbox{ with } i\le n-1;\label{6.23.6.18}\\
&\int_{S_r} r^{1+\ga_0-2\zeta(Z^i)} | Z\rp{i}\phi|^2 d\omega\les \E_{i, \ga_0}, \quad i \le n, \label{6.23.7.18}\\
&\int_{S_r} r^{2+2\ga_0-4\zeta(Z^i)}|Z\rp{i}\phi|^4 d\omega \les \E^2_{i,\ga_0},\quad    i\le n, \label{6.24.4.18}
\end{align}
where  $S_r=\{|x|=r\}$.

(2) Let $u_1, \ub_1$ be a pair of fixed numbers verifying   $-\ub_1\le u_1\le u_0$.
There hold
\begin{align}
&\int_{-u_1}^{\ub_1}(\int_{S_{-\ub, \ub}} r^{2-4\zeta(Z^i)} | Z\rp{i} \phi|^4 d\omega)^\f12 d\ub\les {u_1}_+^{-\ga_0+1}\E_{i, \ga_0}, \quad i \le n\label{6.24.10.18},\\
&\int_{-u_1}^{\ub_1}(\int_{S_{-\ub, \ub}} r^{2 -4\zeta(Z^{i-1})} |\p Z\rp{i-1} \phi|^4 d\omega)^\f12 d\ub\les {u_1}_+^{-\ga_0-1}\E_{i, \ga_0}, \quad i \le n\label{6.24.6.18}.
%&\int_{-u_1}^{\ub_1}(\int_{S_{-\ub, \ub}} r^{6-4\zeta(Z^{i-1})} |\p Z\rp{i-1} \phi|^4 d\omega)^\f12 d\ub'\les {u_1}_+^{-\ga_0+1}\E_{i, \ga_0}, \quad i \le %n\label{6.24.8.18}\\
\end{align}
 The same estimates hold if the domain of integrals are changed to $\int^{u_1}_{-\ub_1} (\int_{S_{u,-u}} \cdot  d\omega)^\f12 du$  for the same integrands.
\begin{align}
&\| r^{-\f12-\zeta(Z^i)}  Z\rp{i} \phi\|^2_{L^2(\Sigma_0^{u_1, \ub_1})}\les {u_1}_+^{-\ga_0+1}\E_{i-1, \ga_0}, \quad i \le n+1 \label{6.25.1.18},\\
&\W_1[Z\rp{i}\phi](\Sigma_0^{u_1, \ub_1})\les {u_1}_+^{-\ga_0+1+2\zeta(Z^i)} \E_{i, \ga_0}, \quad i\le n.\label{6.30.1.18}
\end{align}
(3)
For $i\le n$, there holds on $\Sigma_0\cap \{r\ge R\}$ that
\begin{align}
&r^2|Z\rp{i-1}\phi(u,-u,\omega)|^2\les u_+^{-\ga_0+1+2\zeta(Z^{i-1})}\E_{i, \ga_0} \label{6.24.9.18}.
\end{align}
\end{proposition}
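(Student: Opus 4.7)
\medskip\noindent\textbf{Proof plan.} All of the estimates in (1)--(3) follow from three tools on $\Sigma_0\cap\{r\ge R\}$: a weighted trace from spatial infinity, two-dimensional Sobolev embedding on spheres, and weight comparison on the exterior $\{r\ge u_{1+}\}$. The key bookkeeping is the interplay between the weight exponents and the signature function $\zeta$. Using $\Omega_{ij}=x_i\p_j-x_j\p_i$ in Cartesian form, one has $|Z^{(i)}\phi|^2\les\sum_{j\le i} r^{2(i-j)}|\p^j\phi|^2$; and if $b$ denotes the number of $\p$-factors in $Z^{(i)}$, then $\zeta(Z^i)=-b$. In particular, the weight $r^{-2\zeta(Z^i)}$ combines with the $r^{2(i-b)}$ from the rotations to produce $r^{2i}$ on $|\p^{i+1}\phi|^2$, matching the weight $r^{\ga_0+2i}$ appearing in $\E_{i,\ga_0}$ up to an overall $r^{\ga_0}$.

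\medskip\noindent
For the sphere $L^2$ bound in part (1), apply the fundamental theorem of calculus in $r$, writing $\int_{S_r} r^{1+\ga_0-2\zeta(Z^i)}|Z^{(i)}\phi|^2\,d\omega$ as minus the integral from $r$ to $\infty$ of the radial derivative of the same expression; vanishing of the integrand at $r=\infty$ is guaranteed by finiteness of the weighted energy norm. Expanding the derivative and applying Cauchy--Schwarz on the cross term produces two bulk integrals, $\int r^{\ga_0-2-2\zeta(Z^i)}|Z^{(i)}\phi|^2\,dx$ and $\int r^{\ga_0-2\zeta(Z^i)}|\p Z^{(i)}\phi|^2\,dx$. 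The first is handled by a weighted Hardy inequality on $\{r\ge R\}$, valid since $\ga_0-2\zeta(Z^i)>1$ and functions decay at infinity; this converts it to an integral of the same form as the second. After the Cartesian expansion above, both are controlled by $\E_{i,\ga_0}$. The quartic bound in part (1) then follows by combining the $L^2$ bound with the 2D Sobolev inequality $\int_{S^2}g^4\,d\omega\les\bigl(\int g^2\,d\omega\bigr)\bigl(\int(g^2+|\nabla_{S^2}g|^2)\,d\omega\bigr)$, rescaled to $S_r$: since $|\nabla_{S^2}g|\sim|\Omega g|$ and $\zeta(\Omega Z^i)=\zeta(Z^i)$, the weight structure is preserved.

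\medskip\noindent
For the two integrated $L^4$ estimates in part (2), take the square root of the quartic sphere bound at each sphere $S_{-\bar u,\bar u}$, noting that the integrand differs from the quantity already bounded by a factor $r^{-2\ga_0}$; then use $r\approx\bar u$ on $\Sigma_0$ and integrate over $\bar u\in[u_{1+},\infty)$. The gain $u_{1+}^{1-\ga_0}$ arises from $\int_{u_{1+}}^\infty\bar u^{-\ga_0}\,d\bar u$. The partial-derivative variant is identical with $\p Z^{(i-1)}$ in place of $Z^{(i)}$, using $\zeta(\p Z^{i-1})=\zeta(Z^{i-1})-1$. The pointwise estimate in part (3) comes from the 2D Sobolev embedding $\|g\|_{L^\infty(S^2)}^2\les\|g\|_{L^4(S^2)}\|g\|_{W^{1,4}(S^2)}$, which after rescaling to $S_r$ reads $\|g\|_{L^\infty(S_r)}^2\les r^{-1}\|g\|_{L^4(S_r)}\bigl(\|g\|_{L^4(S_r)}+\|\Omega g\|_{L^4(S_r)}\bigr)$; applying the quartic bound to both factors at $r\approx u_+$ gives the claim.

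\medskip\noindent
The weighted $L^2$-in-$\Sigma_0$ bound in part (2) is obtained by peeling off one derivative: writing $|Z^{(i)}\phi|^2\les r^{2c(Z_1)}|\p Z^{(i-1)}\phi|^2$ with $c(\Omega)=1$ and $c(\p)=0$, the identity $-\zeta(Z_1)+c(Z_1)=1$ (valid in both cases) reduces the integral to $\int r^{1-2\zeta(Z^{i-1})}|\p Z^{(i-1)}\phi|^2\,dx$, which after Cartesian expansion is $\int r^{2i-1}|\p^i\phi|^2\,dx$. The factor $u_{1+}^{1-\ga_0}$ then emerges from $r^{2i-1}\le u_{1+}^{1-\ga_0}r^{\ga_0+2(i-1)}$ on $r\ge u_{1+}$ (using $\ga_0>1$), yielding the bound by $\E_{i-1,\ga_0}$. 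The weighted-energy estimate follows from the expansion $r^{-1}(L(rf))^2\le 2r^{-1}|f|^2+2r|\p f|^2$ on $t=0$ (using $L(rf)=f+rLf$), the slashed-gradient contribution being controlled by $|\p f|^2$ and the potential piece handled via the $q_0$-contribution in $\E_{i,\ga_0}$ together with the assumption on $q$. The principal obstacle is consistent tracking of the signature $\zeta$ under each manipulation; once this bookkeeping is settled, every estimate reduces to one of the three standard tools above.
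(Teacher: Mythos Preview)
Your overall plan is sound, but there is a genuine derivative-counting gap in your argument for the quartic sphere bound \eqref{6.24.4.18}. You propose to use the two-dimensional Ladyzhenskaya inequality on $S^2$,
\[
\int_{S^2}|Z^{(i)}\phi|^4\,d\omega \les \Bigl(\int_{S^2}|Z^{(i)}\phi|^2\,d\omega\Bigr)\Bigl(\int_{S^2}|\Omega^{\le 1}Z^{(i)}\phi|^2\,d\omega\Bigr),
\]
and then invoke your sphere $L^2$ bound \eqref{6.23.7.18} on each factor. But \eqref{6.23.7.18} itself costs a radial derivative (it is proved via the fundamental theorem of calculus in $r$), so applying it to $\Omega Z^{(i)}\phi$ yields $\E_{i+1,\ga_0}$, not $\E_{i,\ga_0}$. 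Your argument therefore produces $\E_{i,\ga_0}\E_{i+1,\ga_0}$ rather than the stated $\E_{i,\ga_0}^2$. At the top order $i=n$ this is fatal: $\E_{n+1,\ga_0}$ is not assumed finite, and the estimate \eqref{6.24.4.18} at $i=n$ is genuinely used downstream (for instance in the proof of \eqref{6.24.9.18} via the $\Omega^{\le 1}Z^{(i-1)}$ term, and in the decay estimates of Proposition~\ref{5.20.1.18}).

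The paper avoids this loss by never invoking a sphere trace on $\Omega Z^{(i)}\phi$. Instead it runs the fundamental theorem of calculus in $r$ directly on $\int_{S_r}|Z^{(i)}\phi|^4\,d\omega$, obtaining $\|\p Z^{(i)}\phi\|_{L^2_rL^2_\omega}\,\||Z^{(i)}\phi|^3\|_{L^2_rL^2_\omega}$, and then closes using the Christodoulou--Klainerman inequality $\int|F|^6\,d\omega\,dr'\les \int(\int|F|^4\,d\omega)(\int|F|^2+|r\sn F|^2\,d\omega)\,dr'$. The point is that the angular derivative $r\sn Z^{(i)}\phi$ now sits inside a \emph{bulk} $L^2$ norm over $\{r'\ge r\}$ rather than a single-sphere norm; one may then use $|r\sn f|\les r|\p f|$ and bound $\int_{r'\ge r}|\p Z^{(i)}\phi|^2\,dx$ directly by $\E_{i,\ga_0}$ via weight comparison, without an additional trace step. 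Your remaining arguments for \eqref{6.23.7.18}, \eqref{6.24.10.18}--\eqref{6.30.1.18}, and \eqref{6.24.9.18} are essentially correct (and in places, such as \eqref{6.25.1.18}, a bit more explicit than the paper), but they inherit the derivative loss wherever they rely on your version of \eqref{6.24.4.18}.
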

\begin{proof}
Note that for $i\le n$, due to $\E_{n, \ga_0, R}<\infty$,
\begin{equation}
\lim\inf_{r\rightarrow \infty} \int_{S_r} r^{\ga_0-2\zeta(Z^i)} (| Z\rp{i} \phi|^2+r^2|\p Z\rp{i} \phi|^2)d\omega =0. \label{6.23.2.18}
\end{equation}
By using the Sobolev embedding on ${\mathbb S}^2$, we have for any scalar function $F$,
\begin{align*}
(\int_{S_r} r^4 |F|^4 d\omega)^\f12\les \|\Omega F\|^2_{L^2(S_r)}+\|F\|^2_{L^2(S_r)}.
\end{align*}
Thus, with $F= Z\rp{i}\phi$, $i\le n$, by using (\ref{6.23.2.18}), we have
\begin{equation}\label{6.23.9.18}
\lim\inf_{r\rightarrow \infty} \int_{S_r} r^{2\ga_0-4\zeta(Z^{i})} | Z\rp{i} \phi|^4 d\omega =0.
\end{equation}
Now consider (\ref{6.23.7.18}) with the help of (\ref{6.23.2.18}). By integrating back from the spacelike infinity,
\begin{align*}
\int_{S_r} r^{1+\ga_0-2\zeta(Z^i)} | Z\rp{i}\phi|^2 d\omega&\le  r^{1+\ga_0-2\zeta(Z^i)}\int_{r}^\infty |\p_r Z\rp{i}\phi| |Z\rp{i} \phi|d\omega dr'\nn\\
&\les \| \p Z\rp{i} \phi\c r^{1+\frac{\ga_0}{2}-\zeta(Z^i)}\|_{L^2_r L_\omega^2}\| Z\rp{i} \phi\c r^{\frac{\ga_0}{2}-\zeta(Z^i)}\|_{L^2_r L_\omega^2}\nn\\
&\les \E_{i,\ga_0},
\end{align*}
which gives (\ref{6.23.7.18}).

 By using (\ref{6.23.9.18}) and H\"{o}lder's inequality, we derive
\begin{align}
\int_{S_r}  | Z\rp{i}\phi|^4 d\omega&\le  \int_{r}^\infty |\p_r Z\rp{i}\phi| | Z\rp{i} \phi|^3d\omega dr'\nn\\
&\les  \| \p Z\rp{i} \phi \|_{L^2_r L_\omega^2\{ r'\ge r\}}\|| Z\rp{i} \phi|^3 \|_{L^2_r L_\omega^2\{ r'\ge r\}}.\label{6.24.3.18}
\end{align}
Note that  by using \cite[Page 58 (3.2.4a)]{CK}
\begin{equation*}
\int_r^\infty |F|^6 d\omega dr' \les \int_r^\infty \int_{{\mathbb S}^2}|F|^4 d\omega \int_{{\mathbb S}^2} (|F|^2+|r\sn F|^2 )d\omega  dr'.
\end{equation*}
We then combine the above inequality with (\ref{6.24.3.18}) to obtain
\begin{align}
\sup_{r'\ge r} (\int_{S_{r'}}  |  Z\rp{i}\phi|^4 d\omega)^\f12 &\les  \| \p Z\rp{i} \phi \|_{L^2_r L_\omega^2\{ r'\ge r\}}\|| Z\rp{i}\phi|+|r\sn Z\rp{i} \phi| \|_{L^2_r L_\omega^2\{ r'\ge r\}}\nn\\
&\les r^{-(1-2\zeta(Z^{i})+\ga_0)} \E_{i,\ga_0}.\nn
\end{align}
This gives  (\ref{6.24.4.18}).

  Note that $\ub_1\ge \ub\ge -u_1$, and (\ref{6.28.3.18}) implies $r(S_{-\ub,\ub})\ge  \f12 \ub $. By integrating  (\ref{6.24.4.18}), we can obtain (\ref{6.24.10.18}), which implies (\ref{6.24.6.18}) immediately.   (\ref{6.25.1.18}) follows as a direct integration of (\ref{6.23.7.18}).

(\ref{6.25.1.18}) can be derived  by using  the first identity in (\ref{6.28.3.18}) and the definition (\ref{3.21.4.18}). (\ref{6.30.1.18}) is a consequence of (\ref{6.25.1.18}).

Next, we prove (\ref{6.24.9.18}).  Let $r_1\ge R$.  We adapt (\ref{6.24.11.18}) to $\Sigma_0\cap\{r\ge R\}$ with $\ga_0'=\ga=1$ and  $\ga_2=\f12$. This implies for $r\ge r_1$,
\begin{align*}
\sup_{S_r} |r Z\rp{i-1}\phi|^4&\les  \lim\inf_{r\rightarrow \infty}\int_{S_r} |r^\ga \Omega\rp{\le 1} Z\rp{i-1} \phi|^4 r^{-2} +\int_{\Sigma_0\cap\{ r\ge r_1\}} r^{2\ga_2}|\Omega\rp{\le 2} Z\rp{i-1} \phi|^2 r^{-2}\\
&\cdot \int_{\Sigma_0\cap \{r\ge r_1 \}} r^{\ga_0'}|\p_r\Omega\rp{\le 1} (r^\ga Z\rp{i-1}\phi)|^2 r^{-2}\les r_1^{-2\ga_0+2+4\zeta(Z^{i-1})}\E^2_{i,\ga_0},
\end{align*}
where due to (\ref{6.24.4.18}) and $\ga_0>1$,  the first term on the right vanished, and we also used the fact that $|\Omega f |\les r|\p f|$ to treat the term of $\Omega\rp{\le 2} Z\rp{i-1} \phi$. Thus, in view of  (\ref{6.28.3.18}), (\ref{6.24.9.18}) is proved.

\end{proof}

 The energy or weighted energy norms in (\ref{3.19.1}) not only give  control on $\p \varphi$,  they also control  $\varphi$ itself, which is given in the following result.
\begin{lemma}
Let   $-\ub\le u\le u_0(M_0)$ and $\a>0$ be fixed. There hold the following estimates
\begin{align}
\int_{\Hb_\ub^u} r\varphi^2 du' d\omega&+\int_{\D_u^\ub} \{r^2 (L\varphi)^2+\a(\frac{u_+}{r})^\a\varphi^2\} du' d\ub' d\omega\nn\\
&\les \W_1[\varphi](\D_u^\ub)+M\int_{-\ub}^{u} {u'}_+^{-1}E[\varphi](\H_{u'}^\ub) du'+\int_{\Sigma^{u,\ub}_0} r^{-1} \varphi^2  dx,\label{3.23.2.18}\\
\int_{\H_u^\ub } r|L'(r\varphi)|^2 d\omega d\ub'&\les \W_1[\varphi](\H_u^\ub )+M E[\varphi](\H_u^\ub)+M^2\int_{\H_u^{\ub}} r^{-1} | \varphi|^2 d\omega d\ub'  \label{3.23.4.18},\\
\int_{S_{u,\ub}} r \varphi^2 d\omega&+ \int_{\Hb_\ub^u} \varphi^2 du' d\omega\les \int_{S_{-\ub,\ub}} r \varphi^2 d\omega +E[\varphi](\Hb_\ub^u),\label{4.29.6.18}
\end{align}
%\begin{equation}\label{3.24.7.18}
%\int_{\H_u}r^{\f12} \varphi^2 dv d\omega +\int_{\D_u^v} r^{-1/2} \varphi^2 du dv d\omega \les M^{-1}\int_{u_{\min}(v)}^{u_0} E[\varphi](\H_{u'}^v) %du'+\int_{\D_u^v} \varphi^2
%\end{equation}
%\begin{equation}\label{3.24.8.18}
%\begin{split}
%\int_{\H_u^v}r^3 (L \varphi)^2 dv' d\omega&+\int_{S(u,v)} ra^{-1}(1+h)\varphi^2 d\omega \\
%&\les \int_{S_{u,v_{\min}(u)}} r a^{-1}(1+h) \varphi^2 d\omega+ \int_{\H_u} r(L'(r\varphi))^2 dvd\omega
%\end{split}
%\end{equation}
\begin{equation}\label{5.1.1.18}
\begin{split}
\int_{\H_u^\ub} \varphi^2 d\ub' d\omega &\les \int_{S_{u, -u}}r \varphi^2 d \omega +\int_{S_{-\ub, \ub}} r \varphi^2 d\omega +E[\varphi](\H_u^\ub)+E[\varphi](\Hb_\ub^u).
\end{split}
\end{equation}
\end{lemma}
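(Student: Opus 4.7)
\medskip
\noindent\textbf{Proof proposal.} All four inequalities are Hardy–type trace identities in the double-null region $\D_u^\ub$, obtained by integrating a single total derivative of an appropriate weight times $\varphi^2$ and then absorbing cross terms via Cauchy–Schwarz against the energy or weighted-energy integrands. The basic identities I will use repeatedly are
\[
L(r\varphi) = \varphi + r L\varphi, \qquad \Lb(r\varphi) = -\varphi + r\Lb\varphi,
\]
together with the parametrizations $\partial_\ub r = \f12 \bb$ on $\H_u$ and $\partial_u r = -\f12 \bb$ on $\Hb_\ub$ from \eqref{7.19.3.18}, and the decomposition $L'=L-h\Lb,\ \Lb'=\Lb-hL$ with $r|h|\le M$ from \eqref{7.1.1.18}. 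I will proceed in the order \eqref{4.29.6.18} $\Rightarrow$ \eqref{5.1.1.18} $\Rightarrow$ \eqref{3.23.4.18} $\Rightarrow$ \eqref{3.23.2.18}.

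For \eqref{4.29.6.18} I integrate
\[
\partial_u(r\varphi^2) \;=\; -\tfrac{1}{2}\bb\,\varphi^2 + \tfrac{r^2}{r-M_0}\,\varphi\,\Lb'\varphi
\]
along $\Hb_\ub$ from $u'=-\ub$ to $u'=u$, integrate over $\omega$, and apply Cauchy–Schwarz on the cross term; the $\varepsilon\,r^2(\Lb'\varphi)^2$ piece is controlled by $E[\varphi](\Hb_\ub^u)$ since $|\Lb'|^2\le 2|\Lb|^2+2h^2|L|^2$ and $r^2h^2|L\varphi|^2\le M\cdot rM|L\varphi|^2$, while the $\varphi^2$ piece is absorbed by the positive left-hand boundary term. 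For \eqref{5.1.1.18} I do the symmetric computation on $\H_u$: integrating $\partial_\ub(r\varphi^2)$ from $\ub'=-u$ to $\ub'=\ub$ and using Cauchy–Schwarz with $r^2|L'\varphi|^2\lesssim r^2|L\varphi|^2 + M\cdot rM|\Lb\varphi|^2\lesssim E[\varphi](\H_u^\ub)$-density gives
\[
\int_{\H_u^\ub}\varphi^2\, d\ub'd\omega \;\lesssim\; \int_{S_{u,\ub}} r\varphi^2 d\omega + \int_{S_{u,-u}} r\varphi^2 d\omega + E[\varphi](\H_u^\ub),
\]
and bounding $\int_{S_{u,\ub}}r\varphi^2$ by \eqref{4.29.6.18} yields \eqref{5.1.1.18}. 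For \eqref{3.23.4.18} I expand $L'(r\varphi)=L(r\varphi)-h\Lb(r\varphi)=L(r\varphi)-h(-\varphi+r\Lb\varphi)$, square, and use $r|h|\le M$: the first term gives $\W_1[\varphi](\H_u^\ub)$, the term $r h^2 r^2|\Lb\varphi|^2=(rh)^2\,r|\Lb\varphi|^2\le M\cdot rM|\Lb\varphi|^2$ gives $M\,E[\varphi](\H_u^\ub)$, and the term $rh^2\varphi^2=(rh)\cdot h\,\varphi^2\le M^2 r^{-1}\varphi^2$ gives the last term.

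The main step, and the one I expect to be the delicate one, is \eqref{3.23.2.18}, which requires both a classical $L$-direction Hardy inequality and an $r$-weighted bulk $\varphi^2$ control. My plan is to apply the divergence theorem in $\D_u^\ub$ to the vector field
\[
X := -r L' \big(\text{acting on }\varphi^2\big), \qquad \text{organised as the identity}\quad
\partial_u\bigl(r\varphi^2\bigr) + \partial_\ub\bigl(F(r)\varphi^2\bigr) = \text{cross terms},
\]
with $F$ chosen so that the bulk remainder produces a coercive $\a(u_+/r)^\a\varphi^2$ density after absorbing an $\varepsilon$-fraction of $r^{-2}|L(r\varphi)|^2$ via Cauchy–Schwarz (this is where the $\W_1[\varphi](\D_u^\ub)$ term on the right is used, and where the bulk $r^2(L\varphi)^2$ on the left is produced via $r^2|L\varphi|^2\le 2|L(r\varphi)|^2+2\varphi^2$, with the resulting $\varphi^2$ reabsorbed by the coercive Hardy weight $\a(u_+/r)^\a\varphi^2$ for $\ga_0>1$ since then $u_+/r\le 1$). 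The boundary flux on $\Hb_\ub^u$ yields the desired $r\varphi^2$ term; the flux on $\Sigma_0^{u,\ub}$ yields the initial $r^{-1}\varphi^2$ contribution after using $u_+\le r$. The only remaining contribution comes from integrating by parts the $h L$-piece of $L'$ on the $\H_{u'}^\ub$-slices: this produces an integrand $\lesssim h\,|L(r\varphi)|\,|\varphi|$ whose Cauchy–Schwarz bound, with weight $r^{-1}u_+^{-1}$ chosen to match the $\varphi^2$ Hardy term and using $|h|\le M/r$, gives the correction $M\int_{-\ub}^{u}{u'}_+^{-1}E[\varphi](\H_{u'}^\ub)\,du'$. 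The hardest point technically will be calibrating the exponent in the Hardy weight and in the Cauchy–Schwarz split so that all three error contributions are simultaneously absorbed, using $\ga_0>1$ and the smallness of $M$ from \eqref{7.1.1.18}.
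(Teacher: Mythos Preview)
Your treatments of \eqref{4.29.6.18}, \eqref{5.1.1.18} and \eqref{3.23.4.18} are correct and essentially identical to the paper's (the paper writes the integrand as $\p_u(r^{1/2}\varphi)\cdot r^{1/2}\varphi$ rather than $\p_u(r\varphi^2)$, which is equivalent).

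For \eqref{3.23.2.18} your plan has a real gap. The paper's proof rests on the exact algebraic identity
\[
\frac{(L'(r\varphi))^2}{2(1-h^2)} \;=\; \frac{r^2(L'\varphi)^2}{2(1-h^2)} \;+\; \partial_\ub\bigl(r\varphi^2\bigr),
\]
obtained by expanding $(L'(r\varphi))^2=(rL'\varphi+(1+h)\varphi)^2$ and recognizing the cross term plus the zeroth-order term as a perfect $\ub$-derivative. Integrating this over $\D_u^\ub$ yields \emph{simultaneously} the flux $\int_{\Hb_\ub^u}r\varphi^2$ and the bulk $\int_{\D_u^\ub}r^2(L'\varphi)^2$, with the only source being $\int_{\D_u^\ub}|L'(r\varphi)|^2$ and the data term. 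The weighted bulk $\a(u_+/r)^\a\varphi^2$ is then obtained \emph{a posteriori} from the $\Hb_\ub$ flux via $\int_{\D_u^\ub}r^{-\a}\varphi^2\lesssim \a^{-1}u_+^{-\a}\sup_{\ub'}\int_{\Hb_{\ub'}^u}r\varphi^2$, using $r\approx\ub$; it is not produced by any bulk divergence with a cleverly chosen weight $F$. Finally the $L'\to L$ conversion (the difference is $h\Lb(r\varphi)$, not an ``$hL$-piece'' as you wrote) is what generates both the $M\int_{-\ub}^u u'_+{}^{-1}E[\varphi](\H_{u'}^\ub)\,du'$ term, via $\int_{\D}r^2h^2(\Lb\varphi)^2\lesssim M\int u_+^{-1}E\,du'$, and a small $h^2\varphi^2$ piece absorbed by Gronwall against the $\Hb_\ub$ flux.

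Your proposed alternative --- recover $r^2(L\varphi)^2$ from $|L(r\varphi)|^2$ by the crude inequality $r^2|L\varphi|^2\le 2|L(r\varphi)|^2+2\varphi^2$ and then absorb the leftover bulk $\varphi^2$ into the Hardy weight $\a(u_+/r)^\a\varphi^2$ --- fails as stated: since $u_+/r\le 1$, the weight $(u_+/r)^\a\varphi^2$ is \emph{weaker} than $\varphi^2$ and cannot absorb it. You could salvage this by first bounding $\int_\D\varphi^2\,du\,d\ub\,d\omega$ via the $\Hb_\ub$ flux, but then you are reproducing the paper's two-step argument anyway, and there is no benefit to the unspecified divergence identity with weight $F$. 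The clean route is to use the exact identity above with $L'$ from the start.
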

\begin{proof}
We first prove
\begin{align}
\int_{\Hb_\ub^u }r \varphi^2 du' d\omega&+\int_{\D_u^\ub} \{r^2 (L'\varphi)^2+\a(\frac{u_+}{r})^\a\varphi^2\} du' d\ub' d\omega\nn\\
&\les \int_{\D_u^\ub} |L'(r\varphi)|^2 du'd\ub' d\omega+\int_{\Sigma^{u,\ub}_0} r \varphi^2  du' d\omega\label{3.23.3.18}.
\end{align}
  Due to $L'r=1+h$, (\ref{4.30.3.18}) and (\ref{7.19.3.18}),  by directly computing   $|L'(r\varphi)|^2$, we obtain
\begin{align*}
 \frac{(L'(r\varphi))^2 }{2(1-h)(1+h)}& =\frac{r^2(L' \varphi)^2 }{2(1-h)(1+h)}+ r \p_\ub(\varphi^2)+\frac{1+h}{2(1-h)}\varphi^2\\
 &=\frac{r^2(L' \varphi)^2 }{2(1-h)(1+h)}+ \p_\ub (r \varphi^2).
\end{align*}
Integrating the above identity in $\D_u^{\ub}$  and  using the  smallness of $|h|$ imply
\begin{align}
\int_{\Hb_{\ub}^u} r \varphi^2 d\omega du' +\int_{\D_u^{\ub}} r^2 (L' \varphi)^2 du' d\ub' d\omega \les \int_{\Sigma_0^{u, \ub}} r \varphi^2 du' d\omega+\int_{\D_u^\ub} |L'(r\varphi)|^2 du' d\ub' d\omega. \label{7.19.1.18}
\end{align}
By   using $r\approx \ub$ in (\ref{7.3.1.18}),
\begin{equation*}
\int_{\D_\ub^u }r^{-
\a} \varphi^2 d\omega du'd\ub'\les\a^{-1} u_+^{-\a} \sup_{-u\le \ub'\le \ub} \int_{\Hb_{\ub'}^u} r \varphi^2 d\omega du'.
\end{equation*}
Since (\ref{7.19.1.18}) holds for any $-u\le \ub'\le \ub $, we can conclude  (\ref{3.23.3.18}).

We can prove (\ref{3.23.2.18}) by using (\ref{3.23.3.18}).   Note that $L'(r\varphi) =L(r \varphi)+h \Lb(r \varphi)$.
We have
\begin{equation*}
\int_{\D_u^\ub} |L'(r\varphi)-L(r\varphi)|^2 du' d\ub' d\omega \les \int_{\D_u^\ub} |rh\Lb \varphi-h\varphi|^2 du' d\ub' d\omega.
\end{equation*}
Due to (\ref{7.3.1.18}), we have
\begin{align*}
 \int_{\D_u^\ub} r^2 h^2 (\Lb \varphi)^2 du'd\ub' d\omega\les M^2\int_{\D_u^\ub} r^{-1} (\Lb \varphi)^2 r  du' d\ub' d\omega\les M\int^u_{-\ub} {u'}_+^{-1} E[\varphi](\H^\ub_{u'}) du'.
\end{align*}
Due to (\ref{7.1.1.18}),
$
\int h^2 \varphi^2 d u d\ub d\omega\les \int M^2 {\ub'}^{-3} r\varphi^2 du' d\ub' d\omega.
$
This term can  absorbed by the first term on the left of  (\ref{3.23.3.18}) by using Gronwall's inequality.
Thus can obtain (\ref{3.23.2.18}) by using (\ref{3.23.3.18}).

(\ref{3.23.4.18}) is by direct calculation.

%Next, to show (\ref{3.24.7.18}) we note that $r^\f12 \Lb (\varphi)^2=\Lb (\varphi^2 r^\f12)+\f12 r^{-\f12}\varphi^2$. By direct integration,
%\begin{align*}
%\int_{\H_u^v}& \varphi^2 r^\f12 dv d\omega+\f12\int_{\D_u^v} r^{-\f12} \varphi^2 du dv d\omega
%&\les \int_{\Sigma_0} \varphi^2 r^\f12 dv d\omega+\int_{\D_u^v} (r (\Lb \varphi)^2 +\varphi^2)
%\end{align*}
%which implies (\ref{3.24.7.18}).

Next we prove (\ref{4.29.6.18}). By using (\ref{7.19.3.18}) we can derive on $\Hb_\ub^u$,
$
\p_u (r^\f12 \varphi)\c  r^\f12 \varphi=r\varphi\p_u \varphi-\frac{1}{4} \bb\varphi^2.
$
By integrating  the above identity along $\H_u^\ub$ with area element $ d u' d\omega$ and by using (\ref{4.30.3.18}), we can obtain
\begin{align*}
\int_{S_{u, \ub}} r\varphi^2 d\omega+\frac{1}{2}\int_{\Hb_\ub^u} \bb\varphi^2  d\omega du'&=\int_{S_{-\ub,\ub}} r \varphi^2d\omega+ \int_{\Hb_\ub^u} \frac{r^2}{(r-M_0)}\Lb' \varphi\c   \varphi d\omega d u'.
\end{align*}
By using Cauchy Schwartz inequality and $\bb>\frac{3}{4}$ due to the smallness of $|h|$,
\begin{align*}
\int_{S_{u, \ub}} r\varphi^2 d\omega + \int_{\Hb_\ub^u} \varphi^2 d\omega du' \les \int_{S_{-\ub,\ub}} r \varphi^2d\omega+ E[\varphi](\Hb_\ub^u).
\end{align*}
This proves (\ref{4.29.6.18}). Next, we prove (\ref{5.1.1.18}) in the same fashion.  It is direct to derive along $\H_u$
\begin{equation*}
\p_\ub (r^\f12 \varphi) r^\f12 \varphi=r \p_\ub\varphi\c \varphi+ \frac{1}{4} \bb \varphi^2.
\end{equation*}
In view of the above identity, by using  (\ref{4.30.3.18}), we integrate along $\H_u^\ub$ with the area element $ d\ub' d\omega$ to derive
\begin{align*}
\frac{1}{2}\int_{\H_u^\ub} \bb\varphi^2 d\omega d\ub'&= \int_{S_{u,\ub}} r \varphi^2 d\omega-\int_{S_{u, -u}} r\varphi^2 d\omega-\int_{\H_u^\ub} \frac{r^2}{r-M_0}  L' \varphi \c \varphi d\ub' d\omega .
\end{align*}
We then combine the estimate of (\ref{4.29.6.18}) and Cauchy Schwartz inequality  to derive
\begin{align*}
\int_{\H_u^\ub} \varphi^2 d\omega d\ub'&\les \int_{S_{-\ub, \ub}} r \varphi^2 d\omega+\int_{S_{u,-u}} r\varphi^2 d\omega+E[\varphi](\H_u^\ub)+E[\varphi](\Hb_\ub^u)
\end{align*}
as desired in  (\ref{5.1.1.18}).
\end{proof}

\subsection{Simple facts of vector fields}
Before proceeding further,  we give basic facts about the vector fields.

 Let $\sn$ be the covariant derivative on $S_{u,\ub}$. Its component under the Cartesian frame $\p_i, i=1,2,3$  takes the form
 of  $ \sn_i=\p_i-\omega^i \p_r, \, \omega^i=x^i/r.$
 We set
$\bar \p=(\sn, L), \,  \ud \p=(\sn, \Lb).$ For smooth functions $f$, we have
\begin{enumerate}
\item[(1)]
By direct calculation, there hold
\begin{align}
&[\p_\rho, \Omega_{\mu\nu}]=\bm_{\rho\mu}\p_\nu-\bm_{\rho \nu}\p_\mu, \quad  0\le \mu<\nu\le 3,\label{8.1.1.18}\\
&[\Omega_{ij}, \sn_l ]f=-\delta_l^i \sn_j f+\delta_l^j \sn_i f,\label{3.24.3.18}\\
&[L,\Omega_{ij}]=0=[\Lb, \Omega_{ij}]=[\p_r, \Omega_{ij}],\quad [L, \Lb]=0,\label{3.24.1.18}\\
&[L, \sn] f=-r^{-1} \sn f, \quad [\Lb, \sn ]f =r^{-1} \sn f, \quad [\p_r, \sn] f= -r^{-1} \sn f.\label{3.24.5.18}
\end{align}
\item[(2)]
For $X=L, \Lb, L', \Lb'$,  due to $X\omega^i=0$, there hold
\begin{equation}\label{3.24.14.18}
|X(r Lf)|+|X(r\Lb f)|
%\les \frac{x^l}{r}|L(r\p_l f)|+|L (r\p_t f)|
\les |X(r \p f)|; \qquad |X L f|+|X\Lb f|\les |X \p f|.
\end{equation}
\item[(3)]
\begin{align}
  &|\sn \Lb f|+|\sn L f|+ |\sn \p_r f| \les |\sn \p f|+r^{-1} |\sn f|,\label{3.24.2.18}\\
 &|\p f|^2 +|\Omega \p_\mu f|^2\approx  |\p f|^2+|\p_\mu \Omega f|^2,\, \, \mu=0,\cdots, 3,\label{6.2.9.18}\\
&\Omega\rp{\ell} \D_* f=\D_*\Omega\rp{\le \ell} f,\, \quad \mbox{ if }\D_*\in \{\sn, \ud \p, \bar \p\} \, \quad \ell\in {\mathbb N},\label{3.24.4.18}
\end{align}
where  $\Omega$  means one of $\{\Omega_{ij}, 1\le i<j\le 3\}$.
%Note that
% \begin{align*}
 %\sn_i\p_r f&= \sn_i(\frac{x^l}{r})\p_l f+\frac{x^l}{r} \sn_i \p_l f=r^{-1} (\delta_i^l-\frac{x^i x^l}{r^2}) \p_l f+\frac{x^l}{r} \sn_i \p_l f=r^{-1} \sn_i %f+\frac{x^l}{r}\sn_i \p_l f
 %\end{align*}
\end{enumerate}
Indeed, it is direct to check
\begin{equation}\label{7.20.1.18}
\sn_l \omega^i=r^{-1}(\delta_l^i -\omega^l \omega^i);\qquad  \Omega_{lj}\omega^i =r^{-1}( x^l \delta_j ^i-x^j \delta_l^i ),\qquad 1 \le  l<j\le 3.
\end{equation}
  (\ref{3.24.2.18}) follows by using the first identity.  To see (\ref{6.2.9.18}), in view of (\ref{8.1.1.18})
  $
|  [\Omega, \p_i]f|\les |\p f|,
  $
by also using that   $\Omega\p_0=\p_0 \Omega$,  (\ref{6.2.9.18}) is proved.

By using (\ref{3.24.3.18}), we can obtain
\begin{align*}
\Omega_{ij}\Omega_{mn} \sn_l f&=\sn_l \Omega_{ij} \Omega_{mn} f+[\Omega_{ij}, \sn_l]\Omega_{mn} f+\Omega_{ij}[\Omega_{mn}, \sn_l]f\\
&=\sn_l \Omega_{ij} \Omega_{mn} f+ \sn \Omega_{mn} f+\sn \Omega_{ij}\rp{\le 1} f.
\end{align*}
Higher order calculation can be done by induction. This implies (\ref{3.24.4.18}) with $\D_*=\sn$ for  $\ell\in {\mathbb N}$. We then combine this calculation with (\ref{3.24.1.18}) to conclude (\ref{3.24.4.18}) if $\D_*$ is one of the derivatives $\sn, \ud \p, \bar \p$.

\subsection{$L^4$ type estimates}
In order to  give the product estimates for the nonlinear terms of (\ref{eqn_1}), we will rely on $L^4$ type estimates. They can be  derived by using the Sobolev inequality (\ref{6.24.12.18}) and the energies in (\ref{3.19.1}).
\begin{lemma}\label{6.25.2.18}
Let $-\ub_1\le u_1\le u_0(M_0)$  and $\a>0$ be fixed.  For smooth functions $F$ and $\psi$, there hold the following estimates,
\begin{align}
%\|r^\f12F\|_{ L_u^2 L_\omega^4(\Hb_{\ub_1}^{u_1})}&\les \|r^\f12 \sn F\|_{L^2(\Hb_{\ub_1}^{u_1})}+\|r^{-\f12} F\|_{L^2(\Hb_{\ub_1}^{u_1})} \label{5.22.1.18} \\
\|r^\f12 F\|^2_{L_u^2 L_\omega^4(\Hb_{\ub_1}^{u_1})}&\les \W_1[ F](\Hb_{\ub_1}^{u_1})+\W_1[F](\D_{u_1}^{\ub_1})+\| r^{-\f12} F \|^2_{L^2(\Sigma_0^{u_1, \ub_1})}\nn\\
&\quad +M \int_{-\ub_1}^{u_1} u _+^{-1} E [F](\H_u^{\ub_1}) du,  \label{6.3.9.18}\\
\a\|(\frac{{u_1}_+}{r})^\a F\|^2_{L_u^2 L_\ub^2 L_\omega^4(\D_{u_1}^{\ub_1})}&\les \W_1[F](\D_{u_1}^{\ub_1})+\|r^{-\f12} F\|^2_{L^2(\Sigma_0^{u_1,\ub_1})}\nn\\
&+M\int_{-\ub_1}^{u_1} u_+^{-1}E[F](\H_u^{\ub_1}) du,\label{6.14.7.18}\\
\| r^\f12\p \psi\|^2_{L_u^2 L_\ub^2 L_\omega^4(\D_{u_1}^{\ub_1})}&\les M^{-1} \int_{-\ub_1}^{u_1}  E[\Omega\rp{\le 1}  \psi] (\H_u^{\ub_1}) du, \label{6.14.1.18}\\\
\int_{S_{u_1,\ub_1}} r^2 |F|^4 d\omega &\les \int_{S_{u_1, -u_1}}r^2 |F|^4 d\omega +(E[F](\H_{u_1}^{\ub_1})+\int_{\H_{u_1}^{\ub_1}} |F|^2 d\omega d\ub)^2,
\label{5.22.2.18}\\
\|r^\f12 \p \psi\|^2_{L_\ub^2 L_u^\infty L_\omega^4(\D_{u_1}^{\ub_1})}&\les \int_{-u_1}^{\ub_1} (\int_{S_{-\ub , \ub }} r^2 |\p\psi|^4 d\omega)^\f12  d\ub+ M^{-1/2}(\int_{-\ub_1}^{u_1}  E[\Omega\rp{\le 1} \psi](\H_u^{\ub_1}) du)^\f12\nn \\
&\c(M^{-1} \int_{-\ub_1}^{u_1}  E[\p \psi](\H_u^{\ub_1}) du +{u_1}_+^{-2}(\sup_{-u_1\le \ub <\ub_1} E[\psi](\Hb_\ub^{u_1})\nn\\
&+\sup_{-\ub_1\le u\le u_1}E[\psi](\H_u^{\ub_1})))^\f12 \label{5.22.5.18},\\
%\|r \p \psi\|_{L_u^2L_v^\infty L_\omega^4(\D_u^v)}^2&\les \int_{u_{\min}(v)}^u (\int_{S_{u',v_{\min}(u')}} |r \p %\psi|^4 d\omega)^{\f12}  du' \nn\\
%&+M^{-\f12}(\int_{u_{\min}(v)}^u  \W_1[\p \psi](\H_{u'}^v) du')^\f12(\int_{u_{\min}(v)}^u E[\Omega\rp{\le 1} \psi](\H_{u'}^v )du')^\f12\label{5.25.1.18}\\
\int_{S_{u_1,\ub_1}}|r \ud\p \psi|^4 d\omega &\les \int_{S_{-\ub_1, \ub_1}}|r \ud\p \psi|^4 d\omega  +\big(E[\p \psi](\Hb_{\ub_1}^{u_1})+{u_1}_+^{-2} E[\psi](\Hb_{\ub_1}^{u_1})\big)\nn\\
&\qquad\c E[\Omega\rp{\le 1}\psi](\Hb_{\ub_1}^{u_1}),\label{6.27.1.18}\displaybreak[0]\\
\int_{S_{u_1,\ub_1}}|r L\psi|^4 d\omega &\les \int_{S_{u_1, -u_1}} |rL\psi|^4 d\omega+(E[\p \psi](\H_{u_1}^{\ub_1})+{u_1}_+^{-2} E[\psi](\H_{u_1}^{\ub_1}))\nn\\
&\qquad\c E[\Omega\rp{\le 1}\psi](\H_{u_1}^{\ub_1}),\label{6.28.1.18}\\
\| r \ud\p \psi\|_{L_u^2 L_\omega^4(\Hb_{\ub_1}^{u_1})}^2&\les E[\Omega\rp{\le 1} \psi](\Hb_{\ub_1}^{u_1}),\label{6.13.1.18}\\
\|r L \psi\|^2_{L_u^2 L_\omega^4(\Hb_{\ub_1}^{u_1})}&\les \int_{-\ub_1}^{u_1} (\int_{S_{u, -u}} |r L\psi|^4 d\omega)^\f12 du+ \big(\int_{-\ub_1}^{u_1} E[\Omega\rp{\le 1}\psi](\H_{u}^{\ub_1}) du \big)^\f12\nn\\
&\c \big(\int_{-\ub_1}^{u_1}( E[\p \psi](\H_{u}^{\ub_1})+{u}_+^{-2} E[\psi](\H_{u}^{\ub_1})) du \big)^\f12.\label{6.28.7.18}
\end{align}
\end{lemma}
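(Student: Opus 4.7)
The nine inequalities in the lemma share a common template: invoke the weighted Sobolev inequality (\ref{6.24.11.18}) or (\ref{6.24.12.18}) on an appropriate outgoing or incoming null cone; expand $L'(r^\ga f) = r^\ga L'f + \ga r^{\ga-1}(L'r) f$ (and likewise for $\Lb'$) to separate the genuine derivative from a lower-order $f$ contribution; then convert the resulting cone integrals into the stated energy norms via the identities (\ref{3.23.2.18})--(\ref{5.1.1.18}) proved just above. The parameters $\ga, \ga_0', \ga_2$ are dictated by the $r$-weight on the left-hand side of each estimate. Throughout the argument one trades factors of $h$ against the small parameter $M$ using the standing assumption $r|h|\le M$ from (\ref{7.1.1.18}), which lets one absorb the discrepancy $|L'F - LF| + |\Lb'F - \Lb F| \les |h||\p F|$ between the $L', \Lb'$ appearing in the Sobolev inequality and the $L, \Lb$ appearing in the energy fluxes of (\ref{3.19.1}).

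For the sphere estimates (\ref{5.22.2.18}), (\ref{6.27.1.18}), (\ref{6.28.1.18}) I apply (\ref{6.24.12.18}) with $\ga=1$, $\ga_0'=0$, $\ga_2=1$ on $\H_{u_1}^{\ub_1}$ (or its $\Hb$ analogue); the $|L'(rf)|^2$ factor is handled by (\ref{3.23.4.18}), while $\int r^2|\Omega^{\le 1} f|^2$ is controlled by the standard energy once (\ref{3.24.1.18})--(\ref{3.24.5.18}) are used to commute rotations through $L,\Lb,\sn$, and the lower-order $\psi^2$ remainders are absorbed by (\ref{4.29.6.18}) and (\ref{5.1.1.18}). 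The pure $L^2_u L^4_\omega$ estimates (\ref{6.13.1.18}) and (\ref{6.28.7.18}) follow by integrating the spherical Sobolev embedding $\|G\|^2_{L^4(S_{u,\ub})} \les \|G\|^2_{L^2(S)} + \|\Omega G\|^2_{L^2(S)}$ against $du$ with $G = r\ud\p \psi$ or $G = rL\psi$; the rotation commutators produce the $u_+^{-2}E[\psi]$ correction and the $E[\Omega^{\le 1}\psi]$ control, and for (\ref{6.28.7.18}) the initial-sphere term at $(u,-u)$ is exactly what the outgoing Sobolev on $\H_u$ produces, matching the first summand on the right.

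The four mixed-cone/spacetime estimates (\ref{6.3.9.18}), (\ref{6.14.7.18}), (\ref{6.14.1.18}), (\ref{5.22.5.18}) form the delicate part. For (\ref{6.3.9.18}) I apply (\ref{6.24.12.18}) on $\Hb_{\ub_1}^{u}$ with $\ga=\f12, \ga_0'=1, \ga_2=0$; the expansion of $\Lb'(r^{1/2}F)$ reduces the first Sobolev factor to $\int r^2|\Lb F|^2 du d\omega$ (bounded by $E[F](\Hb)$) plus an $\int F^2$ contribution, the latter handled by (\ref{3.23.2.18}), which is precisely the identity producing the $\W_1[F](\D_{u_1}^{\ub_1})$ and $\|r^{-1/2}F\|_{L^2(\Sigma_0)}$ terms on the right; the $M\int u_+^{-1}E[F](\H_{u'}^{\ub_1})du'$ remainder absorbs the cross contribution $\int r^2h^2(LF)^2$ after the $r|h|\le M$ trade, and a final Cauchy--Schwarz in $u$ converts the product of cumulative cone integrals into the desired $L^2_u L^4_\omega$ norm. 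Estimate (\ref{6.14.7.18}) is analogous, but uses directly the spacetime $\a(u_+/r)^\a \varphi^2$ piece on the left of (\ref{3.23.2.18}); (\ref{6.14.1.18}) is an integration of (\ref{6.13.1.18}) in $u$ with the $(M/r)r^2|L\psi|^2$ portion of $E[\psi]$ producing the $M^{-1}$ factor; and (\ref{5.22.5.18}) uses the supremum version (\ref{6.24.11.18}) applied to $\p\psi$, with (\ref{6.2.9.18}) supplying the $\p\Omega \leftrightarrow \Omega\p$ exchange and with the initial-sphere integral at $(-\ub,\ub)$ kept on the right as a genuine $\Sigma_0$ datum. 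The principal obstacle is the weight bookkeeping: since $\W_1$ only sees the tangential $L'$ on $\H$ (respectively $\Lb'$ on $\Hb$), every time a $\Lb'$ meets an $r^\ga$ one must peel off the lower-order $F$ piece and close the resulting $\int F^2$ via (\ref{4.29.6.18})--(\ref{5.1.1.18}); keeping this recovery compatible with the $r|h|\le M$ cap is what forces the exact asymmetric combination of $\W_1$, $E$, and $\Sigma_0^{u_1,\ub_1}$ norms appearing on the right of each mixed-cone estimate.
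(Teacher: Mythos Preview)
Your overall template---Sobolev inequality plus conversion to energy fluxes---is correct, but the execution has a genuine gap in the four estimates you call ``the delicate part.''

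For (\ref{6.3.9.18}), (\ref{6.14.7.18}), and (\ref{6.14.1.18}) the paper does \emph{not} use the null-cone Sobolev inequality (\ref{6.24.12.18}) at all; it uses only the elementary spherical embedding
\[
\|G\|_{L^4_\omega(S_{u,\ub})}\les \|\sn G\|_{L^2_\omega(S_{u,\ub})}+\|r^{-1}G\|_{L^2_\omega(S_{u,\ub})},
\]
applied with $G=r^{1/2}F$ (or $G=r^{1/2}\p\psi$) on each sphere, then squared and integrated in $u$ (or in $u,\ub$). This immediately produces $\W_1[F](\Hb)$ from the $\sn$ term and $\|r^{-1/2}F\|_{L^2(\Hb)}^2$ from the zero-order term, which is then fed into (\ref{3.23.2.18}). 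Your proposed route through (\ref{6.24.12.18}) does not work: that inequality returns $\|r^{1/2}F\|_{L^4_\omega(S_{u,\ub_1})}^4$, a \emph{pointwise-in-$u$} bound whose right-hand side is a \emph{product} of two cone integrals over $\Hb_{\ub_1}^{u}$ that both grow with $u$. There is no Cauchy--Schwarz in $u$ that converts this to the claimed $L^2_u L^4_\omega$ norm without losing a full factor of $|u_1|$; the ``final Cauchy--Schwarz'' you invoke is exactly where the argument fails. The same objection applies to your treatment of (\ref{6.14.1.18}): it is not an integration of (\ref{6.13.1.18}), since the weights differ ($r^{1/2}$ versus $r$); it is again the elementary spherical Sobolev followed by $|\sn\p\psi|\approx r^{-1}|\Omega\p\psi|$ and (\ref{6.2.9.18}), with the $M^{-1}$ arising because $\|r^{-1/2}\p\psi\|_{L^2(\H_u)}^2\les M^{-1}E[\psi](\H_u)$.

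Two smaller points: for (\ref{5.22.2.18}) the left side is $r^2|F|^4=|r^{1/2}F|^4$, so the correct exponents are $\ga=\tfrac12,\ \ga_0'=1,\ \ga_2=0$, not $\ga=1$; and for (\ref{5.22.5.18}) the paper uses (\ref{6.24.12.18}) (not the sup-version (\ref{6.24.11.18})) on each $\Hb_\ub^{u_1}$ with $\ga=\ga_2=\tfrac12,\ \ga_0'=0$, takes the square root, and then integrates in $\ub$---this is what produces the $L^2_\ub L^\infty_u L^4_\omega$ structure. Your description of (\ref{6.27.1.18}), (\ref{6.28.1.18}), (\ref{6.13.1.18}) is essentially correct, and (\ref{6.28.7.18}) in the paper is just the $u$-integral of (\ref{6.28.1.18}).
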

\begin{proof}
We first derive directly from the Sobolev inequality on unit sphere for $-\ub\le u\le u_0$,
\begin{align*}
\|r^\f12 F\|_{L_\omega^4(S_{u,\ub})}&\les \|r^\f12\sn F\|_{L^2(S_{u,\ub})}+\|r^{-1/2} F\|_{L^2(S_{u,\ub})}.
\end{align*}
Integrating the above inequality in $u$ variable along $\Hb_{\ub_1}$ implies
\begin{equation*}
\|r^\f12 F\|_{L_u^2 L_\omega^4(\Hb_{\ub_1}^{u_1})}\les \W_1[F]^\f12(\Hb_{\ub_1}^{u_1})+\|r^{-\f12} F\|_{L^2(\Hb_{\ub_1}^{u_1})}.
\end{equation*}
We then use (\ref{3.23.2.18}) to obtain (\ref{6.3.9.18}).

By the Sobolev embedding on the unit sphere
\begin{equation*}
\|F\|_{L_u^2 L_\ub^2 L_\omega^4(\D_{u_1}^{\ub_1})}\les \|\sn F\|_{L^2(\D_{u_1}^{\ub_1})}+\|r^{-1} F\|_{L^2(\D_{u_1}^{\ub_1})}
\end{equation*}
and (\ref{3.23.2.18}), we can obtain (\ref{6.14.7.18}).

Similarly, by using the Sobolev inequality that
\begin{equation*}
\| r^\f12\p \psi\|_{L_u^2 L_\ub^2 L_\omega^4(\D_{u_1}^{\ub_1})}\les \|r^\f12 \sn \p \psi\|_{L^2(\D_{u_1}^{\ub_1})}+\|r^{-\f12}\p \psi \|_{L^2(\D_{u_1}^{\ub_1})},
\end{equation*}
 $|\sn f|\approx  r^{-1}|\Omega f| $ and (\ref{6.2.9.18}), we can obtain (\ref{6.14.1.18}).

To prove (\ref{5.22.2.18}), we set  in (\ref{6.24.12.18}) $\ga_0'=1, \ga=\f12, \ga_2=0$.
\begin{align}
&\int_{S_{u_1,\ub_1}} r^2 |F|^4 d\omega  \label{5.22.4.18}\\
&\les \int_{S_{u_1,-u_1}} r^2 |F|^4 d\omega +\int_{\H_{u_1}^{\ub_1}} r|L'(r^\f12 F)|^2 d\ub d\omega\c \int_{\H_{u_1}^{\ub_1}}(|F|^2+r^2 |\sn F|^2 ) d\ub d\omega.\nn
\end{align}
Note that
$
r^\f12 L'(r^\f12 F)=r L'F+\f12(1+h)F.
$
We can derive in view of  the smallness of $|h|$ in (\ref{7.1.1.18}) that
\begin{equation*}
\int_{\H_{u_1}^{\ub_1}} |r^\f12 L'(r^\f12 F)|^2 d\ub d\omega \les E[F](\H_{u_1}^{\ub_1})+\int_{\H_{u_1}^{\ub_1}} |F|^2 d\omega d\ub.
\end{equation*}
Substituting the above inequality  into (\ref{5.22.4.18}) implies (\ref{5.22.2.18}).

Now we prove (\ref{5.22.5.18}).  Note that by taking $\ga_0'=0 $ and $\ga_2=\f12=\ga$ in (\ref{6.24.12.18}), we can derive for any smooth scalar function $F$ and $-u_1\le \ub\le \ub_1$ that
\begin{align*}
\|r^\f12 F\|^2_{L_u^\infty L_\omega^4(\Hb_\ub^{u_1})} &\le (\int_{S_{-\ub, \ub}} r^2 |F|^4 d\omega)^\f12+(\int_{\Hb_\ub^{u_1} } |\Lb' (r^\f12 F)|^2 du d\omega)^\f12 \nn\\
&\c (\int_{\Hb_\ub^{u_1} } (|F|^2 +|\Omega F|^2 ) r  du d\omega)^\f12.
\end{align*}
We then apply the above inequality to $F=\p \psi$, followed with integrating in $\ub$ variable.
\begin{align}
\|r^\f12 \p \psi\|^2_{L_\ub^2 L_u^\infty L_\omega^4(\D_{u_1}^{\ub_1})}&\les \int^{\ub_1}_{-u_1} (\int_{S_{-\ub , \ub}} r^2 |\p\psi|^4 d\omega)^\f12  d\ub\nn \\
&+(\int_{-u_1}^{\ub_1}  \int_{\Hb_\ub^{u_1} } |\Lb' (r^\f12 \p \psi )|^2 du d\omega d\ub)^\f12\nn \\
&\c (\int_{-u_1}^{\ub_1}\int_{\Hb_\ub^{u_1} } (|\p \psi|^2 +|\Omega \p \psi|^2 ) r  du d\omega d\ub)^\f12. \label{6.2.8.18}
\end{align}
Note also that
$|\Lb' (r^\f12 \p \psi)|\les r^\f12|\Lb' \p\psi|+r^{-\f12} (1+h)|\p \psi|$ and the smallness of $|h|$ imply
\begin{equation}\label{6.2.6.18}
\begin{split}
\int_{\D_{u_1}^{\ub_1}} |\Lb'(r^\f12 \p \psi)|^2 d u d\omega d\ub&\les M^{-1} \int^{u_1}_{-\ub_1} E[\p \psi](\H_u^{\ub_1}) du\\
& +{u_1}_+^{-2}\sup_{-u_1\le \ub\le \ub_1} \big(E[\psi](\Hb_\ub^{u_1})+\sup_{-\ub_1\le u\le u_1} E[\psi](\H_u^{\ub_1})\big),
\end{split}
\end{equation}
where the last line is the bound for $\int_{\D_{u_1}^{\ub_1}} r^{-3}|\p \psi|^2$. It is achieved  by using  $|\p \psi|\les |\ud \p \psi|+|\bar \p \psi|$, (\ref{3.19.1}) and  (\ref{7.3.1.18}),  followed with integrating in $u$ or $\ub$ variable.
We then substitute (\ref{6.2.9.18}) and (\ref{6.2.6.18}) to (\ref{6.2.8.18}), which implies (\ref{5.22.5.18}).
%Next, we prove (\ref{5.25.1.18}). By setting $\ga_0'=1=\ga, \ga_2=\f12$, we have
%\begin{align*}
%(\int_{u_{\min}(v)}^u& \sup_{v'}\int_{S_{u',v'}} |r \p \psi|^4 d\omega du')^\f12\les \int_{u_{\min}(v)}^u %(\int_{S_{u', v_{\min}(u')}} |r\p \psi|^4 d\omega)^\f12 du'\\
%&+(\int_{\D_u^v} r|L(r\p \psi)|^2 dv' du'd\omega)^\f12 (\int_{\D_u^v} r (|\p \psi|^2 +|\p_\omega \p \psi|^2 ) %du'dv'd\omega)^\f12 \\
%&\les \int_{u_{\min}(v)}^u  (\int_{S_{u',v_{\min}(u')}}|r \p \varphi|^4 d\omega)^\f12 du + (\int_{u_{\min}(v)}^u %\W_1[\p \psi](\H_{u'}^v) du')^\frac{1}{2} \\
%&\c (\int_{u_{\min}(v)}^u   E[\Omega \rp{\le 1}\psi](\H_{u'}^v) du')^\frac{1}{2} M^{-\f12}
%\end{align*}

Noting $|\Lb'(r\ud \p \psi)|\les |r \Lb' \ud \p\psi|+|\ud \p\psi|$, (\ref{6.27.1.18}) can be proved by using (\ref{6.24.12.18}) for $\ud\p \psi$ along $\Hb^{u_1}_{\ub_1}$ with $\ga'_0=0, \ga_2=\ga=1$, with the help of (\ref{3.24.4.18}).

Note (\ref{3.24.14.18}) and the smallness of $|h|$ imply $|L'(r L\psi)|\les |L\psi|+r |L \p \psi|$. Applying (\ref{6.24.12.18}) to $L \psi$ along $\H_{u_1}^{\ub_1}$ with the same combination of weight exponents, we can similarly obtain (\ref{6.28.1.18}) with the help of (\ref{3.24.4.18}).

 The Sobolev embedding on ${\mathbb S}^2$ gives
\begin{align*}
\| r\ud\p \psi\|_{L_u^2 L_\omega^4(\Hb_{\ub_1}^{u_1})}&\les\| r\Omega\ud\p \psi\|_{L_u^2 L_\omega^2(\Hb_{\ub_1}^{u_1})}+ \| r \ud\p \psi\|_{L_u^2 L_\omega^2(\Hb_{\ub_1}^{u_1})}.
\end{align*}
 Then with a direct substitution of (\ref{3.24.4.18}), we can obtain (\ref{6.13.1.18}).

 (\ref{6.28.7.18}) follows by  a direct integration of (\ref{6.28.1.18}) in $-\ub_1\le u\le u_1$.

\end{proof}

\section{Decay estimates}\label{decay}
In this section, we provide in Proposition \ref{5.20.1.18} and Proposition \ref{6.17.5.18} the decay properties for a smooth function $\phi\in {\mathbb R}^{3+1}$ with bounded energies.

To be more precise,  let  $n=2$ or $3$ be fixed. We  suppose $\phi$ verifies $\E_{n,\ga_0, R}(\phi[0])<\infty$ for a fixed $1<\ga_0<2$ and
 the following assumptions:
\begin{itemize}
 \item Let   $\ub_*> -u_0(M_0)$  be a fixed number and $\dn>0$ be a fixed small constant. There hold with   $0\le l\le n$ that
\begin{equation}\label{7.1.2.18}\tag{$\BA{n}$}
\begin{split}
&E[Z\rp{l} \phi](\H_u^\ub)+E[Z\rp{l} \phi](\Hb_\ub^u)\le 2\dn u_+^{-\ga_0+2\zeta(Z^l)}, \\
&\W_1[Z\rp{l} \phi](\H_u^\ub)+\W_1[Z\rp{l} \phi](\Hb_\ub^u)+\W_1[Z\rp{l}\phi](\D_u^\ub)\le 2\dn u_+^{-\ga_0+1+2\zeta(Z^l)},
\end{split}
\end{equation}
for all $-\ub_* \le -\ub\le u\le u_0$.
\end{itemize}
We first derive a set of estimates, including the pointwise decay estimates, integrated decay estimates and the improved Hardy's inequality.

\begin{proposition}\label{5.20.1.18}
Let $n=2$ or $3$ be fixed.
Under the assumption of $(\BA{n})$,

(1) For any point $(u, \ub, \omega),\omega \in {\mathbb S}^2$ with $-\ub_*\le -\ub\le u\le u_0$, there hold for $l\le n-2$,
\begin{align}
&r^3 |\sn Z\rp{l}\phi|^2+ r^2 u_+|\Lb Z\rp{l} \phi|^2+r^3|LZ\rp{l} \phi|^2\les (\E_{l+2, \ga_0}+\dn) u_+^{-\ga_0+2\zeta(Z^l)},\label{3.24.18.18}\\
& |r Z\rp{l}\phi(u,v, \omega)|^2\les (\E_{l+2,\ga_0}+\dn) u_+^{-\ga_0+1+2\zeta(Z^l)} .\label{3.24.11.18}
%&  r^3 |L Z\rp{l}\phi|^2\les (\E_{l+2,\ga_0}+\dn) u_+^{-\ga_0+2\zeta(Z^l)} \label{3.24.19.18}
\end{align}
(2) Let $(u_1, \ub_1) $ be any pair of numbers verifying $-\ub_*\le -\ub_1\le u_1\le u_0$. With  $a\le n$ and $l\le n-2$,  there hold
\begin{align}
&\|r^\f12 \Lb Z\rp{l}\phi, r^\f12 \p Z\rp{l}\phi\|^2_{L_\ub^2 L^\infty (\D_{u_1}^{\ub_1})}\les(\E_{l+2,\ga_0}+ \dn M^{-1}) {u_1}_+^{-\ga_0+2\zeta(Z^l)},\label{4.2.1.18}\\
%&\| (\frac{r}{u_+})^\f12 L (rZ\rp{l}\phi)\|^2_{L_\ub^2 L^\infty(\D_{u_1}^{\ub_1})}\les (\E_{l+2,\ga_0}+\dn)  {u_1}_+^{-\ga_0+2\zeta(Z^l)} \label{4.15.2.18}\\
&\| r^{-\f12} Z\rp{a}\phi\|_{L^2(\H_{u_1}^{\ub_1})}^2\les {u_1}_+^{-\ga_0+2+2\zeta(Z^a)}(\dn M^{-1}+\E_{a,\ga_0}).\label{6.17.1.18}
%&\|r^\f12 Z\rp{l}\phi\|^2_{L_u^\infty L_\ub^2  L_\omega^\infty(\D_{u_1}^{\ub_1})}\les (\E_{l+2}+ M^{-1} \dn) %{u_1}_+^{-\ga_0+2+2\zeta{(Z^l)}}.\label{5.5.3.18}
\end{align}
With $p>-\frac{\ga_0}{2}+\frac{3}{2}$, there holds for $a\le n$ that
\begin{equation}\label{6.17.2.18}
\|u_+^{-p} r^{-\f12} Z\rp{a} \phi\|_{L^2(\D_{u_1}^{\ub_1})}\les {u_1}_+^{-\frac{\ga_0}{2}+\frac{3}{2}-p+\zeta(Z^a)}(\dn^\f12 M^{-\f12}+\E^\f12_{a,\ga_0}).
\end{equation}
\end{proposition}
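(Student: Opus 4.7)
The plan is to derive each estimate by combining the sphere--Sobolev inequalities (\ref{6.24.11.18})--(\ref{6.24.12.18}) and the $L^4$-type bounds of Lemma \ref{6.25.2.18} with the bootstrap assumption $(\BA{n})$, using Proposition \ref{6.23.4.18} to dispose of the initial sphere traces and the smallness of $|h|\le M/r$ from (\ref{7.1.1.18}) to absorb the schwarzschild corrections. For the pointwise estimates (\ref{3.24.18.18})--(\ref{3.24.11.18}), I apply (\ref{6.24.11.18}) to $Z\rp{l}\phi$ with $\ga=1$ to obtain the bound on $|rZ\rp{l}\phi|^2$, to $LZ\rp{l}\phi$ with $\ga=\tfrac{3}{2}$ to produce $r^3|LZ\rp{l}\phi|^2$, to $\Omega Z\rp{l}\phi$ with $\ga=\tfrac{1}{2}$ to produce $r^3|\sn Z\rp{l}\phi|^2$ (using $|r\sn f|\lesssim |\Omega f|$ from (\ref{3.24.4.18})), and to $\Lb Z\rp{l}\phi$ along the incoming analogue of (\ref{6.24.11.18}) on $\Hb_\ub^u$ to produce $r^2 u_+|\Lb Z\rp{l}\phi|^2$; in the last case the extra $u_+$ decay arises from the factor $r_{\min}(\Hb_\ub^u)^{-3}\lesssim u_+^{-3}$ appearing in the first bulk factor together with the $\zeta(Z^{l+2})$ shift from $\Lb$ contributing $-1$ to the signature. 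The boundary sphere integrals at $S_{u,-u}$ or $S_{-\ub,\ub}$ are dominated by (\ref{6.24.4.18}) and (\ref{6.24.9.18}); the two bulk factors are controlled by $(\BA{n})$, where the $L^2$-integral of $\Omega\rp{\le 2}Z\rp{l}\phi$ uses (\ref{5.1.1.18}) and the $L'$-type factor is related to $\W_1[\Omega\rp{\le 1}Z\rp{l}\phi]$ after expanding $L'(rg)=(1+h)g+rL'g$. The restriction $l\le n-2$ reflects the two extra angular derivatives consumed by Sobolev.

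For the integrated bound (\ref{4.2.1.18}), I apply (\ref{5.22.5.18}) with $\psi=\Omega\rp{\le 1}Z\rp{l}\phi$, so that the Sobolev embedding on $S^2$ promotes $L^4_\omega$ to $L^\infty_\omega$; the initial-slice term on the right is bounded by (\ref{6.24.6.18}), the energy factors by $(\BA{n})$, and the $M^{-1}$ factor arises exactly as in (\ref{6.14.1.18}) from the $\tfrac{M}{r}|\Lb\cdot|^2$ weighting in the standard energy. The Hardy estimate (\ref{6.17.1.18}) is the technical heart of the proposition. I start from the weighted-energy flux $\W_1[Z\rp{a}\phi](\H_u^{\ub_1})$, expand $r(L(rf))^2=rf^2+2r^2 fLf+r^3(Lf)^2$ with $f=Z\rp{a}\phi$, and use $\p_\ub=(2(1-h))^{-1}L'$ together with $L=L'+h\Lb$ from (\ref{4.30.3.18}) to convert $Lf$ into $\p_\ub f$ plus an $h\Lb f$ remainder. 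The identity $\p_\ub(r^2 f^2)=\bb r f^2+2r^2 f\,\p_\ub f$ then allows one to integrate the cross term by parts along $\H_u$; a Cauchy--Schwarz with small parameter isolates $\int_{\H_u^{\ub_1}}rf^2\,d\ub\,d\omega$ modulo $\W_1[f](\H_u^{\ub_1})$, an $ME[f](\H_u^{\ub_1})$ correction (treated via (\ref{3.23.4.18})), and the two boundary sphere integrals. The boundary at $S_{u,-u}$ is bounded via (\ref{6.24.9.18}); the outer boundary at $S_{u,\ub_1}$ is bounded via (\ref{4.29.6.18}) combined with the initial-data decay (\ref{6.23.7.18}) and the $\Hb_{\ub_1}^u$-flux in $(\BA{n})$, the combination being designed so that the $\ub_1$-growth coming from $r(S_{u,\ub_1})\approx \ub_1$ is compensated by the $\ub_1^{-\ga_0+2\zeta(Z^a)}$ decay, giving a bound uniform in $\ub_1$ of the claimed form $u_+^{-\ga_0+2+2\zeta(Z^a)}(\dn M^{-1}+\mathcal{E}_{a,\ga_0})$. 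Finally, (\ref{6.17.2.18}) follows by inserting (\ref{6.17.1.18}) into the $u$-integration $\int_{-\ub_1}^{u_1}u_+^{-2p}\,\|r^{-1/2}Z\rp{a}\phi\|_{L^2(\H_u^{\ub_1})}^2\,du$, whose integrability at $u\to -\infty$ is guaranteed precisely by the hypothesis $p>\tfrac{3}{2}-\tfrac{\ga_0}{2}$ since $\zeta(Z^a)\le 0$.

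The main obstacle will be (\ref{6.17.1.18}). The delicate point is producing a bound uniform in $\ub_1$: the outer boundary term arising from the integration by parts grows like $r(S_{u,\ub_1})\approx \ub_1$ through the sphere measure, and this growth must be absorbed precisely by the $\ub_1^{-\ga_0+2\zeta(Z^a)}$ decay furnished by both the initial data trace (\ref{6.23.7.18}) at $S_{-\ub_1,\ub_1}$ and the $\Hb_{\ub_1}^u$-flux in $(\BA{n})$, while the lapse $\bb$, the ratio $h$, and their derivatives contribute $O(M)$-corrections at every step that must be tracked and absorbed using (\ref{7.1.1.18}). Once this balancing is executed, the rest of the proposition follows essentially by assembling the tools developed in Sections \ref{setup}--\ref{prel}.
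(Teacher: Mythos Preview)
Your plan for (\ref{6.17.1.18}) does not close. After the identity
\(\p_\ub(r^2f^2)=\bb r f^2+2r^2 f\,\p_\ub f\) is integrated along $\H_u^{\ub_1}$, the boundary term at $S_{u,\ub_1}$ is $\int_{S^2} r(u,\ub_1)^2 f^2\,d\omega = r(u,\ub_1)\cdot\int_{S_{u,\ub_1}} r f^2\,d\omega$. You propose to bound the inner factor via (\ref{4.29.6.18}), which gives
\(\int_{S_{u,\ub_1}} rf^2\,d\omega \lesssim \ub_1^{-\ga_0+2\zeta(Z^a)}\E_{a,\ga_0}+\dn\,u_+^{-\ga_0+2\zeta(Z^a)}\).
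The second summand has no decay in $\ub_1$, so after multiplication by $r(u,\ub_1)\approx\ub_1$ the boundary term is $\sim \dn\,\ub_1\,u_+^{-\ga_0+2\zeta(Z^a)}$, which is \emph{not} uniform in $\ub_1$. (The same obstruction appears if you instead try to use the expansion of $r(L(rf))^2$: the term $\int r^3(Lf)^2\,d\ub\,d\omega$ carries one more $r$-weight than either $E[f]$ or $\W_1[f]$ supplies on $\H_u$.) The paper's argument avoids this by integrating in the \emph{incoming} direction over the full slab: from $\Lb'(rf^2)=-(1+h)f^2+r\Lb'(f^2)$ one integrates $\D_{u_1}^{\ub_1}$, drops the good-signed $(1+h)f^2$ term, and is left with
\(\int_{\H_{u_1}^{\ub_1}} r f^2\,d\ub\,d\omega \lesssim \text{(initial data)}+\int_{-\ub_1}^{u_1}\|r^{-\f12}\Lb' f\|_{L^2(\H_u^{\ub_1})}\|r^{-\f12}f\|_{L^2(\H_u^{\ub_1})}\,du\).
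The point is that $\|r^{-\f12}\Lb' f\|_{L^2(\H_u)}\lesssim M^{-\f12}E[f]^{\f12}(\H_u)$ is available (this is exactly where the $M^{-1}$ in the stated bound comes from), and then Gronwall in $u$ closes the estimate. Your outgoing scheme never sees $\Lb f$ and therefore cannot access this gain.

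There is a related issue in your treatment of the $L$-bound in (\ref{3.24.18.18}) and of (\ref{3.24.11.18}). Applying (\ref{6.24.11.18}) directly to $LZ\rp{l}\phi$ with $\ga=\tfrac32$ requires the bulk factor $\int_{\H_u} r^3|L\Omega\rp{\le 2}Z\rp{l}\phi|^2\,d\ub\,d\omega$, which is not controlled by $E$ or $\W_1$ without first invoking (\ref{6.17.1.18}) (write $rLf=L(rf)-f$ and the $f$-term needs $\|r^{-\f12}f\|_{L^2(\H_u)}$); this would feed an $M^{-1}$ into the pointwise bound, contradicting the statement. The paper sidesteps this by applying the Sobolev inequality to $r^{-1}L(rZ\rp{l}\phi)$ (for which the bulk factor is exactly $\W_1$), thereby first obtaining $r|L(rZ\rp{l}\phi)|^2\lesssim(\E+\dn)u_+^{-\ga_0+2\zeta}$; then (\ref{3.24.11.18}) is obtained not by Sobolev but by integrating the already-proved $\Lb Z\rp{l}\phi$ bound along the incoming integral curves of $\p_u$ on $\Hb_\ub$. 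The $LZ\rp{l}\phi$ estimate then follows by combining these two. This ordering is essential if you want the $M$-free constants stated in part (1).
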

\begin{remark}
 The estimates  with $M^{-1}$ appeared in the bound are  stronger than the standard estimates for the free wave in the region $\{r\ge t+R\}$.
  (\ref{4.2.1.18})-(\ref{6.17.2.18}) are not used for the proofs for Theorem \ref{thm2} and \ref{thm1}. They will be  used in Section \ref{quasi}.  In particular, (\ref{6.17.2.18}) is an improved Hardy's inequality, which is proved by using the sharp improved estimate (\ref{6.17.1.18}). The estimate (\ref{6.17.2.18}) takes a weight of $r^\f12 $ upto the top order derivative, which is much stronger than the standard Hardy's type inequality. Such type of estimates will be crucial for the result for the quasilinear equations.
\end{remark}
\begin{remark}
We emphasize that  we only need $(\BA2)$ and $\E_{2, \ga_0}(\phi[0])$ to be  bounded  in order to obtain the above results with $n=2$. There involves neither the third order control from $(\BA3)$ nor the  bound  of $\E_{3,\ga_0}$.
\end{remark}
\begin{remark}
We can also prove
$$\| (\frac{r}{u_+})^\f12 L (rZ\rp{l}\phi)\|^2_{L_\ub^2 L^\infty(\D_{u_1}^{\ub_1})} \les (\E_{l+2,\ga_0}+\dn)  {u_1}_+^{-\ga_0+2\zeta(Z^l)}, l\le n-2 $$ which is not used for the proofs in this paper.
\end{remark}
\begin{proof}
We first consider the  inequality for $\Lb Z\rp{l}\phi$ in (\ref{3.24.18.18}).  With $\ga_2=1=\ga$, $\ga_0'=0$ in (\ref{6.24.11.18}) and  $f=\Lb Z\rp{l}\phi$,  we have
\begin{align*}
\sup_{S_{u,\ub}} |r^\ga f|^4 &\les  \int_{S_{-\ub ,\ub}} |r^\ga \Omega\rp{\le 1} f|^4 r^{-2} +\int_{\Hb_\ub^u} r^{2\ga_2}|\Omega\rp{\le 2} f|^2 r^{-2}\\
&\cdot \int_{\Hb_\ub^u } r^{\ga_0'}|\Lb' \Omega\rp{\le 1} (r^\ga f)|^2 r^{-2}.
\end{align*}
Note that $\Lb' \Omega\rp{m} (r^\a \Lb f)=\Lb' (r^\a\Lb \Omega\rp{m} f)=-\a(1+h) r^{\a-1}\Lb \Omega\rp{m} f+r^\a \Lb' \Lb\Omega\rp{m} f $ holds for any smooth function $f$. Due to the smallness of $|h|$ and (\ref{3.24.14.18}),
\begin{equation}\label{6.26.1.18}
|\Lb' \Omega\rp{m}(r^\a \Lb f)|\les r^{\a-1}|\Lb \Omega\rp{m} f|+r^\a|\Lb \p \Omega\rp{m} f|, \qquad \a \ge 0.
\end{equation}
 With the help of (\ref{3.24.1.18}), we can  derive  in view of (\ref{6.26.1.18}) with $\a=1$,
 \begin{align*}
 \sup_{S_{u,\ub}} |r \Lb Z\rp{l} \phi|^4&\les \int_{S_{-\ub, \ub}} |r \Omega\rp{\le 1} \Lb Z\rp{l} \phi|^4 r^{-2} +\int_{\Hb_\ub^u }|\Lb \Omega\rp{\le 2}Z\rp{l}\phi|^2\\
 &\cdot \int_{\Hb_\ub^u}\{|\Lb \Omega\rp{\le 1} Z\rp{l}\phi|^2+|r\Lb \p \Omega\rp{\le 1} Z\rp{l}\phi|^2 \}r^{-2} \\
 &\les \int_{S_{-\ub, \ub}} |r \Omega\rp{\le 1} \Lb Z\rp{l}\phi|^4 r^{-2}+E[\Omega\rp{\le 2}Z\rp{l} \phi](\Hb_\ub^u)\\
 &\c \big(u_+^{-2}E[\Omega \rp{\le 1} Z\rp{l}\phi](\Hb_\ub^u)+E[\p \Omega\rp{\le 1} Z\rp{l}\phi](\Hb_\ub^u)\big)\\
 &\les (\E^2_{l+2,\ga_0}+\dn^2) u_+^{-2-2\ga_0+2\zeta(Z^l)},
 \end{align*}
 where we employed (\ref{6.24.4.18}) with $Z^i= \p \Omega^{\le 1}_{ij} Z^l$ for $l\le n-2$, (\ref{7.3.1.18}) and $(\BA{n})$.

 Applying (\ref{6.24.11.18}) along $\Hb_\ub^u$ with $\ga_2=\ga=\frac{3}{2}$ and $\ga_0'=0$ to $f= \sn Z\rp{l}\phi$ implies
 \begin{equation}\label{3.24.6.18}
 \begin{split}
 \sup_{S_{u,\ub}}|r^\frac{3}{2} \sn Z\rp{l}\phi|^4&\les \int_{S_{-\ub, \ub}}|r^\frac{3}{2} \Omega\rp{\le 1}\sn Z\rp{l} \phi|^4r^{-2} +\int_{\Hb_\ub^u} r^3 |\Omega\rp{\le 2} \sn Z\rp{l}\phi|^2 r^{-2}\\
 & \c \int_{\Hb_\ub^u} |\Lb'(r^{\frac{3}{2}}\Omega\rp{\le 1}\sn Z\rp{l} \phi)|^2 r^{-2}.
\end{split}
 \end{equation}
 By using (\ref{3.24.3.18}) and  (\ref{3.24.5.18}), symbolically,
 %\begin{align}
 %\Lb( r^{\frac{3}{2}} \Omega_{ij}\sn f)&= r^\frac{1}{2} (\sn \Omega_{ij} f+\sn f)+r^\frac{3}{2}\Lb (\sn \Omega_{ij} f+[\Omega_{ij}, \sn]f)\nn\\
 %&=r^{\frac{3}{2}}( \sn \Lb \Omega_{ij}\rp{\le 1} f+r^{-1} \sn \Omega_{ij}\rp{\le 1} f)\label{6.26.3.18}
 %\end{align}
for $m=0,1$, in view of $\Lb'(r)=-1-h$,
\begin{align*}
|\Lb' (r^\frac{3}{2} \Omega\rp{m}\sn f)|&\les r^\frac{1}{2} |\sn \Omega\rp{\le m} f|+r^\frac{3}{2}|\Lb'(\sn \Omega\rp{m} f+[\Omega\rp{m}, \sn ]f)|\nn\\
&\les r^\f12|\sn \Omega\rp{\le m} f|+r^\frac{3}{2} |\sn \Lb \Omega\rp{\le m}f|+r^\frac{3}{2} |h||L \sn \Omega\rp{\le m}_{ij} f|\nn\\
&\les r^\f12 |\sn \Omega\rp{\le m} f|+r^\frac{3}{2} |\sn \p\Omega\rp{\le m} f|,
\end{align*}
where we used the smallness of $|h|$, (\ref{3.24.5.18}) and (\ref{3.24.2.18}).  By using the above calculation for $f=Z\rp{l}\phi$ and  (\ref{3.24.4.18}), we deduce from  (\ref{3.24.6.18}) and (\ref{6.24.4.18}) that
 \begin{align*}
 \sup_{S_{u,\ub}}|r^\frac{3}{2} \sn Z\rp{l}\phi|^4&\les \int_{S_{-\ub, \ub}}|r^\frac{3}{2} \sn\Omega\rp{\le 1}Z\rp{l}\phi|^4r^{-2} + \W_1[\Omega\rp{\le 2}  Z\rp{l}\phi](\Hb_\ub^u)\nn \\
 & \c \big( \W_1[\p \Omega\rp{\le 1} Z\rp{l}\phi](\Hb_\ub^u)+u_+^{-1} E[\Omega\rp{\le 1} Z\rp{l}\phi](\Hb_\ub^u)\big)\\
 &\les u_+^{-2\ga_0+2\zeta(Z^l)}(\dn^2+\E^2_{l+2,\ga_0}),
  \end{align*}
  where we also used (\ref{7.3.1.18}). Thus the first two estimates in (\ref{3.24.18.18}) are proved.

   To prove the third one, we first need to prove
  \begin{equation}\label{7.20.3.18}
  \sup_{S_{u,\ub}}r| L(r Z\rp{l}\phi)|^2\les (\E_{l+2,\ga_0}+\dn) u_+^{-\ga_0+2\zeta(Z^l)},\,  l\le n-2,
  \end{equation}
    since so far we  can not bound $\|r^\f12 L Z\rp{l}\phi\|_{L^2(\H_u^\ub)}$  without loss in $r$. Instead of applying the Sobolev inequality (\ref{6.24.11.18}) to $LZ\rp{l}\phi$, we apply it to $f=r^{-1} L(r Z\rp{l}\varphi)$ with $\ga_2=\ga=\frac{3}{2}$ and $\ga_0'=0$.
\begin{align}
\sup_{S_{u,\ub}}|r^\frac{3}{2}\c r^{-1}L(rZ\rp{l}\phi) |^4&\les \int_{S_{u,-u}} |r^\f12\Omega\rp{\le 1} L(r Z\rp{l} \phi)|^4 r^{-2} +\int_{\H_u^\ub}  r|\Omega\rp{\le 2}\big( r^{-1}L(r Z\rp{l}\phi)\big)|^2 \nn\\
&\cdot \int_{\H_u^\ub} |L'\Omega\rp{\le 1} (r^\frac{1}{2}L(r Z\rp{l}\phi))|^2 r^{-2}. \label{3.24.16.18}
\end{align}
Note that due to (\ref{3.24.1.18}) and the smallness of $|h|$,
\begin{equation}\label{3.24.15.18}
|L'(\Omega_{ij}(r^\f12 L(r Z\rp{l}\phi)))| \les r^{-\f12} |L(r\Omega_{ij} Z\rp{l}\phi)|+r^\f12 |L'\big(L(r\Omega_{ij}Z\rp{l}\phi)\big)|.
\end{equation}
By using (\ref{3.24.14.18}), we have
\begin{align*}
 |L(\Omega_{ij}L (rZ\rp{l}\phi))|&\les  |L\Omega_{ij}Z\rp{l}\phi|+|L(rL\Omega_{ij}Z\rp{l}\phi)|\les |L(r \p \Omega_{ij} Z\rp{l}\phi)|+|L\Omega_{ij} Z\rp{l}\phi|,
\end{align*}
and due to $[L, \Lb]=0$ and (\ref{3.24.14.18})
\begin{equation*}
|h| |\Lb  L(r \Omega_{ij}Z\rp{l}\phi)|\le |h|(|L \Omega_{ij} Z\rp{l}\phi|+|L( r\Lb\Omega_{ij} Z\rp{l}\phi)|)\les |h|(|L (r \p \Omega_{ij} Z\rp{l}\phi)|+|L \Omega_{ij}Z\rp{l}\phi|).
\end{equation*}
In view of the above two inequalities, the smallness of $|h|$,  (\ref{3.24.15.18}) and (\ref{7.3.1.18}) we have
\begin{align*}
\int_{\H_u^{\ub}} |L'\Omega\rp{\le 1} (r^\frac{1}{2}L(r Z\rp{l}\phi))|^2 r^{-2}&\les \W_1[\p \Omega Z\rp{l} \phi](\H_u^\ub) +E[\Omega Z\rp{l}\phi](\H_u^\ub)u_+^{-1}\\
&\les \dn u_+^{-\ga_0-1+2\zeta(Z^l)}.
\end{align*}
Thus we derive from (\ref{3.24.16.18}) and (\ref{6.24.4.18})   that
\begin{align*}
&\sup_{S_{u,\ub}}|r^\frac{3}{2}\c r^{-1}L(r Z\rp{l}\phi) |^4\\
&\les \int_{S_{u,-u}} |r^\f12 \Omega\rp{\le 1}L(r Z\rp{l}\phi)|^4 r^{-2}+\W_1[\Omega\rp{\le 2} Z\rp{l}\phi](\H_u^\ub)\c \dn u_+^{-\ga_0-1+2\zeta(Z^l)}\\
&\les (\E^2_{l+2,\ga_0}+\dn^2) u_+^{-2\ga_0+4\zeta(Z^l)},
\end{align*}
as desired in (\ref{7.20.3.18}).

Next we prove   (\ref{3.24.11.18}).  For any fixed point $(u,\ub,\omega)$, we integrate the estimate of $\Lb Z\rp{l}\phi$ in (\ref{3.24.18.18}) along the ingoing integral curve of $\p_{u'}$ along $\Hb_\ub^u$ from $t=0$.  For $l\le n-2$, we derive in view of (\ref{4.30.3.18}) that
\begin{align*}
\big|Z\rp{l}\phi(u, \ub,\omega)&-Z\rp{l}\phi(-\ub, \ub, \omega)\big|\les \int_{-\ub}^u \f12 (1-h)^{-1}|\Lb' Z\rp{l} \phi|(u', \ub, \omega) du'\\
&\les  \int_{-\ub}^u (|\Lb Z\rp{l} \phi|(u',\ub, \omega)+r^{-1} |h|(|L(r Z\rp{l}\phi)|+|Z\rp{l}\phi|)(u',\ub,\omega) ) du'.\\
\end{align*}
 The last term on the right can be treated by using Gronwall's inequality, $|h|\le M r^{-1}$ , (\ref{7.3.1.18}) and $-u\le \ub$.
The integration of the first and second term are bounded by $(\dn^\f12+\E^\f12_{l+2, \ga_0}) \ub^{-1} u_+^{\frac{-\ga_0+1}{2}+\zeta(Z^l)}$ by using $\ub\approx r$, the second estimate of  (\ref{3.24.18.18}) and (\ref{7.20.3.18}).  Thus by using $\ub\approx r$ again, we can derive
\begin{align*}
|rZ\rp{l} \phi(u,\ub,\omega)|&\les \ub| Z\rp{l}\phi(-\ub, \ub, \omega)|+ (\dn^\f12+\E^\f12_{l+2,\ga_0}) u_+^{\frac{-\ga_0+1}{2}+\zeta(Z^l)}.
\end{align*}
We then bound the first term on the right with the help of (\ref{6.24.9.18}) and $r\approx \ub$, which implies
\begin{equation*}
|r Z\rp{l}\phi|\les (\E^\f12_{l+2,\ga_0}+\dn^\f12) u_+^{-\frac{\ga_0-1}{2}+\zeta(Z^l)}.
\end{equation*}
The proof of  (\ref{3.24.11.18}) is then completed.
 The last estimate in  (\ref{3.24.18.18}) can be obtained by using (\ref{3.24.11.18}) and (\ref{7.20.3.18}).

Next, we prove (\ref{4.2.1.18}). It suffices to consider the estimate for $\Lb Z\rp{l}\phi$, other estimates follow from integrating the better estimates in  (\ref{3.24.18.18}).
We first apply the Sobolev inequality (\ref{6.24.11.18}) along $\Hb_\ub^u$ to $\Lb Z\rp{l} \phi$ with $\ga=\f12=\ga_2, \ga_0' =0$, which yields
\begin{align*}
\sup_{S_{u,\ub}}r^2 |\Lb Z\rp{l}\phi|^4&\les \int_{S_{-\ub, \ub}} r^2 |\Omega\rp{\le 1} \Lb Z\rp{l} \phi|^4 d\omega+\int_{\Hb_\ub^u} |\Omega\rp{\le 2}(\Lb Z\rp{l}\phi)|^2 r du'  d\omega\\
& \c\int_{\Hb_\ub^u} |\Lb' \Omega\rp{\le 1} (r^\f12 \Lb Z\rp{l} \phi)|^2 du' d\omega.
\end{align*}
We then derive
\begin{equation}\label{7.20.4.18}
\begin{split}
&\int_{-u_1}^ {\ub_1} \big(\sup_{-\ub \le u\le u_1} \sup_{S_{u,\ub}}r |\Lb Z\rp{l} \phi|^2 \big)d\ub \\
&\les\int_{-u_1}^ {\ub_1}  (\int_{S_{-\ub, \ub}} r^2 |\Omega\rp{\le 1} \Lb Z\rp{l}\phi|^4 d\omega )^\f12 d\ub+(\int_{\D_{u_1}^{\ub_1}} |\Omega\rp{\le 2}(\Lb Z\rp{l}\phi)|^2 r du d\omega d\ub)^\f12 \\
&\c(\int_{\D_{u_1}^{\ub_1}} |\Lb' \Omega\rp{\le 1} (r^\f12 \Lb Z\rp{l}\phi)|^2 du d\omega d\ub)^\f12.
\end{split}
\end{equation}
We apply (\ref{6.24.6.18}) to $\Lb\Omega\rp{\le 1}Z\rp{l}\phi$, which then bounds the first term on the righthand side of the inequality by ${u_1}_+^{-\ga_0-1+2\zeta(Z^l)}\E_{l+2,\ga_0}$. For the second term, due to (\ref{3.24.1.18}) we derive
\begin{align*}
\int_{\D_{u_1}^{\ub_1}} |\Omega\rp{\le 2}\Lb Z\rp{l} \phi|^2 r d\ub du d\omega&\les M^{-1} \int_{-\ub_1}^{u_1}\int_{\H_u^{\ub_1}} |\Lb \Omega \rp{\le 2} Z\rp{l}\phi|^2 r d\ub d\omega du  \\
&\les M^{-1} \int_{-\ub_1}^{u_1} E[\Omega\rp{\le 2}Z\rp{l} \phi](\H_u^{\ub_1}) du \\
&\les M^{-1} \dn { u_1}_+^{-\ga_0+1+2\zeta(Z^l)}.
\end{align*}
 By using (\ref{3.24.14.18}) and in view of (\ref{6.26.1.18}) with $\a=\f12,$ $f=Z\rp{l}\phi$ , we have
\begin{align*}
\int_{\D_{u_1}^{\ub_1}}& |\Lb' \Omega\rp{\le 1} (r^\f12 \Lb Z\rp{l}\phi)|^2 du d\ub d\omega \\
&\les \int_{\D_{u_1}^{\ub_1}} \{r^{-1}|\Lb \Omega\rp{\le 1}Z\rp{l}\phi|^2 + r|\Lb \p \Omega\rp{\le 1}Z\rp{l} \phi|^2 \} du d\ub d\omega\\
&\les \int_{-\ub_1}^{u_1} M^{-1} E[\p \Omega\rp{\le 1} Z\rp{l}\phi](\H_u^{\ub_1}) du+{u_1}_+^{-2} \sup_{-u_1\le \ub\le \ub_1} E[\Omega\rp{\le 1}Z\rp{l}\phi](\Hb_\ub^{u_1})\\
&\les ({u_1}_+^{-1}+M^{-1}) \dn {u_1}_+^{-\ga_0-1+2\zeta(Z^l)}.
\end{align*}
By substituting the above two estimates to (\ref{7.20.4.18}),   (\ref{4.2.1.18}) is proved.

Next, we   prove (\ref{6.17.1.18}). (\ref{6.17.2.18}) follows as an immediate consequence by integrating in $u$-variable.

By applying   $
\Lb'(r f^2)=(-1-h)f^2 +r \Lb'(f^2)
$  to $f=Z\rp{a}\phi$,
and in view of  $1+h>0$ due to the smallness of $h$,  we integrate the result in $\D_{u_1}^{\ub_1}$ with the area element $\f12 (1-h)^{-1} d\ub du d\omega$, which yields,
\begin{align}
&\int_{\H_{u_1}^{\ub_1}}r |Z\rp{a}\phi|^2 (u_1, \ub, \omega) d\ub d\omega \label{7.4.1.18}\\
&\les \int_{-u_1}^{\ub_1}  \int_{{\mathbb S}^2} r |Z\rp{a}\phi|^2(-\ub, \ub, \omega) d\ub d\omega+\int_{-\ub_1}^{u_1} \|r^{-\f12} \Lb' Z\rp{a}\phi\|_{L^2(\H_u^{\ub_1}) } \|r^{-\f12 }Z\rp{a}\phi\|_{L^2(\H_u^{\ub_1})} du\nn,
\end{align}
where we dropped the integral of $(1+h) |Z\rp{a}\phi|^2$ due to the positivity. Note that
\begin{equation}\label{7.20.5.18}
 \|r^{-\f12} \Lb' Z\rp{a}\phi\|_{L^2(\H_u^{\ub_1}) }\les M^{-\f12} E[Z\rp{a}\phi]^\f12(\H_u^{\ub_1})\les u_+^{-\frac{\ga_0}{2}+\zeta(Z^a)}M^{-\f12}\dn^\f12,
\qquad a\le n
\end{equation}
and the first term on the right of (\ref{7.4.1.18}) can be bounded by ${u_1}_+^{-\ga_0+1+2\zeta(Z^a)}\E_{a,\ga_0}$ by using (\ref{6.25.1.18}).
 By  multiplying both sides by ${u_1}_+^p$ with $p\ge 1$, followed with applying Gronwall's inequality,
\begin{align*}
{u_1}_+^\frac{p}{2}&(\int_{\H_{u_1}^{\ub_1}}r |Z\rp{a}\phi|^2 (u_1, \ub, \omega) d\omega d\ub)^\f12\\
&\les {u_1}_+^{\frac{p-\ga_0+1}{2}+\zeta(Z^a)}\E^\f12_{n,\ga_0}+{u_1}_+^{p}\int_{-\ub_1}^{u_1} \|r^{-\f12} \Lb'(Z\rp{a}\phi)\|_{L^2(\H_u^{\ub_1})} u_+^{-\frac{p}{2}} du.
\end{align*}
 We can obtain (\ref{6.17.1.18}) for $a\le n$ by using (\ref{7.20.5.18}).

\end{proof}

With the help of the assumption of ($\BA{n}$) with $n=2, 3$,   we can derive the following  estimates of mixed $L^p$ norms.
\begin{proposition}\label{6.17.5.18}
Let $-\ub_*\le -\ub_1\le u_1\le u_0$,  $b\le n-1$,  $a\le n$, $\ga>\f12$ and $\a>0$. There hold the following estimates
\begin{align}
&\| r^{-\f12} \p Z\rp{a} \phi\|_{L^2(\D_{u_1}^{\ub_1})}\les  {u_1}_+^{-\frac{\ga_0}{2}+\zeta(Z^a)+\f12}\dn^\f12 M^{-\f12},\label{7.5.4.18}\\
&\|r^{-\ga} \p Z\rp{a}\phi\|_{L^2(\D_{u_1}^{\ub_1})}\les {u_1}_+^{-\ga+\f12-\f12\ga_0+\zeta(Z^a)}\dn^\f12,\label{7.19.2.18}\\
&\|r^\f12 Z\rp{a}\phi\|^2_{L_\ub^\infty L^2_u L_\omega^2(\D_{u_1}^{\ub_1})}+\a\|(\frac{{u_1}_+}{r})^\a r^{-1}Z\rp{a}\phi\|^2_{L^2(\D_{u_1}^{\ub_1})}\nn\\
&\qquad\qquad\qquad\qquad\qquad\les {u_1}_+^{-\ga_0+1+2\zeta(Z^a)}(\dn+\E_{a,\ga_0}),\label{7.21.2.18}\\
&\a\|(\frac{{u_1}_+}{r})^\a Z\rp{a} \phi\|^2_{L_u^2 L_\ub^2 L_\omega^4(\D_{u_1}^{\ub_1})}\les {u_1}_+^{- \ga_0+1+2\zeta(Z^a)} (\dn+\E_{a,\ga_0}), \label{6.3.11.18}\\
&\|r^\f12 \p Z\rp{b} \phi\|_{L^4(S_{u_1,\ub_1})}\les {u_1}_+^{-\f12\ga_0-\f12+\zeta(Z^b)}(\dn^\f12+\E^\f12_{b+1,\ga_0}),\label{6.17.3.18}\\
&\| r \p Z\rp{b}\phi\|_{L_\ub^\infty L_u^2 L_\omega^4(\D_{u_1}^{\ub_1})}\les {u_1}_+^{-\frac{\ga_0}{2}+\zeta(Z^b)}(\dn^\f12+\E^\f12_{b+1,\ga_0}),\label{7.5.2.18}\\
&\|r \p Z\rp{b}\phi\|_{L_u^2 L_\ub^\infty L_\omega^4(\D_{u_1}^{\ub_1})}\les  {u_1}_+^{-\frac{\ga_0}{2} +\zeta(Z^b)}(\dn^\f12+\E^\f12_{b+1,\ga_0}),\label{6.17.11.18}\displaybreak[0]\\
&\| r^\f12 \p Z\rp{b} \phi\|_{L_\ub^2 L_u^\infty L_\omega^4(\D_{u_1}^{\ub_1})}\les {u_1}_+^{-\f12\ga_0+\zeta(Z^b)} ( \dn^\f12 M^{-\f12}+\E^\f12_{b+1, \ga_0} {u_1}_+^{-\f12}) \label{6.14.3.18},\\
&\| r^\f12 \p Z\rp{b}\phi\|_{L_\ub^2 L_u^2 L_\omega^4(\D_{u_1}^{\ub_1})}\les {u_1}_+^{-\f12\ga_0+\f12+\zeta(Z^b)} \dn^\f12  M^{-\f12}.\label{7.5.3.18}
\end{align}
If the constant $p>-\frac{\ga_0}{2}+\frac{3}{2}$, there hold
\begin{align}
&\|u_+^{-p} r^\f12 Z\rp{b}\phi \|_{L^2_u L_\ub^2 L_\omega^4(\D_{u_1}^{\ub_1})}\les {u_1}_+^{\zeta(Z^{b})-\f12\ga_0+\frac{3}{2}-p}(\dn^{\f12}M^{-\f12}+\E^\f12_{b+1,\ga_0}), \label{6.5.7.18}\\
&\|r^\f12 Z\rp{a} \phi\|_{ L_u^2 L_\omega^4(\Hb^{u_1}_{\ub_1})}\les {u_1}_+^{\zeta(Z^a)-\f12 \ga_0+\f12}(\dn^\f12+\E^\f12_{a,\ga_0}). \label{6.14.10.18}
\end{align}
\end{proposition}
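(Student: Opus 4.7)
The plan is to derive each listed estimate by combining the bootstrap assumption $(\BA{n})$ with the weighted Sobolev inequalities from Lemma \ref{6.25.2.18}, the initial-slice bounds from Proposition \ref{6.23.4.18}, and the sharp pointwise/integrated decay already established in Proposition \ref{5.20.1.18}. The unifying principle is that every estimate of mixed $L^p$ type gets reduced, via a Sobolev inequality on $\Hb_\ub^u$, $\H_u^\ub$, or $\D_{u_1}^{\ub_1}$, to (i) an initial-slice term controlled by $\E_{a,\ga_0}$ and (ii) bulk energy or weighted-energy terms which are controlled by $(\BA{n})$ after integration in $u$ against the factor $u_+^{-\ga_0+2\zeta(Z^\cdot)}$.

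First I would dispose of the pure $L^2$-type bounds (\ref{7.5.4.18})–(\ref{7.21.2.18}). For (\ref{7.5.4.18}) one writes $|\p Z\rp{a}\phi|^2\les |\Lb Z\rp{a}\phi|^2+|LZ\rp{a}\phi|^2+|\sn Z\rp{a}\phi|^2$ and integrates $r^{-1}$ along $\H_u$ with $du$; the $\Lb$-term absorbs the factor $r^{-1}$ into the coefficient $M/r$ of the energy density on $\H_u$, producing the $M^{-1/2}$ loss, while the $L$ and $\sn$ terms fold directly into $E[Z\rp{a}\phi](\H_u^{\ub_1})\les \dn u_+^{-\ga_0+2\zeta(Z^a)}$, and integration in $u$ over $[-\ub_1,u_1]$ yields the advertised exponent $\f12$ in $u_1$. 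The slightly weaker (\ref{7.19.2.18}) follows by trading one power of $|u|$ for a power of $r$ via the identity $r\approx \ub\ge u_+$ on $\D_{u_1}^{\ub_1}$, so one avoids the $M^{-1/2}$ loss. Estimate (\ref{7.21.2.18}) is a direct combination of (\ref{5.1.1.18}) with the Hardy-type ingredient (\ref{3.23.2.18}), applied to $\varphi = Z\rp{a}\phi$; the initial sphere terms go into $\E_{a,\ga_0}$ via (\ref{6.24.9.18}), and the weighted bulk is controlled by the $\W_1$ part of $(\BA{n})$.

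Next come the $L^4$-type bounds. Estimate (\ref{6.3.11.18}) is exactly (\ref{6.14.7.18}) specialized to $F=Z\rp{a}\phi$, with the initial-slice piece controlled by (\ref{6.25.1.18}) and the remaining flux by $(\BA{n})$. The pointwise $L^4$ sphere estimate (\ref{6.17.3.18}) is obtained by inserting $Z\rp{b}\phi$ into (\ref{6.28.1.18}) (or its twin (\ref{6.27.1.18})) for $\psi=Z\rp{b}\phi$: the initial sphere integral of $|rLZ\rp{b}\phi|^4$ is bounded via (\ref{6.24.6.18}) and the two factors on the right by $E[\p Z\rp{b}\phi](\H_{u_1}^{\ub_1})$, $E[\Omega\rp{\le 1}Z\rp{b}\phi](\H_{u_1}^{\ub_1})$, and ${u_1}_+^{-2}E[Z\rp{b}\phi](\H_{u_1}^{\ub_1})$; each of these contributes a decay factor $u_+^{-\ga_0+2\zeta(Z^{b+1})}$, and taking the square root gives the stated exponent $-\f12\ga_0-\f12+\zeta(Z^b)$. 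The integrated sphere estimates (\ref{7.5.2.18}) and (\ref{6.17.11.18}) are then obtained by integrating (\ref{6.17.3.18}) in $\ub$ or $u$, respectively — the $L_\ub^\infty$ version uses the sharpness of (\ref{6.28.1.18}) uniformly in $\ub$, while the $L_u^2 L_\ub^\infty$ version uses $r\approx \ub$ and a pure $L^2$-in-$u$ bound of energies along $\Hb_\ub$.

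The most delicate steps are (\ref{6.14.3.18}), (\ref{7.5.3.18}), (\ref{6.5.7.18}) and (\ref{6.14.10.18}). For (\ref{6.14.3.18}) I plug $\psi=Z\rp{b}\phi$ into (\ref{5.22.5.18}): the initial integral of $(\int_{S_{-\ub,\ub}}r^2|\p Z\rp{b}\phi|^4 d\omega)^{1/2}d\ub$ is bounded by $\E_{b+1,\ga_0}{u_1}_+^{-\ga_0-1+2\zeta(Z^b)}$ via (\ref{6.24.6.18}), while the mixed product term inherits an $M^{-1/2}$ from integrating $E[\p Z\rp{b}\phi](\H_u^{\ub_1})$ against $u_+^{-\ga_0+2\zeta(Z^{b+1})}$ over $u\in[-\ub_1,u_1]$. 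Estimate (\ref{7.5.3.18}) is a direct application of (\ref{6.14.1.18}). For (\ref{6.5.7.18}) I combine the Sobolev inequality (\ref{6.24.12.18}) on $S_{u,\ub}$ with $\ga=\tfrac12$ applied to $Z\rp{b}\phi$, and integrate in $u$ against the weight $u_+^{-p}$; the key input on the right is the sharp improved Hardy estimate (\ref{6.17.2.18}), which is exactly the ingredient that allows the weight $u_+^{-p}$ with $p>-\f12\ga_0+\f32$ to be integrable. Finally (\ref{6.14.10.18}) follows from applying the Sobolev embedding on spheres in angular directions and using the improved pointwise decay (\ref{3.24.11.18}) together with (\ref{6.17.1.18}) on $\Hb_{\ub_1}^{u_1}$.

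The main obstacle is bookkeeping of the signature $\zeta(Z^a)$ through the Sobolev inequalities, since every angular commutator $[\Omega,\sn]$ and every $L/\Lb$ commutator (via \eqref{3.24.3.18}–\eqref{3.24.5.18}) must not raise the total number of derivatives beyond $n$; this is exactly why (\ref{6.24.11.18})–(\ref{6.24.12.18}) are stated in the $\Omega$-form only, and why estimates like (\ref{6.14.3.18}) require $b\le n-1$ rather than $b\le n$. The second delicate point is distinguishing which estimates can afford the $M^{-1/2}$ loss (those where $\Lb$-energy is essential) from those where we must instead trade $r$ for $u_+$ to preserve sharp decay; this distinction governs the choice between (\ref{7.5.4.18}) and (\ref{7.19.2.18}), and between (\ref{6.14.3.18}) and (\ref{7.5.2.18}).
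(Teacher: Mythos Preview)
Your overall strategy and most of the individual derivations match the paper's proof. Two places need correction.

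For (\ref{6.14.10.18}) you propose to use the pointwise decay (\ref{3.24.11.18}) together with (\ref{6.17.1.18}). Neither applies: (\ref{3.24.11.18}) is only available for $l\le n-2$, whereas (\ref{6.14.10.18}) must hold for all $a\le n$; and (\ref{6.17.1.18}) is an estimate on $\H_{u_1}^{\ub_1}$, not on $\Hb_{\ub_1}^{u_1}$. The paper instead applies (\ref{6.3.9.18}) directly with $F=Z\rp{a}\phi$, which bounds $\|r^{1/2}F\|_{L_u^2 L_\omega^4(\Hb_{\ub_1}^{u_1})}$ by $\W_1[F](\Hb_{\ub_1}^{u_1})+\W_1[F](\D_{u_1}^{\ub_1})+\|r^{-1/2}F\|_{L^2(\Sigma_0^{u_1,\ub_1})}+M\int u_+^{-1}E[F](\H_u^{\ub_1})\,du$, and then invokes $(\BA{n})$ and (\ref{6.25.1.18}). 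The angular derivative needed is the $\sn$ already sitting inside $\W_1$, so no extra order of $Z$ is spent and the estimate goes through for $a\le n$.

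For (\ref{6.5.7.18}) your route via (\ref{6.24.12.18}) is more circuitous than what the paper does. The paper uses the plain Sobolev embedding on the unit sphere to write
\[
\|r^{1/2}u_+^{-p}Z\rp{b}\phi\|_{L^2_uL^2_\ub L^4_\omega(\D_{u_1}^{\ub_1})}\les \|r^{-1/2}u_+^{-p}\Omega\rp{\le 1}Z\rp{b}\phi\|_{L^2(\D_{u_1}^{\ub_1})},
\]
and then applies the improved Hardy inequality (\ref{6.17.2.18}) to $\Omega\rp{\le 1}Z\rp{b}\phi$; since $b\le n-1$ this remains within the $n$-derivative budget.

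Two minor citation slips elsewhere: for the initial-sphere term in (\ref{6.17.3.18}) one needs the pointwise bound (\ref{6.24.4.18}) rather than the integrated (\ref{6.24.6.18}); and for (\ref{7.21.2.18}) the paper uses (\ref{3.23.2.18}) together with (\ref{6.25.1.18}), without invoking (\ref{5.1.1.18}).
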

\begin{proof}
We first derive  by using $(\BA{n})$ that
\begin{align*}
\| r^{-\f12} \p Z\rp{a} \phi\|_{L^2(\D_{u_1}^{\ub_1})}&\les M^{-\f12} (\int_{-\ub_1}^{u_1} E[Z^a \phi](\H_u^{\ub_1}) du)^\f12\les M^{-\f12} {u_1}_+^{-\frac{\ga_0}{2}+\zeta(Z^a)+\f12}\dn^\f12, \quad a\le n.
\end{align*}

By using $(\BA{n})$, (\ref{7.19.2.18}) is a direct consequence of $|\p Z\rp{a}\phi|\les |\ud \p Z\rp{a}\phi|+|\bar \p Z\rp{a}\phi|$;
(\ref{7.21.2.18}) is a consequence of (\ref{3.23.2.18}) and (\ref{6.25.1.18}).

Note that by using (\ref{6.14.7.18}), (\ref{6.25.1.18}) and ($\BA{n}$), we can derive
\begin{align*}
\a\|(\frac{{u_1}_+}{r})^\a Z\rp{a} \phi\|^2_{L_u^2 L_\ub^2 L_\omega^4(\D_{u_1}^{\ub_1})}&\les \W_1[Z\rp{a}\phi](\D_{u_1}^{\ub_1})+\|r^{-\f12} Z\rp{a}\phi\|^2_{L^2(\Sigma_0^{u_1,\ub_1})}\\
&+M\int_{-\ub_1}^{u_1} {u'}_+^{-1}E[Z\rp{a}\phi](\H_{u'}^{\ub_1}) du'\\
&\les {u_1}_+^{- \ga_0+1+2\zeta(Z^a)} (\dn+\E_{a,\ga_0}),
\end{align*}
as desired in (\ref{6.3.11.18}).

Next we consider (\ref{6.17.3.18}).
For the estimate of $L Z\rp{b}\phi$, we apply (\ref{6.28.1.18}) to  $\psi= Z\rp{b}\phi$, which yields  for
$\I=\{(u,\ub):-\ub_1\le -\ub\le u\le u_1\}$  that
\begin{align*}
\int_{S_{u,\ub}}|r LZ\rp{b}\phi|^4 d\omega &\les \int_{S_{u, -u}} |rL Z\rp{b}\phi|^4 d\omega+(E[\p Z\rp{b} \phi](\H_{u}^{\ub})+{u}_+^{-2} E[Z\rp{b}\phi](\H_{u}^{\ub}))\nn\\
&\qquad\c E[\Omega\rp{\le 1}Z\rp{b}\phi](\H_{u}^{\ub}).
\end{align*}
The first term on the right can be bounded by applying (\ref{6.24.4.18}) to $\p Z\rp{b}\phi$, which is bounded by $u_+^{-2-2\ga_0+4\zeta(Z^b)}\E^2_{b+1, \ga_0}$. We then use  ($\BA{n}$) to obtain
\begin{equation*}
\int_{S_{u,\ub}}|r LZ\rp{b}\phi|^4 d\omega\les u_+^{-2-2\ga_0+4\zeta(Z^b)}(\E^2_{b+1, \ga_0}+\dn^2).
\end{equation*}
By repeating the same procedure for $\ud\p\psi$ in view of (\ref{6.27.1.18}), we can get the same estimate with $L$ replaced by $\ud \p$.
 This implies (\ref{6.17.3.18}).

 Integrating (\ref{6.17.3.18}) along $\Hb^{u_1}_\ub$ for any $-u_1\le \ub\le \ub_1$ implies  (\ref{7.5.2.18}) directly.

  We note also that (\ref{6.17.3.18}) is independent of $\ub$. For $(u,\ub)\in \I$, we can take supremum of (\ref{6.17.3.18}) for $-u \le \ub\le \ub_1$, followed with integrating $u$ from $-\ub_1$ to $u_1$. Thus (\ref{6.17.11.18})  is proved.

Next, we apply (\ref{5.22.5.18}) to $\psi=Z\rp{b}\phi$ for $b\le n-1$ and also by using ($\BA{n}$),
\begin{equation*}
\| r^\f12 \p Z\rp{b} \phi\|^2_{L_\ub^2 L_u^\infty L_\omega^4(\D_{u_1}^{\ub_1})}\les\int_{-u_1}^{\ub_1} (\int_{S_{-\ub, \ub}} r^2 |\p Z\rp{b}\phi|^4 d\omega)^\f12 d\ub +M^{-1}\dn {u_1}_+^{2\zeta(Z^b)-\ga_0}.
\end{equation*}
We then apply (\ref{6.24.10.18}) to $\p Z\rp{b}\phi$, which bounds the first term on the right by $\E_{b+1, \ga_0} {u_1}_+^{-\ga_0-1+2\zeta(Z^b)}$.  The result of (\ref{6.14.3.18}) follows by a direct substitution.

(\ref{7.5.3.18}) follows by applying (\ref{6.14.1.18}) to $\psi=Z\rp{b}\phi$ and also using $(\BA{n})$.

We can obtain (\ref{6.5.7.18}) by  applying  the sobolev embedding on unit sphere
\begin{equation*}
\|r^\f12 u_+^{-p} Z\rp{b} \phi\|_{L_u^2 L_\ub^2 L_\omega^4(\D_{u_1}^{\ub_1})} \les \| r^{-\f12} u_+^{-p} \Omega\rp{\le 1} Z\rp{b} \phi\|_{L^2(\D_{u_1}^{\ub_1})},
\end{equation*}
followed by applying (\ref{6.17.2.18}) to $\Omega\rp{\le 1} Z\rp{b}\phi$ with $p>-\frac{\ga_0}{2}+\frac{3}{2}$.

We deduce by applying (\ref{6.3.9.18}) to $F=Z\rp{a}\phi$ followed with using (\ref{6.25.1.18}) and ($\BA{n}$) that
\begin{align*}
\|r^\f12 Z\rp{a}\phi\|_{L_u^2 L_\omega^4(\Hb_{\ub_1} ^{u_1})}&\les \W_1^\f12[Z\rp{a}\phi](\Hb_{\ub_1}^{u_1})+\W_1^\f12[Z\rp{a}\phi](\D_{u_1}^{\ub_1})+\|r^{-\f12}Z\rp{a}\phi\|_{L^2(\Sigma_0^{u_1,\ub_1})}\\
&+M^\f12 (\int_{-\ub_1}^ {u_1} u_+^{-1} E[Z\rp{a}\phi](\H_u^{\ub_1}) du)^\f12\les {u_1}_+^{\zeta(Z^a)-\f12\ga_0+\f12}(\dn^\f12+\E^\f12_{a,\ga_0}),
\end{align*}
as desired in (\ref{6.14.10.18}).
\end{proof}
\section{Semilinear wave equations}\label{semi}

In this section we consider the equation of (\ref{3.18.1.18}) in ${\mathbb R}^{3+1}$, i.e.
\begin{equation*}
\Box_\bm \phi=\N^{\a\b}(\phi) \p_\a\phi\p_\b \phi+q(x) \phi
\end{equation*}
where $0\le q(x)\le 1$ satisfies (\ref{potential}) with $n=2$. We will prove Theorem \ref{thm2} and Theorem \ref{thm1}.

In Section \ref{prel} and \ref{decay}, for functions with (\ref{3.21.4.18}) bounded for $k=2$ or $3$, under the assumption of $(\BA{k})$, we have obtained decay properties and a set of  Sobolev inequalities.  In this section, by a bootstrap argument, we will prove ($\BA{2}$) with $\dn$ comparable with $\E_{2,\ga_0,R}$, provided that the latter is sufficiently small. The analysis in Sections \ref{prel} and \ref{decay} will play a crucial role for achieving the boundedness of the sets of  energies.

We will give the fundamental standard and weighted energy inequalities for the  $u$ and $\ub$ foliations in Proposition \ref{5.16.5.18} and Proposition \ref{6.12.2.18}. The goal is  to justify the energy norms given in (\ref{3.19.1}) can be  achieved along the foliations $\H_u$, $\H_\ub$ and $\D_u^\ub$  provided that  $(\Box_\bm -q)f$  can be bounded  as desired.

Throughout this section, $M>0$ is a fixed small number, with the upper bound determined during the proofs of Theorem \ref{thm2} and \ref{thm1}. Let
 $$h=M/r,\qquad  u_0=u_0(M, R),$$
 where $R\ge 2$, whose lower bound  will be finally determined at the end of the proof of Theorem \ref{thm2}.

For  the wave equation $\Box_\bm \varphi-q\varphi=F$, we define the energy momentum tensor,
\begin{equation}\label{5.7.5.18}
Q_{\a\b}[\varphi]= \p_\a  \varphi \p_\b \varphi-\frac{1}{2} \bm_{\a\b} (\p^\a \varphi \p_\a \varphi+q \varphi^2).
\end{equation}
%There is no energy conservation even if $F=0$. The assumption on $q(x,t)$ allows us to study such equation by using energy method.
Recall  $\N, \Nb$ from (\ref{7.11.3.18}).  It is straightforward to obtain  the energy densities on $\H_u$ and $\Hb_\ub$
  \begin{equation}\label{7.4.2.18}
  \begin{split}
&Q[\varphi](\p_t, \N)= \f12 (1+h)^{-1} \{(L\varphi)^2+h (\Lb \varphi)^2+(1+h)( |\sn \varphi|^2+q \varphi^2)\},\\
&Q[\varphi](\p_t, \Nb)= \f12 (1+h)^{-1} \{(\Lb\varphi)^2+h (L \varphi)^2+(1+h)( |\sn \varphi|^2+q \varphi^2)\}.
\end{split}
\end{equation}

Next, we give the fundamental energy inequality in $\{u\le u_0\}$.
\begin{lemma}\label{3.19.2.18}
Consider the equation $\Box_\bm \varphi-q(x) \varphi =F$. There holds for a smooth vectorfield $X$ that
\begin{equation}\label{7.20.6.18}
\p^\a (Q_{\a\b} [\varphi]X^\b)= Q_{\a\b} {}\rp{X}\pi^{\a\b}+(\Box_\bm \varphi-q\varphi) X\varphi-\f12 Xq\c \varphi^2,
\end{equation}
where ${}\rp{X} \pi_{\a\b}=\f12(\l \p_\a X,\p_\b\r +\l \p_\b X, \p_\a\r)$.

 There also holds the following energy estimate,
 \begin{equation}\label{7.20.7.18}
 E[\varphi](\H_u^\ub)+E[\varphi](\Hb_\ub^u) \les |\int_{\D_u^{\ub}} F\c \p_t \varphi |+E[\varphi](\Sigma^{u,\ub}_0)
 \end{equation}
 for all $-\ub\le u\le u_0$.
\end{lemma}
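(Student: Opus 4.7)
The identity (\ref{7.20.6.18}) is a routine algebraic computation: applying $\p^\a$ to the definition (\ref{5.7.5.18}) and using the symmetry of $\p_\a\p_\b$, the two second-derivative contributions $\p^\a\varphi\c\p_\a\p_\b\varphi$ and $\f12\p_\b(\p^\a\varphi\p_\a\varphi)$ cancel, leaving $\p^\a Q_{\a\b}[\varphi]=(\Box_\bm\varphi-q\varphi)\p_\b\varphi-\f12(\p_\b q)\varphi^2$. Contracting with $X^\b$ and using the symmetry of $Q_{\a\b}$ to rewrite $Q_{\a\b}\p^\a X^\b$ as $Q_{\a\b}{}^{(X)}\pi^{\a\b}$ yields the stated divergence identity.

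For the energy estimate, the plan is to specialize to the Killing multiplier $X=\p_t$. Two simplifications occur simultaneously: ${}^{(\p_t)}\pi\equiv 0$ since $\p_t$ is Killing for $\bm$, and $\p_t q=0$ since the assumption (\ref{potential}) makes $q=q(x)$ independent of $t$. Hence the first identity reduces to $\p^\a(Q_{\a\b}[\varphi](\p_t)^\b)=F\c\p_t\varphi$. I would then integrate this over the compact region $\D_u^\ub$ defined in (\ref{7.30.1.18}), whose boundary is precisely the disjoint union $\H_u^\ub\cup\Hb_\ub^u\cup\Sigma_0^{u,\ub}$. Stokes' theorem converts the bulk divergence into boundary fluxes over these three hypersurfaces plus the interior term $\int_{\D_u^\ub}F\c\p_t\varphi$.

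The final step is to match the resulting flux integrands against the norms in (\ref{3.19.1}). Using the explicit densities (\ref{7.4.2.18}), the area elements (\ref{7.4.3.18}), and the relation $\bb(1+h)^{-1}=(1-h)^{-1}$, the $\H_u$-flux integrand takes the form $\f14(1-h)^{-1}r^2\bigl((L\varphi)^2+\tfrac{M}{r}(\Lb\varphi)^2+(1+h)(|\sn\varphi|^2+q\varphi^2)\bigr) d\ub d\omega$, which, under (\ref{7.1.1.18}), is comparable up to universal constants with the integrand defining $E[\varphi](\H_u^\ub)$. The situation on $\Hb_\ub^u$ is symmetric, and $Q[\varphi](\p_t,\p_t)=\f12(|\p\varphi|^2+q\varphi^2)$ reproduces $E[\varphi](\Sigma_0^{u,\ub})$. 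Every flux term is nonnegative because $h=M/r>0$ and $q\ge 0$, so the left side is manifestly coercive without requiring subtle cancellation; bringing $|\int_{\D_u^\ub} F\c\p_t\varphi|$ to the right yields (\ref{7.20.7.18}).

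The main (mild) obstacle is the algebraic bookkeeping in verifying that the $h(\Lb\varphi)^2$ contribution in (\ref{7.4.2.18}), after multiplication by $\f12\bb r^2$, produces exactly the weight $r^2\c\tfrac{M}{r}$ prescribed in (\ref{3.19.1}). It is essential here that $h>0$ rather than merely small, since the resulting $\tfrac{M}{r}(\Lb\varphi)^2$ coefficient is the precise mechanism providing the crucial improved transversal control on $\Lb\varphi$ exploited throughout the remainder of the paper.
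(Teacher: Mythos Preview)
Your proof is correct and follows essentially the same approach as the paper: compute $\p^\a Q_{\a\b}$, specialize the divergence identity to the Killing field $X=\p_t$ (so that both ${}^{(X)}\pi$ and $\p_t q$ vanish), integrate over $\D_u^\ub$, and identify the boundary fluxes via (\ref{7.4.2.18}). The paper's own proof is terser, leaving the matching of the flux integrands against the definitions in (\ref{3.19.1}) implicit, while you spell out this step explicitly.
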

\begin{proof}
For  $Q_{\a\b}[\varphi]$ in (\ref{5.7.5.18}), it is straightforward to derive
\begin{equation*}
\p^\a Q_{\a\b}[\varphi]= (\Box_\bm\varphi-q \varphi) \p_\b \varphi -\f12 \p_\b q \c \varphi^2.
\end{equation*}
(\ref{7.20.6.18}) follows as a consequence.

Applying (\ref{7.20.6.18}) with $X=\p_t$ gives  $\p^\a( Q_{\a\b}\p_t^\b)=(\Box_\bm\varphi-q \varphi) \p_t \varphi$.
We then integrate the identity in $\D_u^\ub$ to  obtain
\begin{equation*}
 \int_{\H_u^\ub}Q(\N, \p_t) d\mu_\H +\int_{\Hb_\ub^u} Q(\Nb, \p_t) d\mu_{\Hb}+\int_{\D_u^\ub} F\c \p_t \varphi dx dt=\int_{\Sigma_0^{u, \ub}}Q(\p_t, \p_t) dx.
\end{equation*}
(\ref{7.20.7.18}) is proved in view of (\ref{7.4.2.18})
\end{proof}
As a consequence, we derive the following energy estimate.
\begin{proposition}\label{5.16.5.18}
Let $\I=\{(u_1,\ub_1): -\ub_2\le -\ub_1 \le u_1\le  u_2\le u_0\}$, where $u_2$ and $ \ub_2$ are fixed. There holds the following energy inequality for  $(u,\ub)\in \I$,
\begin{align*}
 {u}_+^{-2p+\ga_0}&(E[\varphi](\H_{u}^{\ub})+ E[\varphi](\Hb_{\ub}^{u})) \\
&\quad\les \sup_{(u_1,\ub_1)\in \I}\{{u_1}_+^{-2p+\ga_0} E[\varphi](\Sigma_0^{u_1,\ub_1})+{u_1}_+^{-2p+\ga_0}(\|rF^\flat\|_{L_\ub^1 L_u^2 L_\omega^2(\D_{u_1}^{\ub_1})}^2\\
& \quad +  \| r F^\flat \|_{L_u^1 L_\ub^2 L_\omega^2(\D_{u_1}^{\ub_1})}^2)+{u_1}_+^{1-2p+\ga_0} M^{-1} \|r^\f12 F^\sharp \|^2_{L^2(\D_{u_1}^{\ub_1})}\},
\end{align*}
 where $p\le 0$ is any constant \begin{footnote}{The inequality holds uniformly for any $p\le 0$.}\end{footnote}, $F=F^\flat+F^\sharp$, and the two smooth functions $F^\flat$ and $F^\sharp$ are  in the corresponding normed spaces.
\end{proposition}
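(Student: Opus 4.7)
The plan is to start from the identity in Lemma \ref{3.19.2.18},
$$E[\varphi](\H_u^\ub)+E[\varphi](\Hb_\ub^u)\les E[\varphi](\Sigma_0^{u,\ub})+\Big|\int_{\D_u^\ub} F\c\p_t\varphi\, dxdt\Big|,$$
split $F=F^\flat+F^\sharp$, and control each spacetime integral so that the resulting flux terms, once weighted by $u_+^{-2p+\ga_0}$, can be absorbed into the LHS after a Gronwall-type argument. The guiding principle is to use the two dual foliations so that $\p_t\varphi$ is always paired with the ``good'' direction carrying the full $r^2$ weight in the energy density.

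For the $F^\flat$ integral I would decompose $\p_t\varphi=\f12(L\varphi+\Lb\varphi)$. Foliating $\D_u^\ub$ by the incoming cones $\Hb_{\ub'}^u$ with $\ub'\in[-u,\ub]$, Cauchy--Schwarz in $(u',\omega)$ and the bound $\|r\Lb\varphi\|_{L^2_u L^2_\omega(\Hb_{\ub'}^u)}\les E[\varphi](\Hb_{\ub'}^u)^{1/2}$ (since $\Lb$ is the good direction in $E[\varphi](\Hb_\ub)$) yield
$$\int_{\D_u^\ub}|F^\flat\Lb\varphi|\, dxdt \les \|rF^\flat\|_{L^1_\ub L^2_u L^2_\omega(\D_u^\ub)}\, \sup_{\ub'\in[-u,\ub]} E[\varphi](\Hb_{\ub'}^u)^{1/2}.$$
Symmetrically, slicing by outgoing cones $\H_{u'}^\ub$ with $u'\in[-\ub,u]$ handles $\int F^\flat L\varphi\, dxdt$ with $\|rF^\flat\|_{L^1_u L^2_\ub L^2_\omega(\D_u^\ub)}\cdot\sup_{u'} E[\varphi](\H_{u'}^\ub)^{1/2}$.

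For the $F^\sharp$ integral, Cauchy--Schwarz with weights $r^{\pm\f12}$ reduces the task to bounding $\|r^{-\f12}\p_t\varphi\|_{L^2(\D_u^\ub)}$. Slicing by outgoing cones and using $|\p_t\varphi|^2\les |L\varphi|^2+|\Lb\varphi|^2$, the $\Lb$ component is controlled only by the ``bad'' weight $Mr|\Lb\varphi|^2$ in the density on $\H_{u'}$, while the $L$ and $\sn$ pieces benefit from the full $r^2$ weight and the fact that $u'_+^{-1}\les M^{-1}$; this forces a single $M^{-1}$ and produces
$$\|r^{-\f12}\p_t\varphi\|_{L^2(\D_u^\ub)}^2\les M^{-1}\int_{-\ub}^u E[\varphi](\H_{u'}^\ub)\, du'.$$

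To close the argument, set $\Phi(u,\ub)=u_+^{-2p+\ga_0}(E[\varphi](\H_u^\ub)+E[\varphi](\Hb_\ub^u))$. For each $\ub'\in[-u,\ub]$ the pair $(u,\ub')$ lies in $\I$, so $E[\varphi](\Hb_{\ub'}^u)\le u_+^{2p-\ga_0}\sup_\I\Phi$, and similarly for $E[\varphi](\H_{u'}^\ub)$. In the $F^\sharp$ bound, along $u'\in[-\ub,u]$ we have $u'_+\ge u_+$; using $p\le 0<(\ga_0-1)/2$ so that $2p-\ga_0<-1$, the integral $\int s^{2p-\ga_0}\,ds$ is convergent and
$$\int_{-\ub}^u E[\varphi](\H_{u'}^\ub)\, du'\les u_+^{1+2p-\ga_0}\sup_\I\Phi.$$
Multiplying the energy identity by $u_+^{-2p+\ga_0}$ and applying Young's inequality $XY\le\frac{1}{4\ep}X^2+\ep Y^2$ then separates the $F$-norms, with exactly the weights $u_+^{-2p+\ga_0}$ and $u_+^{1-2p+\ga_0}M^{-1}$ appearing in the statement, from an absorbable remainder $C\ep\sup_\I\Phi$. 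Taking $\sup_\I$ on both sides and fixing $\ep$ small enough completes the proof. The main technical point is the careful matching of each norm type to the appropriate direction-weight in $E[\varphi]$---pairing $L$ with outgoing foliations and $\Lb$ with incoming foliations in the $F^\flat$ terms to avoid any loss in $r$-decay, and recognising that the single $M^{-1}$ in the $F^\sharp$ term is unavoidable because the dangerous $\Lb\varphi$ direction has only the degenerate weight $Mr$ in $E[\varphi](\H_{u'})$.
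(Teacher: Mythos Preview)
Your proof is correct and follows essentially the same approach as the paper's: both start from the basic energy identity of Lemma \ref{3.19.2.18}, split $\p_t\varphi=\f12(L\varphi+\Lb\varphi)$ and pair $L\varphi$ with the outgoing foliation (yielding the $L_u^1 L_\ub^2 L_\omega^2$ norm on $rF^\flat$) and $\Lb\varphi$ with the incoming foliation (yielding the $L_\ub^1 L_u^2 L_\omega^2$ norm), treat $F^\sharp$ via $\|r^{-\f12}\p_t\varphi\|_{L^2(\D)}^2\les M^{-1}\int E[\varphi](\H_{u'})\,du'$, and close by Young's inequality and absorption after taking the weighted supremum over $\I$. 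The only cosmetic difference is that the paper carries the weight $u_+^{p-\ga_0/2}$ through the H\"older estimates before applying Cauchy--Schwarz, whereas you first bound and then weight; the resulting terms and exponents are identical.
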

\begin{proof}
We apply  Lemma \ref{3.19.2.18}  with  $(u,\ub)\in \I$. This implies
\begin{align}
{u}_+^{-2p+\ga_0}&\big(E[\varphi](\H_{u}^{\ub})+E[\varphi](\Hb_{\ub}^{u})\big)\nn \\
&\les\sup_{(u_1,\ub_1)\in \I }\{ {u_1}_+^{-2p+\ga_0}|\int_{\D_{u_1}^{\ub_1}} F\c \p_t \varphi dx dt|+{u_1}_+^{-2p+\ga_0}E[\varphi](\Sigma_0^{u_1,\ub_1})\}.\label{5.16.2.18}
\end{align}
To control the first term on the right, we first consider the $\F^\flat$ term in  the decomposition $F= F^\sharp+F^\flat$.
\begin{align*}
|\int_{\D_{u_1}^{\ub_1}} F^\flat\c \p_t \varphi dx dt|\les \int_{\D_{u_1}^{\ub_1}} |F^\flat \c L \varphi| dx dt+\int_{\D_{u_1}^{\ub_1}} |F^\flat \c \Lb \varphi| dx dt=I+II.
\end{align*}
We estimate $I$ and $II$ in the following way,
\begin{align*}
I&\les \| {u}_+^{p-\f12\ga_0} rF^\flat \|_{L_u^1 L_\ub^2 L_\omega^2(\D_{u_1}^{\ub_1})}\|{u}_+^{-p+\f12 \ga_0} r L \varphi\|_{L_u^\infty L_\ub^2 L_\omega^2(\D_{u_1}^{\ub_1})}  \\
&\les  {u_1}_+^{p-\f12\ga_0} \|rF^\flat \|_{L_u^1 L_\ub^2 L_\omega^2(\D_{u_1}^{\ub_1})}\sup_{-\ub_1\le u\le u_1}  {u}_+^{-p+\f12 \ga_0} E[\varphi]^\f12(\H_{u}^{\ub_1}),\\
II&\les \| rF^\flat \|_{L_\ub^1 L_u^2 L_\omega^2(\D_{u_1}^{\ub_1})}\| r \Lb  \varphi\|_{L_\ub^\infty L_u^2 L_\omega^2(\D_{u_1}^{\ub_1})}\\
&\les  {u_1}_+^{p-\f12\ga_0}\| rF^\flat \|_{L_\ub^1 L_u^2 L_\omega^2(\D_{u_1}^{\ub_1})}  \sup_{-u_1\le \ub\le \ub_1}{u_1}_+^{-p+\f12 \ga_0} E[\varphi]^\f12(\Hb_{\ub}^{u_1}).
\end{align*}
Similarly,  we can derive
\begin{align*}
\int_{\D_{u_1}^{\ub_1}} |F^\sharp \c \p_t \varphi| dx dt
&\le  M^{-\f12}\|r^\f12 F^\sharp \|_{L^2(\D_{u_1}^{\ub_1})}(\int_{-\ub_1}^{u_1} E  [\varphi](\H_{u}^{\ub_1}) du)^\f12\\
  &\les M^{-\f12} \|r^\f12 F^\sharp \|_{L^2(\D_{u_1}^{\ub_1})} {u_1}_+^{-\frac{\ga_0}{2}+\f12+p}\sup_{-\ub_1\le u \le u_1}  {u}_+^{\frac{\ga_0}{2}-p} E^\f12 [\varphi](\H_{u}^{\ub_1}).
\end{align*}
By multiplying the weight ${u_1}_+^{-2p+\ga_0}$ to the three inequalities, followed by using Cauchy-Schwartz inequality, we can derive
\begin{align}
{u_1}_+^{-2p+\ga_0}&|\int_{\D_{u_1}^{\ub_1}} F \c \p_t \varphi dx dt|\nn\\
&\les \ep_1^{-1} \big({u_1}_+^{-2p+\ga_0}\|rF^\flat \|_{L_\ub^1 L_u^2 L_\omega^2(\D_{u_1}^{\ub_1})}^2 + {u_1}_+^{-2p+\ga_0} \|r F^\flat \|_{L_u^1 L_\ub^2 L_\omega^2(\D_{u_1}^{\ub_1})}^2\nn\\
&\quad \quad +\|r^\f12 F^\sharp \|^2_{L^2(\D_{u_1}^{\ub_1})}M^{-1} {u_1}_+^{\ga_0-2p+1}\big)\nn\\
&+\ep_1\big( \sup_{-\ub_1\le u\le u_1}  {u}_+^{-2p+ \ga_0} E[\varphi](\H_{u}^{\ub_1})+ \sup_{-u_1\le \ub\le \ub_1}{u_1}_+^{-2p+\ga_0} E[\varphi](\Hb_{\ub}^{u_1})\big).\label{5.16.3.18}
\end{align}
By substituting the above inequality to (\ref{5.16.2.18}), followed with taking supremum for $(u,\ub)\in \I $, the last line of (\ref{5.16.3.18}) can be absorbed. Thus Proposition \ref{5.16.5.18} is proved.
\end{proof}
Next, we give the weighted energy estimate.
\begin{lemma}\label{5.7.7.18}
For any $\ub \le u\le u_0$,  there holds the following inequality,
\begin{align*}
& \int_{\H_u^\ub} r (L(r\varphi))^2+h r^3( |\sn \varphi|^2+q\varphi^2) d\ub' d\omega+\int_{\Hb_\ub^u} r^{3}(h (L\varphi)^2+ |\sn \varphi|^2+q\varphi^2) du' d\omega\\
&+\f12\int_{\D_u^\ub}\{(L(r\varphi))^2+r^2 |\sn \varphi|^2\} du' d\ub' d\omega \\
&\les \W_1[\varphi](\Sigma_0^{u,\ub})+\int_{\D_u^\ub}| F\c L(r\varphi)| dx dt+\int_{-\ub}^u  E[\varphi] (\H_{u'}^\ub) du'+E[\varphi](\Hb_\ub^u)\nn\\
&+\|r^{-1/2}\varphi\|^2_{L^2(\Sigma_0^{u, \ub})}+\int_{S_{-\ub, \ub}} r\varphi^2 d\omega,\nn
\end{align*}
where $q(x)\ge 0$ verifies (\ref{potential})  with $n=2$.
\end{lemma}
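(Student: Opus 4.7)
I adopt the Dafermos--Rodnianski $r^p$-weighted multiplier method, adapted to the schwarzschild $(u,\ub)$-foliation of Section~\ref{setup}. Set $\psi=r\varphi$. The decomposition $\Box_\bm=-L\Lb+r^{-1}(L-\Lb)+\sD$ together with $Lr=1$, $\Lb r=-1$ gives $r\Box_\bm\varphi=-L\Lb\psi+\sD\psi$, so the equation $\Box_\bm\varphi-q\varphi=F$ becomes $L\Lb\psi-\sD\psi+q\psi=-rF$. Multiplying this by $rL\psi$ and combining (a) $rL\psi\cdot L\Lb\psi=\f12\Lb(r(L\psi)^2)+\f12(L\psi)^2$ (from $\Lb r=-1$); (b) $\int_{{\mathbb S}^2} L\psi\cdot\sD\psi\,d\omega=-\int_{{\mathbb S}^2}\sn\psi\cdot\sn L\psi\,d\omega$ together with $L(|\sn\psi|^2)=2\sn\psi\cdot\sn L\psi-\frac{2}{r}|\sn\psi|^2$; and (c) $rq\psi L\psi=\f12 L(rq\psi^2)-\frac{q+rLq}{2}\psi^2$, I obtain on each sphere the pointwise identity
\begin{equation*}
\int_{{\mathbb S}^2}\Big\{\f12\Lb\big(r(L\psi)^2\big)+\f12 L\big(r|\sn\psi|^2+rq\psi^2\big)+\f12\big((L\psi)^2+|\sn\psi|^2\big)\Big\}d\omega=\int_{{\mathbb S}^2}\Big\{\tfrac{q+rLq}{2}\psi^2-r^2FL\psi\Big\}d\omega,
\end{equation*}
whose third left-hand term equals the desired bulk integrand $\f12\big((L(r\varphi))^2+r^2|\sn\varphi|^2\big)$.

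\textbf{Flux identification via integration over $\D_u^\ub$.} I integrate the above identity with measure $du'd\ub'd\omega$. Using $\Lb=(1+\bb^{-1})\p_{u'}+(1-\bb^{-1})\p_{\ub'}$ and $L=(1-\bb^{-1})\p_{u'}+(1+\bb^{-1})\p_{\ub'}$ from (\ref{6.28.2.18}), integration by parts in $u'$ and $\ub'$ produces boundary contributions on $\H_u^\ub$ equal to $\f12(1+\bb^{-1})r(L\psi)^2+\f12(1-\bb^{-1})r(|\sn\psi|^2+q\psi^2)$; since $\f12(1-\bb^{-1})=h/(1+h)\approx h$, this reproduces $r(L(r\varphi))^2+hr^3(|\sn\varphi|^2+q\varphi^2)$ up to multiplicative constants. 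Symmetrically on $\Hb_\ub^u$ one finds $\f12(1-\bb^{-1})r(L\psi)^2+\f12(1+\bb^{-1})r(|\sn\psi|^2+q\psi^2)$, matching $hr^3(L\varphi)^2+r^3(|\sn\varphi|^2+q\varphi^2)$. The $\Sigma_0^{u,\ub}$ boundary contributions reproduce $\W_1[\varphi](\Sigma_0^{u,\ub})$ plus lower-order $\varphi^2$ pieces absorbable into $\|r^{-1/2}\varphi\|^2_{L^2(\Sigma_0^{u,\ub})}$ and the boundary-sphere term $\int_{S_{-\ub,\ub}}r\varphi^2\,d\omega$ arising at the tip $\ub'=-u'$ of the $u'$-integration range.

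\textbf{Error terms and main obstacle.} The IBP corrections produce $\p_{u'}\bb^{-1},\p_{\ub'}\bb^{-1}=O(Mr^{-2})$, contributing bulk errors $\int_\D O(M/r)((L\psi)^2+|\sn\psi|^2)\,du'd\ub'd\omega$; the pointwise bound $r^{-1}\les u_+^{-1}$ on $\H_{u'}$ absorbs these into $\int_{-\ub}^u E[\varphi](\H_{u'}^\ub)\,du'$. The potential remainder $\frac{q+rLq}{2}\psi^2$ is controlled via $|q|\les r^{-2}$ and $|rLq|\les r^{-2-\eta}$, giving a bound by $\varphi^2$ which is again absorbed through the $q\varphi^2$ piece of $E[\varphi](\H_{u'}^\ub)$. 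The source bound $\big|\int_\D r^2 FL\psi\,du'd\ub'd\omega\big|\les\int_\D |F\cdot L(r\varphi)|\,dxdt$ follows directly from $dxdt\approx\f12 r^2 du'd\ub'd\omega$. Lower-order cross terms from $(L\psi)^2=(rL\varphi+\varphi)^2$ on the fluxes are handled via (\ref{4.29.6.18}) and (\ref{5.1.1.18}). The main technical delicacy is ensuring that the $O(h)$ transverse components of $L,\Lb$ relative to $\p_{u'},\p_{\ub'}$ deliver \emph{positive} contributions $hr^3(L\varphi)^2$ on $\Hb_\ub^u$ and $hr^3(|\sn\varphi|^2+q\varphi^2)$ on $\H_u^\ub$ rather than sign-indefinite errors --- a point that rests on the strict positivity $1-\bb^{-1}=2h/(1+h)>0$ for $M>0$, precisely the geometric reason for using the schwarzschild rather than Minkowski cone foliation.
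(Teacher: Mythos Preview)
Your approach and the paper's are algebraically equivalent presentations of the same $r$-weighted multiplier method. The paper packages the computation through an energy current ${}^{(X)}\sP_\a=Q_{\a\b}X^\b+\tfrac12\p_\a(\varphi^2)+Y_\a$ with $X=rL$, $Y=\tfrac12 r^{-1}\varphi^2 L$, derives the divergence identity $\p^\a{}^{(X)}\sP_\a=(\Box_\bm\varphi-q\varphi)(X\varphi+\varphi)+\tfrac12(r^{-2}(L(r\varphi))^2+|\sn\varphi|^2)-\tfrac12(Xq+q)\varphi^2$, and applies the divergence theorem together with the auxiliary identity (\ref{6.21.1.18}) to cancel certain $\p(r^2\varphi^2)$ boundary pieces. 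Your reduction $\psi=r\varphi$ followed by multiplication by $rL\psi$ and direct integration by parts in $(u',\ub')$ is the standard Dafermos--Rodnianski shortcut to the same identity; in fact your route arrives at the desired $r(L(r\varphi))^2$ flux on $\H_u$ slightly more cleanly, since the $(L\psi)^2=(L(r\varphi))^2$ term carries it directly rather than via the added identity (\ref{6.21.1.18}). The paper's stress-energy formulation, on the other hand, is what generalizes to the quasilinear case in Section~\ref{quasi}.

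One correction: you claim $|q|\les r^{-2}$, but the hypotheses only give $0\le q\le 1$ and the derivative bound (\ref{potential}) for $i\ge 1$; the potential itself need not decay. The term $\tfrac12\int_{\D}q\psi^2\,du'd\ub'd\omega=\tfrac12\int_{\D}qr^2\varphi^2\,du'd\ub'd\omega$ is nonetheless bounded by $\int_{-\ub}^u E[\varphi](\H_{u'}^\ub)\,du'$ directly, since $\tfrac12 r^2q\varphi^2$ is a summand in the energy density on $\H_{u'}$ (see (\ref{3.19.1})). The $rLq$ piece is handled as you say, via (\ref{potential}) and (\ref{3.23.2.18}); this is exactly how the paper treats $\tfrac12(Xq+q)\varphi^2$ around (\ref{6.23.3.18}).
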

\begin{proof}
We define
\begin{equation*}
{}\rp{X}\sP_\a= Q_{\a\b}X^\b +\f12 \p_\a(\varphi^2)+Y_\a,
\end{equation*}
where
\begin{equation*}
X=rL, \quad Y= \f12 r^{-1} \varphi^2 L.
\end{equation*}
Similar to the calculation in \cite{mMaxwell}, we have
\begin{align}\label{5.7.6.18}
\p^\a {}\rp{X} \sP_\a= (\Box_\bm \varphi-q\varphi) (X\varphi+\varphi)+\f12 \big(r^{-2} (L(r\varphi))^2 +|\sn \varphi|^2\big) - \f12 (Xq +q)\varphi^2.
\end{align}

Indeed, by using (\ref{7.20.6.18}), we derive
\begin{equation}\label{7.20.8.18}
\p^\a {}\rp{X} \sP_\a = (\Box_\bm \varphi-q\varphi) X\varphi-\f12 Xq\c \varphi^2+Q_{\a\b}{}\rp{X}\pi^{\a\b}+\f12 \p^\a \p_\a (\varphi^2)+\p^\a Y_\a.
\end{equation}
We can check the nonvanishing components of ${}\rp{X} \pi$ for $X=r L$ are
\begin{equation*}
{}\rp{X}\pi_{\Lb \Lb}=2, \,\quad {}\rp{X}\pi_{L\Lb}=-1,\quad {}\rp{X}\pi_{e_Ae_B}=\delta_{AB}, \, \, A,B=1,2
\end{equation*}
and
\begin{align*}
&Q_{L L}=(L\varphi)^2, \qquad Q_{\Lb \Lb}=(\Lb \varphi)^2,  \qquad Q_{L\Lb}=|\sn \varphi|^2+q\varphi^2\\
&Q_{AB}=\sn_A \varphi \sn_B \varphi-\f12 \bm_{AB}(-L \varphi \Lb \varphi+|\sn \varphi|^2+q\varphi^2 ).
\end{align*}
By combining the calculations of $Q_{\a\b}$ and ${}\rp{X}\pi^{\a\b}$, we derive
\begin{align*}
&Q_{LL}{}\rp{X} \pi^{LL}=\f12 (L \varphi)^2, \quad \quad Q_{L \Lb}{}\rp{X} \pi^{L \Lb}=-\frac{1}{4} (|\sn \varphi|^2+q\varphi^2)\\
&Q_{AB} {}\rp{X}\pi^{AB}=\Lb \varphi L \varphi-q \varphi^2.
\end{align*}
Thus
\begin{equation*}
Q_{\a\b}{}\rp{X}\pi^{\a\b}=\f12\big((L \varphi)^2 -|\sn\varphi|^2)+\Lb \varphi L \varphi-\frac{3}{2}q\varphi^2.
\end{equation*}
It is straightforward to have
\begin{align*}
\f12 \p^\a\p_\a (\varphi^2)&=  \p^\a\varphi \p_\a \varphi+\Box_\bm \varphi \c \varphi=-L \varphi \Lb \varphi+|\sn \varphi|^2+\Box_\bm \varphi \c \varphi.
\end{align*}
Note that $\p^\a L_\a=2/r$. We have
\begin{align*}
\p^\a Y_\a &= \f12 \p^\a(r^{-1} \varphi^2L_\a )=\f12 \{\varphi^2 L(r^{-1})+r^{-1}L(\varphi^2)+r^{-1} \varphi^2\p^\a L_\a \}\\
&=\f12 (r^{-1} L (\varphi^2)+r^{-2} \varphi^2).
\end{align*}
Combining the above calculations with (\ref{7.20.8.18}) implies (\ref{5.7.6.18}).

On the other hand, by divergence theorem, we have
\begin{align}
\int_{\H_u^\ub}& \N^\a {}\rp{X}\sP_\a  d\mu_\H  +\int_{\Hb_\ub^u} \Nb^\a{}\rp{X} \sP_\a  d\mu_{\Hb}= \int_{\Sigma_0^{u, \ub}} {}\rp{X}\sP_\a (\p_t)^\a dx-\int_{\D_u^\ub} \p^\a{}\rp{X} \sP_\a dx dt\label{5.7.3.18}
\end{align}
where the area elements $d\mu_\H$ and $d\mu_{\Hb}$ can be found in (\ref{7.4.3.18}). A direct substitution implies
\begin{equation}\label{5.7.4.18}
\begin{split}
&r^2 (1+h) \N^\a {}\rp{X}\sP_\a= r \{(L (r\varphi))^2+r^2 h (|\sn \varphi|^2+q\varphi^2)-r^{-1}\f12 L' (r^2 \varphi^2)\},\\
&r^2 (1+h)\Nb^\a{}\rp{X} \sP_\a=r^3\big(h (L\varphi)^2+|\sn \varphi|^2+q\varphi^2\big)+\f12 \Lb' (r^2 \varphi^2)+rh\big(r L(\varphi^2)+\varphi^2\big),\\
&r^2 \p_t^\a {}\rp{X}\sP_\a=\f12 r^3 (r^{-2}|L(r\varphi)|^2+|\sn \varphi|^2  +q\varphi^2 )-\f12\p_r(r^2 \varphi^2).
\end{split}
\end{equation}

Note that  for any $-\ub_1\le u_1\le u_0$, there holds
\begin{align}
\int_{\H_{u_1}^{\ub_1}}\frac{d}{d\ub} (r^2 \varphi^2) d\omega d\ub-\int_{\Hb_{\ub_1}^{u_1}} \frac{d}{du} (r^2 \varphi^2) d\omega du-\int_{\Sigma_0^{u_1, \ub_1}} \p_r (r^2 \varphi^2) d\omega dr =0.\label{6.21.1.18}
\end{align}
By adding this identity to (\ref{5.7.3.18}), in view of (\ref{5.7.4.18}) and (\ref{4.30.3.18}), we can obtain
\begin{equation*}
\begin{split}
& \int_{\H_u^\ub} \big(r (L(r\varphi))^2+h r^3 (|\sn \varphi|^2+q\varphi^2)\big) (1+h)^{-1}\bb r^2  d\ub' d\omega+\int_{\Hb_\ub^u} r^{3}(h|L \varphi|^2\\
&+|\sn \varphi|^2+q\varphi^2) (1+h)^{-1}\bb du' d\omega+\f12\int_{\D_u^\ub }\{(L(r\varphi))^2+r^2 |\sn \varphi|^2\} \bb du' d\ub' d\omega \\
&\les\W_1[\varphi](\Sigma_0^{u, \ub})+\int_{\D_u^\ub} \{|Xq| \varphi^2+ q\varphi^2 \}+\int_{\D_u^\ub}|  F\c L(r\varphi) |\\
&+\int_{\H^u_\ub} r|h| \big|\varphi^2+rL (\varphi^2)\big| du'd\omega.
\end{split}
\end{equation*}
The last term can be treated by using (\ref{4.29.6.18}) and  Cauchy-Schwartz inequality,
\begin{align*}
\int_{\Hb_\ub^u} |h| (r|L(\varphi^2)|+\varphi^2)  du' d\omega &\les \int_{S_{-\ub, \ub}} r\varphi^2 d\omega +E[\varphi](\Hb_\ub^u)+ \int_{\Hb_\ub^u}|h|  |r L\varphi|^2  du'd\omega\\
&\les
\int_{S_{-\ub, \ub}} r\varphi^2 d\omega +E[\varphi](\Hb_\ub^u).
\end{align*}
Note that, due to (\ref{potential}), $|Xq|\les (1+r)^{-2-\eta}$.  We then have by using (\ref{3.23.2.18}) that
\begin{equation}\label{6.23.3.18}
\int_{\D_u^\ub}  |Xq|\varphi^2\les \int_{-u}^\ub (1+\ub')^{-1-\eta} \W_1[\varphi](\D^ {\ub'}_{u}) d\ub'+ \int_{\Sigma_0^{u, \ub}} r^{-1}\varphi^2+M\int_{-\ub}^{u} {u'}_+^{-1}E[\varphi](\H_{u'}^\ub) du'.
\end{equation}
The first term can be absorbed by using Gronwall's inequality, other terms can be derived by direct  integration. Thus, Lemma \ref{5.7.7.18} is proved.
\end{proof}
We then can  derive the following result.
\begin{proposition}\label{6.12.2.18}
 Let $p\le 0$ be any fixed number.  With  $\I=\{(u,\ub): -\ub_1\le u_1\le u_0\}$,  the following energy inequality holds
\begin{align*}
& \W_1[\varphi](\H^{\ub_1}_{u_1})+\W_1[\varphi](\Hb_{\ub_1}^{u_1})+\W_1[\varphi](\D_{u_1}^{\ub_1})\\
&\les\W_1[\varphi](\Sigma_0^{u_1,\ub_1})+\|r^\frac{3}{2}F\|_{L_u^1 L_\ub^2 L_\omega^2(\D_{u_1}^{\ub_1})}^2+ {u_1}_+^{-\ga_0+2p+1}\sup_{-\ub_1\le u\le u_1} u_+^{-2p+\ga_0}E[\varphi](\H_u^{\ub_1})\nn\\
&\qquad+ E[\varphi](\Hb_{\ub_1}^{u_1})+\|r^{-1/2}\varphi\|^2_{L^2(\Sigma_0^{u_1, \ub_1})}+\int_{S_{-\ub_1, \ub_1}} r\varphi^2 d\omega\nn.
\end{align*}
\end{proposition}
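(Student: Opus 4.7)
The plan is to invoke Lemma \ref{5.7.7.18} at the generic pair $(u,\ub_1)$ with $u\in[-\ub_1,u_1]$, convert each of its three residual error terms into the form required by the proposition, and close the argument by a sup-absorption trick. In view of the definitions in (\ref{3.19.1}), the identities (\ref{7.4.3.18})--(\ref{4.30.3.18}) for the area elements, and the choice $hr^3=Mr^2$, the left-hand side of Lemma \ref{5.7.7.18} is equivalent (up to universal constants) to $\W_1[\varphi](\H_u^{\ub_1})+\W_1[\varphi](\Hb_{\ub_1}^u)+\W_1[\varphi](\D_u^{\ub_1})$, so the task reduces to absorbing the three error contributions.

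For the nonlinear source term I use $dx\,dt\sim r^2\,du'\,d\ub'\,d\omega$ and split $r^2=r^{3/2}\cdot r^{1/2}$; placing $F$ in $L^1_{u'}L^2_{\ub'}L^2_\omega$ and $L(r\varphi)$ in the dual $L^\infty_{u'}L^2_{\ub'}L^2_\omega$ yields, via H\"older,
\begin{equation*}
\int_{\D_u^{\ub_1}}|F\,L(r\varphi)|\,dx\,dt\les \|r^{3/2}F\|_{L^1_{u'}L^2_{\ub'}L^2_\omega(\D_{u_1}^{\ub_1})}\sup_{u''\in[-\ub_1,u]}\W_1^{\f12}[\varphi](\H_{u''}^{\ub_1}),
\end{equation*}
which Cauchy--Schwarz then splits as $\epsilon\sup_{u''}\W_1[\varphi](\H_{u''}^{\ub_1})+\epsilon^{-1}\|r^{3/2}F\|^2_{L^1_{u'}L^2_{\ub'}L^2_\omega(\D_{u_1}^{\ub_1})}$. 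For the linear error I insert the weight ${u'}_+^{\ga_0-2p}$, pull out the weighted sup, and use that $2p-\ga_0<-1$ (because $p\le 0$ and $\ga_0>1$) to get $\int_{-\ub_1}^u {u'}_+^{2p-\ga_0}\,du'\les u_+^{-\ga_0+2p+1}\le u_{1+}^{-\ga_0+2p+1}$, producing exactly the factor appearing in the statement. The remaining $E[\varphi](\Hb_{\ub_1}^u)$, $\|r^{-1/2}\varphi\|^2_{L^2(\Sigma_0^{u,\ub_1})}$, $\W_1[\varphi](\Sigma_0^{u,\ub_1})$ and $\int_{S_{-\ub_1,\ub_1}}r\varphi^2\,d\omega$ are all monotone under $u\le u_1$, so they are dominated by the corresponding quantities at $u=u_1$.

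Finally I take the supremum over $u\in[-\ub_1,u_1]$ on both sides; writing $A(u):=\W_1[\varphi](\H_u^{\ub_1})+\W_1[\varphi](\Hb_{\ub_1}^u)+\W_1[\varphi](\D_u^{\ub_1})$ and letting $B(u_1,\ub_1)$ denote the right-hand side of the proposition, the resulting inequality reads $\sup_{u\in[-\ub_1,u_1]}A(u)\les B(u_1,\ub_1)+C\epsilon\sup_{u\in[-\ub_1,u_1]}A(u)$; choosing $\epsilon$ small enough absorbs the last term, and evaluating the resulting uniform bound at $u=u_1$ gives the proposition. The one delicate point is precisely this sup-absorption: because the H\"older bound for the $F$-error naturally produces $\sup_{u''\in[-\ub_1,u]}\W_1^{\f12}[\varphi](\H_{u''}^{\ub_1})$ instead of the value at the fixed endpoint $u_1$, the inequality must be derived simultaneously at every intermediate $u$ in order to close, and one must verify monotonicity of $B(\cdot,\ub_1)$ (using $p\le 0$ and $\ga_0>1$ so that $u_+^{-\ga_0+2p+1}$ is increasing in $u$) to conclude that the supremum of $A(u)$ is controlled by $B(u_1,\ub_1)$.
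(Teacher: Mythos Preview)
Your proposal is correct and follows exactly the approach the paper intends: the paper states Proposition \ref{6.12.2.18} immediately after Lemma \ref{5.7.7.18} with only the remark ``We then can derive the following result,'' and your argument supplies precisely the missing details --- applying the lemma at each $(u,\ub_1)$, handling $\int|F\,L(r\varphi)|$ by H\"older with the $r^{3/2}\cdot r^{1/2}$ splitting and Cauchy--Schwarz with parameter $\epsilon$, converting $\int E[\varphi](\H_{u'}^{\ub_1})\,du'$ via the $u_+^{\ga_0-2p}$ weight, and closing by sup-absorption using the monotonicity in $u$ of the data terms. This mirrors the same absorption scheme the paper writes out explicitly in the proofs of Proposition \ref{5.16.5.18} and Proposition \ref{4.29.5.18}.
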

\subsection{Preliminaries}
The proof is based on a bootstrap argument, with the assumption of $(\BA2)$ and $\dn=C_1 \E_{2,\ga_0}$ with $C_1>1$ to be determined.
We recast the assumption as follows.

Let $\ub_*>-u_0$ be any fixed large number.
For $0\le n\le 2$, we suppose
\begin{align}
&E[Z\rp{n} \phi](\H_u^\ub)+E[Z\rp{n} \phi](\Hb_\ub^u)\le 2\dn u_+^{-\ga_0+2\zeta(Z^n)},\label{3.21.5.18}\\
&\W_1[Z\rp{n} \phi](\H_u^\ub)+\W_1[Z\rp{n} \phi](\Hb_\ub^u)+\W_1[Z\rp{n}\phi](\D_u^\ub)\le 2\dn u_+^{-\ga_0+1+2\zeta(Z^n)}\label{3.21.6.18}
\end{align}
hold for all $-\ub_*\le -\ub \le u\le u_0$.

The local well-posedness result,  in $\{-\ub(\ep)\le  -\ub < u\le  u_0\}$ with $\ub(\ep)$ finite,   can follow by running a standard iteration argument (see \cite{sogge}), or by using the standard local existence result upto the characteristic boundary, i.e. $\{r\ge t+R\}$. Thus the above assumptions hold for some $\ub_* >-u_0$.  Our task is to show that the estimates in the assumption hold for any $\ub_*>-u_0$, with the bound improved to be $<2\dn$. \begin{footnote}{The same argument is employed for setting up the bootstrap assumptions in Section \ref{quasi} and Section \ref{Eins}, which will not be repeated in later sections.}\end{footnote}

As a direct consequence of the bootstrap assumption, we have
\begin{lemma}
For $Z=\Omega$ or $\p$, there holds
\begin{equation}\label{5.18.1.18}
|Z\phi|\les u_+^{-\f12-\frac{\ga_0}{2}}\dn^\f12 r^{\zeta(Z)}.
\end{equation}
\end{lemma}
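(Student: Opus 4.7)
\medskip
\textbf{Proof plan.} The estimate is essentially a corollary of the pointwise decay bound (\ref{3.24.18.18}) from Proposition \ref{5.20.1.18} applied at $l=0$. Under the bootstrap assumption $(\BA{2})$, this proposition (with $n=2$) is available and yields, for every $(u,\ub,\omega)$ in the region $\{-\ub_*\le-\ub\le u\le u_0\}$,
\begin{equation*}
r^3|\sn\phi|^2+r^2 u_+|\Lb\phi|^2+r^3|L\phi|^2\lesssim(\E_{2,\ga_0}+\dn)u_+^{-\ga_0}\lesssim\dn\,u_+^{-\ga_0},
\end{equation*}
where in the last step I use $\dn=C_1\E_{2,\ga_0}$. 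Thus
\begin{equation*}
|L\phi|+|\sn\phi|\lesssim\dn^{\f12}r^{-\tfrac{3}{2}}u_+^{-\tfrac{\ga_0}{2}},\qquad|\Lb\phi|\lesssim\dn^{\f12}r^{-1}u_+^{-\tfrac{1}{2}-\tfrac{\ga_0}{2}}.
\end{equation*}

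Next, I would invoke the basic geometric fact $r\gtrsim u_+$ on $\{u\le u_0\}$, which is immediate from (\ref{7.3.1.18}) since $r\approx\ub\ge-u=u_+$. This converts the extra $r^{-\f12}$ factor in the $L$ and $\sn$ bounds into $u_+^{-\f12}$, so all three null components obey the uniform bound
\begin{equation*}
|L\phi|+|\sn\phi|+|\Lb\phi|\lesssim\dn^{\f12}r^{-1}u_+^{-\tfrac{1}{2}-\tfrac{\ga_0}{2}}.
\end{equation*}

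For $Z=\p$, the signature is $\zeta(\p)=-1$, and the decomposition $2\p_t=L+\Lb$, $2\p_r=L-\Lb$, and $\p_i=\sn_i+\omega^i\p_r$ shows $|\p\phi|\lesssim|L\phi|+|\Lb\phi|+|\sn\phi|$, so the bound above gives exactly (\ref{5.18.1.18}) with $r^{\zeta(\p)}=r^{-1}$. For $Z=\Omega_{ij}$, since $\Omega_{ij}$ is tangent to the spheres $S_{t,r}$ one has $|\Omega\phi|\lesssim r|\sn\phi|\lesssim\dn^{\f12}r^{-\f12}u_+^{-\tfrac{\ga_0}{2}}$, and again using $r\gtrsim u_+$ yields $|\Omega\phi|\lesssim\dn^{\f12}u_+^{-\tfrac{1}{2}-\tfrac{\ga_0}{2}}$, matching $r^{\zeta(\Omega)}=r^{0}=1$.

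The argument involves no obstacle worth highlighting: it is a straightforward packaging of Proposition \ref{5.20.1.18} with the comparability $r\gtrsim u_+$ in the exterior schwarzschild cone region. The only thing to be careful about is that one cannot apply (\ref{3.24.18.18}) to $Z\phi$ directly (which would require $l=1$, hence $n\ge 3$ in the bootstrap), so one must instead bound $Z\phi$ by the null components of $\phi$ itself and absorb the mismatch in $r$-weights using $r\gtrsim u_+$.
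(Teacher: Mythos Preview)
Your proof is correct and follows essentially the same approach as the paper: apply the pointwise bound (\ref{3.24.18.18}) at $l=0$, use $r\gtrsim u_+$ from (\ref{7.3.1.18}) to trade the extra $r^{-1/2}$ on the good components for $u_+^{-1/2}$, and then read off the cases $Z=\p$ and $Z=\Omega$ via $|\p\phi|\les|L\phi|+|\Lb\phi|+|\sn\phi|$ and $|\Omega\phi|\les r|\sn\phi|$. The paper's proof is simply a terser version of exactly this.
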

\begin{proof}
It  follows by  using (\ref{3.24.18.18}) with $l=0$ that
\begin{equation*}
r |\sn \phi|\les r^{-\f12} u_+^{-\frac{\ga_0}{2}} \dn^\f12,\qquad r|\p \phi|\les u_+^{-\f12-\frac{\ga_0}{2}}\dn^\f12
\end{equation*}
which gives (\ref{5.18.1.18}).
\end{proof}

%Note that $[Z, \p]=\p$ if $Z=\Omega_{ij}$. It vanishes if $Z=\p$.
%We then recall from \cite[Section 2.1]{mMaxwell} with $\ga=\ga_2=1$ and $\ga_0'=0$. Applying the above lemma to $Z\p \phi$, we have
%\begin{align}
%\|r (Z\p \phi)&\|_{L^4_v L_u^\infty L_\omega^4(\D_u^v)}^4\les \int^v_{v_{\min}(u)} \int_{S_{u_{\min}(v'), v'}} r^2 |Z \p \phi|^4  dv' \nn\\
%&+\sup_{v'} (E[Z\p \phi](\Hb_{v'}^u)+u_+^{-1}\W_1[Z\p\phi](\Hb_{v'}^{u}))\c\W_1[Z\p \phi](\D_{u}^v)\nn \\
%&\les\dn^2 {u_+}^{-2\ga_0-2-2+1+4\zeta(Z)}\label{3.26.1.18}
%\end{align}
%Note that if $Z=\Omega$, $Z\p\phi=\p Z\phi+\p \phi$ symbolically, if $Z=\p$, $Z\p \phi=\p Z\phi$. It is easy to see that (\ref{3.26.1.18}) holds in both %cases.

Let $0\le m\le n$. For the ordered product of vector fields, $Z^n=Z_1\cdots Z_n $, we denote by $Z^m\subset Z^n$ if $Z^m=Z_{k_1}\cdots Z_{k_m}$ with $k_1<k_2\cdots <k_m$.
By $Z^a\sqcup Z^b=Z^n$, we denote a decomposition of  $Z^n$ into  $Z^a$ and $Z^b$. It means,  $Z^a, Z^b\subset Z^n$ with $Z^a=Z_{k_1}\cdots Z_{k_a}$ and $Z^b=Z_{m_1}\cdots Z_{m_b}$, none of the subindices among $(k_1, \cdots,  k_a)$ and  $(m_1, \cdots, m_b)$ are equal, i.e., $k_l\neq m_j$ and $a+b=n$. $Z^{a_1}\sqcup\cdots \sqcup Z^{a_m}$ can be understood inductively.

If $Z_1 Z_2\cdots Z_n$ is regarded as a differential operator, we denote it as $Z\rp{n}$. We set $Z\rp{n\ssm i}=Z_1\cdots Z_{i-1}Z_{i+1}\cdots Z_n$, for $i=1,\cdots,n$.   $Z^{n\ssm i}$ represents the corresponding product of vector fields.
\begin{lemma}\label{comm}
  For each killing vector field $Z$, $[Z,\p_\a]= \tensor{C}{_{Z\a}}^\ga \p_\ga$, where $\tensor{C}{_{Z\a}}^\ga=-\p_\a Z^\ga$ is a $(1,1)$ tensor.  $\tensor{C}{_{Z\a}}^\ga=0$ if $Z=\p$. Due to (\ref{8.1.1.18}),
   the components of $C_Z$ are $1$ or $-1$ if $Z=\Omega_{\mu\nu}$. Thus, symbolically, we may ignore the tensorial feature of $C_Z$, and regard $C_Z$  as constants. The tensor products $C_{Z^m}= C_{Z_1}\cdots C_{Z_m}$ may be regarded as a set of product of constants, since $C_{Z_i}$ is understood as a set of  constants with $|C_{Z_i}|=1$ if $Z_i=\Omega$; and $C_{Z_i}=0$ if $Z_i=\p$. \begin{footnote}{$C_{id}=0$}\end{footnote}
\begin{enumerate}
\item[(1)] For $n=1,2,3$, there holds the symbolic identity
\begin{equation}\label{5.16.1.18}
[\p, Z\rp{n}] f= \sum_{Z^a\sqcup Z^b=Z^n, a\ge 1} C_{Z^a}\p Z\rp{b} f.
\end{equation}
Thus, if $\zeta(Z^n)=-n$, $[\p, Z\rp{n}] f=0$.
\item[(2)]
%\begin{equation}\label{5.18.4.18}
%\begin{split}
%&|[\p, Z\rp{2}] f|\les \sum_{(X,Y)=Z^2} u_+^{\zeta(X)}|\p Y f|+u_+^{\zeta(Z^2)}|\p f|\\
%&|[\p, Z] f|\les u_+^{\zeta(Z)} |\p f|
%\end{split}
%\end{equation}
For $n=1,2,3$
\begin{equation}\label{5.18.11.18}
| Z\rp{n} \p f|\les \sum_{Z^a\sqcup Z^b=Z^n}  r^{\zeta(Z^a)} |\p Z\rp{b} f|.
\end{equation}
\end{enumerate}
\end{lemma}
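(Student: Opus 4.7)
The plan is to prove part (1) by induction on $n$ and then deduce (2) from it. Both statements are purely algebraic, being identities or inequalities for iterated commutators of $\p$ with the Minkowskian vector fields; the whole argument is essentially combinatorial once one adopts the symbolic convention for $C_Z$ introduced before the lemma, so that signs and $\bm$-indices are absorbed and each $C_{Z_i}$ is treated as a constant of absolute value $0$ or $1$ (and, in particular, $C_\p=0$).

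For the base case $n=1$ of (\ref{5.16.1.18}), identity (\ref{8.1.1.18}) gives $[\p_\rho, \Omega_{\mu\nu}] = \bm_{\rho\mu}\p_\nu - \bm_{\rho\nu}\p_\mu$, which in the symbolic shorthand reads $C_\Omega\,\p$, while $[\p,\p]=0$ matches $C_\p=0$; both cases agree with the $a=1$ term in (\ref{5.16.1.18}). For the inductive step I would write $Z\rp{n} = Z_1\, Z\rp{n-1}$ and apply the Leibniz rule for commutators,
\begin{equation*}
[\p, Z\rp{n}] f = [\p, Z_1]\, Z\rp{n-1} f + Z_1\, [\p, Z\rp{n-1}] f,
\end{equation*}
using the base case on the first summand and the inductive hypothesis on the second. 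The outer $Z_1$ in the second piece then has to be commuted past the interior $\p$, which by the base case once more produces $\p Z_1(\cdot)f - C_{Z_1}\p(\cdot)f$. A short combinatorial check shows that the family of terms so generated is indexed precisely by the decompositions $Z^a\sqcup Z^b = Z^n$ with $a\ge 1$ and coefficients $C_{Z^a}=C_{Z_{k_1}}\cdots C_{Z_{k_a}}$, closing the induction. The vanishing assertion when $\zeta(Z^n)=-n$ is then automatic, since in that case every $Z_i=\p$ and so $C_{Z^a}=0$ for every $a\ge 1$.

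For (\ref{5.18.11.18}) I would simply expand $Z\rp{n}\p f = \p Z\rp{n} f - [\p, Z\rp{n}] f$. The first term is exactly the $a=0$ contribution on the right of (\ref{5.18.11.18}), carrying the trivial weight $r^{\zeta(\mathrm{id})}=1$. Using (1) on the commutator, the key observation is that whenever $C_{Z^a}\ne 0$ the component $Z^a$ must contain no factor of $\p$, which forces $\zeta(Z^a)=0$ and hence $r^{\zeta(Z^a)}=1$; thus in every contributing term one has $|C_{Z^a}|\le r^{\zeta(Z^a)}$, while the decompositions in the sum on the right of (\ref{5.18.11.18}) with $\p$ factors inside $Z^a$ only add non-negative terms to the upper bound. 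This yields (\ref{5.18.11.18}).

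The only delicate point in the whole argument, and the main bookkeeping obstacle, is maintaining consistency with the symbolic convention: $C_{Z_i}$ is to be manipulated as if it were a scalar of absolute value $\le 1$, yet it also stands for the tensorial commutator coefficient in (\ref{8.1.1.18}). Fixing this convention once and for all is what makes the inductive step collapse to a combinatorial identification of decompositions, and it is also what allows the output to be applied cleanly in the later sections, where (\ref{5.16.1.18})--(\ref{5.18.11.18}) are combined with the $L^p$ estimates of Lemma \ref{6.25.2.18} and the energy bounds of Propositions \ref{5.20.1.18} and \ref{6.17.5.18}.
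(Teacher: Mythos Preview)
Your proposal is correct and follows essentially the same approach as the paper: the paper's proof simply asserts that (\ref{5.16.1.18}) ``follows by direct calculation'' and then deduces (\ref{5.18.11.18}) from (1) via the definition of $\zeta(\cdot)$, exactly as you do by writing $Z\rp{n}\p f=\p Z\rp{n}f-[\p,Z\rp{n}]f$. Your inductive expansion using the Leibniz rule for commutators is a perfectly valid way to carry out that ``direct calculation,'' and your observation that nonzero $C_{Z^a}$ forces $\zeta(Z^a)=0$ is precisely the content of the paper's phrase ``in view of the definition of $\zeta(\cdot)$.''
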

\begin{remark}
In application, most of the time we will replace $r^{\zeta(Z^a)}$ by $u_+^{\zeta(Z^a)}$ which is a weaker version of the result.
\end{remark}
\begin{proof}
(\ref{5.16.1.18}) follows by direct calculation. It  follows directly from (1) in view of the definition of $\zeta(\cdot)$ that $
|[\p, Z\rp{n}] f|\les \sum_{Z^a\sqcup Z^b=Z^n, a\ge 1}  r^{\zeta(Z^a)} |\p Z\rp{b} f|$. (\ref{5.18.11.18}) follows as its consequence.
\end{proof}
\begin{lemma}\label{5.17.2.18}
Let $\varphi$ be a smooth function and $n=1,2,3$. Under the assumption that $|\N\rp{i}(\varphi)|\le C$ with $i=1\cdots n$, there holds
\begin{equation}\label{5.17.1.18}
|Z\rp{n}(\N(\varphi))|\les |Z\rp{n}\varphi|+\sum_{i=1}^n |Z\rp{n\ssm i} \varphi\c Z_i \varphi|+|\Pi_{i=1}^n Z_i \varphi|
\end{equation}
and consequently
\begin{equation}\label{5.17.3.18}
|Z\rp{n}(\N(\varphi))|\les \sum_{Z^a \sqcup Z^b=Z^n, a\ge 1}|Z\rp{a}\varphi|\dn^{\f12b} u_+^{\zeta(Z^b)}.
\end{equation}
\end{lemma}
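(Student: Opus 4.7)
The plan is to prove the two inequalities sequentially, first establishing the pointwise algebraic bound (\ref{5.17.1}) by a direct Fa\`a di Bruno-type expansion, and then deducing (\ref{5.17.3}) by inserting the decay estimate (\ref{5.18.1}) into the factors of lowest differential order. Since the statement restricts to $n\le 3$, I would not pursue a general multi-index induction but just enumerate the cases, which keeps the combinatorics transparent and matches the explicit form of the right-hand side of (\ref{5.17.1}).

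For (\ref{5.17.1}), I would iterate the chain and Leibniz rules directly. For $n=1$, one has $Z_1(\N(\varphi))=\N'(\varphi)Z_1\varphi$, so $|\N'(\varphi)|\le C$ yields the claim. For $n=2$, a single further differentiation gives
\begin{equation*}
Z_1Z_2(\N(\varphi))=\N''(\varphi)Z_1\varphi\,Z_2\varphi+\N'(\varphi)Z_1Z_2\varphi,
\end{equation*}
and the two pieces are controlled by $|Z\rp{2}\varphi|$ and by $|\Pi_{i=1}^2 Z_i\varphi|$, both appearing on the right of (\ref{5.17.1}). For $n=3$, one more round produces exactly the four types of terms
\begin{equation*}
\N'''(\varphi)\,Z_1\varphi\,Z_2\varphi\,Z_3\varphi,\quad \N''(\varphi)\,Z_{i}\varphi\,Z\rp{3\ssm i}\varphi\; (i=1,2,3),\quad \N'(\varphi)\,Z_1Z_2Z_3\varphi,
\end{equation*}
each of which is estimated via the hypothesis $|\N\rp{i}(\varphi)|\le C$ and matched to the corresponding summand on the right of (\ref{5.17.1}). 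No higher-order clustering like $|Z\rp{2}\varphi\cdot Z\rp{2}\varphi|$ can occur because $n\le 3$, which is the reason the stated list exhausts all possible patterns.

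For (\ref{5.17.3}) I would apply (\ref{5.17.1}) with $\varphi=\phi$ and then bound the low-order derivative factors pointwise using (\ref{5.18.1}). The term $|Z\rp{n}\phi|$ is already of the required form with $(a,b)=(n,0)$. For each term $|Z\rp{n\ssm i}\phi\cdot Z_i\phi|$ I would set $Z^a=Z^{n\ssm i}$, $Z^b=Z_i$, and use (\ref{5.18.1}) to bound $|Z_i\phi|\les\dn^{\f12}u_+^{-\f12-\f12\ga_0}r^{\zeta(Z_i)}$. For the pure product $|\Pi_{i=1}^n Z_i\phi|$, I would peel off one factor (say $Z_1\phi$) to respect the constraint $a\ge 1$ and take $Z^a=Z_1$, $Z^b=Z_2\cdots Z_n$, applying (\ref{5.18.1}) to each remaining factor to get $|\Pi_{i=2}^n Z_i\phi|\les \dn^{\frac{n-1}{2}}u_+^{\zeta(Z^{n-1})}$, after using $r\ge u_+$ and $\zeta(Z_i)\le 0$ to convert $r^{\zeta(Z_i)}$ into $u_+^{\zeta(Z_i)}$, and absorbing the extra factors $u_+^{-\f12-\f12\ga_0}$ into the implicit constant (which is permissible because $u_+\ge -u_0=r_*(M_0,R)\gtrsim R$ is bounded below in our region).

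This proof has essentially no analytical obstruction; the work is purely bookkeeping. The only subtleties are (i) ensuring that every term of the enumeration fits into the indexing $Z^a\sqcup Z^b=Z^n$ with $a\ge 1$, which forces the asymmetric splitting of the pure product term above, and (ii) the routine but necessary verification that $r^{\zeta(Z)}\le u_+^{\zeta(Z)}$ and $u_+^{-\f12-\f12\ga_0}\les 1$ in the exterior region $\{u\le u_0\}$ so that the decay (\ref{5.18.1}) can be cleanly expressed in the $u_+^{\zeta(\cdot)}$ form used on the right of (\ref{5.17.3}).
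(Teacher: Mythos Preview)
Your proposal is correct and follows essentially the same approach as the paper: direct enumeration of the chain-rule expansion for $n=1,2,3$ to obtain (\ref{5.17.1.18}), followed by substitution of the first-order decay bound (\ref{5.18.1.18}) into the low-order factors to get (\ref{5.17.3.18}). Your write-up is in fact more detailed than the paper's own proof, which simply records the three chain-rule identities and then states that (\ref{5.17.3.18}) ``follows by using (\ref{5.18.1.18}).''
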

\begin{remark}
  Under the bootstrap assumption  $(\BA{2})$,  (\ref{3.24.11.18}) holds, which imply $|\phi|\les r^{-1}(\E^\f12_{2,\ga_0}+\dn^\f12)\les 1$. Since $\N(y)$ is smooth, we can obtain
$
|\N\rp{i}(\phi)|\les 1,\, i\le  k
$
  for any fixed $k\in {\mathbb N}$.  So the assumption holds for $\varphi=\phi$. We also remark that we only used $|Z\varphi|\les \dn^\f12 u_+^{\zeta(Z)}$ to prove the above result.
\end{remark}
\begin{proof}
It is straightforward to derive
\begin{align*}
&Z\rp{1}(\N(\varphi))=\N'(\varphi) Z\rp{1} \varphi\\
&Z\rp{2}(\N(\varphi))=\N'(\varphi) Z\rp{2} \varphi+ \N^{''}(\varphi)Z_2\varphi Z_1\varphi \\
&Z\rp{3}(\N(\varphi))=\N'(\varphi)Z\rp{3} \varphi+\N^{''}(\varphi)\sum_{i=1}^3 Z\rp{n\ssm i} \varphi \c Z_i \varphi+\N^{'''}(\varphi)\Pi_{i=1}^3Z_i\varphi.
\end{align*}
We then can derive (\ref{5.17.1.18}) for $n=1,2,3$. (\ref{5.17.3.18}) follows by using (\ref{5.18.1.18}).
\end{proof}
\subsection {Error estimates}
We will improve the bootstrap assumptions (\ref{3.21.5.18}) and (\ref{3.21.6.18}) by deriving energy estimates, with the help of Proposition \ref{5.16.5.18} and Proposition \ref{6.12.2.18}. For deriving both types of estimates for $Z\rp{i}\phi$, the main task is to obtain the error  estimates on $(\Box_\bm -q)Z\rp{i} \phi$ with $i\le 2$.  We analyze  in the following result these major error terms.

%We first give some estimate about potential terms.
%\begin{lemma}
%Assuming $|Z^n q|\le (r+1)^{-(\frac{3}{2}+\eta)} $ with $n=1,2$, there holds
%\begin{align}
%&\|Z\rp{i}(q\varphi) (v)\|^2_{ L^2_u L_x^2}\les  (1+v)^{-2(1+\eta)} (\W_1[Z\rp{\le i-1}\varphi] (\D_u^{v})+\|r^{-\f12} Z\rp{\le %i-1}\varphi\|^2_{L^2(\Sigma_0^{u,v})})\\
%&\|r^\frac{1}{2} Z\rp{i} (q\varphi)(u))\|^2_{L_v^2 L_x^2 } \les  \int E[Z\rp{\le i-1}\varphi] du+
%\end{align}
%\begin{equation*}
%\end{equation*}
%\end{lemma}

\begin{lemma}\label{6.3.5.18}
Let $\F=\N(\phi) \p \phi \c \p \phi $,  $\str{n}\F=Z\rp{n}\F$ and $\str{0}\F=\F$.
For $n=0, \cdots,  3$,

(1)
\begin{equation}\label{6.3.4.18}
|Z\rp{n} (\p \phi\c \p \phi)|\les \sum_{Z^a\sqcup Z^b\sqcup Z^c=Z^n}  u_+^{\zeta(Z^c)} |\p Z\rp{b} \phi||\p Z\rp{a}\phi|.
\end{equation}
(2)
\begin{equation}\label{6.3.6.18}
|\str{n} \F|\les \str{n} \F_\Q+\str{n}\F_\C
\end{equation}
where the quadratic part and the cubic part  are
\begin{equation*}
\begin{split}
 \str{n}\F_\Q&= \sum_{Z^a\sqcup Z^b\sqcup Z^c=Z^n} u_+^{\zeta(Z^c)} |\p Z\rp{b} \phi||\p Z\rp{a}\phi|\\
\str{n}\F_\C &=\sum_{Z^{a}\sqcup Z^b\sqcup Z^c\sqcup Z^d=Z^n, 1\le a\le n }|Z\rp{a} \phi|  |\p Z\rp{b} \phi| |\p Z\rp{c} \phi| u_+^{\zeta(Z^d)}.
\end{split}
\end{equation*}
\end{lemma}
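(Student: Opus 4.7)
The proof is a bookkeeping exercise combining the Leibniz rule, the commutation identity (\ref{5.18.11.18}) from Lemma~\ref{comm}, and the chain rule estimate (\ref{5.17.1.18}) from Lemma~\ref{5.17.2.18}. First I would verify that Lemma~\ref{5.17.2.18} actually applies with $\varphi=\phi$: by (\ref{3.24.11.18}) under the bootstrap assumption ($\BA{2}$), one has $|\phi|\lesssim r^{-1}u_+^{(1-\ga_0)/2}(\dn^{1/2}+\E_{2,\ga_0}^{1/2})\lesssim 1$ in the region $\{u\le u_0\}$, so $|\N^{(i)}(\phi)|\lesssim 1$ for $i\le n$ as required.

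For part (1), apply the Leibniz rule to expand
\[
Z^{(n)}(\p\phi\cdot\p\phi)=\sum_{Z^a\sqcup Z^b=Z^n}C(Z^a,Z^b)\,Z^{(a)}(\p\phi)\cdot Z^{(b)}(\p\phi),
\]
and then invoke (\ref{5.18.11.18}) on each factor to commute derivatives past $Z^{(a)}$ and $Z^{(b)}$:
\[
|Z^{(a)}\p\phi|\lesssim\sum_{Z^{a_1}\sqcup Z^{a_2}=Z^a}r^{\zeta(Z^{a_1})}|\p Z^{(a_2)}\phi|,
\]
with a similar bound for $|Z^{(b)}\p\phi|$. Since $r\ge u_+$ in $\{u\le u_0\}$ and $\zeta\le 0$, we may replace $r^{\zeta(\cdot)}$ by $u_+^{\zeta(\cdot)}$ at the cost of enlarging the bound. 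Merging the weight indices $Z^{a_1}\sqcup Z^{b_1}$ into a single $Z^c$ and relabelling the derivative indices yields (\ref{6.3.4.18}).

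For part (2), write $\str{n}\F=Z^{(n)}(\N(\phi)\,\p\phi\cdot\p\phi)$ and expand by Leibniz:
\[
\str{n}\F=\sum_{Z^{a'}\sqcup Z^{b'}=Z^n}C(Z^{a'},Z^{b'})\,Z^{(a')}(\N(\phi))\cdot Z^{(b')}(\p\phi\cdot\p\phi).
\]
The term $a'=0$ contributes $\N(\phi)\cdot Z^{(n)}(\p\phi\cdot\p\phi)$; using $|\N(\phi)|\lesssim 1$ and part (1), this reproduces $\str{n}\F_\Q$. For $a'\ge 1$, apply (\ref{5.17.1.18}) to $Z^{(a')}(\N(\phi))$: the leading summand $|Z^{(a')}\phi|$ combined with (\ref{6.3.4.18}) applied to $Z^{(b')}(\p\phi\cdot\p\phi)$ already produces the schematic shape of $\str{n}\F_\C$ after relabelling $(a',Z^a,Z^b,Z^c)\mapsto(a,b,c,d)$.

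The main thing to verify, and the only mild technical point, is that the genuinely quartic-and-higher terms from (\ref{5.17.1.18}), of the form $|Z^{(a'\setminus i)}\phi\cdot Z_i\phi|$ or $|\Pi_{i=1}^{a'}Z_i\phi|$ times $|Z^{(b')}(\p\phi\cdot\p\phi)|$, can be absorbed into the cubic expression $\str{n}\F_\C$. For this I would use the pointwise bound (\ref{5.18.1.18}), $|Z_i\phi|\lesssim\dn^{1/2}u_+^{-1/2-\ga_0/2}r^{\zeta(Z_i)}\lesssim 1$, on all but one $\phi$-factor in each higher-order product; each such absorption costs only a factor $\lesssim 1$ (indeed a small factor $\dn^{1/2}$, which is harmless) and an additional weight $u_+^{\zeta(Z_i)}$ that can be folded into the existing $u_+^{\zeta(Z^d)}$. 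The result is a bound by terms of exactly the form appearing in $\str{n}\F_\C$, completing (\ref{6.3.6.18}). No estimate beyond the algebraic identities of Lemma~\ref{comm}, the chain rule of Lemma~\ref{5.17.2.18}, and the pointwise bound (\ref{5.18.1.18}) is required, so there is no real analytic obstacle; the only care needed is in the combinatorial relabelling of the various sub-products.
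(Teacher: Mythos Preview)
Your proposal is correct and follows essentially the same approach as the paper: Leibniz rule plus (\ref{5.18.11.18}) for part (1), and for part (2) the split into $a'=0$ and $a'\ge 1$ followed by the chain-rule estimate on $Z^{(a')}(\N(\phi))$ combined with part (1). The only cosmetic difference is that the paper invokes (\ref{5.17.3.18}) directly, which already packages your absorption of the higher-order $\phi$-factors via (\ref{5.18.1.18}), whereas you carry out that absorption by hand.
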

\begin{remark}
The result with $n=3$ will be used in Section \ref{quasi} and \ref{Eins}.
\end{remark}
\begin{proof}
We first can obtain (\ref{6.3.4.18}) by using (\ref{5.18.11.18}) and
\begin{equation*}
Z\rp{n}(\p \phi \c \p \phi)= \sum_{Z^a\sqcup Z^b =Z^n}Z\rp{a}\p \phi \c Z\rp{b} \p \phi.
\end{equation*}
%\begin{align*}
%|Z\rp{2}(\p \phi\c \p \phi)|&\les |\p Z^2 \phi \c \p \phi|+ |\p Z_1\phi \c \p Z_2\phi|\nn\\
%&+\sum_{(X,Y)=Z^2}u_+^{\zeta(X)}(|\p Y\phi|+u_+^{\zeta(Y)}|\p \phi|) |\p \phi|+u_+^{\zeta(Z^2)} |\p \phi|^2\\
%|Z(\p \phi\c \p \phi)|&\les (|\p Z\phi|+u_+^{\zeta(Z)} |\p \phi|)\c |\p \phi|
%\end{align*}
%which proves the  estimate in (\ref{6.3.4.18}).

Next we derive the estimates of $\str{n}\F$ in view of
\begin{equation}\label{6.3.7.18}
\str{n}\F= (\sum_{a\ge 1}+\sum_{a=0})\sum_{Z^a \sqcup Z^b=Z^n}Z\rp{a} \big(\N(\phi)\big)Z\rp{b}(\p \phi \c \p \phi).
\end{equation}
The $a=0$ term can be bounded by using (\ref{6.3.4.18}) directly.
%For the terms with $a\le  1$, we employ (\ref{5.18.1.18}) and (\ref{6.3.4.18}) to derive
%\begin{equation*}
%\sum_{Z^a \sqcup Z^b=Z^n}|Z\rp{a} (\N(\phi))Z\rp{b}(\p \phi \c \p \phi)|\les \sum_{Z^a\sqcup Z^b\sqcup Z^c=Z^n} u_+^{\zeta(Z^c)} |\p Z\rp{b} \phi||\p %Z\rp{a}\phi|
%\end{equation*}
For the terms with $a\ge 1$ in (\ref{6.3.7.18}), we can apply  (\ref{5.17.3.18}) with $n=a$ and  (\ref{6.3.4.18}) with $n=b$ to derive the cubic type of terms.  We then combine the estimates for $0\le a\le n$ to obtain
\begin{equation*}
\begin{split}
|\str{n}\F|&\les \sum_{Z^{a_1}\sqcup Z^b\sqcup Z^c\sqcup Z^d=Z^n}\sum_{ 1\le a_1\le a\le n} \dn^{\f12(a-a_1)}|Z\rp{a_1} \phi| |\p Z\rp{b} \phi| |\p Z\rp{c} \phi| u_+^{\zeta(Z^d)} \\
& +\sum_{Z^a\sqcup Z^b\sqcup Z^c=Z^n} u_+^{\zeta(Z^c)} |\p Z\rp{b} \phi||\p Z\rp{a}\phi|.
\end{split}
\end{equation*}
The second line is the quadratic term $\str{n}\F_\Q$. The first line on the righthand side is a sum of  cubic terms of $\phi$.
By the boundedness of  $\dn^{a-a_1}$,  we can obtain the formula for $\str{n}\F_\C$ in (\ref{6.3.6.18}).
\end{proof}
As an important remark, we can write according to Lemma \ref{6.3.5.18} that
\begin{equation*}
|\str{1}\F|\les (|\p Z\phi|+u_+^{\zeta(Z)}|\p \phi|)|\p \phi|,
\end{equation*}
for which  the cubic term is already controlled by using (\ref{5.18.1.18}). Thus, symbolically,
\begin{equation}\label{7.5.1.18}
|\str{1}\F|\les \str{1}\F_\Q.
\end{equation}
%\begin{align}
%|\str{2}F|&\les |\p \phi|\big(|\p Z^2 \phi| +\sum_{(X,Y)=Z^2} u_+^{\zeta(X)}(|\p Y\phi|+u_+^{\zeta(Y)} |\p \phi|)+u_+^{\zeta(Z^2)}|\p \phi|\big)\nn\\
%&+ |\p Z_1\phi|\c |\p Z_2\phi|\label{5.18.5.18}
%\end{align}

%Let $0\le m\le n$. For the ordered product of vector fields, $Z^n=Z_1\cdots Z_n$, we denote by $Z^m\subset Z^n$ if $Z^m=Z_{k_1}\cdots Z_{k_m}$ with %$k_1<k_2\cdots <k_m$.
\begin{proposition}\label{6.2.5.18} For $n\le 2$ and $-\ub_*\le -\ub_1\le u_1\le u_0$,  the following estimates hold
\begin{align}
&{u_1}_+^{\f12 \ga_{0}+\f12-\zeta(Z^n)}\| r^\f12 \str{n} \F_\Q\|_{L^2(\D_{u_1}^{\ub_1})}\les {u_1}_+^{-\f12\ga_0+\f12} \dn M^{-\f12} , \label{6.13.2.18}\\
&{u_1}_+^{\f12\ga_0-\f12-\zeta(Z^n)}\| r^{\frac{3}{2}} \str{n}\F_\Q\|_{L_u^1 L_\ub^2 L_\omega^2(\D_{u_1}^{\ub_1})}\les {u_1}_+^{-\frac{\ga_0}{2}}  \dn M^{-\f12},\label{6.14.6.18}\\
&\|r^\f12 \str{2} \F_\C\|_{L^2(\D_{u_1}^{\ub_1})}\les {u_1}_+^{-\frac{3\ga_0}{2}-\f12+\zeta(Z^2)}\dn^\frac{3}{2}M^{-\f12},\label{6.3.8.18}\\
&\|r^\frac{3}{2} \str{2} \F_\C\|_{L_u^1 L_\ub^2 L_\omega^2(\D_{u_1}^{\ub_1})}\les {u_1}_+^{-\frac{3}{2}\ga_0-\f12+\zeta(Z^2)}\dn^\frac{3}{2}.\label{6.3.12.18}
\end{align}
\end{proposition}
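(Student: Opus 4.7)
The argument is a systematic case analysis over the decompositions $Z^a\sqcup Z^b\sqcup Z^c=Z^n$ (with an extra $Z^d$ in the cubic terms) afforded by Lemma \ref{6.3.5.18}, coupled to the product inequalities and the weighted Sobolev machinery of Lemma \ref{6.25.2.18} and Proposition \ref{6.17.5.18}. Throughout, I will use the signature additivity $\zeta(Z^a)+\zeta(Z^b)+\zeta(Z^c)=\zeta(Z^n)$ and $r\approx \ub\ge u_+$ inside $\D_{u_1}^{\ub_1}$.

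\emph{Quadratic bounds (\ref{6.13.2.18})--(\ref{6.14.6.18}).} For $n\le 2$ each summand of $\str{n}\F_\Q$ has $a+b+c\le 2$, so at least one of $a,b$ is $\le 1$; moreover the truly borderline case $(a,b,c)=(1,1,0)$ occurs only at $n=2$. In the generic case $\min(a,b)=0$, write $b=0$ and estimate the low-order factor $|\p\phi|$ via the sharp mixed-norm bound (\ref{4.2.1.18}) with $l=0$, which supplies the crucial smallness $\dn^{1/2}M^{-1/2}u_+^{-\ga_0/2}$; pair it with the flux bound on $|\p Z\rp{a}\phi|$ coming directly from $(\BA 2)$ on $\H_u^{\ub_1}$ (resp.\ $\Hb_{\ub_1}^u$). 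Concretely, for (\ref{6.13.2.18}) I split
\[
\|r^{1/2}\p\phi\cdot \p Z\rp{a}\phi\|_{L^2(\D_{u_1}^{\ub_1})}^2 \le \|r^{1/2}\p\phi\|_{L_\ub^2 L^\infty}^2\cdot \sup_{\ub}E[Z\rp{a}\phi](\Hb_{\ub}^{u_1}),
\]
and the two factors contribute the required $\dn M^{-1}$ and $\dn\,{u_1}_+^{-\ga_0+2\zeta(Z^a)}$ respectively. For the borderline $(1,1,0)$ case, I use (\ref{6.14.3.18}) on one factor (which costs $\dn M^{-1}$) and (\ref{7.5.2.18}) on the other (which costs $\dn$), combining via Hölder in $L^4_\omega$ on the sphere. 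For (\ref{6.14.6.18}) the outer $L^1_u$ structure is handled in the same way, except that the $u$-integration of the flux replaces the $L^\infty_u$ supremum, converting one factor to an $L^2$-in-$u$ bound via Cauchy--Schwarz.

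\emph{Cubic bounds (\ref{6.3.8.18})--(\ref{6.3.12.18}).} In each summand of $\str{2}\F_\C$ extract the undifferentiated factor $|Z\rp{a}\phi|$ with $a\ge 1$. When $a=1$ I apply the pointwise estimate (\ref{5.18.1.18}) to get $|Z\phi|\les r^{\zeta(Z)}u_+^{-(1+\ga_0)/2}\dn^{1/2}$, leaving a quadratic expression in $\p$-factors to which I apply the pointwise bound (\ref{3.24.18.18}) (with $l=0$) or an integrated bound of the type already used in Step 1; the extra factor of $r^{-1}u_+^{-(1+\ga_0)/2}\dn^{1/2}$ is exactly what is needed to upgrade the quadratic target to the cubic target. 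When $a=2$ no pointwise decay on $Z\rp{2}\phi$ is available under $(\BA 2)$, and I instead invoke the improved Hardy inequality (\ref{6.17.2.18}) with $p=1+\ga_0$ (which satisfies $p>3/2-\ga_0/2$), after using (\ref{3.24.18.18}) to push both $\p$-factors pointwise into a weight $r^{-2}u_+^{-1-\ga_0}\dn$; the bookkeeping $r^{1/2}\cdot r^{-2}\cdot r^{1/2}\le r^{-1/2}u_+^{-1}$ then feeds directly into (\ref{6.17.2.18}) and reproduces the claimed exponent $-\tfrac{3\ga_0}{2}-\tfrac12+\zeta(Z^2)$.

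\emph{Main obstacle.} The delicate case is $(a,b,c)=(1,1,0)$ in (\ref{6.13.2.18}), where neither factor $\p Z\phi$ admits a pointwise decay estimate under $(\BA 2)$. The only way through is to use the sharp mixed $L^4_\omega$-type estimates (\ref{6.14.3.18})--(\ref{7.5.3.18}), whose derivation relies on the Sobolev inequalities of Section \ref{prel} together with the full weighted energy $\W_1$ on $\D_{u_1}^{\ub_1}$. Verifying that the resulting product estimate is consistent in its $r$- and $u_+$-weights, and that the powers of $M^{-1}$ add up to exactly $M^{-1/2}$ on the right of (\ref{6.13.2.18}), is the main arithmetic check. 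A similar borderline check is required for the cubic case $a=2$ when calibrating $p$ in (\ref{6.17.2.18}) against the claimed exponent, but this reduces to elementary inequalities in $\ga_0$.
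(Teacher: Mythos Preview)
Your overall scheme matches the paper's: split $\str{n}\F_\Q$ into the case $\min(a,b)=0$ and the borderline $(a,b)=(1,1)$, and treat the latter via the mixed $L^4_\omega$ bounds (\ref{6.14.3.18}) and (\ref{7.5.2.18}) (resp.\ (\ref{6.17.11.18}), (\ref{7.5.3.18}) for (\ref{6.14.6.18})). There are, however, two concrete problems.

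\textbf{The generic quadratic case.} Your displayed H\"older step
\[
\|r^{1/2}\p\phi\cdot \p Z\rp{a}\phi\|_{L^2(\D_{u_1}^{\ub_1})}^2 \le \|r^{1/2}\p\phi\|_{L_\ub^2 L^\infty}^2\cdot \sup_{\ub}E[Z\rp{a}\phi](\Hb_{\ub}^{u_1})
\]
is false as written: the flux $E[f](\Hb_\ub^{u_1})$ controls $\int_{\Hb_\ub^{u_1}} r^2\big(|\ud\p f|^2+\tfrac{M}{r}|Lf|^2\big)$, not $\int_{\Hb_\ub^{u_1}} r^2|\p f|^2$; the $L$-component carries no uniform bound on $\Hb_\ub$ without the $M/r$ weight. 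The paper avoids this entirely by pairing the sharp pointwise bound $\|r\p\phi\|_{L^\infty}\les\dn^{1/2}{u_1}_+^{-(1+\ga_0)/2}$ from (\ref{3.24.18.18}) with $\|r^{-1/2}\p Z\rp{a}\phi\|_{L^2(\D_{u_1}^{\ub_1})}$ from (\ref{7.5.4.18}); the $M^{-1/2}$ enters through (\ref{7.5.4.18}), not through (\ref{4.2.1.18}). Your route can be repaired by splitting $\p Z\rp{a}\phi$ into $\ud\p$ and $L$ parts and pairing each with the appropriate foliation, but as stated it does not close.

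\textbf{The cubic terms.} The paper does \emph{not} split $a=1$ from $a=2$; it handles both at once by placing $Z\rp{a}\phi$ in $L_\ub^\infty L_u^2 L_\omega^4$ via (\ref{6.14.10.18}) for (\ref{6.3.8.18}), and in $L_u^2 L_\ub^2 L_\omega^4$ via (\ref{6.3.11.18}) for (\ref{6.3.12.18}), together with (\ref{3.24.18.18}) on $\p\phi$ and (\ref{6.14.3.18}) resp.\ (\ref{6.17.11.18}) on $\p Z\rp{c}\phi$. Your alternative through (\ref{6.17.2.18}) can be made to work for (\ref{6.3.8.18}), but your bookkeeping is off: after $|\p\phi|^2\les r^{-2}u_+^{-1-\ga_0}\dn$ one has $r^{1/2}\cdot r^{-2}=r^{-3/2}\le r^{-1/2}u_+^{-1}$, so the weight that feeds into (\ref{6.17.2.18}) is $u_+^{-(2+\ga_0)}$ and the correct exponent is $p=2+\ga_0$, not $1+\ga_0$. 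More seriously, for (\ref{6.3.12.18}) the route through (\ref{6.17.2.18}) produces an extra factor $M^{-1/2}$ that is absent from the stated bound; the paper's use of (\ref{6.3.11.18}) (which carries no $M^{-1}$) is what gives (\ref{6.3.12.18}) with the clean $\dn^{3/2}$.
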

\begin{proof}
 We first  decompose $\str{n}\F_\Q$  as below
\begin{equation}\label{6.7.3.18}
\begin{split}
 \str{n}\F_{\Q,1}&=\sum_{Z^a\sqcup Z^b\sqcup Z^c=Z^n, b=0}  u_+^{\zeta(Z^c)} |\p Z\rp{b}\phi| |\p Z\rp{a}\phi|,\\
\str{n}\F_{\Q,2}&=\sum_{Z^a\sqcup Z^b\sqcup Z^c=Z^n, b\ge 1}u_+^{\zeta(Z^c)} |\p Z\rp{b} \phi| |\p Z\rp{a}\phi|,
\end{split}
\end{equation}
where we assume $b\le a$ without loss of generality. We will frequently use (\ref{3.21.5.18}) and (\ref{3.21.6.18}) in the sequel.

Note that with $a\le n$, we can apply  (\ref{3.24.18.18}) and (\ref{7.5.4.18}) to derive
\begin{equation}\label{6.13.3.18}
\begin{split}
\| r^\f12 \str{n}\F_{\Q,1}\|_{L^2(\D_{u_1}^{\ub_1})}&\les\sum_{Z^a\sqcup Z^b\sqcup Z^c=Z^n, b=0}  {u_1}_+^{\zeta(Z^c)}\|r\p Z\rp{b}\phi\|_{L^\infty(\D_{u_1}^{\ub_1})}\|r^{-\f12} \p Z\rp{a}\phi \|_{L^2(\D_{u_1}^{\ub_1})}\\
&\les \dn M^{-\f12}{u_1}_+^{-\ga_0+\zeta(Z^n)}.
\end{split}
\end{equation}
We note that by (\ref{7.5.1.18}) in the case of $n\le 1$, $\str{n} \F_{\Q,2}$ vanishes. If $n=2$, $a=b=1\le n-1$.
In view of (\ref{7.5.2.18}) and (\ref{6.14.3.18}), we deduce  for  $\str{2}\F_{\Q,2}$ that
\begin{align*}
&\|r^\f12 \str{n}\F_{\Q,2}\|_{L^2(\D_{u_1}^{\ub_1})}\\
&\les \sum_{Z^a\sqcup Z^b\sqcup Z^c=Z^n, b\ge 1}u_+^{\zeta(Z^c)}\|r \p Z\rp{b} \phi\|_{L_\ub^\infty L_u^2 L_\omega^4(\D_{u_1}^{\ub_1})}\| r^\f12 \p Z\rp{a} \phi\|_{L_\ub^2 L_u^\infty L_\omega^4(\D_{u_1}^{\ub_1})}\\
&\les  \dn M^{-\f12} {u_1}_+^{-\ga_0+\zeta(Z^n)}.
\end{align*}
(\ref{6.13.2.18}) follows by combining the estimates of $\str{n}\F_{\Q,1}$ and $\str{n}\F_{\Q,2}$.

Next, we  prove  (\ref{6.14.6.18}). Using (\ref{3.24.18.18}) and (\ref{7.5.4.18}) implies
\begin{align}
&\| r^\frac{3}{2}\str{n}\F_{\Q,1}\|_{L_u^1 L_\ub^2 L_\omega^2(\D_{u_1}^{\ub_1})}\nn\\
&\les \sum_{Z^a\sqcup Z^c=Z^n} {u_1}_+^{\zeta(Z^c)} \|r^\f12 \p Z\rp{a} \phi u_+^{-\frac{1+\ga_0}{2}}\|_{L_u^1 L_\ub^2 L_\omega^2(\D_{u_1}^{\ub_1})}\|r \p \phi\c u_+^{\frac{1+\ga_0}{2}}\|_{L^\infty(\D_{u_1}^{\ub_1})}\nn\\
&\les \dn^\f12\sum_{Z^a\sqcup Z^c=Z^n} {u_1}_+^{\zeta(Z^c)}\|r^{-\f12} \p Z\rp{a} \phi \|_{ L^2(\D_{u_1}^{\ub_1})}\|u_+^{-\f12\ga_0-\f12}\|_{L_u^2L^\infty (\D_{u_1}^{\ub_1})}\nn\\
&\les \dn M^{-\f12}  {u_1}_+^{\zeta(Z^n)-\ga_0+\f12}.\nn\
\end{align}
In the case that $1\le b\le a$, again $a\le n-1$. By using  (\ref{6.17.11.18}) and (\ref{7.5.3.18}),  we have
\begin{align*}
&\| r^\frac{3}{2}\str{n}\F_{\Q,2}\|_{L_u^1 L_\ub^2 L_\omega^2(\D_{u_1}^{\ub_1})}\\
&\les \sum_{Z^a\sqcup Z^b\sqcup Z^c=Z^n,  b\ge 1} {u_1}_+^{\zeta(Z^c)}\| r\p Z\rp{b} \phi\|_{L_u^2 L_\ub^\infty L_\omega^4(\D_{u_1}^{\ub_1})}\c \| r^\f12 \p Z\rp{a}\phi\|_{L_u^2 L_\ub^2 L_\omega^4(\D_{u_1}^{\ub_1})}\\
&\les \dn M^{-\frac{1}{2}}  {u_1}_+^{\zeta(Z^n)-\ga_0+\f12}.
\end{align*}
 By combining the above two estimates, we can obtain (\ref{6.14.6.18}).

 Next we consider the estimates of   $\str{2}\F_\C$ by using (\ref{3.24.18.18}), (\ref{6.14.3.18}) and  (\ref{6.14.10.18}),
 \begin{align*}
& \|r^\f12\str{2} \F_\C\|_{L^2(\D_{u_1}^{\ub_1})}\nn\\
 &\les \sum_{Z^a\sqcup Z^b \sqcup Z^c \sqcup Z^d=Z^2, a\ge 1, b\le c}\|Z\rp{a}\phi\|_{L_u^\infty L_u^2 L_\omega^4(\D_{u_1}^{\ub_1})}\|r \p Z\rp{b}\phi\|_{L^\infty(\D_{u_1}^{\ub_1})}\|r^\f12 \p Z\rp{c}\phi\|_{L_\ub^2 L_u^\infty L_\omega^4(\D_{u_1}^{\ub_1})}\\
 &\les \dn^\frac{3}{2}M^{-\f12} {u_1}_+^{-\frac{3\ga_0}{2}-\f12+\zeta(Z^2)}.
 \end{align*}
  Here we assumed $b\le c$ without loss of generality, which implies $b=0$. By using (\ref{3.24.18.18}), (\ref{6.17.11.18}) and (\ref{6.3.11.18}),  we have
 \begin{align*}
 \|r^\frac{3}{2}&\str{n}\F_\C\|_{L_u^1 L_\ub^2 L_\omega^2(\D_{u_1}^{\ub_1})}\\
 &\les\sum_{Z^a\sqcup Z^b \sqcup Z^c \sqcup Z^d=Z^2, a\ge 1, b\le c}{u_1}_+^{\zeta(Z^d)}\|r^{-\f12}Z\rp{a}\phi\|_{L_u^2 L_\ub^2 L_\omega^4(\D_{u_1}^{\ub_1})}\| r \p Z\rp{b}\phi\|_{L^\infty(\D_{u_1}^{\ub_1})}\\
&\c \|r \p Z\rp{c}\phi\|_{L_u^2 L_\ub^\infty L_\omega^4(\D_{u_1}^{\ub_1})}\\
 &\les \dn^\frac{3}{2} {u_1}_+^{\zeta(Z^2)-\frac{3}{2}\ga_0-\f12}.
 \end{align*}
  Thus (\ref{6.3.8.18}) and (\ref{6.3.12.18}) are both proved.
\end{proof}

\begin{lemma}\label{5.25.2.18}
Under the assumption of (\ref{potential}), there hold for $n\le 3$ and $-\ub_*\le -\ub_1\le u_1\le u_0$ that
\begin{align*}
{u_1}_+^{-\zeta(Z^n)+\frac{\ga_0}{2}} &(\|r [Z\rp{n}, q] \phi \|_{L_u^1 L_\ub^2 L_\omega^2(\D_{u_1}^{\ub_1})} +\|r [Z\rp{n}, q]\phi \|_{L_\ub^1 L_u^2 L_\omega^2(\D_{u_1}^{\ub_1})})\\
&\les \E^\f12_{n,\ga_0}+\sup_{-u_1\le \ub\le \ub_1}\sum_{Z^m\subsetneq Z^n}\big( E[Z\rp{m} \phi]^\f12(\Hb_{\ub}^{u_1}) {u_1}_+^{\frac{\ga_0}{2}-\zeta(Z^{m})}\big),\\
{u_1}_+^{-\zeta(Z^n)+\f12\ga_0-\f12}&\|r^\frac{3}{2} [Z\rp{n}, q]\phi\|_{L_u^1 L_\ub^2 L_\omega^2(\D_{u_1}^{\ub_1})}\\
& \les \E^\f12_{n,\ga_0}+ \sum_{Z^m\subsetneq Z^n} {u_1}_+^{-\zeta(Z^m)+\frac{\ga_0-1}{2}} \big(\W_1[Z\rp{m} \phi](\D_{u_1}^{\ub_1})\\
&+M\int_{-\ub_1}^{u_1} u_+^{-1}E[Z\rp{m}\phi](\H_u^{\ub_1}) du\big)^\f12.
\end{align*}
\end{lemma}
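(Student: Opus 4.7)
The starting point is the Leibniz expansion $[Z\rp{n}, q]\phi = \sum_{Z^a \sqcup Z^b = Z^n,\, a \ge 1} c_{a,b}\, Z\rp{a} q \cdot Z\rp{b}\phi$, combined with the pointwise bound $|Z\rp{a} q| \les r^{-2-\eta+\zeta(Z^a)}$ for $a \le n$. This follows from (\ref{potential}) together with $|\Omega f| \les r|\p f|$: each rotation in $Z^a$ costs a factor of $r$ while each coordinate derivative costs $r^{-1}$. Consequently $|r\, Z\rp{a} q| \les r^{-1-\eta+\zeta(Z^a)}$ and $|r^{3/2} Z\rp{a} q| \les r^{-1/2-\eta+\zeta(Z^a)}$, and because $a \ge 1$ only lower-order $Z^b \subsetneq Z^n$ ever appears on the right. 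Throughout, the $\eta$-tail produced by (\ref{potential}) generates an absorbable factor ${u_1}_+^{-\eta} \les 1$ (using ${u_1}_+ \ge R \ge 2$), which is what makes the exponent accounting close.

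The $L_\ub^1 L_u^2 L_\omega^2$ estimate is handled slice-by-slice on $\Hb_\ub$, where $r \approx \ub$. Applying (\ref{4.29.6.18}) to $\varphi = Z\rp{b}\phi$ gives $\|Z\rp{b}\phi\|_{L_u^2 L_\omega^2(\Hb_\ub^{u_1})}^2 \les \int_{S_{-\ub,\ub}} r (Z\rp{b}\phi)^2 d\omega + E[Z\rp{b}\phi](\Hb_\ub^{u_1})$; the initial-data term on $S_{-\ub,\ub} \subset \Sigma_0$ is bounded by $\ub^{-\ga_0+2\zeta(Z^b)} \E_{b,\ga_0}$ via (\ref{6.23.7.18}). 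Integrating against $\ub^{-1-\eta+\zeta(Z^a)}$ from $\ub={u_1}_+$ to $\ub=\ub_1$ (both pieces integrable since $\eta >0$ and $\ga_0 > 1$) and multiplying by ${u_1}_+^{-\zeta(Z^n)+\ga_0/2}$ produces $\E_{n,\ga_0}^{1/2}$ together with ${u_1}_+^{\ga_0/2-\zeta(Z^b)}\sup_\ub E^{1/2}[Z\rp{b}\phi](\Hb_\ub^{u_1})$, matching the stated right-hand side since $Z^b \subsetneq Z^n$.

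The $L_u^1 L_\ub^2 L_\omega^2$ estimate in the first inequality is the key technical step, because the right-hand side only contains $\Hb$-fluxes. For each $(u,\ub,\omega)$ I integrate the identity
\[
|Z\rp{b}\phi|^2(u,\ub,\omega) - |Z\rp{b}\phi|^2(-\ub,\ub,\omega) = 2\int_{-\ub}^u Z\rp{b}\phi \cdot \p_{u'} Z\rp{b}\phi\, du'
\]
over $\omega$ and apply Cauchy--Schwarz in $u'$, controlling the two $L_u^2 L_\omega^2(\Hb_\ub^u)$ factors via (\ref{4.29.6.18}) and via $\|\p_{u'} Z\rp{b}\phi\|^2_{L_u^2 L_\omega^2(\Hb_\ub^u)} \les \ub^{-2} E[Z\rp{b}\phi](\Hb_\ub^u)$ (coming from the $r^2|\Lb\cdot|^2$ term in $E$ and $r \approx \ub$). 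After AM--GM absorbs the cross term,
\[
\int_{{\mathbb S}^2}|Z\rp{b}\phi|^2 d\omega \les \ub^{-1-\ga_0+2\zeta(Z^b)} \E_{b,\ga_0} + \ub^{-1} E[Z\rp{b}\phi](\Hb_\ub^u).
\]
Inserting this into $\int_{-u}^{\ub_1} \ub^{-2-2\eta+2\zeta(Z^a)} (\cdots) d\ub$, using monotonicity $E[Z\rp{b}\phi](\Hb_\ub^u) \le \sup_\ub E[Z\rp{b}\phi](\Hb_\ub^{u_1})$, taking square roots, and integrating in $u$ from $-\ub_1$ to $u_1$ (each integrand $\les u_+^{-1-\eta-\ga_0/2+\zeta(Z^n)}$ or $u_+^{-1-\eta+\zeta(Z^a)}$, hence integrable to powers of ${u_1}_+$) yields the claim once multiplied by the outer weight.

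For the second estimate the weight $r^{-1/2-\eta+\zeta(Z^a)}$ decays too slowly for the previous strategy; happily the right-hand side now allows $\W_1(\D_{u_1}^{\ub_1})$ and $M\int u_+^{-1} E(\H_u) du$, matching (\ref{3.23.2.18}). I apply weighted Cauchy--Schwarz in $u$,
\[
\|f\|_{L_u^1 L_\ub^2 L_\omega^2} \le \Big(\int u_+^{-2p} du\Big)^{1/2} \Big(\int u_+^{2p}\|f\|^2_{L_\ub^2 L_\omega^2} du\Big)^{1/2},
\]
with $p \in (1/2, 1/2+\eta]$, and rewrite $u_+^{2p} r^{-1-2\eta+2\zeta(Z^a)} \le {u_1}_+^{2\zeta(Z^a)-2\eta+2p-1}(u_+/r)$, legitimate because the exponent $2\zeta(Z^a)-2\eta+2p-1$ is non-positive for the chosen $p$ and $u_+ \ge {u_1}_+$. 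Applying (\ref{3.23.2.18}) with $\alpha = 1$ to $Z\rp{b}\phi$ then delivers exactly the $\W_1$, initial-data, and $\int u_+^{-1} E(\H_u) du$ contributions required by the target. The main obstacle is purely bookkeeping: to cancel a spurious ${u_1}_+^{-\zeta(Z^b)}$ factor on the initial-data piece and recover only $\E_{n,\ga_0}^{1/2}$, one must use the sharper trace bound $\|r^{-1/2} Z\rp{b}\phi\|^2_{L^2(\Sigma_0^{u_1,\ub_1})} \les {u_1}_+^{-\ga_0+1+2\zeta(Z^b)}\E_{b,\ga_0}$, obtained by integrating (\ref{6.23.7.18}) directly in $r$ rather than invoking the coarser bound (\ref{6.25.1.18}).
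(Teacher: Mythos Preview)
Your argument for the first inequality is correct, though considerably more elaborate than the paper's. The paper observes that the single pointwise estimate $|Z\rp{i}q|\les r^{-2-\eta+\zeta(Z^i)}$ yields $\|r^{3/2+\eta/2}Z\rp{i}q\|_{L_u^2 L^\infty}+\|r^{3/2+\eta/2}Z\rp{i}q\|_{L_\ub^2 L^\infty}\les{u_1}_+^{\zeta(Z^i)}$, so a single H\"older split handles \emph{both} mixed norms at once:
\[
\|r[Z\rp{n},q]\phi\|_{L_u^1 L_\ub^2 L_\omega^2}+\|r[Z\rp{n},q]\phi\|_{L_\ub^1 L_u^2 L_\omega^2}\les \sum_{i\ge 1}{u_1}_+^{\zeta(Z^i)}\|r^{-3/2-\eta/2}Z\rp{n-i}\phi\|_{L^2(\D_{u_1}^{\ub_1})},
\]
and the right-hand $L^2(\D)$ norm is then controlled via (\ref{4.29.6.18}) by $\sup_\ub\|r^{-1}Z\rp{n-i}\phi\|_{L^2(\Hb_\ub^{u_1})}$. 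Your separate treatment of the $L_u^1$ case, deriving a pointwise-in-$(u,\ub)$ bound on $\|Z\rp{b}\phi\|_{L_\omega^2}$ by integrating along $\Hb_\ub$, works but is unnecessary once one realizes both norms collapse to the same $L^2(\D)$ quantity.

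For the second inequality there is a genuine gap in your final step. You arrive at the weight $(u_+/r)$ with the \emph{running} $u_+$ and then invoke (\ref{3.23.2.18}) with $\alpha=1$. But in (\ref{3.23.2.18}) the weight is $(u_+/r)^\alpha$ with $u_+$ the \emph{fixed outer} value of the domain $\D_u^\ub$ (this is how it is proved: one integrates $r^{-\alpha}\approx\ub^{-\alpha}$ in $\ub'$ from $-u$); applied on $\D_{u_1}^{\ub_1}$ it only controls $\int_{\D_{u_1}^{\ub_1}}({u_1}_+/r)^\alpha|\varphi|^2$. Since $u_+\ge{u_1}_+$, your quantity $\int(u_+/r)|\varphi|^2$ is larger, and the lemma does not bound it. The fix is easy: use $u_+\le r$ to write $u_+^{2p}r^{-1-2\eta+2\zeta(Z^a)}\le r^{2p-1-2\eta+2\zeta(Z^a)}$, take $p<1/2+\eta$ so this exponent is strictly negative and there is room to extract a factor $({u_1}_+/r)^\alpha$ with some $\alpha>0$, then apply (\ref{3.23.2.18}). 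The paper bypasses this altogether with the same H\"older split as above (now with weight $r^{-1-\eta/2}$ on the $\phi$ factor), landing directly on $\|r^{-1-\eta/2}Z\rp{n-i}\phi\|_{L^2(\D)}$, to which (\ref{3.23.2.18}) applies with $\alpha=\eta$.
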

\begin{proof}
It is direct to obtain
\begin{equation*}
[Z^n, q]\phi= \sum_{i=1}^n Z\rp{i}q Z\rp{n-i}\phi,
\end{equation*}
where all $Z^{n-i}\subsetneq Z^n$.
Note that the assumption (\ref{potential}) on $q$ implies
\begin{equation}\label{7.5.5.18}
\|r^{\frac{3}{2}+\f12\eta} Z\rp{i} q\|_{L_u^2 L^\infty(\D_{u_1}^{\ub_1})}+\|r^{\frac{3}{2}+\f12\eta} Z\rp{i}q\|_{L_\ub^2 L^\infty(\D_{u_1}^{\ub_1})}\les {u_1}_+^{\zeta(Z^i)}.
\end{equation}
Thus
\begin{align*}
\|r [Z\rp{n}, q] \phi \|_{L_u^1 L_\ub^2 L_\omega^2(\D_{u_1}^{\ub_1})} &+\|r [Z\rp{n}, q]\phi \|_{L_\ub^1 L_u^2 L_\omega^2(\D_{u_1}^{\ub_1})}\\
&\les \sum_{ 1\le i\le n}{u_1}_+^{\zeta(Z^i)}\|r^{-\frac{3}{2}-\f12\eta}Z\rp{n-i}\phi\|_{L^2(\D_{u_1}^{\ub_1})}.
\end{align*}
By using  (\ref{4.29.6.18}), we can derive
\begin{equation*}
\begin{split}
\|r^{-\frac{3}{2}-\f12\eta}Z\rp{n-i} \phi\|_{L^2(\D_{u_1}^{\ub_1})}& \les \sup_{-u_1\le \ub \le \ub_1 } \|r^{-1}Z\rp{n-i} \phi\|_{L^2(\Hb_{\ub}^{u_1})} \\
&\les \sup_{-u_1\le \ub\le \ub_1}\big( (\int_{S_{-\ub,\ub}} r (Z\rp{n-i}\phi)^2 d\omega)^\f12+ E[Z\rp{n-i} \phi]^\f12(\Hb_{\ub}^{u_1})\big).
\end{split}
\end{equation*}
We can apply (\ref{6.23.7.18}) to $Z\rp{n-i}\phi$ to control the first term. The first inequality of Lemma \ref{5.25.2.18} holds by combining the above two estimates.

To see the second inequality, we have by using (\ref{7.5.5.18}) that
\begin{align*}
\|r^\frac{3}{2} [Z\rp{n}, q] \phi \|_{L_u^1 L_\ub^2 L_\omega^2(\D_{u_1}^{\ub_1})}\les \sum_{ 1\le i\le n}{u_1}_+^{\zeta(Z^i)}\|r^{-1-\f12\eta}Z\rp{n-i}\phi\|_{L^2(\D_{u_1}^{\ub_1})}.
\end{align*}
It follows by using  (\ref{3.23.2.18}) that
\begin{align*}
\|r^{-1-\frac{\eta}{2}} Z\rp{n-i}\phi\|^2_{L^2(\D_{u_1}^{\ub_1})}&\les \W_1[Z\rp{n-i} \phi](\D_{u_1}^{\ub_1})+M\int_{-\ub_1}^{u_1} u_+^{-1}E[Z\rp{n-i}\phi](\H_u^{\ub_1}) du\\
&+\int_{\Sigma_0^{u_1,\ub_1}} r (Z\rp{n-i} \phi)^2 du d\omega.
\end{align*}
By combining the above two inequalities and applying (\ref{6.25.1.18}) to $Z\rp{n-i}\phi$, we can obtain Lemma \ref{5.25.2.18}.
\end{proof}
\subsection{Boundedness of energies}
Next, we will use the fact
\begin{equation}\label{5.25.3.18}
\Box_\bm Z\rp{n} \phi-q Z\rp{n} \phi=\str{n} \F+[Z\rp{n}, q]\phi
\end{equation}
Proposition \ref{6.2.5.18}, Lemma \ref{5.25.2.18}, Proposition \ref{5.16.5.18} and Proposition \ref{6.12.2.18} to prove the boundedness of energies in (\ref{6.28.5.18}).

\begin{proposition}\label{3.25.3.18}
Let $\I=\{(u,\ub),-\ub_*\le  -\ub \le u\le u_0\}$ and $n\le 2$.  For $(u, \ub)\in \I$ with $u\le u_1\le u_0$, there hold
\begin{align}
u_+^{\ga_0-2\zeta(Z^n)}&( E[Z\rp{n} \phi](\H_u^\ub)+E[Z\rp{n} \phi](\Hb_\ub^u))\les\E_{n,\ga_0}+M^{-2} \dn^2 {u_1}_+^{1-\ga_0},  \label{6.14.8.18}\\
 u_+^{-2\zeta(Z^n)-1+\ga_0}&\left(\W_1[Z\rp{n} \phi](\H_{u}^{\ub})+\W_1[Z\rp{n} \phi](\Hb_{\ub}^u)+\W_1[Z\rp{n} \phi](\D_u^{\ub})\right)\nn\\
&\les \E_{n, \ga_0} +M^{-2}\dn^2 {u_1}_+^{1-\ga_0}.\nn
\end{align}
\end{proposition}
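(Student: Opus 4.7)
The plan is to commute the equation by $Z\rp{n}$, producing the inhomogeneous wave equation (\ref{5.25.3.18}), and then to apply Proposition \ref{5.16.5.18} and Proposition \ref{6.12.2.18} to $\varphi = Z\rp{n}\phi$ with source $F = \str{n}\F + [Z\rp{n}, q]\phi$. In both inequalities the parameter $p$ is set to $\zeta(Z^n)$, so that the weighted supremum on the left matches the target scaling $u_+^{\ga_0 - 2\zeta(Z^n)}$ of the standard energy and $u_+^{-1+\ga_0 - 2\zeta(Z^n)}$ of the weighted energy. The proof runs by a short induction on $n = 0, 1, 2$: when the commutator $[Z\rp{n}, q]\phi$ is estimated via Lemma \ref{5.25.2.18}, whose right-hand side depends only on lower-order energies $E[Z\rp{m}\phi]$ with $Z^m \subsetneq Z^n$, the previous step of the induction has already improved those energies to size comparable to $\E_{n,\ga_0}$ rather than $\dn$, which is exactly what is needed for the commutator's square to fit inside the target bound.

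For the standard energy estimate I decompose $F = F^\flat + F^\sharp$ with $F^\sharp = \str{n}\F_\Q + \str{n}\F_\C$ and $F^\flat = [Z\rp{n}, q]\phi$. The $L^2$ bounds (\ref{6.13.2.18}) and (\ref{6.3.8.18}) from Proposition \ref{6.2.5.18}, after multiplication by the weight $u_{1+}^{1 - 2\zeta(Z^n) + \ga_0} M^{-1}$ supplied by Proposition \ref{5.16.5.18}, each yield a contribution of size $M^{-2}\dn^2 u_{1+}^{1-\ga_0}$, with the cubic term being strictly smaller by an extra factor $\dn u_{1+}^{-\ga_0}$. The commutator, squared and weighted, contributes a multiple of $\E_{n,\ga_0}$ by the inductive hypothesis. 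The initial data term $E[Z\rp{n}\phi](\Sigma_0^{u_1,\ub_1})$ is bounded by $u_{1+}^{-\ga_0 + 2\zeta(Z^n)}\E_{n,\ga_0}$ via (\ref{6.30.1.18}). For the weighted energy estimate I invoke Proposition \ref{6.12.2.18}, using the $r^{3/2}$ bounds (\ref{6.14.6.18}) and (\ref{6.3.12.18}) together with the second inequality of Lemma \ref{5.25.2.18} to control $\|r^{3/2}F\|^2_{L_u^1 L_\ub^2 L_\omega^2}$. The extra term $u_{1+}^{-\ga_0 + 2p + 1}\sup_u u_+^{-2p+\ga_0}E[\varphi](\H_u^{\ub_1})$ on the right of Proposition \ref{6.12.2.18} is handled by directly substituting the standard energy bound just proved; this produces exactly a term of the form $u_{1+}^{1-\ga_0 + 2\zeta(Z^n)}(\E_{n,\ga_0} + M^{-2}\dn^2 u_{1+}^{1-\ga_0})$, while the remaining boundary and initial-slice contributions are bounded by Proposition \ref{6.23.4.18}.

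The main technical obstacle is the bookkeeping of $u_+$-weights across the product decompositions $Z^n = Z^a \sqcup Z^b \sqcup Z^c$ (and $Z^a \sqcup Z^b \sqcup Z^c \sqcup Z^d$ for the cubic piece) appearing in Lemma \ref{6.3.5.18}: one must verify that the signature identity $\zeta(Z^a) + \zeta(Z^b) + \zeta(Z^c) = \zeta(Z^n)$, combined with the mixed-norm estimates of Proposition \ref{6.2.5.18}, reproduces exactly the power $u_{1+}^{2\zeta(Z^n)}$ after squaring and multiplying by the outer weight $u_{1+}^{1-2\zeta(Z^n)+\ga_0}M^{-1}$. The quadratic semilinear piece is the delicate term, as it drives the $M^{-2}\dn^2 u_{1+}^{1-\ga_0}$ loss in the final bound; this is precisely the term that, combined with $\dn = C_1 \E_{2,\ga_0}$, will fix the smallness conditions $M \gtrsim \dn^{1/2}$ and $R \ge R(\ga_0)$ needed to close the bootstrap $(\BA{2})$ in the final step of the semilinear argument.
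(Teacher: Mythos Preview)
Your proposal is correct and follows essentially the same approach as the paper: the same decomposition $F^\sharp=\str{n}\F_\Q+\str{n}\F_\C$, $F^\flat=[Z\rp{n},q]\phi$, the same choice $p=\zeta(Z^n)$, the same error estimates (\ref{6.13.2.18}), (\ref{6.3.8.18}), (\ref{6.14.6.18}), (\ref{6.3.12.18}) and Lemma \ref{5.25.2.18}, and the same induction on $n$ to absorb the lower-order energies arising from the potential commutator. The only minor imprecision is that the inductive hypothesis yields the bound $\E_{n,\ga_0}+M^{-2}\dn^2 {u_1}_+^{1-\ga_0}$ for the lower-order energies, not merely $\E_{n,\ga_0}$; but this is harmless since it is exactly the target bound and is stable under finite summation.
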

\begin{proof}
In view of (\ref{5.25.3.18}), we will use Proposition \ref{5.16.5.18} with $ \F^\sharp=\str{n}\F_\Q+\str{n}\F_\C$ and $\F^\flat=[Z\rp{n},q]\phi$. By using   (\ref{6.13.2.18}), (\ref{6.3.8.18}) and Lemma \ref{5.25.2.18}, we have
\begin{align*}
 u_+^{\ga_0-2\zeta(Z^n)}&( E[Z\rp{n} \phi](\H_u^\ub)+E[Z\rp{n} \phi](\Hb_\ub^u))\\
&\les \E_{n,\ga_0}+M^{-2}\dn^2 {u_1}_+^{1-\ga_0 }+ \sup_{-\ub_*\le -\ub\le u_1}\sum_{i=1}^n  E[Z\rp{n-i} \phi](\Hb_\ub^{u_1}) {u_1}_+^{\ga_0-2\zeta(Z^{n-i})},
\end{align*}
where the last term vanishes when $n=0$. This implies the first estimate in Proposition \ref{3.25.3.18} by induction.

In view of (\ref{6.14.6.18}) and (\ref{6.3.12.18}) in Proposition \ref{6.2.5.18}  for $\str{n} \F_\Q$ and $\str{n} \F_\C$, we can derive  for $(u_1, \ub_1)\in \I$,
\begin{align*}
{u_1}_+^{-\zeta(Z^n)+ \ga_0-\f12} \|r^\frac{3}{2}\str{n}\F\|_{L_u^1 L_\ub^2 L_\omega^2(\D_{u_1}^{\ub_1})}&\les \dn M^{-\f12}.
\end{align*}
By using the above estimate, Proposition \ref{6.12.2.18}, (\ref{6.23.7.18}), (\ref{6.25.1.18}), (\ref{6.30.1.18}),  the second estimate in Lemma \ref{5.25.2.18} and  (\ref{6.14.8.18}), we  can derive for $(u_1, \ub_1)\in \I$,
\begin{align*}
&{u_1}_+^{-2\zeta(Z^n)-1+\ga_0}\left(\W_1[Z\rp{n} \phi](\H_{u_1}^{\ub_1})+\W_1[Z\rp{n} \phi](\Hb_{\ub_1}^{u_1})+\W_1[Z\rp{n} \phi](\D_{u_1}^{\ub_1})\right)\\
&\les \E_{n, \ga_0}+\dn^2 {u_1}_+^{-\ga_0} (M^{-1}+M^{-2}{u_1}_+)+\sum_{i=1}^n {u_1}_+^{-2\zeta(Z^{n-i})-1+\ga_0} \W_1[Z\rp{n-i} \phi](\D_{u_1}^{\ub_1}).
%&\les \E_{n, \ga_0}+u_+^{-2\ga_0+2} \dn^2 M^{-1}
\end{align*}
Note that when $n=0$ the last term on the right vanishes. The weighted energy estimate can then be derived by induction.
\end{proof}

{\bf Improvement on the bootstrap assumption}
Let us denote the universal constant in $\les $ in the estimates of Proposition \ref{3.25.3.18}  as $C_3>0$. We need to show that
\begin{equation*}
C_3(\E_{2,\ga_0}+M^{-2}\dn^2 u_+^{1-\ga_0})<2\dn, \forall u\le u_0
\end{equation*}
Recall that $\dn=C_1 \E_{2,\ga_0}$  with $C_1$ to be chosen, and $\E_{2,\ga_0}\le C M^2$ with $C\ge 1$  a fixed  constant. Thus we need to choose $C_1$ such that
  $$
  C_3(C_1^{-1} +M^{-2} \dn u_+^{1-\ga_0}) <2.
  $$
  Let $C_1=4C_3$.  It is reduced to  $M^{-2}\dn u_+(R)^{1-\ga_0}< \frac{7}{4C_3}$. Since $\delta_1$ in Theorem \ref{thm2} can be  sufficiently small, $M<\frac{1}{10}$ can be achieved. Thus $u_+(R)>\f12 R$ can be guaranteed. Thus we need
  \begin{equation}\label{5.28.1.18}
  (\frac{R}{2})^{1-\ga_0}<\frac{7}{16 C_3^2  C}.
  \end{equation}
  $R$  is fixed to satisfy the  inequality of (\ref{5.28.1.18}). Thus the proof of Theorem \ref{thm2} is completed. If $R=2$ but $C$ is allowed to be chosen, with $C<\delta_0$ such that $\frac{16}{7} C_3^2 \delta_0< (\frac{R}{2})^{\ga_0-1}=1$, (\ref{5.28.1.18})  can also be achieved. This proves Theorem \ref{thm1}.

\section{Quasilinear equations}\label{quasi}
In this section, we consider the general quasilinear equations (\ref{eqn_1}) in ${\mathbb R}^{3+1}$ which verifies (\ref{potential}) for $n=3$.  $\bg(\phi, \p\phi)$ and $\N^{\a\b}(\phi)$ are both smooth functions of their arguments,  $\bg(0, \mathbf{0})=\bm$. For convenience, we set $\Phi=(\phi, \p \phi)$, which is a $5$- vector valued function, and $|\Phi|=|\phi|+\sum_{\mu=0}^3|\p^\mu\phi| $.  We will prove Theorem \ref{thm_quasi} in this section.

\subsection{Bootstrap assumptions}
Due to the influence of the metric $\bg^{\a\b}$,
the bootstrap assumptions are more delicate than (\ref{3.21.5.18}) and (\ref{3.21.6.18}) in Section \ref{semi}. We first need to fix the constant $M_0$ since the region where the stability result holds is determined by $u_0(M_0)$.   By  using $H^{\a\b}(0, \mathbf{0})=0$ and the fact that $H^{\a\b}$ are smooth functions of the arguments, we can derive
\begin{equation*}
\begin{split}
\sup_{0\le \a, \b\le n } |H^{\a\b}(\phi, \p \phi)|\les|\phi|+|\p \phi|  \mbox{ on } \Sigma_0\cap \{r\ge R\}.
\end{split}
\end{equation*}
By using the above estimate and (\ref{6.24.9.18}), there holds
\begin{equation}\label{7.23.4.18}
\sup_{\a, \b}r |H^{\a\b}|\le C u_+^{-\frac{\ga_0}{2}+\f12} \E^\f12_{2,\ga_0, R} \quad \mbox{ on } \Sigma_0\cap \{r\ge R\}.
\end{equation}
 We can choose $M_0= 3 C \delta^\f12_1$ in the definition of (\ref{metric}) and $h=\frac{M_0}{r}$.   By this choice and (\ref{6.24.9.18}) on $\Sigma_0\cap \{r\ge R\}$ there holds
 \begin{equation}\label{6.30.4.18}
 \sup_{\a,\b}r (h-H^{\a\b})> C(3 \delta^{\f12}_1-  u_+^{-\frac{\ga_0}{2}+\f12} \E^\f12_{2,\ga_0,R}).
 \end{equation}
Since $ u_+(R)\ge \f12 R$, with $(\frac{R}{2})^{-\frac{\ga_0}{2}+\f12}<\f12$, on $\Sigma_0\cap \{r\ge R\}$,
\begin{equation*}
\sup_{\a,\b}r (h-H^{\a\b})> \frac{5}{2}C \delta^{\f12}_1=\frac{5}{6}M, \qquad M=M_0.
\end{equation*}
%\begin{footnote}{  In Section \ref{Eins}, we will let $M_0$ be used for defining (\ref{metric}), which can be negative.}\end{footnote}

The bootstrap assumption for proving Theorem \ref{thm_quasi} consists of the control of the metric and  the boundedness of energies.

Let  $\ub_*> -u_0$ be a fixed number and let $\I=\{(u,\ub): -\ub_*\le -\ub \le u\le u_0 \}$. We suppose there hold on $\I$ that
\begin{equation}\label{6.10.4.18}\tag{A1}
r(h-H^{\Lb \Lb })\ge\frac{M}{3}, \quad r(h-H^{LL})\ge\frac{M}{3},
\end{equation}
and   for any $(\ub_1,  u_1)\in \I$, $n\le 3$,
\begin{align}
%&E[\p Z^n \phi, \p_t](\H_u^\ub)+E[\p Z^n \phi, \p_t](\Hb_{\ub}^{u})\le 2\dn u_+^{-\ga_0-2+2\zeta(Z^n)}\label{4.25.4.18}\\
&E[Z\rp{n} \phi](\H_{u_1}^{\ub_1})+E[Z\rp{n} \phi](\Hb_{\ub_1}^{u_1})\le 2\dn {u_1}_+^{-\ga_0+2\zeta(Z^n)},\label{4.25.1.18}\\
&\W_1[Z\rp{n} \phi](\H_{u_1}^{\ub_1})+\W_1[Z\rp{n} \phi](\Hb_{\ub_1}^{u_1})+\W_1[Z\rp{n}\phi](\D_{u_1}^{\ub_1})\le 2\dn {u_1}_+^{-\ga_0+1+2\zeta(Z^n)},\label{4.25.2.18}
\end{align}
where $\dn \le C_1 M^2$ with $C_1>1$ to be chosen later and $Z\in\{\Omega_{ij}, \p\}$.

As a consequence of the above assumptions, all the estimates in Section \ref{decay} hold.  We can first summarize some of  the decay estimates that will be frequently used in this section.
\begin{proposition}[Decay estimates]\label{decay_quasi}
There hold the following decay estimates \begin{footnote}{$Z\rp{n}H$ is understood as $Z\rp{n}(H(\Phi)).$ }\end{footnote} in $\I$
\begin{align}
&r^2|\p\rp{l} Z\rp{n} \phi |^2 \les \dn u_+^{-\ga_0+1-2l+2\zeta(Z^n)}, \quad l , n\le 1\label{6.7.1.18}\\
&r^2|Z\rp{n}H|^2\les \dn u_+^{-\ga_0+1+2\zeta(Z^n)},\quad n\le 1, \label{4.25.3.18} \\
&r^3 |\bar \p H|^2+ r^2 u_+|\Lb H|^2\les \dn u_+^{-\ga_0},\label{7.23.2.18} \\
& \|r^\f12 \p H\|^2_{L_\ub^2 L^\infty (\D_{u_1}^{\ub_1})}\les  \dn M^{-1} {u_1}_+^{-\ga_0},\label{7.23.3.18}
\end{align}
where $(u_1, \ub_1)\in \I$.
\end{proposition}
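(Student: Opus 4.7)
The plan is to reduce all four estimates to the decay estimates already proved for $\phi$ in Section \ref{decay} under the abstract assumption $(\BA{n})$, combined with the elementary observation that $H^{\a\b}(y, P)$ is a smooth function of its arguments with $H(0, \mathbf{0}) = 0$. Since the bootstrap hypotheses (\ref{4.25.1.18})--(\ref{4.25.2.18}) give precisely $(\BA{3})$, every result of Proposition \ref{5.20.1.18} and Proposition \ref{6.17.5.18} is at our disposal on $\I$.

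First I would establish (\ref{6.7.1.18}). The case $l = 0,\, n \le 1$ is exactly (\ref{3.24.11.18}) in Proposition \ref{5.20.1.18} with $n = 3$ in the bootstrap. For $l = 1,\, n \le 1$, (\ref{3.24.18.18}) (valid for $l \le n - 2$, i.e.\ here for derivatives of order $\le 1$) bounds $r^3|L Z^{(n)}\phi|^2$, $r^3|\sn Z^{(n)}\phi|^2$ and $r^2 u_+ |\Lb Z^{(n)}\phi|^2$ uniformly by $\dn\, u_+^{-\ga_0 + 2\zeta(Z^n)}$; the worst factor being $\Lb$, we deduce $r^2|\p Z^{(n)}\phi|^2 \les \dn\, u_+^{-\ga_0 - 1 + 2\zeta(Z^n)}$, matching the claim with $l=1$.

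For (\ref{4.25.3.18}) and (\ref{7.23.2.18}), since $\Phi$ is small by the $l = n = 0$ case of (\ref{6.7.1.18}), Taylor expansion of $H$ around $(0,\mathbf{0})$ yields $|H| \les |\Phi|$ and the chain rule gives $|X H| \les |X \Phi|$ for any smooth vector field $X$. Then (\ref{6.7.1.18}) applied with $l = 0, 1$ immediately delivers $r^2|H|^2 \les \dn u_+^{-\ga_0+1}$; combining with commutator identities (\ref{8.1.1.18}), (\ref{3.24.5.18}) for $Z\p\phi = \p Z\phi + O(r^{-1}|\p\phi|)$ yields the full range of (\ref{4.25.3.18}). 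For (\ref{7.23.2.18}), I decompose $|\bar\p H|,|\Lb H| \les |\bar\p \Phi|,|\Lb \Phi|$; the $\bar\p \phi$ and $\Lb\phi$ contributions are directly controlled by (\ref{3.24.18.18}) at $l = 0$, while the $\bar\p \p\phi$ and $\Lb\p\phi$ contributions, via commutators $[\p,\bar\p]$, $[\p,\Lb]$ and (\ref{3.24.5.18}), reduce to (\ref{3.24.18.18}) at $l = 1, Z = \p$, which gives the two additional powers of $u_+^{-1}$ and is therefore never the binding factor.

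Finally for (\ref{7.23.3.18}), the chain rule gives $|\p H| \les |\p\phi| + |\p^2\phi|$, and I apply (\ref{4.2.1.18}) of Proposition \ref{5.20.1.18} separately at $l = 0$ (for $\p\phi$) and at $l = 1$ with $Z = \p$ (for $\p^2\phi$, after the same commutator reduction as above). Both contributions are bounded by $(\E_{n,\ga_0, R} + \dn M^{-1})\, u_{1+}^{-\ga_0}$ with $n = 2, 3$, and since $\E_{n,\ga_0,R} \ll \dn M^{-1}$ under the bootstrap setup (the data is of size $\E_{3,\ga_0, R} \le \delta_1$ while $\dn M^{-1} \approx M$), the right-hand side is absorbed into $\dn M^{-1} u_{1+}^{-\ga_0}$. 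There is no genuine obstacle in this proposition; the only care required is in tracking the chain-rule expansions of $H(\Phi)$ and the commutators among $\p, L, \Lb, \sn, \Omega$ so that the decay exponents line up. All hard analytic work is already encoded in the sharp estimates of Section \ref{decay}.
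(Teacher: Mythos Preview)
Your proposal is correct and follows essentially the same route as the paper: reduce everything to the decay estimates of Section~\ref{decay} for $\phi$ (namely (\ref{3.24.18.18}), (\ref{3.24.11.18}), (\ref{4.2.1.18})) together with the smoothness of $H$ and $H(0,\mathbf 0)=0$, which gives $|H|\les |\Phi|$ and $|XH|\les |X\Phi|$. One small imprecision: the commutator $[\Omega,\p]$ from (\ref{8.1.1.18}) has constant coefficients, so $Z\p\phi=\p Z\phi+O(|\p\phi|)$ when $Z=\Omega$, not $O(r^{-1}|\p\phi|)$; the correct bookkeeping is via (\ref{5.18.11.18}), $|Z\rp{1}\p\phi|\les |\p Z\phi|+r^{\zeta(Z)}|\p\phi|$, which the paper uses and which still closes the estimate for (\ref{4.25.3.18}) with the stated exponent.
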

\begin{proof}
If $l=1$, (\ref{6.7.1.18}) is a consequence of (\ref{3.24.18.18}); if $l=0$, it is  the estimate of (\ref{3.24.11.18}).
 The result of (\ref{6.7.1.18}) with $n=0$ , $H(0, \mathbf{0})=0$ and  the fact that $H$ is smooth  imply there hold for all $(u, \ub)\in \I$ that
 \begin{equation}\label{5.2.5.18}
   | H(\Phi)|\les |\phi|+|\p \phi|; \quad |(D\rp{i} H)(\Phi)|\les 1, \, i\ge 1.
   \end{equation}
The $n=0$ case in (\ref{4.25.3.18}) can then be derived by using (\ref{6.7.1.18}) with $(n,l)=(0,1)$, $(0,0)$.
Due to (\ref{5.2.5.18}),  $|Z\rp{1}\big(H(\Phi)\big)|\les |Z\rp{1}\phi|+|Z\rp{1}\p \phi|$, by also using  (\ref{5.18.11.18}) and (\ref{6.7.1.18}) we have
\begin{equation*}
r|Z\rp{1} \p \phi|\les u_+^{\zeta(Z^1)-\f12-\frac{\ga_0}{2}}\dn^\f12.
\end{equation*}
The estimate for $Z\rp{1}\phi$ follows from (\ref{6.7.1.18}). Thus $n=1$ case in (\ref{4.25.3.18}) is treated.
(\ref{7.23.2.18}) follows by using (\ref{5.2.5.18}) and (\ref{3.24.18.18}); similarly, (\ref{7.23.3.18})  can be proved by using (\ref{4.2.1.18}).
\end{proof}
\subsection{Energy and weighted energy inequalities}\label{eng}
In this subsection, we derive the fundamental energy estimate and $r$-weighted energy estimate for (\ref{eqn_1}).

 We will always use the Minkowski metric to lift and lower the indices. For example $H_\a^\b:=\bm_{\a\a'}H^{\a'\b}$. We define the following  $(0,2)$ tensor, which is not necessarily symmetric,
$$
\ti \sQ_{\a\b}[\varphi]=\sQ_{\a\b}[\varphi]+H_\a^\ga \p_\ga \varphi \p_\b \varphi,
$$
where
\begin{equation*}
\sQ_{\a\b}[\varphi]=\p_\a\varphi\p_\b \varphi-\f12 \bm_{\a\b} (\bg^{\rho \sigma} \p_\rho \varphi \p_\sigma \varphi+q\varphi^2).
\end{equation*}
\begin{lemma}\label{6.10.7.18}
Let $X$ be a smooth vector field. There holds
\begin{align}
\p^\a (\ti\sQ_{\a\b}X^\b)&=X^\b\{(\widetilde{\Box}_\bg \varphi-q \varphi+\p^\a H_\a^\ga \p_\ga \varphi)\c \p_\b \varphi-\f12 \p_\b q \c\varphi^2 \nn\\
&-\f12\p_\b H_\a^\ga \p_\ga \varphi \c \p^\a \varphi\}+\sQ_{\a\b}{}\rp{X}\pi^{\a\b}+H_\a^\ga \p_\ga \varphi \p_\b \varphi \p^\a X^\b.\label{4.10.1.18}
\end{align}
\end{lemma}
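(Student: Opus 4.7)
The plan is to compute $\p^\a \ti\sQ_{\a\b}$ first by the product/Leibniz rule, then pair it with $X^\b$ and combine with the term $\ti\sQ_{\a\b}\p^\a X^\b$. Write $\ti\sQ_{\a\b} = \sQ_{\a\b} + H_\a^\ga \p_\ga\varphi\, \p_\b\varphi$ and differentiate each piece. For the $\sQ$ piece, expand
\begin{equation*}
\p^\a \sQ_{\a\b} = \Box_\bm \varphi\, \p_\b\varphi + \p_\a\varphi\,\p^\a \p_\b\varphi - \tfrac{1}{2}\p_\b\!\left(\bg^{\rho\sigma}\p_\rho\varphi\p_\sigma\varphi\right) - \tfrac{1}{2}\p_\b\!\left(q\varphi^2\right),
\end{equation*}
and use the key identities $\Box_\bm\varphi = \widetilde{\Box}_\bg\varphi - H^{\a\b}\p_\a\p_\b\varphi$ and $\p_\a\varphi\,\p^\a\p_\b\varphi = \tfrac{1}{2}\p_\b(\bm^{\rho\sigma}\p_\rho\varphi\p_\sigma\varphi)$. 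Then the $\bm^{\rho\sigma}$ part of the third term cancels the second term, leaving only the $H$-contribution $-\tfrac{1}{2}\p_\b(H^{\rho\sigma}\p_\rho\varphi\p_\sigma\varphi)$, which by symmetry of $H$ expands to $-\tfrac{1}{2}\p_\b H_\a^\ga\, \p_\ga\varphi\, \p^\a\varphi \;-\; H_\a^\ga\, \p_\ga\varphi\, \p^\a\p_\b\varphi$ (relabeling indices and using $H_\a^\ga=\bm_{\a\a'}H^{\a'\ga}$).

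Next, for the additional piece, apply Leibniz to get
\begin{equation*}
\p^\a(H_\a^\ga \p_\ga\varphi\, \p_\b\varphi) = \p^\a H_\a^\ga\, \p_\ga\varphi\, \p_\b\varphi + H^{\a\ga}\p_\a\p_\ga\varphi\, \p_\b\varphi + H_\a^\ga\, \p_\ga\varphi\, \p^\a\p_\b\varphi.
\end{equation*}
The crucial cancellation is between the last term here and the $H_\a^\ga\,\p_\ga\varphi\,\p^\a\p_\b\varphi$ that appeared above; meanwhile the middle term combines with $\Box_\bm\varphi$ to reconstruct $\widetilde{\Box}_\bg\varphi$. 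Adding everything together yields
\begin{equation*}
\p^\a\ti\sQ_{\a\b} = (\widetilde{\Box}_\bg\varphi - q\varphi + \p^\a H_\a^\ga\, \p_\ga\varphi)\,\p_\b\varphi - \tfrac{1}{2}\p_\b H_\a^\ga\, \p_\ga\varphi\, \p^\a\varphi - \tfrac{1}{2}\p_\b q\,\varphi^2,
\end{equation*}
which is precisely the bracketed expression in the lemma.

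Finally, to handle $\ti\sQ_{\a\b}\p^\a X^\b$, split it as $\sQ_{\a\b}\p^\a X^\b + H_\a^\ga \p_\ga\varphi\,\p_\b\varphi\,\p^\a X^\b$. Since $\sQ_{\a\b}$ is symmetric in $(\a,\b)$, the antisymmetric part of $\p^\a X^\b$ drops out and $\sQ_{\a\b}\p^\a X^\b = \sQ_{\a\b}\,{}\rp{X}\!\pi^{\a\b}$, exactly the form appearing in the statement. Combining with $(\p^\a\ti\sQ_{\a\b})X^\b$ produces the asserted identity. The computation is essentially bookkeeping; the only mildly subtle point is verifying the $H_\a^\ga\p_\ga\varphi\,\p^\a\p_\b\varphi$ cancellation, which is the whole motivation for adding the non-symmetric correction term to $\sQ_{\a\b}$ — it is what removes the $\p^2\varphi$ terms that would otherwise be uncontrollable in the quasilinear setting.
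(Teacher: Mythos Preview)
Your proof is correct and follows essentially the same approach as the paper: both compute $\p^\a\ti\sQ_{\a\b}$ directly, use the symmetry of $\sQ_{\a\b}$ to replace $\sQ_{\a\b}\p^\a X^\b$ by $\sQ_{\a\b}{}\rp{X}\pi^{\a\b}$, and identify the key cancellation of the $H_\a^\ga\p_\ga\varphi\,\p^\a\p_\b\varphi$ terms that eliminates second derivatives of $\varphi$. Your write-up is more explicit about the individual cancellations than the paper's terse two-line computation, but the argument is the same.
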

\begin{proof}
We calculate $\p^\a \ti \sQ_{\a\b}$ below,
\begin{align*}
\p^\a(\ti \sQ_{\a\b} X^\b)&=\p^\a \ti \sQ_{\a\b} X^\b+\sQ_{\a\b} {}\rp{X}\pi^{\a\b}+H_\a^\ga \p_\ga \varphi\p_\b \varphi\p^\a X^\b.
\end{align*}
For the first term, we proceed as follows
\begin{align*}
\p^\a \ti \sQ_{\a\b}X^\b&=(\Box_\bm \varphi\c \p_\b \varphi-\f12 \p_\b(H^{\rho\sigma} \p_\rho \varphi \p_\sigma\varphi)+\p^\a (H_\a^\ga \p_\ga \varphi \p_\b \varphi))X^\b\\
&=X^\b\{((\Box_\bm -q)\varphi+H_\a^\ga \p^\a\p_\ga \varphi+\p^\a H_\a^\ga \p_\ga \varphi)\p_\b \varphi-\f12 \p_\b H_\a^\ga \p_\ga \varphi \p^\a \varphi-\f12 \p_\b q \varphi^2\}
\end{align*}
as desired.
\end{proof}

We now give the energy density on $\H_u^\ub$, $\Hb_\ub^u$ and $\Sigma_t$.
\begin{lemma}\label{6.10.6.18}
There hold the following identities for energy densities,
\begin{equation}\label{4.10.2.18}
\begin{split}
\ti\sQ_{\a\b}\p_t^\b (L^\a+h\Lb^\a)&=\f12(\Lb \varphi)^2 \big( h +(h-1) H^{\Lb \Lb } -2h \tensor{H}{^{L\Lb}}\big)\\
&+\f12 \left((L \varphi)^2+(1+h)(|\sn \varphi|^2+q\varphi^2) \right)+(H+h H) \p \varphi \bar \p \varphi,
\end{split}
\end{equation}
\begin{equation}\label{4.24.1.18}
\begin{split}
 \ti \sQ_{\a\b} \p_t^\b (\Lb^\a+h L^\a)&=\f12(L \varphi)^2\big(h+(h-1)H^{LL}-2 h H^{\Lb L}\big)\\
&+\f12\big((\Lb \varphi)^2+(1+h)(|\sn \varphi|^2+q\varphi^2)\big)+(H+h H) \ud\p\varphi  \p \varphi,
\end{split}
\end{equation}
and
\begin{equation}\label{4.24.2.18}
\ti \sQ_{\a\b}\p_t^\a \p_t^\b= \f12\{-\bg^{00}(\p_t\varphi)^2+\bg^{ij}\p_i \varphi\p_j \varphi+q\varphi^2\},
\end{equation}
where $\bar \p =(L,  \sn)$, $\ud\p= (\Lb ,  \sn)$.
\end{lemma}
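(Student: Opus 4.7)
My plan is to prove all three identities by direct computation in the null frame $\{L,\Lb,e_A\}$, treating the three cases in parallel. Recall $\partial_t=\tfrac12(L+\Lb)$ and the Minkowski inner products $\bm(L,\Lb)=-2$, $\bm(L,L)=\bm(\Lb,\Lb)=0$, $\bm(e_A,e_B)=\delta_{AB}$. I will also use the identity
\[
\bg^{\rho\sigma}\partial_\rho\varphi\partial_\sigma\varphi=-L\varphi\,\Lb\varphi+|\sn\varphi|^2+H^{\rho\sigma}\partial_\rho\varphi\partial_\sigma\varphi,
\]
which comes from splitting $\bg=\bm+H$ and evaluating the Minkowski inverse in the null frame.

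For (\ref{4.10.2.18}), the vector $X=L+h\Lb$ satisfies $\bm(X,\partial_t)=\tfrac12(\bm(L,\Lb)+h\bm(\Lb,L))=-(1+h)$, so
\[
\sQ_{\a\b}\partial_t^\b X^\a=(X\varphi)(\partial_t\varphi)+\tfrac12(1+h)\bigl(\bg^{\rho\sigma}\partial_\rho\varphi\partial_\sigma\varphi+q\varphi^2\bigr).
\]
Expanding $(X\varphi)(\partial_t\varphi)=\tfrac12(L\varphi+h\Lb\varphi)(L\varphi+\Lb\varphi)$ produces $\tfrac12(L\varphi)^2$, $\tfrac12 h(\Lb\varphi)^2$, and a mixed term $\tfrac12(1+h)L\varphi\,\Lb\varphi$ which exactly cancels the $-\tfrac12(1+h)L\varphi\,\Lb\varphi$ coming from $\bm^{\rho\sigma}\partial_\rho\varphi\partial_\sigma\varphi$. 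What survives from the $\sQ$-part is therefore $\tfrac12(L\varphi)^2+\tfrac12 h(\Lb\varphi)^2+\tfrac12(1+h)(|\sn\varphi|^2+q\varphi^2)+\tfrac12(1+h)H^{\rho\sigma}\partial_\rho\varphi\partial_\sigma\varphi$. Adding the correction $H_\a^\ga\partial_\ga\varphi\,\partial_\b\varphi\,X^\a\partial_t^\b$ produces a further $(1+h)$-weighted combination of the null components of $H$ paired with $L\varphi,\Lb\varphi,\sn\varphi$. The critical bookkeeping step is to identify the coefficient of $(\Lb\varphi)^2$: the $\sn$-frame component of $H$ gives $\tfrac12(1+h)H^{\Lb\Lb}$ from the $\bg$-trace, while the correction contributes an additional $-\tfrac12(1+h)H^{L\Lb}(\Lb\varphi)^2$ plus further mixed pieces. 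A short rearrangement turns these into the stated coefficient $h+(h-1)H^{\Lb\Lb}-2hH^{L\Lb}$, and every remaining $H$-term is of the form $H\,\partial\varphi\,\bar\partial\varphi$ (because at least one factor is $L\varphi$ or $\sn\varphi$), which is precisely what the statement encodes in the schematic term $(H+hH)\partial\varphi\,\bar\partial\varphi$.

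The second identity (\ref{4.24.1.18}) is obtained from the first by the formal $(L,\Lb)$-swap: one repeats the same computation with $X=\Lb+hL$, noting $\bm(X,\partial_t)=-(1+h)$ is unchanged, and the surviving coefficient of $(L\varphi)^2$ becomes $h+(h-1)H^{LL}-2hH^{L\Lb}$. The schematic $H\,\ud\partial\varphi\,\partial\varphi$ absorbs all mixed $H$-terms, by exactly the same accounting. For (\ref{4.24.2.18}), I take $X=Y=\partial_t$, so $\bm(\partial_t,\partial_t)=-1$ and $\sQ_{\a\b}\partial_t^\a\partial_t^\b=(\partial_t\varphi)^2+\tfrac12(\bg^{\rho\sigma}\partial_\rho\varphi\partial_\sigma\varphi+q\varphi^2)$. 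Writing the first term as $-\tfrac12\bg^{00}(\partial_t\varphi)^2-\tfrac12 H^{00}(\partial_t\varphi)^2+(\partial_t\varphi)^2+\tfrac12\bg^{00}(\partial_t\varphi)^2$ and using $\bg^{\rho\sigma}\partial_\rho\varphi\partial_\sigma\varphi=\bg^{00}(\partial_t\varphi)^2+2\bg^{0i}\partial_t\varphi\partial_i\varphi+\bg^{ij}\partial_i\varphi\partial_j\varphi$, the cross term $H_\a^\ga\partial_\ga\varphi\partial_\b\varphi\,\partial_t^\a\partial_t^\b=-H^{0\ga}\partial_\ga\varphi\partial_t\varphi$ (because $\partial_t^\flat=-dt$) exactly cancels the unwanted pieces, leaving the clean expression $\tfrac12(-\bg^{00}(\partial_t\varphi)^2+\bg^{ij}\partial_i\varphi\partial_j\varphi+q\varphi^2)$.

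The main obstacle is the combinatorial bookkeeping of the $H$ cross-terms in (\ref{4.10.2.18})--(\ref{4.24.1.18}): one must verify that every piece that does not fit in the displayed coefficient of $(\Lb\varphi)^2$ (respectively $(L\varphi)^2$) actually contains a $\bar\partial\varphi$ (respectively $\ud\partial\varphi$) factor, so that it can be hidden in the schematic term $(H+hH)\partial\varphi\,\bar\partial\varphi$ (respectively $(H+hH)\ud\partial\varphi\,\partial\varphi$). This is why the choice of $\ti\sQ$ is asymmetric: the correction $H_\a^\ga\partial_\ga\varphi\partial_\b\varphi$ is designed precisely so that its contraction with $\partial_t^\b$ absorbs exactly the $H^{L\Lb}$ and $H^{\Lb\Lb}$ (respectively $H^{LL}$) contributions that would otherwise spoil the sign structure needed for the positivity of energy flux used in \eqref{6.10.4.18}.
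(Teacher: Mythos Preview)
Your approach is exactly the paper's: direct computation in the null frame for (\ref{4.10.2.18}), the $L\leftrightarrow\Lb$ swap for (\ref{4.24.1.18}), and a Cartesian-coordinate computation with cancellation of the $H^{0\gamma}$ cross-terms for (\ref{4.24.2.18}).

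One bookkeeping correction in your sketch of (\ref{4.10.2.18}): the correction term $H_\a^\ga\p_\ga\varphi\,\p_\b\varphi\,X^\a\p_t^\b$ with $X=L+h\Lb$ gives $H_\a^\ga X^\a=-2H^{\Lb\ga}-2hH^{L\ga}$, so its contribution to the coefficient of $(\Lb\varphi)^2$ (pairing with the $\tfrac12\Lb\varphi$ piece of $\p_t\varphi$) is $-H^{\Lb\Lb}-hH^{L\Lb}$, not $-\tfrac12(1+h)H^{L\Lb}$ as you wrote. Combining this with the $\tfrac12 h$ from the product term and the $\tfrac12(1+h)H^{\Lb\Lb}$ from the $\bg$-trace gives precisely $\tfrac12\big(h+(h-1)H^{\Lb\Lb}-2hH^{L\Lb}\big)$, and every remaining $H$-term does indeed carry a factor of $\bar\p\varphi$. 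With this fix the argument is complete and matches the paper.
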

\begin{proof}
For the energy density on $\H_u^\ub$, we derive
\begin{align*}
&\ti \sQ_{\a\b} \p_t^\b (L^\a+h\Lb^\a)\\
&=\f12 (L \varphi+h\Lb\varphi)(L \varphi+\Lb \varphi)+\f12 (h+1)\big(-\Lb \varphi L\varphi+|\sn \varphi|^2+q\varphi^2+H^{\Lb \Lb} (\Lb \varphi)^2\\
&\quad+H \p \varphi \c \bar\p\varphi\big)+(-2H^{\Lb \ga} \p_\ga \varphi \c \p_t \varphi+h H_\Lb ^\ga\p_\ga \varphi \p_t\varphi)\\
&=\f12( (L \varphi)^2+h (\Lb\varphi)^2)+\f12 (h+1) \big(|\sn \varphi|^2+q\varphi^2+H^{\Lb \Lb } (\Lb \varphi)^2+H\p \varphi \bar \p \varphi\big)\\
&-\big(H^{\Lb \Lb }(\Lb \varphi)^2 +H \p \varphi \bar \p \varphi\big)+ h (\f12\tensor{H}{_\Lb^\Lb} (\p_\Lb \varphi)^2+H \bar\p \phi \p \phi).
\end{align*}
Thus,  (\ref{4.10.2.18}) is proved. The energy density   (\ref{4.24.1.18}) on $\Hb_\ub^u$ can be derived by directly swapping $L$ and $\Lb$ in the above calculation.
%\begin{align*}
%&\ti\sQ_{\a\b}\p_t^\b (\Lb^\a+h L^\a)\\
%&= \f12 (L \varphi+\Lb \varphi)(\Lb \varphi+h L \varphi)+(\tensor{H}{_\Lb^\ga}+h \tensor{H}{_L^\ga}) \p_\ga \varphi \p_t \varphi \\
%&+\f12 (h+1)(-\Lb \varphi L \varphi+|\sn \varphi|^2+q\varphi^2+H^{\rho \sigma} \p_\rho \varphi \p_\sigma \varphi)\\
%&= \f12 \{(\Lb \varphi)^2(1+(h+1) H^{\Lb \Lb})+(L \varphi)^2 (h+(h+1)H^{L L})\\
%&+(h+1)(|\sn \varphi|^2+q\varphi^2+H\ud\p \varphi \bar \p \varphi)\}\\
%&-\{(L \varphi)^2(H^{LL}+h H^{\Lb L})+(\Lb \varphi)^2 (H^{L \Lb}+h H^{\Lb \Lb})+(hH+H)\ud \p \varphi \bar \p \varphi\}.
%\end{align*}

The energy density on  $\Sigma_t$ can be derived by
\begin{equation*}
\begin{split}
\ti \sQ_{\a\b}\p_t^\a \p_t^\b&= (\p_t \varphi)^2+\f12q \varphi^2+\f12 \bg^{\rho \sigma}\p_\rho \varphi \p_\sigma\varphi+H_0^\ga\p_\ga \varphi \p_t \varphi\\
&=\f12\{-\bg^{00}(\p_t\varphi)^2+\bg^{ij}\p_i \varphi\p_j \varphi+q\varphi^2\},
\end{split}
\end{equation*}
where all other terms have been cancelled. This gives (\ref{4.24.2.18}).
\end{proof}
With the help of Lemma \ref{6.10.6.18}, we give the fundamental energy estimates.
\begin{proposition}[Energy inequality]\label{4.29.4.18}
Suppose there hold  on $\I$  the assumptions (\ref{6.10.4.18}), (\ref{4.25.1.18}), (\ref{4.25.2.18})  and
\begin{equation}\label{6.10.3.18}\tag{A2}
\ti C M^{-1}\dn^\f12 (\frac{R}{2})^{\f12-\frac{\ga_0}{2}}\le \frac{1}{6}
\end{equation}
with the universal constant $\ti C\ge 1$ specified in the proof.

 Let  $(u_2, \ub_2)\in \I$. For $ (u, \ub)\in \I$ with $ u\le u_2$,
there holds the following energy inequality for any constant $p\le 0$
\begin{align*}
 u_+^{-2p+\ga_0}&(E[\varphi](\H_u^{\ub})+ E[\varphi](\Hb_\ub^{u})) \\
&\quad\quad\les \sup_{(u_1,\ub_1)\in \I, u_1\le u_2}\{{u_1}_+^{-2p+\ga_0} E[\varphi](\Sigma_0^{u_1,\ub_1})+{u_1}_+^{-2p+\ga_0}(\|r\sF^\flat\|_{L_\ub^1 L_u^2 L_\omega^2(\D_{u_1}^{\ub_1})}^2+\\
& \quad \quad+\|r\sF^\flat\|^2_{L_u^1 L_\ub^2 L_\omega^2(\D_{u_1}^{\ub_1})})+ {u_1}_+^{1-2p+\ga_0} M^{-1}\| r^\f12 \sF^\sharp\|_{L^2(\D_{u_1}^{\ub_1})}^2\},
\end{align*}
where $\widetilde{\Box}_\bg \varphi-q\varphi=\sF^\flat+\sF^\sharp$.
\end{proposition}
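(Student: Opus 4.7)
The strategy is to mimic the semilinear Proposition \ref{5.16.5.18} with the standard energy momentum tensor replaced by the modified $\ti\sQ_{\a\b}$, using assumption (A1) to ensure boundary coercivity and (A2) to absorb all cross terms generated by the presence of $H$. Concretely, apply Lemma \ref{6.10.7.18} with $X=\p_t$. Since $X_\b=-\delta^0_\b$ one has ${}\rp{X}\pi\equiv 0$ and $\p^\a X^\b=0$, so
\begin{equation*}
\p^\a(\ti\sQ_{\a\b}(\p_t)^\b)=(\widetilde\Box_\bg\varphi-q\varphi)\p_t\varphi+\p^\a H_\a^\ga\p_\ga\varphi\c\p_t\varphi-\tfrac12\p_t H_\a^\ga\p_\ga\varphi\,\p^\a\varphi-\tfrac12\p_t q\,\varphi^2.
\end{equation*}
Integrating over $\D_u^\ub$ and using the divergence theorem produces, on the left, the fluxes of $\ti\sQ_{\a\b}(\p_t)^\b$ through $\H_u^\ub$, $\Hb_\ub^u$ and $\Sigma_0^{u,\ub}$, whose densities are given by Lemma \ref{6.10.6.18}.

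I would first secure coercivity of these boundary fluxes. Multiplying (\ref{4.10.2.18}) by the area factor from (\ref{7.4.3.18}) (so as to pair $\p_t^\b(L^\a+h\Lb^\a)$ with the normalization $(1+h)\N^\a$), the $\H_u^\ub$-density becomes
\begin{equation*}
\tfrac12 r^2\left\{(L\varphi)^2+(h-H^{\Lb\Lb})(\Lb\varphi)^2+(1+h)(|\sn\varphi|^2+q\varphi^2)\right\}+O\!\left(r^2|H|\cdot|\p\varphi|\,|\bar\p\varphi|+r^2|h|\,|H|\,|\p\varphi|^2\right).
\end{equation*}
Assumption (A1) gives $r(h-H^{\Lb\Lb})\ge M/3$, while the decay (\ref{4.25.3.18}) together with (A2) forces $|rH|\le \tfrac{1}{6}M$ pointwise in $\I$. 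A Cauchy--Schwartz split of the cross term $r^2 H\,\Lb\varphi\,\bar\p\varphi$ against $\tfrac{M}{3r}(\Lb\varphi)^2$ and $|\bar\p\varphi|^2$ therefore absorbs the error into a fraction of the main positive part, leaving a coercive density comparable to the integrand of $E[\varphi](\H_u^\ub)$. The same argument applied to (\ref{4.24.1.18}) handles $\Hb_\ub^u$, and (\ref{4.24.2.18}) is coercive on $\Sigma_0^{u,\ub}$ by the same smallness of $H$.

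Next I would treat the bulk term. The principal contribution $\int\sF\c\p_t\varphi$ is bounded exactly as in the semilinear proof: decompose $\sF=\sF^\flat+\sF^\sharp$, write $2\p_t=L+\Lb$, and use
\begin{align*}
\left|\int_{\D_{u_1}^{\ub_1}}\sF^\flat L\varphi\right|&\les\|r\sF^\flat\|_{L^1_u L^2_\ub L^2_\omega}\,\|rL\varphi\|_{L^\infty_u L^2_\ub L^2_\omega},\\
\left|\int_{\D_{u_1}^{\ub_1}}\sF^\flat \Lb\varphi\right|&\les\|r\sF^\flat\|_{L^1_\ub L^2_u L^2_\omega}\,\|r\Lb\varphi\|_{L^\infty_\ub L^2_u L^2_\omega},\\
\left|\int_{\D_{u_1}^{\ub_1}}\sF^\sharp\p_t\varphi\right|&\les M^{-1/2}\|r^{1/2}\sF^\sharp\|_{L^2}\Big(\int_{-\ub_1}^{u_1}E[\varphi](\H_u^{\ub_1})\,du\Big)^{1/2},
\end{align*}
where the last factor comes from the $M/r$-coercivity of $E[\varphi]$ on $\H_u$. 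Multiplying through by $u_+^{-2p+\ga_0}$ and completing the square as in (\ref{5.16.3.18}) absorbs the energy factors into the supremum on the left.

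The remaining $H$-induced bulk terms $\p H\c\p\varphi\c\p\varphi$ and $\p_t q\,\varphi^2$ are treated as error. For the $q$-term (\ref{6.23.3.18}) applies verbatim. For the $H$-term, split $\p\varphi=\bar\p\varphi+\Lb\varphi$: the purely transversal contribution $|\p H|(\Lb\varphi)^2$ is dominated by $\|r^{1/2}\p H\|_{L^2_\ub L^\infty}^2$ from (\ref{7.23.3.18}) paired against $M^{-1}E[\varphi](\H_u^{\ub_1})$-type quantities, yielding a loss of the form $\dn M^{-2}u_+^{-\ga_0+1+2p}$ which, by (A2) and $u\le u_0<-R/2$, is small enough to be absorbed by a Gronwall argument in $u_+$; the cross term $|\p H|\,|\Lb\varphi|\,|\bar\p\varphi|$ and the purely $\bar\p$ piece follow the same scheme with better decay from (\ref{7.23.2.18}).

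The main obstacle is the absorption in the second step: one must verify that every $H$-cross correction both in the boundary densities and in the new bulk terms carries a coefficient bounded by a universal multiple of $\dn^{1/2}M^{-1}(R/2)^{1/2-\ga_0/2}$, so that choosing $\ti C$ large enough in (A2) reduces all these contributions below $1/6$ of the main positive part. This is precisely what fixes the universal constant $\ti C$ in (\ref{6.10.3.18}), after which the proposition follows from the semilinear argument of Proposition \ref{5.16.5.18} applied to the coercive modified flux.
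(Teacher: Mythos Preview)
Your strategy is the paper's: apply Lemma~\ref{6.10.7.18} with $X=\p_t$, extract coercive fluxes from Lemma~\ref{6.10.6.18} via (A1), and absorb the $H$-generated errors. Two points need fixing.

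First, $q=q(x)$ in this section, so $\p_t q\equiv 0$; that bulk term simply does not appear.

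Second, your treatment of the trilinear bulk term $\p H\c\p\varphi\c\p\varphi$ is misstated. You cannot bound a trilinear integral by pairing a \emph{squared} norm $\|r^{1/2}\p H\|_{L^2_\ub L^\infty}^2$ against an energy, and the resulting loss $\dn M^{-2}$ is off by a square root. The paper does not split into $\bar\p/\Lb$ components but applies a single trilinear H\"older,
\begin{equation*}
\int_{\D_{u_1}^{\ub_1}}|\p H\,\p\varphi\,\p\varphi|\les\|r^{1/2}\p H\|_{L^2_\ub L^\infty}\Big(\|r\,\ud\p\varphi\|_{L^\infty_\ub L^2_{u,\omega}}+\|r\,\bar\p\varphi\|_{L^\infty_u L^2_{\ub,\omega}}\Big)\|r^{-1/2}\p\varphi\|_{L^2(\D_{u_1}^{\ub_1})},
\end{equation*}
with the last factor controlled by $M^{-1/2}\big(\int E[\varphi](\H_u)\,du\big)^{1/2}$ through the $M/r$-weight in $E[\varphi](\H_u)$. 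After weighting by $u_+^{-2p+\ga_0}$ this produces the coefficient $\dn^{1/2}M^{-1}{u_1}_+^{1/2-\ga_0/2}$ in front of $\sup u_+^{-2p+\ga_0}E[\varphi]$, and it is \emph{this} coefficient that (A2) bounds by $1/6$, fixing $\ti C$; the absorption is then direct, not via Gronwall. For the boundary cross terms $(H+hH)\p\varphi\,\bar\p\varphi$, the paper actually uses $\dn^{1/2}M^{-1/2}\les M^{1/2}$ (from $\dn\le C_1 M^2$) rather than (A2), though your pointwise argument is a valid alternative.
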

\begin{proof} For convenience, we denote $\sF= \widetilde{\Box}_\bg \varphi-q\varphi$.
We first show that, in $\D_u^{\ub}$ with $(u, \ub)\in \I$ and $u\le u_2$, there holds
\begin{align}
E[\varphi](\H_{u}^{\ub})+E[\varphi](\Hb_{\ub}^{u})&\les E[\varphi](\Sigma_0^{u, \ub})+\int_{\D_{u}^{\ub}}|(\sF+\p^\a\tensor{H}{_\a^\ga} \p_\ga \varphi) \p_t \varphi\nn\\
&-\f12 \p_t \tensor{H}{_\a^\ga}\p_\ga \varphi \p^\a \varphi| .\label{6.10.2.18}
\end{align}
%\begin{align}\label{6.10.2.18}
%(E[\varphi](\H_{u}^{\ub})+E[\varphi](\Hb_{\ub}^{u}))&\les E[\varphi](\Sigma_0^{u, \ub})+|\int_{\D_{u}^{\ub}}|\sF \c \p_t \varphi |dx dt;
%\end{align}
If $X=\p_t$ in  (\ref{4.10.1.18}), the last two terms  vanish due to  $\p^\a {\p_t}^\b=0 $. By divergence theorem, we have
\begin{align*}
\int_{\D_u^\ub} \p^\a (\ti\sQ_{\a\b}\p_t^\b)=\int_{\Sigma_0^{u, \ub}} \ti\sQ_{\a\b}\p_t^\b \p_t^\a-(\int_{\H_u^\ub} \ti\sQ_{\a\b}\p_t^\b \N^\a+\int_{\Hb_{\ub}^u}\ti \sQ_{\a\b}\p_t^\b \Nb^\a).
\end{align*}
Here we recall that the area element on $\D_u^\ub$ is $\f12\bb du d\ub d\omega$, $d\mu_\H=\f12\bb d\ub d\omega$ and $d\mu_{\Hb}=\f12 \bb du d\omega$ are the standard area elements on $\H_u$ and $\Hb_\ub$.  We may drop the standard area elements for the integral on the corresponding hypersurfaces or domains for convenience.

 Combining the above identity with Lemma \ref{6.10.7.18} and Lemma \ref{6.10.6.18} implies
\begin{align*}
&\int_{\Hb_\ub^u}\f12\{(\Lb\varphi)^2+(L \varphi)^2(h-H^{LL})+(1+h)(|\sn \varphi|^2+q\varphi^2)\\
&+(H+hH) \ud\p\varphi \p \varphi\}(1+h)^{-1}d\mu_{\Hb}\\
&+\int_{\H_u^\ub} \f12 \{(L \varphi)^2+(\Lb \varphi)^2(h-H^{\Lb \Lb})+(1+h)(|\sn \varphi|^2+q\varphi^2)\\
&+(H+hH) \p\varphi \bar \p \varphi\}(1+h)^{-1}d\mu_\H\\
&=\f12\int_{\Sigma_0^{u,\ub}}(-\bg^{00}(\p_t\varphi)^2+\bg^{ij} \p_i \varphi \p_j \varphi+q\varphi^2) dx-\int_{\D_u^\ub} \{(\sF +\p^\a \tensor{H}{_\a^\ga} \p_\ga \varphi)\p_\b \varphi\\
&-\f12 \p_\b\tensor{H}{_\a^\ga} \p_\ga \varphi \p^\a \varphi\}\p_t^\b.
\end{align*}

We note that by using (\ref{4.25.3.18}) and the smallness of $|h|$,
\begin{align*}
&\int_{\Hb_\ub^u}|(H+hH)\ud \p \varphi \p\varphi| (1+h)^{-1}d\mu_{\Hb}\les\int_{\Hb_\ub^u} |H |(|\ud\p\varphi\bar\p \varphi|+|\ud\p \varphi|^2) d\mu_{\Hb} \\
&\quad \quad \les\dn^\f12\|\ud \p \varphi\|_{L^2(\Hb_\ub^u)} (M^{-\f12} u_+^{-\frac{\ga_0}{2}}\|M^\f12 r^{-\f12} \bar\p \varphi\|_{L^2(\Hb_\ub^u)}+ u_+^{-\frac{\ga_0}{2}-\f12} \|\ud \p\varphi\|_{L^2(\Hb_\ub^u)}) \\
&\quad \quad \les \dn^\f12 (M^{-\f12}+u_+^{-\f12} \dn^\f12)u_+^{-\frac{\ga_0}{2}} E[\varphi](\Hb_\ub^u).
\end{align*}
Similarly,
\begin{equation*}
\int_{\H_u^\ub}|(H+hH) \p \varphi \bar \p \varphi| (1+h)^{-1} d\mu_\H\les\dn^\f12 (M^{-\f12}+u_+^{-\f12} \dn^\f12) u_+^{-\frac{\ga_0}{2}} E[\varphi](\H_u^\ub).
\end{equation*}
Since $\dn^\f12 M^{-\f12}\les M^\f12 $, the coefficient can be sufficiently small. Due to (\ref{6.10.4.18}), this pair of error  terms will be absorbed by the leading positive terms on the lefthand side.

With the help of  (\ref{7.23.4.18}),  we can derive
\begin{align*}
\int_{\Sigma_0^{u, \ub}}(-\bg^{00} (\p_t \varphi)^2+\bg^{ij} \p_i \varphi \p_j \varphi+q\varphi^2) dx\les E[\varphi](\Sigma_0^{u, \ub}).
\end{align*}

Thus
\begin{equation}\label{7.11.1.18}
\begin{split}
\int_{\H_u^\ub}& \big((L \varphi)^2+|\sn \varphi|^2+q\varphi^2 +\frac{M}{r}(\Lb \varphi)^2\big) +\int_{\Hb_\ub^u} \big((\Lb \varphi)^2 +\frac{M}{r}(L \varphi)^2 +|\sn \varphi|^2+q\varphi^2\big)\\
&\le \C\big( E[\varphi](\Sigma_0^{u, \ub}) +\int_{\D_u^\ub}|(\sF+\p^\a\tensor{H}{_\a^\ga} \p_\ga \varphi) \p_t \varphi-\f12 \p_t \tensor{H}{_\a^\ga}\p_\ga \varphi \p^\a \varphi |\big).
\end{split}
\end{equation}
Thus (\ref{6.10.2.18}) is proved.

We remark that
\begin{equation}\label{6.2.2.18}
\Tr[\varphi]=\p^\a H_\a^\ga \p _\ga \varphi \c \p_t \varphi-\f12 \p_t H_\a^ \ga \p_\ga \varphi \p^\a \varphi= \frac{1}{4} \Lb H^{\Lb \Lb }(\Lb \varphi)^2 +\bar{\Tr}(\p H, \p \varphi, \p \varphi)
\end{equation}
where the trilinear term  $\bar{\Tr}$ means that, in the product of the three terms, at least one of the derivatives $\p$ is  $\bar \p$. \begin{footnote}{In Section \ref{Eins}, we will take advantage of this structure when the wave coordinates condition is available.}\end{footnote}

Symbolically, $\Tr[\varphi]=\p H\c \p\varphi\c \p \varphi$. Let $\a=-p+\frac{\ga_0}{2}$ with the constant $p\le 0$. We calculate for any $(u_2, \ub_2)\in \I$ by using (\ref{7.23.3.18})
\begin{align*}
&{u_2}_+^{2\a} \int_{\D_{u_2}^{\ub_2}} |\p H\c \p \varphi\c \p \varphi| \\
&\les {u_2}_+^{2\a} \|r^\f12 \p H\|_{L_\ub^2 L^\infty(\D_{u_2}^{\ub_2})}(\|r \ud \p \varphi\|_{L_\ub^\infty L_u^2 L_\omega^2(\D_{u_2}^{\ub_2})}+\| r\bar \p \varphi\|_{L_u^\infty L_\ub^2 L_\omega^2(\D_{u_2}^{\ub_2})}) \| r^{-\f12} \p \varphi\|_{L^2(\D_{u_2}^{\ub_2})}\\
&\les {u_2}_+^{\a}\|r^\f12 \p H\|_{L_\ub^2 L^\infty(\D_{u_2}^{\ub_2})}M^{-\f12}(\int_{-\ub_2}^{ u_2} E[\varphi](\H_u^{\ub_2}) du)^\f12(\sup_{-\ub_2\le \ub\le u_2} {u_2}_+^{\a}E[\varphi]^\f12 (\Hb_{\ub}^{u_2})\\
&+\sup_{-\ub_2\le u\le u_2}u_+^\a E[\varphi]^\f12(\H_u^{\ub_2}))\\
&\les  M^{-1}\dn^\f12 {u_2}_+^{\f12-\frac{\ga_0}{2}}(\sup_{-\ub_2\le u\le u_2} u_+^{2\a} E [\varphi](\H_u^{\ub_2})+\sup_{-u_2\le \ub\le \ub_2}E[\varphi](\Hb_\ub^{u_2}) {u_2}_+^{2\a}).
\end{align*}
To treat the integral of $\sF\c \p_t \varphi$ in (\ref{6.10.2.18}), we repeat the derivation in  (\ref{5.16.3.18}).  Thus with $\sF=\sF^\flat+\sF^\sharp$,
\begin{align*}
{u_2}_+^{-2p+\ga_0}&\int_{\D_{u_2}^{\ub_2}} (|\sF\c \p_t \varphi|+|\p H \c\p \varphi\c \p \varphi|) \nn\\
&\le C (\ep_1) {u_2}_+^{-2p+\ga_0}(\|r\sF^\flat\|_{L_\ub^1 L_u^2 L_\omega^2(\D_{u_2}^{\ub_2})}^2 + \| r \sF^\flat\|_{L_u^1 L_\ub^2 L_\omega^2(\D_{u_2}^{\ub_2})}^2\\
&+M^{-1} {u_2}_+ \|r^\f12 F^\sharp\|^2_{L^2(\D_{u_2}^{\ub_2})})+(\ep_1+ \ti \C M^{-1}\dn^\f12 {u_2}_+^{\f12-\frac{\ga_0}{2}})\\
&\cdot\big( \sup_{-\ub_2\le u\le u_2}  u_+^{-2p+ \ga_0} E[\varphi](\H_u^{\ub_2})+ \sup_{-u_2\le \ub\le \ub_2}{u_2}_+^{-2p+\ga_0} E[\varphi](\Hb_\ub^{u_2})\big).
\end{align*}
We fix the constant in (\ref{6.10.3.18})  by $\ti C=\C\c \ti \C$.
By using (\ref{6.10.3.18}) and choosing $\ep_1$ sufficiently small, the last term can be absorbed. Substituting  the above estimate to (\ref{7.11.1.18}) implies   Proposition \ref{4.29.4.18}.
\end{proof}

Next we establish the $r$-weighted energy inequality by first  giving the following result.
\begin{lemma}\label{4.15.1.18}
Let $ -\ub_*\le -\ub_1\le u_1\le u_0$. Under the assumptions of (\ref{4.25.1.18}), (\ref{4.25.2.18}) and (\ref{6.10.4.18}), there  holds the following estimate for any constant $p\le 0$
\begin{equation}\label{5.5.2.18}
\begin{split}
\int_{\D_{u_1}^{\ub_1}}&|(\widetilde{\Box}_\bg \varphi-q\varphi)(X\varphi+\varphi)+\f12 (r^{-2} (L(r\varphi))^2+|\sn \varphi|^2)-(\p^\a {}\rp{X}\sP_\a+\f12(Xq+q) \varphi^2) |\\
&\les\dn ^\f12 M^{-\f12}\{{u_1}_+^{1-\frac{3\ga_0}{2}+2p} (\sup_{-\ub_1\le u\le u_1}u_+^{\ga_0-2p} E[\varphi](\H_u^{\ub_1})+{u_1}_+^{\ga_0-2p}\sup_{-u_1\le \ub\le \ub_1} E[\varphi](\Hb_{\ub}^{u_1}))\\
&+\int_{-\ub_1}^{u_1} u_+^{-\frac{\ga_0+1}{2}} \W_1[\varphi](\H_u^{\ub_1})du\},
\end{split}
\end{equation}
where
the energy current is defined by
\begin{equation}\label{4.14.1.18}
{}\rp{X}\sP_\a =\ti \sQ_{\a\b} X^\b +\f12 \bm_{\a \a'} \bg^{\a'\ga} \p_\ga (\varphi^2)+Y_\a
\end{equation}
with
\begin{equation*}
X= r(L-H^{\Lb\Lb} \Lb), \quad Y=\frac{1}{2} r^{-1} \varphi^2 L.
\end{equation*}
\end{lemma}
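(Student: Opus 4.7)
My approach is to expand $\p^\a\,^{(X)}\sP_\a$ explicitly, identify an algebraic cancellation that leaves only manageable error terms, and then bound each residual term by the right-hand side of \eqref{5.5.2.18} using Proposition \ref{decay_quasi}. First, apply Lemma \ref{6.10.7.18} with $X = r(L - H^{\Lb\Lb}\Lb)$, and separately compute $\p^\a\bigl(\tfrac12\bm_{\a\a'}\bg^{\a'\ga}\p_\ga(\varphi^2)\bigr)$ and $\p^\a Y_\a$. Since $\bm_{\a\a'}\bg^{\a'\ga} = \delta_\a^\ga + H_\a^\ga$, the middle piece expands as
\begin{equation*}
\p^\a\bigl(\tfrac12\bm_{\a\a'}\bg^{\a'\ga}\p_\ga(\varphi^2)\bigr) = (\widetilde{\Box}_\bg\varphi - q\varphi)\varphi + q\varphi^2 + (\p^\a H_\a^\ga\p_\ga\varphi)\varphi + \bg^{\a\b}\p_\a\varphi\p_\b\varphi,
\end{equation*}
while $\p^\a Y_\a = r^{-1}L\varphi\c\varphi + \tfrac12 r^{-2}\varphi^2$, exactly as in Lemma \ref{5.7.7.18}. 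Collecting everything, the source-term factor $(\widetilde{\Box}_\bg\varphi - q\varphi)(X\varphi + \varphi)$ cancels, the principal positive density $\tfrac12(r^{-2}|L(r\varphi)|^2 + |\sn\varphi|^2)$ emerges from the Minkowskian part of $\sQ_{\a\b}\,^{(X)}\pi^{\a\b}$ combined with $\p^\a Y_\a$ exactly as in Lemma \ref{5.7.7.18}, and the $\tfrac12(Xq + q)\varphi^2$ contributions cancel out. The residue consists of terms of two schematic shapes: type~(I) trilinear terms $r|\p H||\p\varphi||\bar\p\varphi|$ with at least one \emph{good} derivative ($\bar\p\varphi$ or $\ud\p\varphi$) on $\varphi$; and type~(II) bilinear-in-$\varphi$ terms $|\bar\p H|\,(r^{-1}|L(r\varphi)|^2 + r|\sn\varphi|^2)$, together with lower-order variants that include factors of $H$ times $\varphi^2/r^2$ etc.

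The crucial structural observation is that no term of the form $r|LH^{\Lb\Lb}|(\Lb\varphi)^2$ survives in the residue. Such a term is a priori produced from the $\Lb\Lb$-component of $\sQ_{\a\b}\,^{(X)}\pi^{\a\b}$, because the $L$-derivative of the coefficient $-rH^{\Lb\Lb}$ in the $\Lb$-part of $X$ generates precisely $rLH^{\Lb\Lb}(\Lb\varphi)^2$; this term is cancelled exactly against the $H^{\Lb\Lb}(\Lb\varphi)^2$ piece inside $\bg^{\a\b}\p_\a\varphi\p_\b\varphi$ (traced through the $\Lb$-component of $X$ in the divergence of the modified current). This is precisely why $X$ carries the $-rH^{\Lb\Lb}\Lb$ correction \emph{and} why the energy current \eqref{4.14.1.18} replaces $\tfrac12\p_\a(\varphi^2)$ by $\tfrac12\bm_{\a\a'}\bg^{\a'\ga}\p_\ga(\varphi^2)$: either modification alone is insufficient, but together they annihilate the dangerous coefficient. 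As a consequence, every $(\Lb\varphi)^2$-coefficient in the residue involves only a $\bar\p$-type derivative of $H$ (or a pointwise factor of $H$), both of which enjoy much better decay than $LH^{\Lb\Lb}$.

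Once the residue has been organised into type~(I) and type~(II) pieces, the estimates are essentially routine. For a type~(I) term, apply H\"older's inequality, extracting $r^{1/2}\p H$ in $L^2_\ub L^\infty(\D_{u_1}^{\ub_1})$ via \eqref{7.23.3.18} (worth $\dn^{1/2}M^{-1/2}{u_1}_+^{-\ga_0/2}$), and distribute the two $\p\varphi$ factors between a flux norm of the good derivative on $\H_u^{\ub_1}$ or $\Hb_\ub^{u_1}$ (bounded by $E[\varphi]^{1/2}$ without any $M^{-1/2}$-loss, since $\bar\p\varphi$ is tangential to $\H$ and $\ud\p\varphi$ to $\Hb$) and the $L^2(\D_{u_1}^{\ub_1})$-norm of $r^{-1/2}\p\varphi$, controlled by the $u$-integrated standard energy. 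Combining with the weighted-decay extraction $E[\varphi](\H_u^{\ub_1})\le u_+^{-(\ga_0-2p)}\sup_{u'}{u'}_+^{\ga_0-2p}E[\varphi](\H_{u'}^{\ub_1})$ and the $\Hb$-analogue reproduces the first line of \eqref{5.5.2.18}. For a type~(II) term, use the pointwise bound $r|\bar\p H|\les\dn^{1/2}u_+^{-(\ga_0+1)/2}$ from \eqref{7.23.2.18} to extract the decay factor in $u$, so that
\begin{equation*}
\int_{\D_{u_1}^{\ub_1}}|\bar\p H|\bigl(r^{-1}|L(r\varphi)|^2 + r|\sn\varphi|^2\bigr)\les \dn^{1/2}\int_{-\ub_1}^{u_1}u_+^{-(\ga_0+1)/2}\W_1[\varphi](\H_u^{\ub_1})\,du,
\end{equation*}
which is the second line of \eqref{5.5.2.18}; the lower-order variants involving $\varphi^2/r^2$ are controlled similarly with the help of \eqref{3.23.2.18}. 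The chief difficulty throughout is the algebraic bookkeeping and the verification of the $rLH^{\Lb\Lb}(\Lb\varphi)^2$ cancellation; once this is in place, the improved decay rate \eqref{7.23.3.18}, available only in the schwarzschild cone exterior, is tailored precisely to close the trilinear type~(I) estimate without loss.
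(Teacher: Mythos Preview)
Your overall strategy matches the paper's: compute $\p^\a\,{}\rp{X}\sP_\a$, isolate the main positive density $\tfrac12(r^{-2}|L(r\varphi)|^2+|\sn\varphi|^2)$, verify that the dangerous $rLH^{\Lb\Lb}(\Lb\varphi)^2$ contribution cancels, and bound the remaining error terms using the decay estimates of Proposition~\ref{decay_quasi}. That much is right, and the role of the $-rH^{\Lb\Lb}\Lb$ correction in $X$ is correctly identified as essential.

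However, your account of the cancellation mechanism is off. The bad term $+\tfrac12 rLH^{\Lb\Lb}(\Lb\varphi)^2$ arises from $\sQ_{\Lb\Lb}\,{}\rp{X}\pi^{\Lb\Lb}$ (since ${}\rp{X}\pi_{LL}=2(H^{\Lb\Lb}+rLH^{\Lb\Lb})$), and it is cancelled by the $\Lb\Lb$-component of the term $-\tfrac12 X^\b\p_\b H_\a^\ga\p_\ga\varphi\p^\a\varphi$ coming from Lemma~\ref{6.10.7.18}: with $X=rL-rH^{\Lb\Lb}\Lb$, this contains $-\tfrac12 rLH^{\Lb\Lb}(\Lb\varphi)^2$. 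The $\bg$-modification of the $\varphi^2$-correction in \eqref{4.14.1.18} plays no role in this cancellation; its purpose is only to make the source term appear as $(\widetilde\Box_\bg\varphi-q\varphi)\varphi$ rather than $(\Box_\bm\varphi-q\varphi)\varphi$, so that no stray second-derivative term $H^{\a\b}\p_\a\p_\b\varphi\cdot\varphi$ contaminates the residue. Your claim that ``either modification alone is insufficient'' for the $(\Lb\varphi)^2$ cancellation is therefore inaccurate.

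Your error-term classification is also too coarse and partly misassigns terms. In the paper's decomposition (the analogues of your type~(I) and~(II)) the term that produces the $\int u_+^{-(\ga_0+1)/2}\W_1[\varphi](\H_u^{\ub_1})\,du$ contribution is $\p H\cdot\p\varphi\cdot L(r\varphi)$ (from the $\p^\a H_\a^\ga\p_\ga\varphi\cdot X\varphi$ piece), bounded using the full pointwise decay $r|\p H|\les\dn^{1/2}u_+^{-(\ga_0+1)/2}$ from \eqref{4.25.3.18}, not $|\bar\p H|$ times a square. Separately, the residue contains a genuinely quadrilinear term $rH\cdot\p H\cdot(\p\varphi)^2$ with no guaranteed good derivative on $\varphi$; this does \emph{not} fit your type~(I) pattern and requires the $L^2_\ub L^\infty$ bound \eqref{7.23.3.18} on $r^{1/2}\p H$ together with both the $\Hb$-flux (for one factor of $\ud\p\varphi$) and the $M^{-1/2}$-weighted $L^2(\D)$ bound (for the other). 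Once you correct these two points, your sketch lines up with the paper's proof.
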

The proof of (\ref{5.5.2.18}) relies on an important cancelation thanks to the construction of $\sQ_{\a\b}$ and the choice of the multiplier $X$. The cancelation will be seen in the following proof. For convenience, we denote by $\sP$ the current ${}\rp{X}\sP$ from now on.

\begin{proof}
The proof is based on the following identity  on $\D_{u_1}^{\ub_1}$ .
\begin{equation}\label{7.11.2.18}
\begin{split}
\p^\a  \sP_\a +\f12(Xq+q)\varphi^2&=(\widetilde{\Box}_\bg\varphi-q\varphi)(X\varphi+\varphi)+\f12 (r^{-2} (L(r\varphi))^2+|\sn \varphi|^2)\\
&+I+II+III
\end{split}
\end{equation}
where the error terms $I$, $II$ and $III$ are
\begin{align}
&I=\p^\a \tensor{H}{_\a^\ga}\p_\ga \varphi( X \varphi+\varphi)-\f12 X\tensor{H}{_\a^\ga}\p_\ga \varphi\c \p^\a \varphi+\f12 r L H^{\Lb \Lb}(\Lb \varphi)^2,\nn\\
&II=\f12 \Lb(r H^{\Lb\Lb}) |\sn \varphi|^2+H \p \varphi \bar \p \varphi,\nn\\
&III=\tensor{H}{_\a^\ga} \p_\ga \varphi \p_\b \varphi \p^\a X^\b+(H +r\p H) \big(H^{\rho \sigma}\p_\rho\varphi \p_\sigma\varphi+q\varphi^2).\label{7.27.8.18}
\end{align}
To  show (\ref{7.11.2.18}), by using (\ref{4.10.1.18}), we first derive
\begin{equation}\label{7.23.1.18}
\begin{split}
\p^\a \sP_\a&=X^\b\{(\widetilde{\Box}_\bg \varphi-q\varphi+\p^\a H_\a^\ga \p_\ga \varphi)\c \p_\b \varphi-\f12 \p_\b q\varphi^2\\
&-\f12\p_\b H_\a^\ga \p_\ga \varphi \c \p^\a \varphi\}+H_\a^\ga \p_\ga \varphi \p_\b \varphi \p^\a X^\b+\sE
\end{split}
\end{equation}
where
$$\sE=\sQ_{\a\b}\p^\a X^\b+\p^\a(\f12 \bm_{\a\a'} \bg^{\a'\ga} \p_\ga (\varphi^2)+Y_\a).$$
It is straightforward to check that
\begin{eqnarray*}
{}\rp{X}\pi_{AB}=\delta_{AB}(1+H^{\Lb\Lb})&& {}\rp{X} \pi_{L \Lb} =-1-H^{\Lb \Lb}+r \Lb H^{\Lb \Lb}\\
{}\rp{X}\pi_{\Lb\Lb}=2 && {}\rp{X}\pi_{LL}=2(H^{\Lb \Lb}+r L H^{\Lb \Lb}).
\end{eqnarray*}
It is easy to check in view of the definition of $\sQ_{\a\b}$ that
\begin{align*}
&\sQ_{L L}=(L\varphi)^2 \quad \quad\quad \quad \sQ_{\Lb \Lb}=(\Lb \varphi)^2 \\
&\sQ_{AB}=\sn_A \varphi \sn_B \varphi-\f12 \bm_{AB}(-L \varphi \Lb \varphi+|\sn \varphi|^2+q\varphi^2 +H^{\rho \sigma} \p_\rho \varphi \p_\sigma\varphi)\\
&\sQ_{L\Lb}=|\sn \varphi|^2+q\varphi^2+H^{\rho \sigma}\p_\rho \varphi \p_\sigma\varphi.
\end{align*}
By combining the lists of $\sQ_{\a\b}$ and ${}\rp{X}\pi^{\a\b}$, we derive
\begin{align*}
&\sQ_{LL}{}\rp{X} \pi^{LL}=\f12 (L \varphi)^2 \\
&\sQ_{\Lb \Lb} {}\rp{X} \pi^{\Lb \Lb}= \f12 (\Lb\varphi)^2 (H^{\Lb \Lb}+r L H^{\Lb \Lb})\\
&\sQ_{L \Lb}{}\rp{X} \pi^{L \Lb}=\frac{1}{4} (-1-H^{\Lb \Lb}+r\Lb H^{\Lb \Lb})(|\sn \varphi|^2+q\varphi^2+H^{\rho\sigma} \p_\rho \varphi \p_\sigma\varphi)\\
&\sQ_{AB} {}\rp{X}\pi^{AB}=(1+H^{\Lb \Lb})(\Lb \varphi L \varphi-q \varphi^2-H^{\rho \sigma} \p_\rho \varphi\p_\sigma \varphi).
\end{align*}
Note that
\begin{align*}
\f12 \p^\a(\bm_{\a\a'} \bg^{\a'\ga} \p_\ga (\varphi^2))&=\f12\p_\a H^{\a\ga} \p_\ga (\varphi^2) +\bg^{\a'\ga} \p_{\a'}\varphi \p_\ga \varphi+\widetilde{\Box}_\bg \varphi \c \varphi\\
&=\f12\p_\a H^{\a\ga} \p_\ga (\varphi^2)+ \p^\a\varphi\c \p_\a\varphi+H^{\rho \sigma}\p_\rho \varphi \p_\sigma \varphi+\widetilde{\Box}_\bg \varphi \c \varphi.
\end{align*}
Since $\p^\a L_\a=2/r$ and $\p^\a \Lb_\a=-2/r$, we have
\begin{align*}
\p^\a Y_\a &= \f12 \p^\a(r^{-1} \varphi^2L_\a )=\f12 \{\varphi^2 L(r^{-1})+r^{-1}L(\varphi^2)+r^{-1} \varphi^2\p^\a L_\a \}\\
&=\f12 (r^{-1} L (\varphi^2)+r^{-2} \varphi^2).
\end{align*}
Thus
\begin{align*}
\sE&=\widetilde{\Box}_\bg \varphi \c \varphi+\f12 (r^{-2}(L (r\varphi))^2+|\sn \varphi|^2+H^{\Lb \Lb }(\Lb \varphi)^2)+(-\f12-\frac{3}{2}H^{\Lb\Lb}\\
&+\f12 r \Lb H^{\Lb \Lb})H^{\rho\sigma} \p_\rho \varphi \p_\sigma\varphi +\f12 (\Lb \varphi)^2 r L H^{\Lb\Lb}+\f12\Lb(r H^{\Lb \Lb})|\sn\varphi|^2\\
&+H^{\Lb \Lb} \Lb \varphi L \varphi+q\varphi^2\big(-\frac{3}{2}(1+H^{\Lb \Lb})+\f12 r \Lb H^{\Lb \Lb}\big)+\f12\p_\a H^{\a\ga} \p_\ga (\varphi^2).
\end{align*}
Note that
\begin{equation*}
-H^{\Lb\Lb }(\Lb \varphi)^2+H^{\rho \sigma} \p_\rho \varphi \p_\sigma \varphi=H \bar\p \varphi \p \varphi.
\end{equation*}
Hence we conclude that
\begin{align*}
\sE&=\widetilde{\Box}_\bg \varphi \c \varphi+\f12\p_\a H^{\a\ga} \p_\ga (\varphi^2) +\f12 (r^{-2}(L (r\varphi))^2+|\sn \varphi|^2)+(H+ r \p H)(H^{\rho\sigma} \p_\rho \varphi \p_\sigma\varphi +q\varphi^2)\\
&+\f12  r L H^{\Lb \Lb}(\Lb \varphi)^2+\f12 \Lb(r H^{\Lb \Lb})|\sn\varphi|^2+ H \p\varphi \c\bar \p \varphi-\frac{3}{2}q\varphi^2.
\end{align*}
Substituting the above formula to (\ref{7.23.1.18}) yields (\ref{7.11.2.18}).

 Next we prove (\ref{5.5.2.18}) by controlling the error terms in the identity.  We claim
\begin{equation}\label{6.17.13.18}
\begin{split}
I&=r L H\p \varphi\c \bar\p \varphi+ \p H\c \p\varphi\c L(r\varphi)+r H\c \p H\c (\p \varphi)^2, \\
 II&=r \p H |\bar \p \varphi|^2+ H\p \varphi \bar\p \varphi.
 \end{split}
\end{equation}
Indeed,  the last term of $I$ cancels completely the bad component of the second term, which can be seen below
\begin{equation*}
\begin{split}
-\f12& X \tensor{H}{_\a^\ga} \p_\ga \varphi \p^\a \varphi+\f12 r L H^{\Lb \Lb }(\Lb \varphi)^2\\
&= \f12 r (-L H^{\a\ga} \p_\ga \varphi \p_\a \varphi+L H^{\Lb \Lb } (\Lb \varphi)^2)+\f12 r H^{\Lb \Lb} \Lb \tensor{H}{_\a^\ga} \p_\ga \varphi \p^\a \varphi\\
&=\f12 r L H \p \varphi \bar \p \varphi+\f12 r H \p H (\p \varphi)^2.
\end{split}
\end{equation*}

By direct calculation, we can obtain the symbolic formula for $I$.  The formula of $II$ is a simple recast.
It remains to consider the error term $III$ in (\ref{7.11.2.18}). For the first term, note that
\begin{align*}
\p^\a X^\b=\p^\a r (L -H^{\Lb\Lb} \Lb )^\b+r (\p^\a L^\b-\p^\a H^{\Lb \Lb} \Lb^\b  -H^{\Lb \Lb}\p^\a\Lb^\b)
\end{align*}
and the nontrivial components of  $\p^\a L^\b$ and $\p^\a \Lb^\b$ are $\p_A L_B=r^{-1} \delta_{AB}$ and $\p_A \Lb_B= -r^{-1} \delta_{AB}$.
 By a direct substitution, we can obtain
\begin{align*}
|\tensor{H}{_\a^\ga} \p_\ga \varphi\p_\b \varphi \p^\a X^\b|\les| H \p \varphi \c \bar\p\varphi|+ (r|\p H|+|H|) \c| H| (\p \varphi)^2
\end{align*}
and then
\begin{equation}\label{7.26.3.18}
|III|\les |H\p \varphi \c \bar \p \varphi|+(r|\p H|+|H|) \c( |H|(\p \varphi)^2+q\varphi^2).
\end{equation}
Next we prove the following error estimate
\begin{equation}\label{6.20.1.18}
\begin{split}
\int_{\D_{u_1}^{\ub_1}}|I|+|II|+|III|&\les \dn^\f12 M^{-\f12}{u_1}_+^{-\frac{3\ga_0}{2}+1} (\sup_{-\ub_1\le u\le u_1} E[\varphi](\H_u^{\ub_1}) u_+^{\ga_0}\\
&+ \sup_{-u_1\le \ub \le \ub_1}E[\varphi](\H_{\ub}^{u_1}){u_1}_+^{\ga_0})+ \dn^\f12 M^{-\f12}\int_{-\ub_1}^{u_1} {u}_+^{-\frac{\ga_0+1}{2}}\W_1[\varphi](\H_{u}^{\ub_1}) du.
\end{split}
\end{equation}

Let us consider the error terms $I$ in (\ref{6.17.13.18}). By using (\ref{4.25.3.18}) and (\ref{7.23.3.18}), we have
\begin{align*}
\int_{\D_{u_1}^{\ub_1}}& r |H|| \p H| (\ud\p \varphi)^2 \\
&\les \| r \ud \p \varphi\|_{L_\ub^\infty L_u^2 L_\omega^2(\D_{u_1}^{\ub_1})}\| r^{-\f12} \ud \p \varphi\|_{L^2(\D_{u_1}^{\ub_1})}\|r^\f12 \p H\|_{L_\ub^2 L^\infty(\D_{u_1}^{\ub_1})}\| r H\|_{L^\infty(\D_{u_1}^{\ub_1})}\\
&\les M^{-1} \dn {u_1}_+^{-2\ga_0+\f12} \sup_{-\ub_1\le u\le u_1} E^\f12[\varphi](\H_u^{\ub_1}) u_+^{\frac{\ga_0}{2}} \c \sup_{-u_1\le \ub \le \ub_1}E^\f12[\varphi](\H_{\ub}^{u_1}){u_1}_+^{\frac{\ga_0}{2}}.
\end{align*}
 Similarly, by using (\ref{4.25.3.18}) and (\ref{7.23.2.18}), we can obtain
\begin{equation}\label{7.28.4.18}
\begin{split}
\int_{\D_{u_1}^{\ub_1}}r  (|L H|+|H||\p H|) |\p \varphi \c \bar \p \varphi| &\les \int_{-\ub_1}^{u_1} \| \bar\p \varphi\|_{L^2(\H_u^{\ub_1})}\| r^{-\f12}\p \varphi\|_{L^2(\H_u^{\ub_1})} u_+^{-\frac{\ga_0}{2}}\dn^\f12 d u \\
&\les \dn^\f12 M^{-\f12}{u_1}_+^{-\frac{3\ga_0}{2}+1} \sup_{-\ub_1\le u\le u_1} E[\varphi](\H_u^{\ub_1}) u_+^{\ga_0}.
\end{split}
\end{equation}
By using (\ref{4.25.3.18}), we derive
\begin{align}
\int_{\D_{u_1}^{\ub_1}} | L (r\varphi)\c \p\varphi\c \p H | &\les \dn^\f12 \int_{-\ub_1}^{u_1} {u}_+^{-\frac{\ga_0+1}{2}}\|r^{-\f12}L (r\varphi )\|_{L^2(\H_{u}^{\ub_1})}  \|r^{-\f12}\p \varphi\|_{L^2(\H_{u}^{\ub_1})} du\nn\\
&\les \dn^\f12 M^{-\f12}\int_{-\ub_1}^{u_1} {u}_+^{-\frac{\ga_0+1}{2}}(\W_1[\varphi](\H_{u}^{\ub_1})+ E[\varphi](\H_{u}^{\ub_1})) du.\label{7.27.7.18}
\end{align}
Thus the error terms in $I$ are all  treated.

It again follows by using (\ref{4.25.3.18}) that
\begin{align*}
\int_{\D_{u_1}^{\ub_1}} r|\bar \p \varphi\c \bar \p\varphi\c \p H |  &\les\dn^\f12 \int_{-\ub_1}^{u_1} u_+^{\frac{-\ga_0-1}{2}} \|\bar \p \varphi \|_{L^2(\H_{u}^{\ub_1})}^2 du\\
&\les\dn^\f12\int_{-\ub_1}^{u_1} u_+^{-\frac{\ga_0+1}{2}} E[\varphi](\H_u^{\ub_1}) du \\
&\les \dn^\f12 {u_1}_+^{-\frac{3\ga_0}{2}+\f12} \sup_{-\ub_1\le u\le u_1} u_+^{\ga_0} E[\varphi](\H_u^{\ub_1}),
\end{align*}
\begin{align*}
\int_{\D_{u_1}^{\ub_1}}  |H ||\p \varphi|| \bar \p \varphi|
  &\les \dn^\f12\int_{-\ub_1}^{u_1} u_+^{-\frac{\ga_0}{2}} \|\bar\p \varphi\|_{L^2(\H_u^{\ub_1})}\|r^{-\f12} \p \varphi\|_{L^2(\H_u^{\ub_1})} du\\
&\les \dn^\f12 M^{-\f12} {u_1}_+^{-\frac{3\ga_0}{2}+1}\sup_{-\ub_1\le u\le u_1}u_+^{\ga_0} E[\varphi](\H_u^{\ub_1}).
\end{align*}
Thus the terms of  $II$ are all treated.

Next we estimate $III$ in view of (\ref{7.26.3.18}).
The first term  on the right of (\ref{7.26.3.18}) has been estimated in $II$. Note that  by using (\ref{4.25.3.18}),
\begin{equation*}
r|\p H|+ \frac{r}{u_+}|H| \les \dn^\f12 u_+^{-\frac{\ga_0+1}{2}}, \qquad r ( r|\p H ||H|+ |H|^2  ) \les \dn u_+^{-\ga_0}
\end{equation*}
which, in view of $\ga_0>1$, imply
\begin{align*}
\int_{\D_{u_1}^{\ub_1}} (r|\p H| +|H|) (|H|(\p \varphi)^2+q\varphi^2)&\les \dn \int_{-\ub_1}^{u_1} u_+^{-\ga_0} \|r^{-\f12} \p \varphi\|^2_{L^2(\H_u^{\ub_1})}du +\dn^\f12 \int_{\D_{u_1}^{\ub_1}} u_+^{-\frac{\ga_0+1}{2}} q\varphi^2 \\
&\les (\dn M^{-1} +\dn^\f12)u_+^{\f12-\frac{3}{2}\ga_0}\sup_{-\ub_1\le u\le u_1} E[\varphi](\H_u^{\ub_1}) u_+^{\ga_0}.
\end{align*}
 By combining the estimates for $I$, $II$ and $III$,  (\ref{6.20.1.18}) is proved since $\dn M^{-1}\le (\dn M^{-1})^ \f12$. Thus we can  conclude the inequality in (\ref{5.5.2.18}).
\end{proof}

Next, we give the $r$-weighted energy estimate.
\begin{proposition}\label{4.29.5.18}
 Under the assumptions of (\ref{4.25.1.18}), (\ref{4.25.2.18}) and (\ref{6.10.4.18}), with $-\ub_*\le -\ub_1\le u_1\le u_0, $
  there holds the following weighted energy estimate for $\widetilde{\Box}_\bg \varphi-q \varphi=\sF$,
  \begin{align*}
  &\sup_{-\ub_1 \le u\le u_1} {u}_+^{\ga_0-1-2p}\big(\W_1[\varphi](\D_{u}^{\ub_1})+ \W_1[\varphi](\H_{u}^{\ub_1})+\W_1[\varphi](\Hb_{\ub_1}^{u})\big)\\
&\les \sup_{-\ub_1 \le u\le u_1} {u}_+^{\ga_0-1-2p}\|r^\frac{3}{2} \sF\|_{L_u^1 L_\ub^2L_\omega^2(\D_{u}^{\ub_1})}^2   + \sup_{-\ub_1\le -\ub\le u\le u_1}\{ u_+^{\ga_0-2p}  (E[\varphi](\H_u^\ub)+ E[\varphi](\Hb_\ub^u))\} \\
&+\sup_{-\ub_1\le u\le u_1} u_+^{\ga_0-1-2p}\big(\| r^\f12 |\p \varphi|+r^{-\f12}|\varphi|+q^\f12_0|r^\f12 \varphi|\|^2_{L^2(\Sigma_0^{u,\ub_1})}+\int_{S_{u, -u}} r \varphi^2 d\omega \big),
  \end{align*}
where $p\le 0$ is any fixed constant.
\end{proposition}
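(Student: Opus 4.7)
The plan is to integrate the divergence identity (\ref{7.11.2.18}) from Lemma \ref{4.15.1.18} over $\D_{u_1}^{\ub_1}$, apply the divergence theorem to the current $\sP_\alpha$ with multiplier $X=r(L-H^{\Lb\Lb}\Lb)$, and extract $\W_1[\varphi](\H_{u_1}^{\ub_1})$, $\W_1[\varphi](\Hb_{\ub_1}^{u_1})$ and $\W_1[\varphi](\D_{u_1}^{\ub_1})$ as coercive boundary and bulk terms. The spacetime weighted energy reads off immediately from the positive density $\f12(r^{-2}(L(r\varphi))^2+|\sn\varphi|^2)$ in (\ref{7.11.2.18}), since the area element $dx\,dt=\f12\bb r^2\,du'd\ub'd\omega$ converts this expression into the normalization of (\ref{3.19.1}).

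First I would compute the boundary densities $\sP_\alpha \N^\alpha$ on $\H_{u_1}$ and $\sP_\alpha \Nb^\alpha$ on $\Hb_{\ub_1}$, following the semilinear computation of Lemma \ref{5.7.7.18}. The Minkowskian piece $rL$ of $X$ produces principal densities $r(L(r\varphi))^2+r^3 h(|\sn\varphi|^2+q\varphi^2)$ on $\H_u$ and $r^3(h(L\varphi)^2+|\sn\varphi|^2+q\varphi^2)$ on $\Hb_\ub$, up to total derivatives $\f12 L'(r^2\varphi^2)$ and $\f12\Lb'(r^2\varphi^2)$ that are absorbed via the auxiliary identity (\ref{6.21.1.18}) at the price of the zero-order sphere and $\Sigma_0$ terms on the right-hand side. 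The perturbation $-rH^{\Lb\Lb}\Lb$ of $X$ then contributes $-rH^{\Lb\Lb}(\Lb\varphi)^2$ to the density on $\H_u$, while the analogous contribution on $\Hb_\ub$ combines with the intrinsic $-H^{LL}(L\varphi)^2$ piece of $\ti\sQ_{\alpha\beta}$; hypothesis (\ref{6.10.4.18}), namely $r(h-H^{\Lb\Lb})\ge M/3$ and $r(h-H^{LL})\ge M/3$, then forces the coefficients of $(\Lb\varphi)^2$ on $\H$ and of $(L\varphi)^2$ on $\Hb$ to dominate $Mr^2/3$, producing precisely the $\W_1$-densities of (\ref{3.19.1}). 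The subleading cubic corrections of the form $H\p\varphi\bar\p\varphi$ on the boundary are absorbed using the decay estimates (\ref{4.25.3.18}), (\ref{7.23.3.18}) and Cauchy--Schwarz, in the same manner as the energy-density analysis of Proposition \ref{4.29.4.18}.

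For the source I would decompose $X\varphi+\varphi=L(r\varphi)+h\varphi-rH^{\Lb\Lb}\Lb\varphi$ and pair $\sF$ with each piece. The principal contribution is bounded by
\[
\int_{\D_{u_1}^{\ub_1}}|\sF\,L(r\varphi)|\les \|r^{3/2}\sF\|_{L^1_u L^2_\ub L^2_\omega(\D_{u_1}^{\ub_1})}\,\|r^{-1/2}L(r\varphi)\|_{L^\infty_u L^2_\ub L^2_\omega(\D_{u_1}^{\ub_1})},
\]
where the second factor is $\les \sup_u\W_1[\varphi](\H_u^{\ub_1})^{1/2}$; the lower-order piece $rH^{\Lb\Lb}\Lb\varphi$, of pointwise size $\les M|\Lb\varphi|$, is controlled by the standard energy flux and absorbed into the $\sup u_+^{\ga_0-2p}(E[\varphi](\H_u^\ub)+E[\varphi](\Hb_\ub^u))$ term on the right-hand side. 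A weighted Cauchy--Schwarz with a small parameter then moves a fraction of $\sup_u u_+^{\ga_0-1-2p}\W_1[\varphi](\H_u^{\ub_1})$ back to the left. The bulk errors $I,II,III$ and $\f12(Xq+q)\varphi^2$ are already controlled by (\ref{5.5.2.18}) of Lemma \ref{4.15.1.18}, the $Xq\,\varphi^2$ piece being handled by an estimate modelled on (\ref{6.23.3.18}) using the decay hypothesis (\ref{potential}) for $q$.

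The main obstacle will be absorbing the residual term $\dn^{1/2}M^{-1/2}\int_{-\ub_1}^{u_1}u_+^{-(\ga_0+1)/2}\W_1[\varphi](\H_u^{\ub_1})\,du$ produced by (\ref{5.5.2.18}): after multiplying through by $u_+^{\ga_0-1-2p}$ and taking $\sup_{-\ub_1\le u\le u_1}$, it becomes a Gronwall-type term whose $u$-weight is integrable for $\ga_0>1$, and the smallness of $\dn^{1/2}M^{-1/2}$ (ensured by $\dn\le C_1M^2$ together with $M\ll 1$) then closes the estimate by a standard Gronwall argument. The remaining zero-order data terms on the right-hand side arise from (\ref{6.21.1.18}) and the Hardy-type inequalities (\ref{3.23.2.18}) and (\ref{4.29.6.18}), and are directly bounded by the weighted data norm $\|r^{1/2}|\p\varphi|+r^{-1/2}|\varphi|+q_0^{1/2}r^{1/2}|\varphi|\|_{L^2(\Sigma_0^{u,\ub_1})}^2$ together with $\int_{S_{u,-u}}r\varphi^2\,d\omega$.
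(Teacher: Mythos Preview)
Your approach matches the paper's. One correction: on $\H_u$ the coefficient $(h-H^{\Lb\Lb})$ multiplies $|\sn\varphi|^2+q\varphi^2$, not $(\Lb\varphi)^2$ --- the $\W_1$-density in (\ref{3.19.1}) has no $(\Lb\varphi)^2$ term on $\H_u$. Instead the $-rH^{\Lb\Lb}\Lb\varphi$ piece completes a perfect square with $L(r\varphi)$, producing $r\big(L(r\varphi)-rH^{\Lb\Lb}\Lb\varphi\big)^2$ as the principal boundary term (see (\ref{4.30.2.18})); one then passes back to $r(L(r\varphi))^2$ via the side estimate (\ref{5.4.1.18}) at the cost of $\dn M^{-1}E[\varphi](\H_{u_1}^{\ub_1})$. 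Your description of the $\Hb_\ub$ density, where $(h-H^{LL})$ does sit in front of $(L\varphi)^2$, is correct. With this adjustment the outline is complete.
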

\begin{proof}
With the help of  Lemma \ref{4.15.1.18}, we will apply divergence theorem to $\p^\a \sP_\a$ in $\D_{u_1}^{\ub_1}$.  We first  confirm the boundary terms give the desired weighted energy.
%The term of $\int_{\D_{u_1}^{\ub_1}} q \varphi^2 $ can be bounded by integrating $E[\varphi](\H_u^{\ub_1})$ with $u_1\le u\le -\ub_1$.

 Let us  first compute $(1+h) \sP_\a \N^\a$. Recall $X=r(L-H^{\Lb \Lb}\Lb)$ and $\N$ from (\ref{7.11.3.18}).
\begin{align*}
&(1+h)  \ti \sQ_{\a\b} X^\b \N^\a\\
&=(L+h\Lb) \varphi X\varphi-\f12r (L+h\Lb, L-H^{\Lb \Lb} \Lb)(-\Lb \varphi L\varphi+|\sn \varphi|^2+H^{\rho \sigma} \p_\rho \varphi \p_\sigma \varphi+q\varphi^2)\\
&+r\tensor{H}{_\a^\ga}\p_\ga \varphi \p_\b \varphi (L+h\Lb)^\a (L-H^{\Lb \Lb} \Lb )^\b\\
%&=r\{(L \varphi)^2-h H^{\Lb \Lb} (\Lb \varphi)^2+(h-H^{\Lb \Lb})(|\sn \varphi|^2+H^{\rho \sigma} \p_\rho \varphi \p_\sigma \varphi)\}\\
%&+r(L \varphi-H^{\Lb \Lb }\Lb \varphi)(\tensor{H}{_L^\ga}+h \tensor{H}{_\Lb^\ga})\p_\ga \varphi
&=r\{ (L\varphi-H^{\Lb \Lb } \Lb \varphi)\big(L \varphi+h\Lb \varphi-2(H^{\Lb \ga}+h H^{L \ga}) \p_\ga \varphi\big)+(h-H^{\Lb \Lb})\\
&\cdot(-L \varphi\Lb\varphi+|\sn \varphi|^2+q\varphi^2 +H^{\rho \sigma} \p_\rho \varphi \p_\sigma \varphi)\}\\
&=r\{(L\varphi -H^{\Lb \Lb }\Lb \varphi)(L \varphi+(h-2(H^{\Lb \Lb}+h H^{L\Lb}))\Lb \varphi-2(H+h H) \bar\p \varphi)\\
 &+(h-H^{\Lb \Lb })(-\Lb\varphi L \varphi+H^{\Lb \Lb }(\Lb \varphi) ^2)+(h-H^{\Lb \Lb} )(|\sn \varphi|^2+q\varphi^2+ H  \p \varphi \bar \p \varphi)\}.
 \end{align*}
 We can cancel the first term in the last line by noting that
 $$L \varphi+(h-2(H^{\Lb \Lb}+h H^{L\Lb}))\Lb \varphi=(h-H^{\Lb \Lb })\Lb \varphi+L \varphi-H^{\Lb \Lb } \Lb \varphi-2 h H^{L \Lb}\Lb \varphi,$$
 where the first term on the right gives the cancelation after substitution.
 Hence
 \begin{align*}
& (1+h)  \ti \sQ_{\a\b} X^\b \N^\a\\
&=r\{(L\varphi -H^{\Lb \Lb} \Lb \varphi)^2+(h-H^{\Lb \Lb})(|\sn \varphi|^2+q\varphi^2)+(L\varphi-H^{\Lb \Lb }\Lb \varphi)\\
 &\cdot\big(-2hH^{\Lb L} \Lb \varphi+(H+hH) \bar\p\varphi\big)+(h-H^{\Lb \Lb}) H \p \varphi\bar \p \varphi\}.
\end{align*}
%where we neglect the exact components of $H$ and precise derivatives for $\varphi$ by using the symbolic notations.
We remark that only the first term  on the righthand side is involved with the further cancelations with the next two identities.

Note also that
\begin{align*}
\f12(1+h) \N^\a  \bm_{\a \a'}\bg^{\a'\ga} \p_\ga (\varphi^2)&=\f12 \bm_{\a \a'} (H^{\a'\ga}+\bm^{\a'\ga})\p_\ga (\varphi^2)(L^\a +h\Lb^\a)\\
&=\varphi\{(\tensor{H}{_L^\ga}+h \tensor{H}{_\Lb^\ga}) \p_\ga\varphi+(L \varphi+h \Lb \varphi) \},
\end{align*}
and
\begin{equation*}
(1+h) \N^\a Y_\a=-r^{-1} \varphi^2 h.
\end{equation*}
  In view of the definition of $\sP^\a$ in (\ref{4.14.1.18}) and the above three identities, we can derive that
 \begin{align}
&r^2  (1+h) \N^\a  \sP_\a \label{4.30.2.18}\\
&= r\{(L(r\varphi)-r H^{\Lb \Lb} \Lb \varphi)^2+r^2(h-H^{\Lb \Lb})(|\sn \varphi|^2+q\varphi^2)-r^{-1}\f12 L (r^2 \varphi^2)\nn\\
&+r[-2h H^{L \Lb} \varphi \Lb \varphi-2(H^{\Lb \ga}+h H^{L \ga}) \varphi \bar\p\varphi+r^{-1}h\varphi \Lb (r\varphi)]\nn\\
&+r^2\big[(L \varphi- H^{\Lb \Lb}\Lb \varphi) \big(-2h H^{\Lb \Lb} \Lb \varphi+(H+hH) \bar\p \varphi\big)+(h-H^{\Lb \Lb})H \p \varphi \bar \p \varphi\big]\}\nn.
 \end{align}
 Thus the calculation of the energy density on $\H_u^\ub$ is completed.

Next we consider the weighted energy on $\Hb_{\ub}^u$. We first compute
\begin{align}
&(1+h)\sQ_{\a\b}X^\b \Nb^\a\label{6.20.5.18}\\
&= r\{(\Lb \varphi+hL \varphi)(L \varphi-H^{\Lb \Lb} \Lb \varphi)-\f12 (L-H^{\Lb \Lb}\Lb , \Lb+hL)((H^{\rho\sigma}+\bm^{\rho \sigma})\p_\rho \varphi \p_\sigma\varphi)+q\varphi^2\}\nn\\
&=r\{\Lb \varphi L \varphi (1-hH^{\Lb \Lb})+ h(L \varphi)^2-H^{\Lb \Lb}(\Lb \varphi)^2-\f12(-2+2 hH^{\Lb \Lb})\nn\\
&\cdot\big(H^{LL}(L \varphi)^2+H^{\Lb \Lb} (\Lb \varphi)^2+H\ud\p\varphi \bar \p \varphi-\Lb \varphi L \varphi+|\sn \varphi|^2+q\varphi^2\big)\}\nn\\
&=r\{(L\varphi)^2 (h+(1-H^{\Lb \Lb}h) H^{LL})-(\Lb \varphi)^2 h {H^{\Lb \Lb}}^2+(1-H^{\Lb \Lb} h) (|\sn \varphi|^2+q\varphi^2)\nn\\
&+(1-H^{\Lb \Lb} h)H \ud\p\varphi \bar \p \varphi\}\nn
\end{align}
and
\begin{align}
&(1+h)\tensor{H}{_\a^\ga}\p_\ga \varphi \p_\b \varphi \Nb^\a X^\b\label{6.20.4.18}\\
&= r\{ \Lb \varphi L \varphi(-2H^{L \Lb }-2hH^{\Lb \Lb}+2H^{\Lb \Lb}(H^{LL}+hH^{L\Lb}))+2(\Lb \varphi)^2 H^{\Lb \Lb}(H^{L \Lb}+hH^{\Lb\Lb})\nn\\
&-2(L \varphi)^2(H^{LL}+h H^{\Lb L})+(H+hH) \ud\p \varphi \bar \p \varphi\}.\nn
\end{align}

Note that
\begin{align*}
\f12(1+h) \bm_{\a\a'}\bg^{\a' \ga} \p_\ga (\varphi^2) \Nb^\a&= \f12 \bm_{\a\a'}(H^{\a'\ga}+\bm^{\a'\ga}) \p_\ga (\varphi^2) (\Lb^\a +hL^\a)\\
&=\f12 \{(\Lb +h L)(\varphi^2)+(\tensor{H}{_\Lb^\ga}+h \tensor{H}{_L ^\ga}) \p_\ga(\varphi^2)\}
\end{align*}
and
\begin{equation*}
(1+h) \Nb^\a Y_\a=\f12 r^{-1} \varphi^2(L, \Lb+hL)=-r^{-1} \varphi^2.
\end{equation*}
We then combine the above calculations for the two terms to obtain
\begin{align}
&r^2(1+h) \Nb^\a (\f12 \bm_{\a\a'} \bg^{\a' \ga} \p_\ga (\varphi^2)+Y_\a)\label{6.20.3.18}\\
&=\f12 \{(\Lb +hL)(r^2 \varphi^2)+r^2 (\tensor{H}{_\Lb^\ga}+h \tensor{H}{_L^\ga})\p_\ga(\varphi^2)\}-r h\varphi^2.\nn
\end{align}
Thus, by combining (\ref{6.20.5.18})-(\ref{6.20.3.18}) and in view of the definition (\ref{4.14.1.18}), we can obtain
\begin{align}
&(1+h)r^2\Nb^\a \sP_\a\label{4.30.1.18}\\
&=r^3\{(L\varphi)^2(h-H^{LL}+h H^2+h H)+(\Lb \varphi)^2 (h H^2+HH)+(|\sn \varphi|^2+q\varphi^2)(1-H^{\Lb \Lb} h)\nn\\
&+H\ud \p \varphi \bar \p \varphi\}+\f12 \{(\Lb +hL)(r^2 \varphi^2)+r^2 (H+hH) \p (\varphi)^2\}-rh \varphi^2\nn,
\end{align}
where the term  $ H(1+h+h H )\ud\p \varphi\c \bar\p \varphi$ is simplified to be $H\ud \p \varphi\bar \p \varphi$ due to   $|h|+|H|\le 1$.

In the sequel, we will constantly use the fact that $|h|+|H|\le 1$ to shorten the symbolic formula.
%Next we combine Lemma \ref{4.15.1.18}, (\ref{4.30.1.18}), (\ref{4.30.2.18}) with (\ref{4.30.3.18}) to prove Proposition \ref{4.29.5.18}.
Recall from (\ref{7.4.3.18}) for the area elements $d\mu_\H$ and $d\mu_{\Hb}$. By (\ref{4.30.2.18}), we have
\begin{align}
&I=\int_{\H_{u_1}^{\ub_1}}(1+h) \N^\a \sP_\a  (1+h)^{-1} d\mu_\H \nn\\
&=\int_{\H_{u_1}^{\ub_1}} \big( r (L(r\varphi)-r H^{\Lb \Lb } \Lb \varphi)^2+r^3 (h-H^{\Lb \Lb})( |\sn \varphi|^2+q\varphi^2)\nn\\
&-\f12 (L-h\Lb) (r^2 \varphi^2)\big)\frac{r}{2(r-M_0)} d\omega d\ub\nn+\Er_1,\\
\Er_1&=\int_{\H_{u_1}^{\ub_1}}\{r^2 (L (r\varphi )-r H^{\Lb \Lb} \Lb \varphi)\cdot (H\bar \p \varphi+ hH \Lb \varphi)+r^3 (h-H^{\Lb \Lb}) H \p \varphi \bar \p \varphi \nn\\
&+r^2\varphi H(\bar \p \varphi+h \ud \p \varphi) \}\frac{r}{2(r-M_0)}d\omega d\ub.\label{4.30.6.18}
\end{align}
On $\Hb_{\ub_1}^{u_1}$, by using (\ref{4.30.1.18}),  we have
\begin{align*}
II&=\int_{\Hb_{\ub_1}^{u_1}}(1+h) \Nb^\a \sP_\a  (1+h)^{-1} d\mu_{\Hb} \\
&=\int_{\Hb_{\ub_1}^{u_1}} \big(r^3[(L \varphi)^2(h-H^{LL}+hH)+(q\varphi^2+|\sn \varphi|^2)(1-Hh)+(\Lb \varphi)^2 HH +H \ud \p \varphi \bar \p \varphi]\\
&+\f12[ (\Lb -hL) (r^2 \varphi^2)+r^2 H \p (\varphi^2  )]+h L (r^2\varphi^2)-r h\varphi^2 \big)\frac{r}{2(r-M_0)} d\omega du\\
&=\int_{\Hb_{\ub_1}^{u_1}} r^3\big((L \varphi)^2 (h-H^{LL})+ |\sn \varphi|^2+q\varphi^2\big)+\f12 (\Lb -hL )(r^2 \varphi^2))\frac{r}{2(r-M_0)} d\omega du+\Er_2,
\end{align*}
where
\begin{align}
\Er_2&= \int_{\Hb_{\ub_1}^{u_1}}\{r^2 (H+h) \p (\varphi ^2)+r h \varphi^2+r^3H (h(L \varphi)^2+(\Lb\varphi)^2+\ud \p\varphi\c \bar \p \varphi+qh \varphi^2)\}\frac{r}{2(r-M_0)} d\omega du.\label{7.26.2.18}
\end{align}
In $I$ and $II$, the coefficients of leading terms are precise, while $\Er_1$ and $\Er_2$ are symbolic formulas for the error terms.
Note that applying  divergence theorem  to $\D_{u_1}^{\ub_1}$ implies
\begin{equation*}
I+II-\int_{\Sigma_0^{u_1, \ub_1}} \sP_\a {\p_t}^\a dx =-\int_{\D_{u_1}^{\ub_1}}\p^\a \sP_\a.
\end{equation*}
For convenience, we set
\begin{align*}
&\ti I= I+\int_{\H_{u_1}^{\ub_1}}\f12 (L-h\Lb) (r^2 \varphi^2)\frac{r}{2(r-M_0)}d\omega d\ub;  \\
&\widetilde {II}=II-\int_{\Hb_{\ub_1}^{u_1}} \f12 (\Lb -hL )(r^2 \varphi^2)\frac{r}{2(r-M_0)} d\omega du;\\
&\widetilde{III}=\int_{\Sigma_0^{u_1, \ub_1}} \{\sP_\a \p_t^\a r^2+\f12\p_r (r^2 \varphi^2) \} d\omega dr.
\end{align*}
 Then, in view of (\ref{6.21.1.18}) and (\ref{4.30.3.18}), we have
\begin{equation*}
\ti I+\widetilde{II}-\widetilde{III}=-\int_{\D_{u_1}^{\ub_1}}\p^\a \sP_\a.
\end{equation*}
Now combining (\ref{5.5.2.18}) with the above identity, in view of  the definitions of $\ti I, \widetilde{II}$, we derive
\begin{equation}\label{6.22.1.18}
\begin{split}
&\int_{\D_{u_1}^{\ub_1}} \f12 (r^{-2}| L(r\varphi)|^2+|\sn \varphi|^2)\\
&+ \int_{\H_{u_1}^{\ub_1}} \{r(L(r \varphi)- r H^{\Lb \Lb}\Lb \varphi)^2+r^3 (h-H^{\Lb \Lb}) (|\sn \varphi|^2+q\varphi^2)\} \frac{r}{2(r-M_0)} d\omega d\ub\\
&+\int_{\Hb_{\ub_1}^{u_1}} r^3 \big((L\varphi)^2(h-H^{\Lb \Lb})+|\sn \varphi|^2+q\varphi^2) \frac{r}{2(r-M_0)} d\omega du\\
&\le |\widetilde{III}|+|\Er_1|+|\Er_2|+\int_{\D_{u_1}^{\ub_1}} |\sF (X\varphi+\varphi)| +\frac{q+|Xq|}{2} \varphi^2   \\
&+C \dn ^\f12 M^{-\f12}\{{u_1}_+^{1-\frac{3\ga_0}{2}+2p} (\sup_{-\ub_1\le u\le u_1} E[\varphi](\H_u^{\ub_1})u_+^{\ga_0-2p}+{u_1}_+^{\ga_0-2p}\sup_{-u_1\le \ub\le \ub_1} E[\varphi](\Hb_{\ub}^{u_1}))\\
&\int_{-\ub_1}^{u_1} u_+^{-\frac{\ga_0+1}{2}} \W_1[\varphi](\H_u^{\ub_1})du\}.
\end{split}
\end{equation}
Moreover, by using (\ref{4.25.3.18}), it is straightforward to obtain
\begin{equation*}
\int_{\H_{u_1}^{\ub_1}} r^3 |H|^2 (\Lb  \varphi)^2 d\ub d\omega \les E[\varphi](\H_ {u_1}^{\ub_1}) {u_1}_+^{-\ga_0+1} \dn M^{-1},
\end{equation*}
which leads to
\begin{equation}\label{5.4.1.18}
\begin{split}
|\int_{\H_{u_1}^{\ub_1}}& r (L (r\varphi)-r H^{\Lb \Lb} \Lb \varphi)^2\frac{r}{2(r-M_0)}d\omega d\ub -\int_{\H_{u_1}^{\ub_1}}r |L (r\varphi)|^2\frac{r}{2(r-M_0)} d\omega d\ub|\\
&\les E[\varphi](\H_{u_1}^{\ub_1}) (u_1)_+^{1-\ga_0} \dn M^{-1}.
\end{split}
\end{equation}
Also by using (\ref{6.10.4.18}), we can derive
\begin{equation}\label{6.22.2.18}
\begin{split}
&\int_{\D_{u_1}^{\ub_1}} \f12 (r^{-2}| L(r\varphi)|^2+|\sn \varphi|^2)+ \int_{\H_{u_1}^{\ub_1}}\{ r(L(r \varphi))^2+\frac{1}{3}r^2 M (|\sn \varphi|^2+q \varphi^2)\} \frac{r}{2(r-M_0)} d\omega d\ub\\
&+\int_{\Hb_{\ub_1}^{u_1}}r^3( \frac{M}{3r} (L\varphi)^2  +|\sn \varphi|^2+q\varphi^2) \frac{r}{2(r-M_0)} d\omega du\\
&\le |\widetilde{III}|+|\Er_1|+|\Er_2|+\int_{\D_{u_1}^{\ub_1}} |\sF (X\varphi+\varphi)| +\frac{q+|Xq|}{2} \varphi^2   \\
&+C \dn ^\f12 M^{-\f12}\{{u_1}_+^{1-\frac{3\ga_0}{2}+2p} (\sup_{-\ub_1\le u\le u_1} E[\varphi](\H_u^{\ub_1})u_+^{\ga_0-2p}+{u_1}_+^{\ga_0-2p}\sup_{-u_1\le \ub\le \ub_1} E[\varphi](\Hb_{\ub}^{u_1}))\\
&+\int_{-\ub_1}^{u_1} u_+^{-\frac{\ga_0+1}{2}} \W_1[\varphi](\H_u^{\ub_1})du\}.
\end{split}
\end{equation}
It is straightforward to derive
\begin{equation*}
r^2\sP_\a \p_t^\a+\f12 \p_r(r^2 \phi^2) =\f12\big( r(L(r\varphi))^2+ r^3(|\sn \varphi|^2+q\varphi^2) )+r^3 H\big((\p \varphi)^2+q\varphi^2+\p
\varphi\c \varphi \big).
\end{equation*}
Note that  with the help of (\ref{7.23.4.18}),
\begin{equation}\label{5.4.3.18}
|\widetilde{III}|
 \les \|r^\f12 \p\varphi\|^2_{L^2(\Sigma_0^{u_1, \ub_1})}+\|r^{-\f12}\varphi\|^2_{L^2(\Sigma_0^{u_1, \ub_1})}+q_0\|r^\f12 \varphi\|^2_{L^2(\Sigma_0^{u_1, \ub_1})}.
\end{equation}
It remains to estimate $\Er_1$ and $\Er_2$.
 We first  estimate $\Er_1$ in view of
\begin{align*}
&|r^\frac{3}{2}( H \bar\p \varphi + h H\Lb \varphi)|\les (|r h \Lb \varphi|  +r |\bar \p \varphi|)|r^\f12 H| \\
&r^3|(h-H)H \p \varphi \bar \p \varphi|\les |r \bar\p \varphi||r^\f12 \p \varphi| |r(h-H)||r^\f12 H| \\
&r^2|\varphi H (\bar \p \varphi+h \ud \p \varphi)|\les (|r \bar \p \varphi|+|r h \ud \p \varphi|)|\varphi| r|H|.
\end{align*}
By  H\"{o}lder's inequality, (\ref{4.25.3.18}) and  $r|h|\le M$,  we have
\begin{align*}
&\|r^\frac{3}{2} ( H \bar\p \varphi + h H\Lb \varphi)\|_{L^2_\omega L_\ub^2(\H_{u_1}^{\ub_1})}\les  {u_1}_+^{-\frac{\ga_0}{2}}  \dn^\f12 E[ \varphi]^\f12(\H_{u_1}^{\ub_1}),\\
& \int_{\H_{u_1}^{\ub_1}} r^3|(h-H)H \p \varphi \bar \p \varphi|  d\ub d\omega\les E[\varphi](\H_{u_1}^{\ub_1}) \dn^\f12 {u_1}_+^{-\frac{\ga_0}{2}} (M^{-\f12}\dn^\f12+M^\f12),\\
&\int_{\H_{u_1}^{\ub_1}} r^2 |\varphi H |(\bar \p \varphi+ |h \ud \p \varphi|)|d\ub d\omega\les E[\varphi]^\f12(\H_{u_1}^{\ub_1}) ( E[\varphi](\H_{u_1}^{\ub_1})+E[\varphi](\Hb_{\ub_1}^{u_1})\\
&\qquad\qquad\qquad\qquad\qquad+\int_{S_{u_1,-u_1}} r\varphi^2 d\omega+\int_{S_{-\ub_1, \ub_1}}r\varphi^2 d\omega )^\f12\dn^\f12 {u_1}_+^{-\frac{\ga_0-1}{2}},
\end{align*}
where we employed  (\ref{5.1.1.18}) to derive the last inequality.
Thus, noting that $\frac{r}{r-M_0}\les 1$ and $\dn M^{-1}<1$ we conclude that
\begin{equation}\label{6.22.3.18}
\begin{split}
|\Er_1|&\les {u_1}_+^{-\frac{\ga_0-1}{2}} \dn ^\f12\{({u_1}_+^{-1}\int_{\H_{u_1}^{\ub_1}} r(L(r\varphi)-rH^{\Lb\Lb} \Lb \varphi)^2  du d\omega)^\f12  E[\varphi]^\f12(\H_{u_1}^{\ub_1})\\
&+E[\varphi](\H_{u_1}^{\ub_1})+E[\varphi](\Hb_{\ub_1}^{u_1})+\sup_{-\ub_1\le u\le u_1}\int_{S_{u,-u}} r\varphi^2 d\omega \}\\
&\les {u_1}_+^{-\frac{\ga_0-1}{2}} \dn ^\f12\{{u_1}_+^{-1}\int_{\H_{u_1}^{\ub_1}} r(L(r\varphi))^2  du d\omega+E[\varphi](\H_{u_1}^{\ub_1})+E[\varphi](\H_{\ub_1}^{u_1})\\
&+\sup_{-\ub_1\le u\le u_1}\int_{S_{u, -u}} r\varphi^2 d\omega \},
\end{split}
\end{equation}
where we employed (\ref{5.4.1.18})  to derive the last inequality.  We remark that the term of $L(r\varphi)$ can be absorbed when $\Er_1$ is substituted back to (\ref{6.22.2.18}).

Next we control the error term in $\Er_2$ in a similar fashion. By using (\ref{4.25.3.18}) and $r|h|\le M$,  we derive
\begin{align*}
\int_{\Hb_{\ub_1}^{u_1}} r^3 (L \varphi)^2 |h H| d\omega du \les \dn^{\f12} {u_1}_+^{-\frac{\ga_0-1}{2}}\int_{\Hb_{\ub_1}^{u_1}} M r (L \varphi)^2 d\omega du\les \dn^{\f12} {u_1}_+^{-\frac{\ga_0-1}{2}}E[\varphi](\Hb_{\ub_1}^{u_1}),
\end{align*}
\begin{align*}
\int_{\Hb_{\ub_1}^{u_1}} r^3 |H|(|\Lb \varphi|^2+q\varphi^2)d \omega du &\les\dn ^\f12 {u_1}_+^{-\frac{\ga_0-1}{2}} E[\varphi](\Hb_{\ub_1}^{u_1}),\\
\int_{\Hb_{\ub_1}^{u_1}} r^3 |H \ud \p \varphi \bar \p \varphi| d\omega du&\les \dn^\f12 {u_1}_+^{-\frac{\ga_0-1}{2}} E[\varphi]^\f12 (\Hb_{\ub_1}^{u_1})\big(M^{-\f12}(\int_{\Hb_{\ub_1}^{u_1}}M r^2( L \varphi )^2  du d\omega)^\f12 \\
&\quad \quad \quad +E[\varphi]^\f12(\Hb_{\ub_1}^{u_1})\big),\\
\int_{\Hb_{\ub_1}^{u_1}} r^2 (|H |+|h|)\p (\varphi^2)  d\omega du &\les(\dn^{\f12} {u_1}_+^{-\frac{\ga_0-1}{2}}+M)\|r^{-1}\varphi\|_{L^2(\Hb_{\ub_1}^{u_1})}\\
&\times (\|\ud\p \varphi\|_{L^2(\Hb_{\ub_1}^{u_1})}+M^{-\f12}\|M^\f12 L \varphi\|_{L^2(\Hb_{\ub_1}^{u_1})}), \\
\int_{\Hb_{\ub_1}^{u_1}} r |h| \varphi^2  d\omega du&\le M\|r^{-1}\varphi\|^2_{L^2(\Hb_{\ub_1}^{u_1})}.
\end{align*}
Thus by combining the above estimates, and also by using (\ref{4.29.6.18}) to treat the term of $\|r^{-1}\varphi\|_{L^2(\Hb_{\ub_1}^{u_1})}$  we can derive
\begin{align*}
|\Er_2|& \les\big( \dn^\f12 M^{-\f12}{u_1}_+^{-\frac{\ga_0-1}{2}} +M^\f12\big) (\sup_{-\ub_1\le u\le u_1}\|r^{-\f12} \varphi\|^2_{L^2(S_{u,-u})} +E[\varphi](\Hb_{\ub_1}^{u_1})+\W_1[\varphi](\Hb^{u_1}_{\ub_1}))\\
&\les M^ \f12\big(\W_1[\varphi](\Hb_{\ub_1}^{u_1})+
E[\varphi](\Hb_{\ub_1}^{u_1})+\sup_{-\ub_1\le u\le u_1} \|r^{-\f12} \varphi\|^2_{L^2(S_{u,-u})}\big),
\end{align*}
where the first term on the righthand side of the last inequality will be absorbed due to the smallness of $M$.

We then substitute the estimates of $\Er_1$,  $\Er_2$, (\ref{6.22.2.18}) and (\ref{5.4.3.18}) to derive
\begin{align}
&\W_1[\varphi](\D_{u_1}^{\ub_1})+ \W_1[\varphi](\H_{u_1}^{\ub_1})+\W_1[\varphi](\Hb_{\ub_1}^{u_1})\nn\\
&\les \| |r^\f12 \p \varphi|+r^{-\f12} |\varphi|+q^\f12_0 r^\f12|\varphi|\|^2_{L^2(\Sigma_0^{u_1, \ub_1})}+\int_{\D_{u_1}^{\ub_1}} |\sF (X\varphi+\varphi)| +\frac{q+|Xq|}{2} \varphi^2   \nn\\
&+ M^\f12 \big(E[\varphi](\H_{u_1}^{\ub_1})+E[\varphi](\Hb_{\ub_1}^{u_1})+\sup_{-\ub_1\le u\le u_1} \int_{S_{u, -u}} r \varphi^2 d\omega \big)\nn\\
&+ \dn ^\f12 M^{-\f12}\{{u_1}_+^{1-\frac{3\ga_0}{2}+2p} (\sup_{-\ub_1\le u\le u_1} E[\varphi](\H_u^{\ub_1})u_+^{\ga_0-2p}\label{7.12.1.18}\\
&+{u_1}_+^{\ga_0-2p}\sup_{-u_1\le \ub\le \ub_1} E[\varphi](\Hb_{\ub}^{u_1}))+\int^{u_1}_{-\ub_1} u_+^{-\frac{\ga_0+1}{2}} \W_1[\varphi](\H_u^{\ub_1})du\}.\nn
\end{align}
Note that $\int_{\D_{u_1}^{\ub_1}} q \varphi^2\les \int_{-\ub_1}^{u_1}E[\varphi](\H_u^{\ub_1}) du$. The term $\int_{\D_{u_1}^{\ub_1}}|Xq| \varphi^2  dx dt$ can be treated exactly as in (\ref{6.23.3.18}).
The term of $\|r^{-\f12}(X\varphi+\varphi)\|_{L^2(\H_u^{\ub_1})}$ can treated  by applying (\ref{5.4.1.18}) on each $\H_u^{\ub_1}$, with the bound of error included in the first term in the line of (\ref{7.12.1.18}). By using Gronwall's inequality (see \cite[Section 2.3, Lemma 3]{mMaxwell}), the last term on the righthand side of the inequality and the first term on the right of (\ref{6.23.3.18}) can both be absorbed. We summarize the result after  the above treatments as below
\begin{align*}
&\W_1[\varphi](\D_{u_1}^{\ub_1})+ \W_1[\varphi](\H_{u_1}^{\ub_1})+\W_1[\varphi](\Hb_{\ub_1}^{u_1})\\
&\les C(\ep_1)\|r^\frac{3}{2} \sF\|_{L_u^1 L_u^2L_\omega^2(\D_{u_1}^{\ub_1})}^2 +{u_1}_+^{-\ga_0+2p+1}\ep_1 \sup_{-\ub_1\le u\le u_1} u_+^{\ga_0-2p-1}\W_1[\varphi](\H_u^{\ub_1})\\
&   +{u_1}_+^{1-\ga_0+2p} \sup_{-\ub_1\le u\le u_1}E[\varphi](\H_u^{\ub_1}) u_+^{\ga_0-2p}  +\| r^\f12 |\p \varphi|+r^{-\f12}|\varphi|\|^2_{L^2(\Sigma_0^{u_1, \ub_1})}+q_0\|r^\f12 \varphi\|^2_{L^2(\Sigma_0^{u_1,\ub_1})}\\
&+ (M^\f12+\dn^\f12 M^{-\f12}) \big({u_1}_+^{1-\frac{\ga_0}{2}}\sup_{-u_1\le \ub\le \ub_1}E[\varphi](\Hb_{\ub}^{u_1})+\sup_{-\ub_1\le u\le u_1} \int_{S_{u, -u}} r \varphi^2 d\omega \big),
\end{align*}
where $p\le 0$ is any fixed constant. We then multiply both sides by ${u_1}_+^{\ga_0-1-2p}$ followed with taking supremum on $u_1$ in $  -\ub_1\le u_1\le u_2\le u_0$ for a fixed $u_2$.  In view of the assumption that $\dn \les  M^2$, by choosing the constant $\ep_1>0$ sufficiently small, Proposition \ref{4.29.5.18} can be proved.
\end{proof}

\subsection{Error estimates}\label{error}
The main estimates of this part are the error estimates for controlling  $(\widetilde{\Box}_\bg-q) Z\rp{n}\phi$ in Proposition \ref{6.17.4.18}. Comparing this result with Proposition \ref{6.2.5.18} for the semilinear equation (\ref{3.18.1.18}), one difference lies in that Proposition \ref{6.17.4.18} copes with the term of $[\widetilde{\Box}_\bg, Z\rp{n}] \phi, \mbox{  with } n\le 3.$ Such error terms arise due to the nontrivial influence of the metric $\bg(\phi, \p \phi)$ and vanish in the semilinear case.  The treatment needs a sharp decay property for  the term $ZH$, particularly for $Z=\Omega_{ij}$, which requires us to bound energies for $Z\rp{3} H$.
Therefore in Proposition \ref{6.17.4.18} we treat $\str{n}\F$ \begin{footnote}{See the definition of $\str{n}\F$ in Lemma \ref{6.3.5.18}.}\end{footnote}   with $n\le 3$ for the solution of  quasilinear equation (\ref{eqn_1}), while in   Proposition \ref{6.2.5.18} we only need to bound the terms of $Z\rp{n}(\Box  \phi -q\phi)$ with $n\le 2$.

\begin{lemma}
Let $\Phi=(\phi, \p\phi)$. For $n\le 3$,
%\begin{equation}\label{6.2.1.18}
%|Z\rp{n}H(\phi)|\les |Z\rp{n} \phi|+(\sum_{Z^m\subsetneq Z^n} |Z\rp{m}\phi| u_+^{-\zeta(Z^m)}) \dn^\f12 u_+^{{Z^n}-(\f12+\frac{\ga_0}{2})}
%\end{equation}
\begin{equation}\label{6.2.1.18}
|Z\rp{n}(H(\Phi))|\les \sum_{Z^a \sqcup Z^b=Z^n}|Z\rp{a}\Phi| u_+^{\zeta(Z^b)}.
\end{equation}
\end{lemma}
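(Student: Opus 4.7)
\medskip

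\noindent\textbf{Proof proposal.} The plan is to proceed by induction on $n$ using the Fa\`a di Bruno / chain-rule expansion
\[
Z\rp{n}\bigl(H(\Phi)\bigr)=\sum_{k=1}^{n} H^{(k)}(\Phi)\sum_{\substack{Z^{a_1}\sqcup\cdots\sqcup Z^{a_k}=Z^n\\ a_i\ge 1}} Z\rp{a_1}\Phi\cdots Z\rp{a_k}\Phi,
\]
which one verifies directly for $n=1,2,3$ by successively applying $Z$ and the Leibniz rule (the $k=1$ term is just $H'(\Phi)Z\rp{n}\Phi$, giving the contribution $|Z\rp{n}\Phi|\,u_+^{\zeta(Z^0)}$ on the right-hand side of the claim).

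First I would control the scalar coefficients $H^{(k)}(\Phi)$. Since $H(0,\mathbf{0})=0$ and $H$ is smooth in its arguments, it suffices to show $|\Phi|\lesssim 1$. This follows from (\ref{6.7.1.18}), which under the bootstrap assumption gives $r|\phi|+r|\p\phi|\lesssim \dn^{1/2}u_+^{(1-\ga_0)/2}\lesssim 1$ in view of $u_+\ge 1$ and $r\gtrsim u_+$ (cf.\ (\ref{7.3.1.18})). Consequently $|H^{(k)}(\Phi)|\lesssim 1$ for every fixed $k\le 3$, exactly as in the remark after Lemma \ref{5.17.2.18}.

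Next, in each product $Z\rp{a_1}\Phi\cdots Z\rp{a_k}\Phi$ arising above, I identify the factor with the largest order, say $a_1$, and keep it as the ``top'' factor. Since $n\le 3$ and $\sum a_i=n$ with $a_i\ge 1$ and $k\ge 2$, each remaining index satisfies $a_i\le\lfloor n/2\rfloor\le 1$, hence $Z\rp{a_i}\Phi=Z_{j_i}\Phi$ is a first-order expression. The pointwise estimate (\ref{6.7.1.18}) (applied separately to $\phi$ and to $\p\phi$, with $l=0,1$) then yields
\[
|Z_{j_i}\Phi|\lesssim r^{-1}\dn^{1/2}u_+^{(1-\ga_0)/2+\zeta(Z_{j_i})}\lesssim \dn^{1/2}u_+^{\zeta(Z_{j_i})},
\]
where the last step uses $r\gtrsim u_+$ and $\ga_0>1$. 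Multiplying the bounds for $i=2,\dots,k$ and absorbing the harmless powers of $\dn^{1/2}$, the product is bounded by $|Z\rp{a_1}\Phi|\,u_+^{\zeta(Z^{a_2})+\cdots+\zeta(Z^{a_k})}=|Z\rp{a_1}\Phi|\,u_+^{\zeta(Z^b)}$, where $Z^b=Z^{a_2}\sqcup\cdots\sqcup Z^{a_k}$ and $Z^{a_1}\sqcup Z^b=Z^n$. Summing over the partitions recovers the form on the right of (\ref{6.2.1.18}).

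The only real subtlety is the bookkeeping of the signature $\zeta$ across the partition, which is handled by the additivity $\zeta(Z^b)=\sum_i\zeta(Z^{a_i})$ and the fact that exponents of $\dn^{1/2}$ arising from the ``tame'' factors are uniformly bounded and can be absorbed into the implicit constant; no new analytic input is required beyond what Proposition \ref{decay_quasi} already provides. This is the quasilinear analogue of Lemma \ref{5.17.2.18}, and I do not anticipate any genuine obstacle.
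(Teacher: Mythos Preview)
Your proposal is correct and follows essentially the same approach as the paper. The paper's proof simply notes that (\ref{5.18.1.18}) extends to $\p\phi$ (via (\ref{6.7.1.18}) and (\ref{5.18.11.18})), invokes (\ref{5.2.5.18}) for the boundedness of $|D^{(k)}H(\Phi)|$, and then says to repeat verbatim the proof of (\ref{5.17.3.18}) with $\N(\varphi)$ replaced by $H(\Phi)$; your explicit Fa\`a di Bruno expansion, the bound $|H^{(k)}(\Phi)|\lesssim 1$, and the pointwise control $|Z_{j_i}\Phi|\lesssim \dn^{1/2}u_+^{\zeta(Z_{j_i})}$ on the subleading factors are exactly that argument written out in detail.
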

Note that  due to (\ref{6.7.1.18}) and (\ref{5.18.11.18}), (\ref{5.18.1.18}) holds for $\p \phi$ as well.  Thus, with the help of (\ref{5.2.5.18}), we can repeat the proof of  (\ref{5.17.3.18}) with $\N(\varphi)$ replaced by $H(\Phi)$ to obtain (\ref{6.2.1.18}).
%\begin{proof}
%We can use (\ref{5.17.1.18}) and (\ref{5.18.1.18}) to derive
%\begin{align*}
%|Z\rp{3} H(\phi)|&\le |Z\rp{3} \phi|+\sum_{i=1}^3 |Z\rp{3\setminus i} \phi \c Z_i \phi| +|\Pi_{i=1}^n Z_i \phi|\\
%&\les |Z\rp{3} \phi|+(\sum_{Z^2 \subset Z^3} |Z\rp{2} \phi|u_+^{-\zeta(Z^2)}+\sum_{i=1}^3 |Z_i \phi| u_+^{-\zeta(Z_i)-\f12 -\frac{\ga_0}{2}}\dn^\f12)\\
%&\c\dn^\f12 u_+^{\zeta(Z^3)-(\frac{1}{2}+\frac{\ga_0}{2})}
%\end{align*}
%For $n=2$ and $n=1$ the proof is similar.
%\end{proof}
\begin{proposition}\label{6.3.1.18}
\begin{equation}\label{7.6.1.18}
\begin{split}
|[\widetilde \Box_\bg, Z\rp{n}]\phi|&\les \sum_{Z^{n-1} \sqcup Z^1=Z^n}\big( u_+^{\zeta(Z^1)}|H|+| Z\rp{1} H|\big)  |\p^2 Z\rp{n-1} \phi|\\
&+\sum_{Z^a\sqcup Z^b\sqcup Z^c=Z^n}^{ b\le n-2}u_+^{\zeta(Z^c)} |Z\rp{a} \Phi|| \p^2 Z\rp{b}\phi|,\qquad 1\le n\le 3
\end{split}
\end{equation}
where the last term on the righthand side vanishes if $n=1$.
\end{proposition}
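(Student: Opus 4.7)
The plan is to reduce the proposition to a purely combinatorial identity. Writing $\widetilde\Box_\bg=\Box_\bm+H^{\a\b}\p_\a\p_\b$ and observing that every $Z\in\{\p_\mu,\Omega_{ij}\}$ is a Killing field for $\bm$, so $[\Box_\bm,Z\rp{n}]=0$, I would reduce the problem to bounding
\begin{equation*}
[H^{\a\b}\p_\a\p_\b,Z\rp{n}]\phi=\sum_{Z^a\sqcup Z^b=Z^n,\, a\ge 1}Z\rp{a}(H^{\a\b})\, Z\rp{b}(\p_\a\p_\b\phi)+H^{\a\b}[Z\rp{n},\p_\a\p_\b]\phi,
\end{equation*}
obtained by Leibniz expansion of $Z\rp{n}(H^{\a\b}\p_\a\p_\b\phi)$ and isolating the $a=0$ summand.

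To handle each factor $Z\rp{b}(\p_\a\p_\b\phi)$, I would apply Lemma \ref{comm} twice (once to each partial), obtaining the symbolic bound
\begin{equation*}
|Z\rp{b}(\p_\a\p_\b\phi)|\les\sum_{Z^{c'}\sqcup Z^{b'}=Z^b}r^{\zeta(Z^{c'})}|\p^2 Z\rp{b'}\phi|;
\end{equation*}
since $r\ge u_+$ on $\{u\le u_0\}$ by (\ref{7.3.1.18}) while $\zeta(\cdot)\le 0$, I may replace every $r^{\zeta(\cdot)}$ by $u_+^{\zeta(\cdot)}$. The inner commutator $[Z\rp{n},\p_\a\p_\b]\phi$ is this same formula restricted to $|Z^{c'}|\ge 1$. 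For the Leibniz terms with $a\ge 2$ I would then invoke (\ref{6.2.1.18}) to bound $|Z\rp{a}H|\les\sum_{Z^{a_1}\sqcup Z^{a_2}=Z^a}u_+^{\zeta(Z^{a_2})}|Z\rp{a_1}\Phi|$; the same estimate at $n=1$ handles the lower-order residue of the $a=1$ Leibniz piece, and at $n=0$, together with $|H|\les|\Phi|$ from (\ref{5.2.5.18}), it converts the bare $|H|$ coefficient arising from the inner commutator into $|Z\rp{0}\Phi|$.

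Sorting by the number of $Z$'s landing on $\phi$ then yields the two claimed sums. The top order $\p^2 Z\rp{n-1}\phi$ terms come either from the $a=1$, $Z^{c'}=\emptyset$ Leibniz piece (giving the $|Z\rp{1}H|$ coefficient) or from the $|Z^{c'}|=1$ piece of the inner commutator paired with $|H|$ (giving the $u_+^{\zeta(Z^1)}|H|$ coefficient); these constitute the first sum exactly. All remaining contributions have $b'\le n-2$, and upon merging the various extra multi-indices (the $Z^{a_2}$ from (\ref{6.2.1.18}) and the $Z^{c'}$ from the inner commutation) into the single $Z^c$ slot of the statement, reproduce the second sum with its triple partition $Z^a\sqcup Z^b\sqcup Z^c=Z^n$ and weight $u_+^{\zeta(Z^c)}$. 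The main obstacle is purely bookkeeping --- verifying that the multi-fold partitions on both sides line up and that the $u_+^{\zeta}$ exponents add correctly --- since no analytic input beyond $r\approx u_+$ together with Lemma \ref{comm}, (\ref{5.2.5.18}) and (\ref{6.2.1.18}) is needed.
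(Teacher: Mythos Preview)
Your proposal is correct and follows essentially the same route as the paper. The paper first records the single-commutator identity $[\widetilde\Box_\bg,Z]\psi=-ZH^{\a\b}\p_\a\p_\b\psi+2H^{\a\b}\tensor{C}{_{Z\a}^\ga}\p_\ga\p_\b\psi$, iterates it explicitly for $n=2,3$ to obtain closed formulas, and then collapses these into the intermediate bound (\ref{6.3.2.18}) with $|Z\rp{a}H|$ in the second sum before invoking (\ref{6.2.1.18}); you instead go straight through the Leibniz expansion and the iterated use of Lemma \ref{comm}, which is just a more compact packaging of the same computation. One cosmetic point: your displayed identity is actually $[Z\rp{n},H^{\a\b}\p_\a\p_\b]\phi$ rather than $[H^{\a\b}\p_\a\p_\b,Z\rp{n}]\phi$ (both terms carry the opposite sign), but since only the absolute value is needed this has no effect on the argument.
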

\begin{proof}
Since $Z\in \{\Omega, \p\}$ are killing vector fields,  $[\widetilde\Box_\bg, Z]=[H^{\mu\nu}\p_\mu\p_\nu, Z]$. We can derive
\begin{equation}\label{4.29.1.18}
[\widetilde{\Box}_{\bg}, Z] \psi=-Z H^{\a\b} \p_\a \p_\b \psi+2 H^{\a\b} \tensor{C}{_{Z \a}^\ga} \p_\ga \p_\b \psi,
\end{equation}
since
\begin{equation*}
H^{\a\b} [Z, \p_\a]  \p_\b \psi =H^{\a\b}\p_\a [Z, \p_\b] \psi= H^{\a\b}\tensor{C}{_Z_\a^\ga} \p_\ga \p_\b \psi,
\end{equation*}
where $\tensor{C}{_{Z \a}^\ga}$ has been defined in Lemma \ref{comm}. For convenience we will drop the coefficient $2$ in the calculation, and we will adopt
 the convention in Lemma \ref{comm}.

Similarly,
\begin{align*}
\widetilde{\Box}_\bg Z\rp{2} \psi%&=[Z_2 Z_1, \widetilde\Box_\bg]\psi+Z_2 Z_1 \widetilde\Box_\bg\varphi  \\
  &=Z\rp{2} \widetilde{\Box}_\bg \psi +Z_2[Z_1, \widetilde\Box_\bg] \psi+[Z_2, \widetilde{\Box}_{\bg}]Z_1 \psi\\
  &=Z\rp{2} \widetilde{\Box}_\bg \psi+Z_2[Z_1, H^{\a\b} \p_\a\p_\b ]\psi+[Z_2, H^{\a\b} \p_\a \p_\b]Z_1 \psi.
\end{align*}
By using (\ref{4.29.1.18}), we also give the following commutation identities
\begin{equation}\label{4.29.2.18}
\begin{split}
\widetilde\Box_\bg Z\rp{2} \psi&=Z\rp{2} \widetilde\Box_\bg \psi+\sum_{X\sqcup Y=Z^2}\{Z\rp{2} H^{\mu\nu} \p_\mu \p_\nu \psi+ X H^{\mu\nu} \p_\mu\p_\nu Y \psi+X H^{\mu\nu} (C_Y\c \p^2 \psi)_{\mu\nu}\\
&+H^{\mu\nu} \tensor{C}{_{X\mu}^\ga}\p_\ga\p_\nu Y\psi+H^{\mu\nu}( C_X C_Y \p^2\psi)_{\mu\nu}\},
\end{split}
\end{equation}
%\begin{equation}\label{4.29.3.18}
%\widetilde\Box_\bg \p Z\rp{2} \psi=\p Z\rp{2} \widetilde\Box_\bg\psi-\p H^{\mu\nu} \p_\mu \p_\nu Z\rp{2}\psi+\p[\widetilde\Box_\bg, Z\rp{2}]\psi
%\end{equation}
\begin{equation}\label{6.1.1.18}
\begin{split}
\widetilde{\Box}_\bg Z\rp{3}\psi&=Z\rp{3} \widetilde\Box_\bg\psi+ Z\rp{3} H^{\mu\nu} \p^2_{\mu\nu}\psi+ \sum_{a=1}^3 Z\rp{3\setminus a} H^{\mu\nu}\big(\p_\mu \p_\nu Z_a \phi+ (C_{Z_a} \c \p^2 \psi)_{\mu\nu}\big)\\
&+\sum_{a=1}^3 Z_a H^{\mu\nu} \left(\p_\mu \p_\nu Z\rp{3\setminus a} \psi +\sum_{X\sqcup Y=Z\rp{3\setminus a}}(C_X \c \p^2 Y\psi)_{\mu\nu}+(C_{Z\rp{3\setminus a}}\c \p^2 \psi)_{\mu\nu}\right)\\
&+H^{\mu\nu}\{\sum_{a=1}^3\big(\tensor{C}{_{Z_a}_\mu^\ga}  \p_\ga\p_\nu Z\rp{3\setminus a} \psi+ (C_{Z\rp{3\setminus a}} \p^2 Z_a \psi)_{\mu\nu}\big) +(C_{Z^3} \p^2 \psi)_{\mu\nu}\}.
\end{split}
\end{equation}
We then  summarize  the terms in (\ref{4.29.1.18})-(\ref{6.1.1.18}) into
\begin{equation}\label{6.3.2.18}
\begin{split}
|[\widetilde \Box_\bg, Z\rp{n}]\phi|&\les \sum_{Z^1\sqcup Z^{n-1}=Z^n} (u_+^{\zeta(Z^1 )}|H| + | Z\rp{1} H|) |\p^2 Z\rp{n-1}\phi|\\
&+\sum_{Z^a\sqcup Z^b\sqcup Z^c=Z^n, b\le n-2}u_+^{\zeta(Z^c)} |Z\rp{a} H|| \p^2 Z\rp{b}\phi|,
\end{split}
\end{equation}
where the last term on the righthand side vanishes if $n=1$.
By using (\ref{6.2.1.18}) to treat the term $Z\rp{a}H$,  the last term on the righthand side of (\ref{6.3.2.18}) can be bounded by
$$\sum_{Z^{a_1}\sqcup Z^b\sqcup Z^{c_1}=Z^n, b\le n-2}u_+^{\zeta(Z^{c_1})} |Z\rp{a_1} \Phi|| \p^2 Z\rp{b}\phi|.$$

We then combine the above estimates to
 conclude Proposition \ref{6.3.1.18}.
\end{proof}

In view of Proposition \ref{6.3.1.18} and (2) in  Lemma \ref{6.3.5.18}, we will prove the following result.
\begin{proposition}\label{6.17.4.18}
For $0\le n\le 3$, there hold for $(u_1, \ub_1)\in \I$ that
\begin{align}
&{u_1}_+^{-\zeta(Z^n)+\f12\ga_0+\f12}\|r^\f12 [\widetilde{\Box}_\bg, Z\rp{n}] \phi\|_{L^2(\D_{u_1}^{\ub_1})}\les \dn M^{-\f12}{u_1}_+^{-\f12\ga_0+\f12},\label{6.5.8.18}\\
&{u_1}_+^{-\zeta(Z^n)+\f12\ga_0}\|r^\frac{3}{2} [\widetilde{\Box}_\bg, Z\rp{n}]\phi\|_{L_u^1 L_\ub^2 L_\omega^2(\D_{u_1}^{\ub_1})} \les \dn M^{-\f12} {u_1}_+^{-\f12\ga_0+\f12},\label{6.9.3.18}\\
&{u_1}_+^{-\zeta(Z^n)+\f12 \ga_0+\f12}\|r^\f12 \str{n}\F_\Q\|_{L^2(\D_{u_1}^{\ub_1})}\les \dn M^{-\f12} {u_1}_+^{-\f12\ga_0+\f12},\label{6.8.2.18}\\
&{u_1}_+^{-\zeta(Z^n)+\f12\ga_0}\|r^\frac{3}{2} \str{n}\F_\Q\|_{L_u^1 L_\ub^2 L_\omega^2(\D_{u_1}^{\ub_1})}\les \dn M^{-\frac{1}{2}} {u_1}_+^{-\f12\ga_0+\frac{1}{2}},\label{6.9.5.18}\\
&{u_1}_+^{-\zeta(Z^n)}\|r^\f12 \str{n}\F_\C\|_{L^2(\D_{u_1}^{\ub_1})}\les \dn^\frac{3}{2}M^{-\f12}  {u_1}_+^{-\frac{3\ga_0}{2}-\f12}, \label{6.7.2.18}\\
&{u_1}_+^{-\zeta(Z^n)}\|r^\frac{3}{2} \str{n}\F_\C\|_{L_u^1 L_\ub^2 L_\omega^2(\D_{u_1}^{\ub_1})}\les \dn^\frac{3}{2}{u_1}_+^{-\frac{3\ga_0}{2}-\f12}.\label{6.9.9.18}
\end{align}
\end{proposition}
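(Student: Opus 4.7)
The six estimates divide naturally into the commutator bounds \eqref{6.5.8.18}--\eqref{6.9.3.18}, which are new to the quasilinear setting, and the nonlinearity estimates \eqref{6.8.2.18}--\eqref{6.9.9.18}, which extend Proposition~\ref{6.2.5.18} from $n\le 2$ to $n\le 3$. The plan is to treat the latter by replaying the proof of Proposition~\ref{6.2.5.18} essentially verbatim, and the former by combining the structural decomposition \eqref{7.6.1} of Proposition~\ref{6.3.1.18} with the decay catalogue of Section~\ref{decay}.

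For the quadratic estimates \eqref{6.8.2.18}, \eqref{6.9.5.18}, I split $\str{n}\F_\Q$ as in \eqref{6.7.3} into the piece $\str{n}\F_{\Q,1}$, where the inner $\p\phi$ is undifferentiated and hence pointwise bounded by \eqref{3.24.18.18}, and the piece $\str{n}\F_{\Q,2}$, where both wave factors carry at least one derivative. Since $b\ge 1$ forces $a\le n-1\le 2$ in $\str{n}\F_{\Q,2}$, all of the mixed-norm estimates \eqref{6.17.3.18}, \eqref{6.17.11.18}, \eqref{6.14.3.18}, \eqref{7.5.3.18} of Proposition~\ref{6.17.5.18} apply to both factors simultaneously, while $\str{n}\F_{\Q,1}$ is controlled by putting the outer top-order factor in $L^2$ via \eqref{7.5.4.18}. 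The cubic estimates \eqref{6.7.2.18}, \eqref{6.9.9.18} are treated analogously: place one factor pointwise via \eqref{3.24.18.18}, a second in $L^\infty_\ub L^2_u L^4_\omega$ or $L^2_u L^\infty_\ub L^4_\omega$ via \eqref{6.17.11.18} or \eqref{6.14.3.18}, and the top-order factor in $L^2_u L^2_\ub L^4_\omega$ via \eqref{6.3.11.18} (or in $L^2_u L^4_\omega$ on $\Hb_{\ub_1}^{u_1}$ via \eqref{6.14.10.18}). The bootstrap $(\BA3)$ extends the entire decay catalogue to third order, so the signature arithmetic is identical to that of Proposition~\ref{6.2.5.18}.

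For the commutator estimates I apply \eqref{7.6.1} and focus first on the principal contribution
$$\sum_{Z^1\sqcup Z^{n-1}=Z^n}\bigl(u_+^{\zeta(Z^1)}|H|+|Z\rp{1}H|\bigr)\,|\p^2 Z\rp{n-1}\phi|.$$
By Lemma~\ref{comm} the inner factor $\p^2 Z\rp{n-1}\phi$ is, modulo commutators of strictly lower total order, a linear combination of $\p Z\rp{n}\phi$ with one extra $\p$ absorbed into the product $Z\rp{n}$ (so $\zeta(Z^n)$ acquires an additional $-1$). For \eqref{6.5.8.18} I pair the pointwise bounds $r|H|+r|Z\rp{1}H|\les\dn^{1/2}u_+^{(1-\ga_0)/2+\zeta(Z^1)}$ from \eqref{4.25.3} with the $r^{-1/2}$-weighted $L^2$-bound on $\p Z\rp{n}\phi$ delivered by \eqref{7.5.4.18}; the exponents collapse to exactly $\dn M^{-1/2}{u_1}_+^{-\ga_0/2+1/2}$. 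For \eqref{6.9.3.18} the pointwise bound on $Z\rp{1}H$ is too weak against the heavier weight $r^{3/2}$, and I instead invoke the improved integrated estimate \eqref{7.23.3.18}, $\|r^{1/2}\p H\|_{L^2_\ub L^\infty}\les\dn^{1/2}M^{-1/2}{u_1}_+^{-\ga_0/2}$, pairing it with an $L^1_u L^2$-bound on $\p Z\rp{n}\phi$ obtained from $(\BA3)$ by Cauchy--Schwarz.

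The lower-order part of \eqref{7.6.1}, summed over $b\le n-2$, places the wave factor $\p^2 Z\rp{b}\phi$ at middle order; since $b+1\le n-1\le 2$ the pointwise decay \eqref{3.24.18.18} controls it, and the coefficient $|Z\rp{a}\Phi|$ with $a\le n$ is placed in an $L^2$-type norm. The main obstacle I foresee is the top-order case $a=n$, where a naive Hardy inequality would cost an extra $r^{-1/2}$-weight and break closure of the bootstrap; this is precisely the setting for which the improved Hardy inequality \eqref{6.17.2} of Proposition~\ref{5.20.1.18} was designed, and substituting it restores the missing $r^{1/2}$-weight. Once this point is handled, every remaining bound reduces to a direct Hölder application together with signature tracking via $\zeta(Z^n)=\zeta(Z^a)+\zeta(Z^b)+\zeta(Z^c)$.
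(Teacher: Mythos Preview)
Your treatment of $\str{n}\F_\Q$ and $\str{n}\F_\C$ tracks the paper closely, and your handling of $I_n$ for \eqref{6.5.8.18} via \eqref{4.25.3.18} and \eqref{7.5.4.18} is correct. Two points in the commutator analysis need correction.

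For \eqref{6.9.3.18} you assert that the pointwise bound on $Z\rp{1}H$ is too weak against the weight $r^{3/2}$ and propose to substitute \eqref{7.23.3.18}. This is a miscalculation: the pointwise estimate \eqref{4.25.3.18} is precisely what the paper uses and is sufficient. Writing $r^{3/2}|Z\rp{1}H|\,|\p^2 Z\rp{n-1}\phi|\les\dn^{1/2}u_+^{(1-\ga_0)/2+\zeta(Z^1)}\,r^{1/2}|\p^2 Z\rp{n-1}\phi|$, one places $r^{1/2}\p^2 Z\rp{n-1}\phi$ in $L^2_\ub L^2_\omega(\H_u)$ at cost $M^{-1/2}E^{1/2}[\p Z\rp{n-1}\phi](\H_u)$ and integrates in $u$; the resulting power $u_+^{-\ga_0-1/2}$ is integrable and the bound closes exactly. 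Your proposed substitute \eqref{7.23.3.18} concerns $\p H$, not $Z\rp{1}H$: when $Z_1=\Omega$ one has $|\Omega H|\approx r|\sn H|$, so applying \eqref{7.23.3.18} to $r^{3/2}|\Omega H|$ would require control of $\|r^{5/2}\sn H\|_{L^2_\ub L^\infty}$, which is not available.

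For the lower-order block $II_n$ you place $\p^2 Z\rp{b}\phi$ in $L^\infty$ via \eqref{3.24.18.18}, citing $b+1\le n-1$. But \eqref{3.24.18.18} holds only for $l\le n-2=1$, and $\p^2 Z\rp{b}\phi$ corresponds to $l=b+1$; hence the pointwise bound applies only when $b=0$. For $b=1$ (which occurs when $n=3$) the paper instead uses the $L^4_\omega$ bound $\|r\,\p^2 Z\rp{b}\phi\|_{L^\infty L^4_\omega}\les\dn^{1/2}u_+^{-\ga_0/2-3/2+\zeta(Z^b)}$ from \eqref{6.17.3.18}, paired with $Z\rp{a}\Phi$ in $L^2_u L^2_\ub L^4_\omega$ via \eqref{6.5.7.18}. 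Your invocation of the improved Hardy inequality \eqref{6.17.2.18} is correct for the $b=0$ subcase (where $a$ can reach $n$), but the $L^\infty\times L^2$ splitting you describe does not close when $b=1$.
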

\begin{proof}
If $n=0$, the commutator is identically $0$. Thus the corresponding  estimates  are trivially true.  Thus for the commutator estimates (\ref{6.5.8.18}) and (\ref{6.9.3.18}), we only need to consider the cases $1\le n\le 3$.
We first prove (\ref{6.5.8.18}).
Denote by $I_n$ and $II_n$ the terms on the right of (\ref{7.6.1.18}) respectively.

We  apply (\ref{4.25.3.18}) to $ Z\rp{i} H, i\le 1$ and bound $\p^2 Z\rp{n-1}\phi$ by using (\ref{7.5.4.18}). Thus,
\begin{align*}
\|r^\f12 I_n\|_{L^2(\D_{u_1}^{\ub_1})}&\les \sum_{Z^1\sqcup Z^{n-1}=Z^n}\|r( |Z\rp{1} H|+u_+^{\zeta(Z^1)}|H|)\|_{L^\infty(\D_{u_1}^{\ub_1})}\|r^{-\f12} \p^2 Z\rp{n-1}\phi\|_{L^2(\D_{u_1}^{\ub_1})}\\
&\les M^{-\f12} \dn {u_1}_+^{-\ga_0+\zeta(Z^n)}.
\end{align*}
%\begin{equation*}
%\begin{split}
%II_n&=\sum_{a
%\ge 1, Z^b\sqcup Z^a=Z^n } |Z\rp{a}H^{\a\b} \c \p^2 Z\rp{b}\phi|\les \overbracket{|Z\rp{1} H^{\a\b} \c \p^2 Z\rp{n-1} \phi|}^{II_{n,1}}\\
%&+\underbracket{\sum_{Z^{a_1}\sqcup Z^{b_1}\sqcup Z^b= Z^n, a_1\ge 1,  b\le  n-2} \dn^{\f12 b_1} u_+^{\zeta(Z^{b_1})}|Z\rp{a_1} \phi  \c \p^2 %Z\rp{b}\phi|}_{II_{n,2}}
%\end{split}
%\end{equation*}
For the term
$
II_n=\sum_{Z^{a_1}\sqcup Z^{b_1}\sqcup Z^b= Z^n,  b\le  n-2}u_+^{\zeta(Z^{b_1})}|Z\rp{a_1} \Phi|  |\p^2 Z\rp{b}\phi|,
$
we first derive
%by using  Proposition \ref{6.17.5.18} and (\ref{4.25.3.18}), we have
\begin{align*}
&\|r^\f12 II_n\|_{L^2(\D_{u_1}^{\ub_1})}\\
&\les \sum_{Z^{a_1}\sqcup Z^{b_1}\sqcup Z^b= Z^n}^{ 1\le b\le  n-2}  {u_1}_+^{\zeta(Z^{b_1})}\| u_+^{-\f12 \ga_0-\frac{3}{2}}r^\f12 Z\rp{a_1} \Phi\|_{L_\ub^2 L_u^2 L_\omega^4(\D_{u_1}^{\ub_1})}\c \|r  u_+^{\f12\ga_0+\frac{3}{2}} \p^2 Z\rp{b}\phi\|_{L^\infty L_\omega^4(\D_{u_1}^{\ub_1})}\\
&+\sum_{Z^{a_1}\sqcup Z^{b_1}= Z^n} {u_1}_+^{\zeta(Z^{b_1})}\| u_+^{-\f12\ga_0-\frac{3}{2}}r^{-\f12} Z\rp{a_1} \Phi\|_{L^2(\D_{u_1}^{\ub_1})}\c \|r u_+^{\f12\ga_0+\frac{3}{2}} \p^2 \phi\|_{L^\infty(\D_{u_1}^{\ub_1})}.
\end{align*}
With the following estimates
\begin{align}
&\| u_+^{-\frac{\ga_0}{2}-\frac{3}{2}}r^\f12 Z\rp{a_1} \Phi\|_{L_\ub^2 L_u^2 L_\omega^4(\D_{u_1}^{\ub_1})}\les \dn^\f12M^{-\f12} {u_1}_+^{-\ga_0+\zeta(Z^{a_1})},\, a_1\le n-1\label{6.17.6.18}\\
 &\|u_+^{\frac{\ga_0}{2}+\frac{3}{2}} r \p^2 Z\rp{b}\phi\|_{L^\infty L_\omega^4(\D_{u_1}^{\ub_1})}\les \dn^\f12 {u_1}_+^{\zeta(Z^b)},\,\, b\le n-2\label{6.17.7.18}\\
 & \| u_+^{-\frac{\ga_0}{2}-\frac{3}{2}}r^{-\f12} Z\rp{a_1} \Phi\|_{L^2(\D_{u_1}^{\ub_1})}\les \dn^\f12 M^{-\f12}{u_1}_+^{\zeta(Z^{a_1})-\ga_0}, \, \, a_1\le n\label{6.17.8.18}\\
&  \|u_+^{\frac{\ga_0}{2}+\frac{3}{2}}r \p^2 \phi\|_{L^\infty(\D_{u_1}^{\ub_1})}\les \dn^\f12\label{6.17.9.18},
\end{align}
we can directly obtain
\begin{equation*}
\|r^\f12 II_n\|_{L^2(\D_{u_1}^{\ub_1})}\les M^{-\f12}\dn {u_1}_+^{-\ga_0+\zeta(Z^n)}.
\end{equation*}
Now we derive the estimates (\ref{6.17.6.18})-(\ref{6.17.9.18}). We first apply (\ref{6.17.3.18}) and (\ref{4.25.3.18}) to obtain (\ref{6.17.7.18}) and (\ref{6.7.1.18}).

  By using  (\ref{6.5.7.18}) and (\ref{6.17.2.18}), we can obtain (\ref{6.17.6.18}) and (\ref{6.17.8.18}) if $\Phi$ is simply $\phi$.
  By using (\ref{5.18.11.18}) and (\ref{7.5.3.18}), we can obtain (\ref{6.17.6.18})  for $\Phi$; by using (\ref{5.18.11.18}) and (\ref{7.5.4.18}), (\ref{6.17.8.18}) is proved for $\Phi$.

 By combining the estimates of $I_n$ and $II_n$, (\ref{6.5.8.18}) is proved.

Next we prove (\ref{6.9.3.18}).
%In view of (\ref{4.25.3.18}),
%\begin{equation}\label{6.9.2.18}
%\|H \|_{L_u^2 L^\infty(\D_{u_1}^{\ub_1})}\les {u_1}_+^{-\frac{\ga_0}{2}}\dn^\f12, \quad \| r^\f12 H\|_{L_u^2 L^\infty(\D_{u_1}^{\ub_1})}\les %{u_1}_+^{-\frac{\ga_0}{2}+\f12}\dn^\f12.
%\end{equation}
  By using (\ref{4.25.3.18}) and (\ref{4.25.1.18}), we derive
\begin{align*}
&{u_1}_+^{-\zeta(Z^n)+\f12\ga_0}\|r^\frac{3}{2} I_n\|_{L_u^1 L_\ub^2 L_\omega^2(\D_{u_1}^{\ub_1})}\\
&\les {u_1}_+^{-\zeta(Z^{n-1})+\f12\ga_0}\dn^\f12\| u_+^{-\frac{\ga_0}{2}+\f12}r^{\f12} \p^2 Z\rp{n-1}\phi\|_{L_u^1 L_\ub^2 L_\omega^2 (\D_{u_1}^{\ub_1})}\\
&\les {u_1}_+^{\f12\ga_0}\dn^\f12  M^{-\f12} \sup_{-\ub_1\le u\le u_1}E^\f12[\p Z\rp{n-1} \phi](\H_u^{\ub_1}) u_+^{1+\frac{\ga_0}{2}-\zeta(Z^{n-1})}\c\int_{-\ub_1}^{u_1}  u_+^{-\ga_0-\f12}du  \\
&\les {u_1}_+^{ -\frac{\ga_0}{2}+\f12}\dn M^{-\f12},
\end{align*}
which holds for any  $Z^{n-1}\subset Z^n$.

To estimate $II_n$,  by repeating the derivation of (\ref{6.17.6.18}) and (\ref{6.17.8.18}),  we have
\begin{align}
&\| u_+^{-\frac{\ga_0}{2}-\f12}r^\f12 Z\rp{a_1} \Phi\|_{L_\ub^2 L_u^2 L_\omega^4(\D_{u_1}^{\ub_1})}\les \dn^\f12 M^{-\f12} {u_1}_+^{-\ga_0+1+\zeta(Z^{a_1})},\, a_1\le n-1\nn\\
 & \| u_+^{-\frac{\ga_0}{2}-\frac{1}{2}}r^{-\f12} Z\rp{a_1} \Phi\|_{L^2(\D_{u_1}^{\ub_1})}\les \dn^\f12 M^{-\f12}{u_1}_+^{-\ga_0+1+\zeta(Z^{a_1})}, a_1\le n\nn.
\end{align}
 Combining (\ref{6.17.7.18}), (\ref{6.17.9.18}) with the above two estimates, by a standard H\"older's inequality, we can derive
\begin{align*}
&\|r^\frac{3}{2} II_n \|_{L_u^1 L_\ub^2 L_\omega^2(\D_{u_1}^{\ub_1})}\les\| r^\frac{1}{2} u_+ II_n\|_{L^2(\D_{u_1}^{\ub_1})}\|u_+^{-1}\|_{L_u^2 L^\infty(\D_{u_1}^{\ub_1})}\les  \dn M^{-\f12}  {u_1}_+^{\zeta(Z^n)- \ga_0+\f12}.
\end{align*}
 (\ref{6.9.3.18}) follows by combining the estimates for $I_n$ and $II_n$ .

We now consider $\str{n}\F_\Q$.  Recall from (\ref{6.7.3.18}) in the proof of Proposition \ref{6.2.5.18} that $\str{n}\F_\Q= \str{n}\F_{\Q,1}+\str{n}\F_{\Q,2}$. We will control the terms of $\str{n} \F_{\Q,1}$ and $\str{n} \F_{\Q,2}$ in a similar way.

Similar to the estimate of $I_n$, by using (\ref{6.7.1.18}) and using (\ref{7.5.4.18}) due to $b\le n$, we can derive
\begin{align*}
\|r^\f12 \str{n}\F_{\Q, 1}\|_{L^2(\D_{u_1}^{\ub_1})}&\les \sum_{ Z^b \sqcup Z^c=Z^n}{u_1}_+^{\zeta(Z^c)-\f12-\frac{\ga_0}{2}}\|u_+^{\f12+\frac{\ga_0}{2}}r\p  \phi \|_{L^\infty(\D_{u_1}^{\ub_1})}\| r^{-\f12}\p Z\rp{b}\phi \|_{L^2(\D_{u_1}^{\ub_1})}\\
&\les  \dn M^{-\f12} {u_1}_+^{-\ga_0+\zeta(Z^n)}.
\end{align*}
Note that  $1 \le b \le  a\le n-1$ in $\str{n}\F_{\Q, 2}$. We can employ (\ref{7.5.3.18}) for the term $\p Z\rp{b}\phi$ and apply (\ref{6.17.3.18}) to $\p Z\rp{a}\phi$
\begin{align*}
\|r^\f12\str{n} \F_{\Q,2}\|_{L^2(\D_{u_1}^{\ub_1})}&\les \sum_{Z^a \sqcup Z^b \sqcup Z^c=Z^n}{u_1}_+^{\zeta(Z^c)} \|r^\f12 \p Z\rp{b}\phi\|_{L_\ub^2 L_u^2 L_\omega^4(\D_{u_1}^{\ub_1})}\|r\p Z\rp{a}\phi\|_{L^\infty  L_\omega^4(\D_{u_1}^{\ub_1})}\\
&\les  \dn M^{-\f12} {u_1}_+^{-\ga_0+\zeta(Z^n)}.
\end{align*}
By combining the above two estimates, we can derive (\ref{6.8.2.18}).

Next we prove  (\ref{6.9.5.18}).  We first consider $\str{n} \F_{\Q, 1}$. By using (\ref{6.7.1.18}) and (\ref{7.5.4.18}), we have
\begin{align*}
\|r^\frac{3}{2} \str{n}\F_{\Q, 1}\|_{L_u^1 L_\ub^2 L_\omega^2(\D_{u_1}^{\ub_1})}&\les \sum_{Z^b\sqcup Z^c=Z^n}u_+^{\zeta(Z^c)}\| r \p \phi\|_{L^2_u L^\infty(\D_{u_1}^{\ub_1})}\|r^{-\f12} \p Z\rp{b}\phi\|_{L^2(\D_{u_1}^{\ub_1})}\\
&\les \dn \sum_{Z^b\sqcup Z^c=Z^n}{u_1}_+^{-\frac{\ga_0}{2}+\zeta(Z^c)} M^{-\f12} {u_1}_+^{-\frac{\ga_0}{2}+\zeta(Z^b)+\f12}\\
&= \dn M^{-\f12} {u_1}_+^{-\ga_0+\f12+\zeta(Z^n)}.
\end{align*}
 Due to  $1\le b\le a\le n-1$ in $\str{n}\F_{\Q,2}$,
we can derive by using (\ref{6.17.11.18}) and (\ref{7.5.3.18}) that
\begin{align*}
\|r^\frac{3}{2} \str{n}\F_{\Q,2}\|_{L_u^1 L_\ub^2 L_\omega^2(\D_{u_1}^{\ub_1})}&\les \sum_{z^a\sqcup Z^b\sqcup Z^c= Z^n }u_+^{\zeta(Z^c)}\| r \p Z\rp{b} \phi\|_{L_u^2 L_\ub^\infty L_\omega^4(\D_{u_1}^{\ub_1})}\|r^\f12 \p Z\rp{a}\phi\|_{L_u^2 L_\ub^2 L_\omega^4(\D_{u_1}^{\ub_1})}\\
&\les \dn M^{-\frac{1}{2}} \sum_{Z^a \sqcup Z^b\sqcup Z^c=Z^n}{u_1}_+^{-\frac{\ga_0}{2}+\zeta(Z^b)+\zeta(Z^c)}  {u_1}_+^{\zeta(Z^a)-\frac{\ga_0}{2}+\f12}\\
&\les \dn M^{-\frac{1}{2}}  {u_1}_+^{-\ga_0+\f12+\zeta(Z^n)}.
\end{align*}

By combining the estimates for $\str{n}\F_{\Q,1}$ and $\str{n}\F_{\Q, 2}$, we can obtain  (\ref{6.9.5.18}).

At last we consider the term $\str{n}\F_\C$. We recall the definition of $\str{n}\F_\C$ from Lemma \ref{6.3.5.18} and will first show (\ref{6.7.2.18}).  Note that  at least one of $b$ or $c$ is $\le 1$. Assume, without loss of generality, $b\le c$. Since $a\ge 1$ in  $\str{n}\F_\C$, $b+c\le n-1$. Thus $ 0\le b\le c\le n-1$ and $b\le 1$.
In view of (\ref{6.7.1.18}), (\ref{6.14.3.18}) and (\ref{6.14.10.18}),
we have
\begin{align*}
&\|r^\f12\str{n }\F_\C\|_{L^2(\D_{u_1}^{\ub_1})}\\
&\les\sum_{Z^a \sqcup Z^b \sqcup Z^c \sqcup Z^d=Z^n, a\ge 1, b\le c}u_+^{\zeta(Z^d)}\|r\p Z\rp{b}\phi\|_{L^\infty(\D_{u_1}^{\ub_1})}\| r^\f12 \p Z\rp{c}\phi\|_{L_\ub^2 L_u^\infty L_\omega^4(\D_{u_1}^{\ub_1})}\\
&\cdot\| Z\rp{a} \phi\|_{L^\infty_\ub L_u^2 L_\omega^4(\D_{u_1}^{\ub_1})}\les \dn^{\frac{3}{2}} M^{-\f12} {u_1}_+^{-\frac{3}{2}\ga_0-\f12+\zeta(Z^n)},
\end{align*}
  which gives (\ref{6.7.2.18}).

 By using (\ref{6.7.1.18}), (\ref{6.17.11.18}) and (\ref{6.3.11.18}), we have
\begin{align*}
&\|r^\frac{3}{2}\str{n }\F_\C\|_{L_u^1 L_\ub^2 L_\omega^2(\D_{u_1}^{\ub_1})}\\
&\les\sum_{Z^a \sqcup Z^b \sqcup Z^c \sqcup Z^d=Z^n, a\ge 1, b\le c}u_+^{\zeta(Z^d)}\|r\p Z\rp{b}\phi\|_{L^\infty(\D_{u_1}^{\ub_1})}\| r \p Z\rp{c}\phi\|_{L_u^2 L_\ub^\infty L_\omega^4(\D_{u_1}^{\ub_1})}\\
&\cdot\|r^{-\f12} Z\rp{a} \phi\|_{ L_u^2 L_\ub^2 L_\omega^4(\D_{u_1}^{\ub_1})}\les \dn^\frac{3}{2}{u_1}_+^{-\frac{3\ga_0}{2}-\f12+\zeta(Z^n)}.
\end{align*}
Thus (\ref{6.9.9.18}) is proved.
\end{proof}
\subsection{Boundedness theorem}\label{7.10.5.18}
We now combine the energy and weighted energy inequalities in Section \ref{eng} and the error estimates in Section \ref{error} to give the boundedness of energy and the weighted energy. The proof follows similarly as for Proposition \ref{3.25.3.18}.
\begin{theorem}[Boundedness of energies]\label{6.30.5.18}
For $n\le 3$, under the assumptions (\ref{6.10.4.18}) and (\ref{6.10.3.18}), there hold for $(u, \ub)\in \I$ that
\begin{align}
\sup_{-\ub\le u' \le u} &{u'}_+^{\ga_0-2\zeta(Z^n)}( E[Z\rp{n} \phi](\H_{u'}^\ub)+E[Z\rp{n}\phi](\Hb_\ub^{u'}))\les\E_{n,\ga_0}+M^{-2} \dn^2 u_+^{1-\ga_0},  \label{6.30.2.18}\\
\sup_{-\ub\le u' \le u }& {u'}_+^{-2\zeta(Z^n)-1+\ga_0}\left(\W_1[Z\rp{n} \phi](\H_{u'}^{\ub})+\W_1[Z\rp{n}\phi](\Hb_\ub^{u'})+\W_1[Z\rp{n} \phi](\D_{u'}^\ub)\right)\label{6.30.3.18}\\
&\les \E_{n, \ga_0}+M^{-2}\dn^2 u_+^{1-\ga_0}\nn.
\end{align}
\end{theorem}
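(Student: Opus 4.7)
The plan is to mirror the scheme of Proposition \ref{3.25.3.18} from the semilinear section, but now combining the quasilinear energy and $r$-weighted energy inequalities of Section \ref{eng} (Propositions \ref{4.29.4.18} and \ref{4.29.5.18}) with the full commutator/nonlinearity estimates of Proposition \ref{6.17.4.18} and Lemma \ref{5.25.2.18}. The argument proceeds by induction on $n\in\{0,1,2,3\}$, with the base case $n=0$ given directly by Propositions \ref{4.29.4.18} and \ref{4.29.5.18} applied to $\phi$ itself (here $\str{0}\F_\C$ is absent and Lemma \ref{5.25.2.18} contributes nothing when $n=0$), using the initial data bounds (\ref{6.25.1.18}), (\ref{6.30.1.18}) and (\ref{6.23.7.18}).

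For the inductive step, fix $n\le 3$ and write the equation satisfied by $Z\rp{n}\phi$ as
\begin{equation*}
\widetilde{\Box}_\bg Z\rp{n}\phi-qZ\rp{n}\phi=[\widetilde{\Box}_\bg,Z\rp{n}]\phi+\str{n}\F+[Z\rp{n},q]\phi,
\end{equation*}
with $|\str{n}\F|\les \str{n}\F_\Q+\str{n}\F_\C$ by Lemma \ref{6.3.5.18}. I then split the source according to the $(\sF^\flat,\sF^\sharp)$ decomposition required by Proposition \ref{4.29.4.18}, taking
\begin{equation*}
\sF^\sharp=[\widetilde{\Box}_\bg,Z\rp{n}]\phi+\str{n}\F_\Q+\str{n}\F_\C,\qquad \sF^\flat=[Z\rp{n},q]\phi,
\end{equation*}
and apply Proposition \ref{4.29.4.18} with the choice $p=\zeta(Z^n)$. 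The five estimates (\ref{6.5.8.18}), (\ref{6.8.2.18}), (\ref{6.7.2.18}) of Proposition \ref{6.17.4.18}, after squaring and multiplying by the weight ${u_1}_+^{1-2\zeta(Z^n)+\ga_0}M^{-1}$ demanded by Proposition \ref{4.29.4.18}, each contribute $M^{-2}\dn^2\, {u_1}_+^{1-\ga_0}$ (the cubic pieces produce an extra factor $\dn M^{-1}\ll 1$ by the bootstrap assumption $\dn\le C_1 M^2$, so they are subordinate). For the $\sF^\flat$ contribution the first inequality of Lemma \ref{5.25.2.18} produces, on the right-hand side, $\E_{n,\ga_0}$ together with ${u_1}_+^{\ga_0-2\zeta(Z^m)}E[Z\rp{m}\phi](\Hb^{u_1}_{\ub})$ for strict subsets $Z^m\subsetneq Z^n$, and these are absorbed using the induction hypothesis. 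Combined with the data estimate $E[Z\rp{n}\phi](\Sigma_0^{u_1,\ub_1})\les \E_{n,\ga_0}{u_1}_+^{-\ga_0+2\zeta(Z^n)}$, which follows from (\ref{6.25.1.18}) and (\ref{7.23.4.18}) for the $H$-correction in (\ref{4.24.2.18}), this yields (\ref{6.30.2.18}).

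For the weighted bound (\ref{6.30.3.18}) I apply Proposition \ref{4.29.5.18} with the same $p=\zeta(Z^n)$, using (\ref{6.9.3.18}), (\ref{6.9.5.18}) and (\ref{6.9.9.18}) to control $\|r^{3/2}\sF\|_{L^1_u L^2_{\ub}L^2_\omega}^2$ by $\dn^2 M^{-1}{u_1}_+^{1-\ga_0+2\zeta(Z^n)}$ (again the cubic piece is subordinate), the second inequality of Lemma \ref{5.25.2.18} to feed back the strictly lower-order weighted fluxes from the induction hypothesis, and the already established standard energy bound (\ref{6.30.2.18}) to dominate the term $\sup_u u_+^{\ga_0-2\zeta(Z^n)}E[Z\rp{n}\phi](\H_u^{\ub_1})$ appearing on the right of Proposition \ref{4.29.5.18}. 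The data contributions $\|r^{1/2}\p Z\rp{n}\phi\|_{L^2(\Sigma_0)}^2$, $\|r^{-1/2}Z\rp{n}\phi\|_{L^2(\Sigma_0)}^2$, $q_0\|r^{1/2}Z\rp{n}\phi\|^2_{L^2(\Sigma_0)}$ and the spherical boundary terms are bounded by $\E_{n,\ga_0}\,{u_1}_+^{1-\ga_0+2\zeta(Z^n)}$ via (\ref{6.30.1.18}), (\ref{6.25.1.18}) and (\ref{6.23.7.18}).

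The main obstacle I expect is making the bookkeeping of signatures and weights consistent so that each error contribution closes against the target bound, in particular checking that the exponent $1-\ga_0$ in the correction term $M^{-2}\dn^2 u_+^{1-\ga_0}$ is produced correctly by all of the Proposition \ref{6.17.4.18} inputs (the exponent depends on the interplay between the weights $u_+^{\pm(1-2p+\ga_0)}$ in the energy inequality and the factors $u_+^{-\ga_0/2+1/2}$ and $u_+^{-\ga_0+1+2\zeta(Z^n)}$ on the RHS of the error estimates). A secondary technical point is ensuring the induction in Lemma \ref{5.25.2.18} is actually a strict induction: the subsets $Z^m\subsetneq Z^n$ have either strictly fewer factors or the same number with a smaller signature, so the scheme does not circularly require the top-order bound we are trying to prove. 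The final sentence is an application, outside this theorem, to improve the bootstrap constants so that (\ref{4.25.1.18})--(\ref{4.25.2.18}) are improved, following exactly the argument at the end of Section \ref{semi}.
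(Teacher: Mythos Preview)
Your proposal is correct and follows essentially the same approach as the paper: the same decomposition $\sF^\sharp=[\widetilde{\Box}_\bg,Z\rp{n}]\phi+\str{n}\F_\Q+\str{n}\F_\C$, $\sF^\flat=[Z\rp{n},q]\phi$, the same application of Propositions \ref{4.29.4.18} and \ref{4.29.5.18} with $p=\zeta(Z^n)$, the same error inputs (\ref{6.5.8.18}), (\ref{6.8.2.18}), (\ref{6.7.2.18}) (resp.\ (\ref{6.9.3.18}), (\ref{6.9.5.18}), (\ref{6.9.9.18})) from Proposition \ref{6.17.4.18} together with Lemma \ref{5.25.2.18}, and the same induction on $n$ using that the lower-order terms vanish when $n=0$. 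Your bookkeeping concern about the exponent $1-\ga_0$ is handled exactly as you anticipate, and the paper's proof confirms the cubic pieces are subordinate via $\dn M^{-1}<1$.
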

\begin{remark}
We then can find a universal constant $C_3\ge 1$ such that both of the inequalities are bounded by
\begin{equation}
C_3 (\E_{n,\ga_0}+M^{-2} \dn^2 u_+^{1-\ga_0}).
\end{equation}
\end{remark}
\begin{proof}
We first consider (\ref{6.30.2.18}). Similar to (\ref{5.25.3.18}), we have $\widetilde{\Box}_{\bg}Z\rp{n}\phi-qZ\rp{n} \phi=\sF$, where
\begin{equation*}
\sF^\sharp=[\widetilde{\Box}_\bg, Z\rp{n}]\phi+\str{n} \F_\C+\str{n}\F_\Q, \qquad\sF^\flat =[Z\rp{n}, q]\phi.
\end{equation*}
 Then we apply the first inequality in Lemma \ref{5.25.2.18} to treat $\sF^\flat$, and apply (\ref{6.5.8.18}), (\ref{6.8.2.18}) and (\ref{6.7.2.18}) to treat $\sF^\sharp$. By using Proposition \ref{4.29.4.18},  we have due to $\dn M^{-1}< 1$ that
\begin{align*}
&\sup_{-\ub\le u' \le u} {u'}_+^{\ga_0-2\zeta(Z^n)}( E[Z\rp{n}\phi](\H_{u'}^\ub)+E[Z\rp{n} \phi](\Hb_\ub^{u'}))\\
&\les \E_{n,\ga_0}+M^{-2}\dn^2 (u_+^{1-\ga_0 } +M u_+^{-2\ga_0})+ \sup_{-u\le \ub'\le \ub}\sum_{i=1}^n  E[Z\rp{n-i} \phi](\Hb_{\ub'}^u) u_+^{\ga_0-2\zeta(Z^{n-i})},
\end{align*}
where the last term vanishes when $n=0$. Thus under the assumptions (\ref{6.10.4.18}) and (\ref{6.10.3.18}), (\ref{6.30.2.18}) holds true by induction.

Let $(u_1, \ub_1)\in \I$ be fixed.  To see the weighted energy estimate for $Z\rp{n}\phi$, by using Proposition \ref{4.29.5.18}, (\ref{6.23.7.18}), (\ref{6.25.1.18}) and the fact that  $\|r^\f12 \p Z\rp{a}\phi\|^2_{L^2(\Sigma_0^{u_1, \ub_1})}\les {u_1}_+^{-\ga_0+1}\E_{a, \ga_0}$ for $a\le n$,  we derive
 \begin{align*}
  \sup_{-\ub_1 \le u\le u_1} {u}_+^{\ga_0-1-2\zeta(Z^n)}&\big(\W_1[Z\rp{n} \phi](\D_{u}^{\ub_1})+ \W_1[Z\rp{n}\phi](\H_{u}^{\ub_1})+\W_1[Z\rp{n}\phi](\Hb_{\ub_1}^{u})\big)\\
&\les {u_1}_+^{-\ga_0}\dn^2 (M^{-1}+1)+\sum_{Z^m\subsetneq Z^n} {u_1}_+^{-2\zeta(Z^m)-1+\ga_0} \W_1[Z\rp{m} \phi](\D_{u_1}^{\ub_1})\\
&  + \sup_{ -\ub_1\le -\ub \le u\le u_1}\{ u_+^{\ga_0-2\zeta(Z^n)}  (E[Z\rp{n}\phi](\H_u^\ub)+ E[Z\rp{n}\phi](\Hb_\ub^u))\} + \E_{n, \ga_0},
  \end{align*}
  where we also employed the second inequality in Lemma \ref{5.25.2.18} and (\ref{6.9.3.18}), (\ref{6.9.5.18}) and (\ref{6.9.9.18}).
We then substitute the result of (\ref{6.30.2.18}) followed with an induction argument   to derive (\ref{6.30.3.18}).
\begin{align*}
\sup_{-\ub_1 \le u\le u_1} {u}_+^{\ga_0-1+2\zeta(Z^n)}&\big(\W_1[Z\rp{n}\phi](\D_{u}^{\ub_1})+ \W_1[Z\rp{n}\phi](\H_{u}^{\ub_1})+\W_1[Z\rp{n}\phi](\Hb_{\ub_1}^{u})\big)\\
&\les {u_1}_+^{-\ga_0}\dn^2 (M^{-1}+1) +M^{-2} \dn^2 {u_1}_+^{1-\ga_0} +  \E_{n, \ga_0}.
\end{align*}
Thus (\ref{6.30.3.18}) is proved.
\end{proof}
\subsection{Proof of Theorem \ref{thm_quasi}}
With $\dn=\C_1 \E_{3,\ga_0}$ and $\C_1=4 C_3 C$,   in view of (\ref{6.30.2.18}), (\ref{6.30.3.18}) and $M=3C \delta_1^{\f12}$,  to improve the bootstrap assumptions (\ref{4.25.1.18}) and (\ref{4.25.2.18}), we need to have
\begin{align*}
&C_3(\C_1^{-1}\dn +(3C)^{-2}\delta^{-1}_1\dn \C_1 \E_{3,\ga_0} u_+^{1-\ga_0})<2 \dn,
\end{align*}
where $C, \C_1, C_3>1$. Identically,
\begin{equation*}
C_3(\C_1^{-1}+(3C)^{-2}\C_1 u_+^{1-\ga_0})<2
\end{equation*}
which requires
\begin{equation}\label{6.30.6.18}
\frac{4}{9} C_3^2 C^{-1}u_+^{1-\ga_0}\le\frac{4}{9} C_3^2 C^{-1}{u_0(R)}_+^{1-\ga_0} <\frac{7}{4}.
\end{equation}
%\begin{equation*}
%C_3\frac{1}{9} C^{-2}\delta^{-1}_1\dn \C_1 \E_{3,\ga_0} u_+^{1-\ga_0}<\frac{5}{4}\dn.
%\end{equation*}
%which requires
%\begin{equation}\label{6.30.6.18}
%\frac{1}{9}\C_3 C^{-2} \C_1  u_+^{1-\ga_0}<\frac{5}{4}.
%\end{equation}
Next we determine $R$ so that (\ref{6.10.4.18})  can be improved and (\ref{6.10.3.18}) can be satisfied.

Note that  by using (\ref{4.25.3.18}), for $(u, \ub)\in \I$,
\begin{equation}\label{7.16.4.18}
r| H^{\a\b}(u,\ub, \omega)|\le C_2\dn^\f12 u_+^{-\f12\ga_0+\f12}.
\end{equation}
Thus, in view of  (\ref{6.30.4.18}),
\begin{equation*}
 \sup_{\a,\b}r (h-H^{\a\b})> C 3 \delta^{\f12}_1-  u_+^{-\frac{\ga_0}{2}+\f12}\C^\f12_1 C_2 \E^\f12_{3,\ga_0}.
\end{equation*}
With
\begin{equation}\label{6.30.7.18}
u_+^{-\frac{\ga_0}{2}+\f12} \C^\f12_1 C_2\le {u_0(R)}_+^{-\frac{\ga_0}{2}+\f12} \C^\f12_1 C_2<2C,
\end{equation}
the strict inequality in (\ref{6.10.4.18}) holds true.

By a direct substitution, to make (\ref{6.10.3.18})  hold, we need
\begin{equation*}
\C_1^\f12 (\frac{R}{2})^{\f12-\frac{\ga_0}{2}}< \f12 C\ti C^{-1}.
\end{equation*}
 Thus we require  $u_0(R)$ to satisfy the second inequalities in (\ref{6.30.6.18}), (\ref{6.30.7.18}) and the above inequality. With the help of  ${u_0(R)}_+>\f12 R$, we can fix $ R(\ga_0, C)$, the lower bound of $R$, such that these inequalities hold.

\section{Einstein scalar fields}\label{Eins}
In this section, we apply the approach in Section \ref{quasi} to prove  the nonlinear  stability result for Einstein scalar fields, exterior to a schwarzschild cone with small positive mass, which is stated in Theorem \ref{eins_thm}.

 Under the wave coordinates \begin{footnote}{The wave coordinates $\{x^\mu\}_{\mu=0}^3 $ are required to be the solution of $\Box_\bg x^\mu=0$ where $\Box_\bg$ is the Laplace-Beltrami operator of the Einstein metric $\bg$. }\end{footnote}, let $\bh_{\mu\nu}= \bg_{\mu\nu}-\bm_{\mu\nu}$.  The Einstein equation with scalar fields  takes the form of
\begin{equation}\label{7.7.1.18}
\left\{\begin{array}{lll}
&\widetilde{\Box}_\bg \bh_{\mu\nu}=(A^{\a\b}_{\mu\nu}+ G^{\a\b}_{\mu\nu}(\bh) )\p_\a \bh  \p_\b \bh+\p_\mu \phi\c \p_\nu \phi+ \bg_{\mu\nu} q_0 \phi^2 \\
&\widetilde{\Box}_\bg \phi-q_0 \phi= 0
\end{array}\right.,
\end{equation}
where we assume the constant $0\le q_0\le 1$ without loss of generality. For each fixed $(\mu, \nu)$,  $A_{\mu\nu}$  is a matrix of constant components.
    For each fixed $(\mu,\nu)$, $G_{\mu\nu}^{\a\b}(\bh)$ are smooth functions of $\bh$. They  represent the  product   $\bh_{\a\b}$ or  $H^{\a\b}$ with  components of  $\bg$.  We will symbolically represent all such functions as $G(\bh)$. Such  $G(\bh)$   vanishes at $(\bh_{\a\b})\equiv({\bf 0})$.   Other constant coefficients on the  righthand side  of (\ref{7.7.1.18}) have been simplified to be $1$.  \begin{footnote}{See \cite{Lindrod2} for the more detailed structure. We do not need the weak null structure to prove  the result of Theorem \ref{eins_thm}.}\end{footnote}

Let $m_0>0$ be a fixed small number.
Denote $\cir{\bh}{}_ {\mu\nu}=\frac{m_0}{r}\delta_{\mu\nu} .$  We decompose
$$\bh_{\mu\nu}=\bh^1_{\mu\nu}+\cir{\bh}{}_{\mu\nu}.$$

 This reduces (\ref{7.7.1.18}) to the equations for $(\bh^1, \phi)$,
\begin{equation}\tag{ES}\label{7.7.2.18}
\left\{\begin{array}{lll}
&\widetilde{\Box}_\bg \bh^1_{\mu\nu}=\N(\bh)(\p \cir{\bh}+\p \bh^1) \c \p \bh^1 +\p \phi\c \p \phi+ \bg_{\mu\nu} q_0 \phi^2+\S_{\mu\nu} \\
&\widetilde{\Box}_\bg \phi-q_0 \phi= 0
\end{array}\right.,
\end{equation}
where $\N(\bh)=1+G(\bh)$ symbolically and
\begin{equation}\label{7.9.1.18}
S_{\mu\nu}=-\widetilde{\Box}_\bg \cir{\bh}{}_{\mu\nu}+\N(\bh)\p \cir{\bh}\p \cir{\bh}.
\end{equation}

We remark that the structure of wave coordinates implies (see \cite[Lemma 8.1]{Lindrod2})
\begin{equation}\label{wave_coord}
|\Lb H^{\Lb \Lb}|\les |\bar\p H|+|H\c \p H|
\end{equation}
which can provide  some convenience to the proof of boundedness of energy.  This  will be shown shortly.   In this section, we  prove Theorem \ref{eins_thm} by  applying our approach in Section \ref{quasi} to $(\bh^1, \phi)$ with potentials $(0, q_0)$.

Let  $1<\ga_0<2$ be fixed. We define for the initial data $\bh^1[0]=(\bh^1(0), \p_t \bh^1(0))$  and $\phi[0]=( \phi(0), \p_t\phi(0))$ the weighted norm
\begin{equation*}
\E_{3,\ga_0, R}(\bh^1, \phi)= \E_{3,\ga_0, R, 0}(\bh^1[0])+\E_{3,\ga_0, R, q_0}(\phi[0]).
\end{equation*}
The extra subindex $C$ of $\E_{3,\ga_0, R, C}$  on the right denotes  the constant potential function of each equation.
In this section, we assume
\begin{equation}\label{7.25.1.18}
\E_{3, \ga_0, R}(\bh^1, \phi)\le C_0 m_0^2
\end{equation}
where $C_0\ge 1$ is a fixed constant, $R\ge 2$ with  the lower bound determined later.

$\E_{3,\ga_0, R, 0}(\bh^1[0])<\infty $  implies  $
\liminf_{|x|\rightarrow\infty} \bg_{\mu\nu}(0,x)= \cir{\bg}{}_{\mu\nu}.
$
where
$\cir{\bg}{}_{\mu\nu}=\bm_{\mu\nu}+\frac{m_0}{r} \delta_{\mu\nu}$.
It is direct to compute
\begin{equation}\label{7.7.6.18}
\cir{\bg}{}^{L L}=\cir{\bg}{}^{\Lb \Lb}<\frac{-m_0 }{2r}.
\end{equation}

To prove Theorem \ref{eins_thm}, we fix
$$h=-\frac{m_0}{20 r},  \mbox{ i.e. } M_0=-\frac{m_0}{20}$$
 and  show that with $M=m_0$ and the constant potentials $(0, q_0)$, all the norms in (\ref{3.19.1}) for $Z\rp{i}(\bh^1, \phi), \, i\le 3$  remain small in the region $\{u\le u_0(M_0)\}$ provided that $m_0$ in  (\ref{7.25.1.18}) is sufficiently small and  $R\ge R_0(\ga_0, C_0)$. The constant $R_0(\ga_0, C_0)$ will be specified at the end of the proof.

\subsection{Preliminaries}
\begin{lemma}\label{7.29.4.18}
If   $\E_{3,\ga_0, R}(\bh^1[0])\le C_0 m_0^2, \, C_0\ge 1$, there exists  a constant $\ti C_1\ge 1$ depending on $C_0$ and $\ga_0$, if $ \ti C_1 R^{-\frac{\ga_0}{2}+\f12}\le 1$, there hold
\begin{equation}\label{7.10.2.18}
r(h-H^{\Lb\Lb})>\frac{m_0}{3},\quad r (h-H^{LL})>\frac{m_0}{3}, \mbox{ on } \Sigma_0\cap \{r\ge R\}.
\end{equation}
\end{lemma}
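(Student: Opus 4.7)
The plan is to decompose $H^{\a\b}$ into a static part coming from $\cir\bh$ and a perturbation coming from $\bh^1$, and to show that on $\Sigma_0 \cap \{r \ge R\}$ the static part already gives a coercive lower bound (essentially by the positive mass identity (\ref{7.7.6.18})), while the $\bh^1$-perturbation is controlled by the weighted norm $\E_{3,\ga_0,R}(\bh^1[0]) \le C_0 m_0^2$ and is strictly smaller provided $R$ is large.

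First, since $\bg_{\mu\nu} = \bm_{\mu\nu} + \cir\bh_{\mu\nu} + \bh^1_{\mu\nu}$ with $\cir\bh_{\mu\nu} = (m_0/r)\delta_{\mu\nu}$, and since each component $H^{\a\b}(\bh)$ is a smooth function of $\bh$ vanishing at $\bh = 0$ (namely a component of $\bg^{-1} - \bm^{-1}$), I can write
\begin{equation*}
H^{\a\b}(\bh) - \cir H{}^{\a\b} = \int_0^1 (D H^{\a\b})(\cir\bh + s \bh^1) \cdot \bh^1 \, ds,
\end{equation*}
so as long as $\bh^1$ stays in a small fixed neighbourhood of $0$ (ensured by smallness of $m_0$), $|H(\bh) - \cir H| \lesssim |\bh^1|$ on $\Sigma_0 \cap \{r \ge R\}$. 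Next I apply the pointwise bound (\ref{6.24.9.18}) with $i=1$ to each component of $\bh^1$ on $\Sigma_0$, where $u_+ = r$, to obtain
\begin{equation*}
r|\bh^1(0,x)| \lesssim r^{(1-\ga_0)/2} \E_{1,\ga_0,R}(\bh^1[0])^{1/2} \lesssim C_0^{1/2} m_0 \, r^{(1-\ga_0)/2}.
\end{equation*}
Combining with the previous Lipschitz estimate gives $r|H^{\a\b} - \cir H{}^{\a\b}| \le C\sqrt{C_0}\, m_0 \, r^{(1-\ga_0)/2}$ on $\Sigma_0 \cap \{r \ge R\}$, where $C$ depends only on $\ga_0$ (and on uniform bounds for $DH$ near $0$, which are universal).

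Using (\ref{7.7.6.18}), which reads $r \cir H{}^{LL} = r\cir\bg{}^{LL} < -m_0/2$ (and the same for $\Lb\Lb$; recall $\bm^{LL} = \bm^{\Lb\Lb} = 0$), and remembering that $rh = -m_0/20$, I compute
\begin{equation*}
r(h - H^{LL}) = rh - r\cir H{}^{LL} - r(H^{LL} - \cir H{}^{LL}) \ge -\tfrac{m_0}{20} + \tfrac{m_0}{2} - C\sqrt{C_0}\, m_0\, r^{(1-\ga_0)/2} = \tfrac{9 m_0}{20} - C\sqrt{C_0}\, m_0\, r^{(1-\ga_0)/2},
\end{equation*}
and identically for the $\Lb\Lb$ component. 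Since $r \ge R$ and $(1-\ga_0)/2 < 0$, the last term is bounded by $C\sqrt{C_0}\, m_0\, R^{(1-\ga_0)/2}$. Setting $\tilde C_1 := \frac{60 C\sqrt{C_0}}{7}$ (so that $\tilde C_1 R^{-\ga_0/2 + 1/2} \le 1$ forces $C\sqrt{C_0}\, R^{(1-\ga_0)/2} \le 7/60 = 9/20 - 1/3$) produces the desired lower bound $m_0/3$ for both expressions.

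There is no serious obstacle here: the result is really a quantitative consequence of the negativity of $r\cir H{}^{LL}$ from the positive mass term, combined with a Sobolev/Hardy-type bound on the initial slice for $\bh^1$. The only point requiring a little care is to check that the Lipschitz constant of $H$ as a function of $\bh$ is indeed universal in the small-data regime (which is automatic once $m_0$ is small, since then $\bh$ lies in a compact neighbourhood of $0$ where $H$ is smooth), so that the constant $C$ above, and hence $\tilde C_1$, depends only on $\ga_0$ and $C_0$.
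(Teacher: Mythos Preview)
Your proof is correct and follows essentially the same approach as the paper: decompose $H^{\a\b}$ into the static part $\cir H{}^{\a\b}$ and a perturbation, use (\ref{7.7.6.18}) to get the lower bound $\tfrac{9m_0}{20}$ from the static part, and control the perturbation via the pointwise bound (\ref{6.24.9.18}) on $\bh^1$ at $t=0$. The only cosmetic difference is that the paper writes $|H^{\mu\nu}-\cir H{}^{\mu\nu}|\lesssim |\bh^1|+|\bh|^2$ from the explicit expansion $H^{\mu\nu}=-\bh^{\mu\nu}+O(\bh^2)$, so an additional quadratic error $r^{-1}(m_0+\E_{2,\ga_0}^{1/2})^2$ appears in its final inequality, whereas your mean-value argument absorbs this into the Lipschitz constant and yields $|H-\cir H|\lesssim|\bh^1|$ directly; also note that on $\Sigma_0$ one has $u_+=r_*\approx r$ rather than exactly $u_+=r$, but this only affects constants.
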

\begin{proof}
It follows from (\ref{6.24.9.18}) that
\begin{equation}\label{7.10.4.18}
r |\p\rp{l} \bh^1(u, \ub, \omega)|\les u_+^{-\frac{\ga_0}{2}+\f12-l} \E^\f12_{2,\ga_0}, \qquad l\le 1
\end{equation}
Thus  $r|\bh|\les  (\E^\f12_{2,\ga_0}+m_0)$.

 For small $\bh$, $H^{\mu\nu}=-\bh^{\mu\nu}+\O^{\mu\nu}(\bh^2) $ where $\bh^{\mu\nu}=\bm^{\mu\mu'}\bm^{\nu\nu'}\bh_{\mu'\nu'}$ and $\O^{\mu\nu}(\bh^2)$ vanishes to second order at $\bh=0$.
Thus
\begin{equation}\label{7.29.3.18}
|H^{\mu\nu}-\cir{H}{}^{\mu\nu}|\les |\bh^1|+|\bh|^2
\end{equation}
where $\cir{H}{}^{\mu\nu}=\cir{\bg}{}^{\mu\nu}-\bm^{\mu\nu}.$
By using the above two  inequalities, (\ref{7.10.4.18}) and  (\ref{7.7.6.18}), we can derive
\begin{equation*}
r (h-H^{\Lb \Lb})\ge  \frac{9}{20} m_0 -C (u_+^{-\frac{\ga_0}{2}+\f12} \E^\f12_{2,\ga_0}+r^{-1} (m_0+\E^\f12_{2,\ga_0})^2)
\end{equation*}
where the universal constant $C\ge 1$. Due to $|u(R)|>\frac{R}{2}$ and $m_0<1$, if  we choose $R$ such that
\begin{equation*}
C((\frac{R}{2})^{\f12-\frac{\ga_0}{2}}C_0^\f12+R^{-1}(1+C_0^\f12)^2 )<\frac{1}{10}
\end{equation*}
 Therefore we may require $ \f12 C (\frac{R}{2})^{\frac{1-\ga_0}{2}}(C^\f12_0+2)^2\le \frac{1}{10}$, then
 (\ref{7.10.2.18})  holds.
 The same treatment works the same for $r(h-H^{LL})$.  The proof is completed.
\end{proof}

The above result shows that  the assumption of (\ref{6.10.4.18}) holds true on $\Sigma_0\cap \{r\ge R\}$ with  $M=m_0$ if $R^{\f12-\frac{\ga_0}{2}}\le {\ti C_1}^{-1}$. We will further specify the lower bound $R_0(\ga_0, C_0)$ during the proof of Theorem \ref{eins_thm}.

{\bf Bootstrap assumptions}
To prove Theorem \ref{eins_thm}, we make the following bootstrap assumptions.

Let $\ub_*>-u_0$ be any fixed number, where $u_0=-r_*(-\frac{m_0}{20}, R).$ In $\I=\{(u,\ub), -\ub_*\le -\ub\le u\le u_0\}$, suppose   (\ref{6.10.4.18}) holds,  and  the  assumption   $(\BA{3})$ holds for $(\bh^1, \phi)$ with  $\dn=C_1 m_0^2 $, $C_1\ge 1$ to be chosen.  $(\BA{3})$ is restated as below for $n\le 3$ and $Z\in\{\Omega, \p\}$,
\begin{equation}\label{7.26.9.18}
\begin{split}
&E[Z\rp{n} (\bh^1, \phi)](\H_u^\ub)+E[Z\rp{n} (\bh^1, \phi)](\Hb_\ub^u)\le  2\dn u_+^{-\ga_0+2\zeta(Z^n)}\\
&\W_1[Z\rp{n} (\bh^1, \phi)](\H_u^\ub)+\W_1[Z\rp{n} (\bh^1,\phi)](\Hb_\ub^u)+\W_1[Z\rp{n}(\bh^1, \phi)](\D_u^\ub)\le  2\dn u_+^{-\ga_0+1+2\zeta(Z^n)}
\end{split}
\end{equation}
for all $(u, \ub)\in \I$.

Thus the full set of decay estimates in Section \ref{decay} hold for $(\bh^1,\phi)$. We will quote the results in the proofs whenever necessary.

Before starting to prove Theorem \ref{eins_thm}, we highlight the difference in analysis in comparison with Section \ref{quasi}.
\begin{itemize}
\item In the  Einsteinian case, there holds $H^{\mu\nu}=-\bh^{\mu\nu}+O(\bh^2)$. Thus $H$ does not depend on $\p(\bh^1, \phi)$. In terms of regularity it seems better than in Section \ref{quasi}. Nevertheless, the  small static part $\bh^0$ in $\bh$ slows down the decay of $H$. In the proofs of Theorem \ref{thm2} and Theorem \ref{thm_quasi}, we take advantage of the fact that  $R^{1-\ga_0}$ can be small so as to improve the bootstrap assumption, while in this section, we lose such extra smallness for the critical terms. This requires us to separate carefully the evolutionary part of the metric $\bh^1$ from $\bh^0$ in analysis. For the borderline terms appeared in the error estimates, we  bound them  by energies which have been controlled in the induction. Although these  energies are of the same order, the signature of $Z^n$ derivative is strictly smaller. See Lemma \ref{7.28.2.18} and Section \ref{einsbd}.

    \item  Comparing the equations of (\ref{7.7.2.18}) with the standard equation (\ref{eqn_1}), (\ref{7.7.2.18}) have quite a few extra error terms on the righthand side.  The term $q_0 \bg_{\mu\nu}\phi^2$ in (\ref{7.7.2.18}) requires an improved decay  estimate for the scalar field if $q_0>0$. The small static part $\bh^0$ of the metric also gives a set of error terms. Such set of error estimates are included in Lemma \ref{7.8.15.18}.

   \item The changed asymptotic behavior of the metric $H$ influences the  proofs of Proposition \ref{4.29.4.18} and Proposition \ref{4.29.5.18}.We will  go   through the analysis in the proof of Proposition \ref{7.28.3.18}.
\end{itemize}

 We first give an improve decay estimate of $\phi$  compared with (\ref{3.24.11.18}) in  Proposition \ref{5.20.1.18}.
\begin{proposition}
For $(u, \ub)\in \I$, there holds
\begin{equation}\label{7.8.3.18}
q_0|r^\frac{5}{4} Z\rp{l}\phi|^2 \les (\E_{l+2,\ga_0}+\dn)u_+^{-\ga_0+2\zeta(Z^l)+\frac{1}{2}},\quad  l\le 1.
\end{equation}
\end{proposition}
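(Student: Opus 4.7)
The plan is to apply the weighted Sobolev inequality (\ref{6.24.11.18}) along the incoming null cone $\Hb_\ub^u$ to the function $f = q_0^{1/2} Z\rp{l}\phi$, with parameters $\gamma = 5/4$, $\gamma_2 = 3/2$, and $\gamma_0' = -1/2$ (so $\gamma_0' + 2\gamma_2 = 5/2 = 2\gamma$). The critical observation is that the weighted-energy flux $\W_1[\cdot](\Hb_\ub^u)$ defined in (\ref{3.19.1}) already contains the integral $\int r^3 q_0 (\cdot)^2\, du\, d\omega$, which matches the $r^{2\gamma_2}q_0 = r^3 q_0$ weight appearing in the first Sobolev factor; this matching is precisely the reason such an improved estimate is available when $q_0>0$.

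First I would bound the first Sobolev factor using the bootstrap assumption (\ref{7.26.9.18}):
\[ \int_{\Hb_\ub^u} r^3 q_0\, |\Omega\rp{\le 2}Z\rp{l}\phi|^2\, du\, d\omega \;\le\; \W_1[\Omega\rp{\le 2}Z\rp{l}\phi](\Hb_\ub^u) \;\lesssim\; \dn\, u_+^{1-\gamma_0+2\zeta(Z^l)}. \]
For the second Sobolev factor I would use that $q_0$ is constant and $h=M_0/r$ is radial, so $\Omega$ commutes with $r^{5/4}$, $q_0^{1/2}$, and $\Lb'$. Expanding $\Lb'(r^{5/4}\,\cdot\,)$ by Leibniz together with $\Lb' r = -(1+h)$ produces a leading term carrying weight exactly $r^{-1/2}\cdot r^{5/2} = r^2$ on $|\Lb' Z\rp{l}\phi|^2$, which matches the standard energy weight on $\Hb_\ub$. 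Splitting $\Lb' = \Lb + hL$, the $\Lb$-contribution is absorbed into $E[\Omega\rp{\le 1}Z\rp{l}\phi](\Hb_\ub^u)$ after noting $q_0\le 1$, while the $hL$-contribution gives $h^2 r^2 = M^2$ and is controlled via the $Mr|Lg|^2$ term in $E$. The low-order leftover $\int q_0|\Omega\rp{\le 1}Z\rp{l}\phi|^2\,du\,d\omega$ is bounded by $u_+^{-3}\W_1[\Omega\rp{\le 1}Z\rp{l}\phi](\Hb_\ub^u)$ using $r\ge u_+$. Altogether the second factor is $\lesssim \dn\, u_+^{-\gamma_0+2\zeta(Z^l)}$.

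It remains to treat the initial-sphere contribution $\int_{S_{-\ub,\ub}}|r^{5/4}q_0^{1/2}\Omega\rp{\le 1}Z\rp{l}\phi_0|^4 r^{-2}\, d\omega$. Since $\mathcal{E}_{l+2,\gamma_0,R,q_0}$ includes the $q_0$-weighted integrals $\int (1+r)^{\gamma_0+2k}q_0|\p^k\phi_0|^2\, dx$ for $k\le l+2$, two successive weighted Hardy integrations on $\Sigma_0\cap\{r\ge R\}$ (in the spirit of the proof of (\ref{6.23.7.18})) yield $\int_{S_r} r^{2+\gamma_0-2\zeta(Z^l)} q_0\, |\Omega\rp{\le 2} Z\rp{l}\phi_0|^2\, d\omega \lesssim \mathcal{E}_{l+2,\gamma_0,R,q_0}$; Sobolev embedding on $S^2$ then upgrades this to the pointwise bound $q_0|\Omega\rp{\le 1}Z\rp{l}\phi_0|^2 \lesssim r^{-2-\gamma_0+2\zeta(Z^l)}\mathcal{E}_{l+2,\gamma_0,R,q_0}$. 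At $r = \ub\ge u_+$ this matches, in fact beats, the target power of $u_+$. Combining the three contributions in (\ref{6.24.11.18}) and taking fourth roots yields (\ref{7.8.3}).

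The main obstacle is the weight matching in the second Sobolev factor: any choice $\gamma_0' > -1/2$ would place a power of $r$ strictly greater than $2$ on $|\Lb'Z\rp{l}\phi|^2$, exceeding what is controlled by the standard energy on $\Hb_\ub$, where no $\W_1$-type bound on $|\Lb f|^2$ is available. Forcing $\gamma_0' = -1/2$ is exactly what restores the balance, and this works only because the compensating large $\gamma_2 = 3/2$ in the first factor is absorbed by the $q_0$-weighted $r^3$ term in $\W_1(\Hb_\ub^u)$, reflecting the essentially dispersive role played by the potential $q_0$.
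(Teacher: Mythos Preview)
Your approach is correct and constitutes a genuinely different route from the paper's proof. The paper applies (\ref{6.24.11.18}) along the \emph{outgoing} cone $\H_u^\ub$ with $(\gamma_0',\gamma_2)=(\tfrac12,1)$: the $q_0$-weighted zeroth-order factor $\int_{\H_u} r^2 q_0|\Omega\rp{\le 2}Z\rp{l}\phi|^2\,d\ub\,d\omega$ is matched with $E[\cdot](\H_u^\ub)$, while the $L'$-derivative factor, after the manipulation $|L'(r^{5/4}f)|\lesssim r^{1/4}(|f|+|L'(rf)|)$ and (\ref{3.23.4.18}), is matched with $\W_1[\cdot](\H_u^\ub)+u_+^{-1}E[\cdot](\H_u^\ub)$. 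You instead work on the \emph{incoming} cone $\Hb_\ub^u$ with $(\gamma_0',\gamma_2)=(-\tfrac12,\tfrac32)$, reversing the roles: the $q_0$-factor $\int_{\Hb_\ub} r^3 q_0|\cdot|^2\,du\,d\omega$ sits inside $\W_1[\cdot](\Hb_\ub^u)$, and the $\Lb'$-derivative factor lands in $E[\cdot](\Hb_\ub^u)$ after invoking $q_0\le 1$. Either pairing yields the same product decay $\dn^2 u_+^{1-2\gamma_0+4\zeta(Z^l)}$. The paper's route is marginally cleaner in that the tangential derivative $L'$ on $\H_u$ is native to both $E$ and $\W_1$, whereas your $\Lb'$-factor needs the explicit drop $q_0\le 1$; on the other hand your matching of the $r^3 q_0$ weight with $\W_1(\Hb_\ub)$ is perhaps the more transparent conceptual explanation for why the potential buys exactly the extra $r^{1/2}$.

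One small correction in your initial-sphere step: from the $L^2_\omega$ bound on $\Omega\rp{\le 2}Z\rp{l}\phi_0$ you cannot reach a \emph{pointwise} bound on $\Omega\rp{\le 1}Z\rp{l}\phi_0$ with only one angular derivative of Sobolev gain on $S^2$. What you actually need, and what one derivative does give, is the $L^4_\omega$ bound $q_0^2\int_{S_r}|\Omega\rp{\le 1}Z\rp{l}\phi_0|^4\,d\omega\lesssim r^{-4-2\gamma_0+4\zeta(Z^l)}\E_{l+2}^2$, which is precisely the initial-sphere integrand required by (\ref{6.24.11.18}). Equivalently you can cite (\ref{7.8.4.18}) directly, as the paper does.
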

\begin{proof}
To begin with, noting  that $0\le q_0\le 1$ is now a fixed constant, (\ref{6.24.4.18}) can be improved to be
\begin{equation}\label{7.8.4.18}
q_0^2 \int_{S_r} r^{6+2\ga_0-4\zeta(Z^i)}|Z\rp{i}\phi|^4 d\omega \les \E^2_{i+1,\ga_0}, \qquad i\le 2.
\end{equation}
Similar to \cite[Section 3.2]{mMaxwell}, by applying (\ref{6.24.11.18}) to $f=Z\rp{l}\phi$ with $l\le 1$, $\ga=\frac{5}{4}$, $\ga_0'=\f12, \ga_2=1$,
\begin{align*}
\sup_{S_{u, \ub}}|r^{\frac{5}{4}} Z\rp{l}\phi|^4
&\les  \int_{S_{u, -u}} r^ 3| \Omega\rp{\le 1}Z\rp{l} \phi|^4 +\int_{\H_u^\ub} |\Omega\rp{\le 2} Z\rp{l}\phi|^2\\
&\times  \int_{\H_u^{\ub}} r^{-\frac{3}{2}}|L' (\Omega\rp{\le 1} (r^{\frac{5}{4}}Z\rp{l} \phi))|^2.
\end{align*}
By using (\ref{3.23.4.18}) and  $|L'(r ^\frac{5}{4} f)| \les r^{\frac{1}{4}} (|f|+|L '(rf)|)$, we derive
\begin{align*}
\int_{\H_u^{\ub}} r^{-\frac{3}{2}}q_0 |L' (\Omega\rp{\le 1} (r^{\frac{5}{4}}Z\rp{l} \phi))|^2&\les q_0\int_{\H_u^\ub} r^{-1} |\{L'(r\cdot\}\rp{\le 1} \Omega\rp{\le 1} Z\rp{l}\phi)|^2\\
&\les \W_1[\Omega\rp{\le 1} Z\rp{l}\phi](\H_u^\ub)+u_+^{-1}E[\Omega\rp{\le 1} Z\rp{l}\phi](\H_u^\ub)  \\
&\les u_+^{-\ga_0+1+2\zeta(Z^l)}\dn.
\end{align*}
Thus we conclude by using (\ref{7.8.4.18}) that
\begin{align*}
q_0^2\sup_{S_{u, \ub}}|r^{\frac{5}{4}} Z\rp{l}\phi|^4&\les  q_0^2\int_{S_{u, -u}} r^ 3| {\Omega_{ij}}^{\le 1}Z\rp{l} \phi|^4 +E[\Omega\rp{\le 2}Z\rp{l}\phi](\H_u^\ub) u_+^{-\ga_0+1+2\zeta(Z^l)}\dn \\
&\les u_+^{-2\ga_0+1+4\zeta(Z^l)}\dn^2.
\end{align*}
\end{proof}

If $Z\in \{\p, \Omega_{\mu\nu}, 0\le \mu<\nu\le 3\}$,
\begin{equation}\label{7.7.7.18}
|Z\rp{l} \cir{\bh}|\les m_0 r^{-1+\zeta(Z^l)}, l \ge 0.
\end{equation}
If  $Z\in \{\Omega, \p\}$, we can use the fact,
\begin{equation}\label{7.7.9.18}
|Z\rp{l} \cir{\bh}|\les m_0 r^{-1-l}, l \ge 0
\end{equation}
to treat the error terms.

\begin{proposition}[Decay estimates]\label{Decay_eins}
\begin{align}
&|\bh, H|\les r^{-1}(m_0+\dn^\f12 u_+^{-\frac{\ga_0}{2}+\f12}),\label{7.26.11.18}\\
&|\D_*  H|\les |\D_*\bh^1|+r^{-2}m_0, \quad \D_*\in \{ \p , \ud \p, \bar \p\},\label{7.26.12.18}\\
&r u_+ |\p H|\les \dn^\f12 +m_0\label{7.27.5.18},  \\
&|Z\rp{n} H, Z\rp{n} G(\bh)|\les \sum_{Z^a \sqcup Z^b=Z^n}u_+^{\zeta(Z^b)}( |Z\rp{a} \bh^1|+ m_0 r^{-1+\zeta(Z^a)}), n\le 3,\label{7.29.1.18}\displaybreak[0]\\
& |Z\rp{n} H|\les \sum_{Z^a \sqcup Z^b=Z^n, a\ge 1}u_+^{\zeta(Z^b)}( |Z\rp{a} \bh^1|+ m_0 r^{-1-a}), \quad 1\le n \le 3.\label{7.27.10.18}
\end{align}
\end{proposition}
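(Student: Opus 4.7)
The plan is to prove each of the five estimates by decomposing $\bh=\bh^1+\cir{\bh}$, extracting the pointwise decay of $\bh^1$ from the bootstrap assumption \eqref{7.26.9.18} via Proposition \ref{5.20.1.18}, and using the explicit formula $\cir{\bh}_{\mu\nu}=\frac{m_0}{r}\delta_{\mu\nu}$ for the static part. The key algebraic input throughout is that $H^{\mu\nu}=-\bh^{\mu\nu}+O(\bh^2)$ (cf.\ \eqref{7.29.3.18}), so $H$ is a smooth function of $\bh$ vanishing to first order, and similarly $G(\bh)$ vanishes at $\bh=0$.

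First I would handle the three pointwise estimates \eqref{7.26.11.18}--\eqref{7.27.5.18}. For \eqref{7.26.11.18}, applying \eqref{3.24.11.18} with $l=0$ to $\bh^1$ (justified by $(\BA{3})$ for $\bh^1$) gives $r|\bh^1|\les\dn^{\f12}u_+^{-\frac{\ga_0}{2}+\f12}$, while $|\cir{\bh}|\les m_0/r$ by definition. Adding these bounds controls $|\bh|$ and then $|H|\les|\bh|+|\bh|^2\les|\bh|$ by the smallness of $m_0,\dn$. For \eqref{7.26.12.18}, write $H=\cir{H}+(H-\cir{H})$; since $\cir{H}$ is a function of $r$ alone, $|\D_*\cir{H}|\les m_0 r^{-2}$ for $\D_*\in\{\p,\ud\p,\bar\p\}$, and the expansion \eqref{7.29.3.18} combined with the smallness of $\bh$ gives $|\D_*(H-\cir{H})|\les|\D_*\bh^1|+|\bh|(|\D_*\bh^1|+|\D_*\cir{\bh}|)$, which simplifies to the stated bound. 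For \eqref{7.27.5.18}, decompose $\p=\{L,\Lb,\sn\}$ and apply \eqref{3.24.18.18} to each component of $\p\bh^1$: the estimate $ru_+^{\f12}|\Lb\bh^1|\les\dn^{\f12}u_+^{-\ga_0/2}$ yields $ru_+|\Lb\bh^1|\les\dn^{\f12}u_+^{(1-\ga_0)/2}\les\dn^{\f12}$ since $\ga_0>1$; the tangential components $r^{\f32}|L\bh^1|,r^{\f32}|\sn\bh^1|\les\dn^{\f12}u_+^{-\ga_0/2}$ give $ru_+|\bar\p\bh^1|\les\dn^{\f12}(u_+/r)^{\f12}u_+^{(1-\ga_0)/2}\les\dn^{\f12}$ using $u_+\les r$ in $\I$. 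Add $ru_+|\p\cir{\bh}|\les m_0 u_+/r\les m_0$.

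Next I would address the commuted estimates \eqref{7.29.1.18} and \eqref{7.27.10.18}. For \eqref{7.29.1.18}, the derivation mirrors that of \eqref{6.2.1.18} from Lemma \ref{5.17.2.18}: expand
\[
Z\rp{n}H(\bh)=\sum_{k=1}^{n}(D\rp{k}H)(\bh)\,\prod_{i}Z\rp{a_i}\bh,\qquad\sum_i a_i=n,\ a_i\ge1,
\]
and use the pointwise bound $|D\rp{k}H(\bh)|\les1$ (from \eqref{5.2.5.18}-type smoothness and the smallness of $\bh$) together with the pointwise decay $|Z\rp{a_i}\bh|\les\dn^{\f12}u_+^{\zeta(Z^{a_i})}$ available for $a_i\ge 1$ up to order $2$. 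Symbolically, this collapses to $|Z\rp{n}H|\les\sum_{Z^a\sqcup Z^b=Z^n}u_+^{\zeta(Z^b)}|Z\rp{a}\bh|$, at which point substituting $\bh=\bh^1+\cir{\bh}$ and applying \eqref{7.7.7.18} to $Z\rp{a}\cir{\bh}$ yields the claim. The same Leibniz expansion applied to $G(\bh)$ (which is smooth and vanishes at $\bh=0$) produces the identical bound. For \eqref{7.27.10.18}, the crucial observation is that every term in the Leibniz expansion of $Z\rp{n}H(\bh)$ already contains at least one factor $Z\rp{a}\bh$ with $a\ge1$ (since $H(0)=0$); splitting $Z\rp{a}\bh=Z\rp{a}\bh^1+Z\rp{a}\cir{\bh}$ and invoking the sharper bound \eqref{7.7.9.18} for $Z\rp{a}\cir{\bh}$ delivers the stated form with the $r^{-1-a}$ weight.

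The main obstacle will be bookkeeping in the higher-order commuted estimate \eqref{7.27.10.18}: one must carefully separate the contributions of $\bh^1$ and $\cir{\bh}$ at every level of the Leibniz expansion, and verify that the signatures $\zeta(Z^a),\zeta(Z^b)$ combine correctly to $\zeta(Z^n)$ in each resulting term. The static background $\cir{\bh}$ has no decay in $u_+$, so any sloppy placement of it would destroy the $u_+^{\zeta(Z^b)}$ factor; ensuring that the decomposition $Z^a\sqcup Z^b=Z^n$ places the vector fields on the correct side requires the more careful variant \eqref{7.7.9.18} in place of \eqref{7.7.7.18}, which provides the extra power of $r$ needed to compensate for the loss of $u_+$ decay from the static piece. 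Once these accounting issues are dispensed with, each estimate reduces to a direct combination of the bootstrap decay for $\bh^1$ with the explicit decay of $\cir{\bh}$ and its derivatives.
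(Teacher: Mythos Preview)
Your proposal is correct and follows essentially the same approach as the paper: decompose $\bh=\bh^1+\cir{\bh}$, use Proposition~\ref{5.20.1.18} (specifically \eqref{3.24.18.18} and \eqref{3.24.11.18}) for $\bh^1$ together with the explicit decay of $\cir{\bh}$, and for the commuted estimates mimic the chain-rule expansion of Lemma~\ref{5.17.2.18}, invoking \eqref{7.7.7.18} for \eqref{7.29.1.18} and the sharper \eqref{7.7.9.18} for \eqref{7.27.10.18}. One minor remark: in the Leibniz reduction for \eqref{7.29.1.18} you only need the first-order bound $|Z\bh|\les\dn^{\f12}u_+^{\zeta(Z)}$ (not up to order~$2$), exactly as in the remark following Lemma~\ref{5.17.2.18}.
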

\begin{proof}
The proofs of (\ref{7.26.11.18})-(\ref{7.27.5.18}) can follow directly from the property of $H^{\mu\nu}(\bh)$, $\bh=\cir{\bh}+\bh^1$ and applying (\ref{3.24.18.18}) and  (\ref{3.24.11.18}) to $\bh^1$.
(\ref{7.29.1.18}) can be similarly proved as Lemma \ref{5.17.2.18}. (\ref{7.27.10.18}) is an improved version, which relies on (\ref{7.7.9.18}) instead of (\ref{7.7.7.18}).
\end{proof}

\begin{proposition}\label{7.28.3.18}
Consider
 \begin{equation*}
\widetilde{\Box}_{\bg} \varphi= \sF+q\varphi
\end{equation*}
where  $q\ge 0$ is a constant,  $\bh^1$ in the  Lorentzian metric  $\bg_{\mu\nu}=\bh^1_{\mu\nu}+\cir{\bh}_{\mu\nu}+\bm_{\mu\nu}$ satisfy the bootstrap assumptions (\ref{7.26.9.18}) and (\ref{7.25.1.18}). By  assuming (\ref{6.10.4.18}),
  Proposition \ref{4.29.5.18} holds true,  and Proposition \ref{4.29.4.18} holds if assuming (\ref{wave_coord}) instead of  (\ref{6.10.3.18}).
\end{proposition}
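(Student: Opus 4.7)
The plan is to revisit the proofs of Proposition \ref{4.29.4.18} and Proposition \ref{4.29.5.18} line by line, and verify that each error term is still controllable after $H(\bh)$ is split as $H=\cir{H}+(H-\cir{H})$, where the decay of $H-\cir{H}$ is governed by $\bh^1$ via Proposition \ref{Decay_eins}, while the static part $\cir{H}$ obeys $|\p\rp{l}\cir{H}|\les m_0 r^{-1-l}$. The key observation is that although $r\,\p\cir{H}$ no longer has any $u_+$-decay, it still has $r^{-1}$ decay, so that any integration against $d\ub$ along an outgoing cone $\H_u^{\ub_1}$ is $u_+^{-1}$ integrable since $r\approx\ub$. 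Concretely, (\ref{7.27.5.18}) replaces the stronger estimate (\ref{7.23.3.18}) from Section \ref{quasi}, and in the $\|r^{1/2}\p H\|_{L_\ub^2 L^\infty}$ bound the static part contributes $m_0\,u_+^{-1/2}$ rather than $\dn^{1/2}M^{-1/2}u_+^{-\ga_0/2}$; both give an integrable factor in $\ub$ once paired with the flux.

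First I would handle the energy inequality. The proof of Proposition \ref{4.29.4.18} produces the single borderline bulk term $\int_{\D}\Tr[\varphi]$, which by (\ref{6.2.2.18}) contains $\tfrac14\Lb H^{\Lb\Lb}(\Lb\varphi)^2$. This is precisely where smallness (\ref{6.10.3.18}) was used; now in its place I would invoke the wave-coordinate identity (\ref{wave_coord}), which gives $|\Lb H^{\Lb\Lb}|\les|\bar\p H|+|H\c\p H|$. Both replacement terms are good: $\bar\p H$ carries an additional $u_+^{-1/2}$ via the improved decay (\ref{7.23.2.18}) applied to $\bh^1$ plus $|\bar\p\cir{H}|\les m_0 r^{-2}$, and the cubic term $H\c\p H\c(\Lb\varphi)^2$ gains a factor of $r^{-1}(m_0+\dn^{1/2})$ from (\ref{7.26.11.18}). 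In both cases the resulting bulk integral is bounded by $M^{-1/2}m_0\int u_+^{-\frac{\ga_0+1}{2}}E[\varphi](\H_u)\,du$ (plus lower-order pieces), which can be absorbed into the flux after multiplying by $u_+^{-2p+\ga_0}$ and taking supremum, exactly as in the proof of Proposition \ref{4.29.4.18}. The remaining trilinear pieces $\bar\Tr(\p H,\p\varphi,\p\varphi)$ only involve $\bar\p H$ or $\bar\p\varphi$, so their estimates proceed identically to (\ref{7.28.4.18}).

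For the weighted energy inequality, I would re-examine the proof of Lemma \ref{4.15.1.18} term by term using the new bounds. The identity (\ref{7.11.2.18}) and the symbolic formulas $I$, $II$, $III$ in (\ref{6.17.13.18})–(\ref{7.27.8.18}) are unchanged. For each error I would bound the $\bh^1$ part by the estimates already used in (\ref{7.28.4.18})–(\ref{7.27.7.18}), and the static $\cir{H}$ part by combining (\ref{7.7.7.18}) with the flux. For instance, $\int_\D r|L\cir{H}||\p\varphi\c\bar\p\varphi|\les m_0\int r^{-1}|\p\varphi||\bar\p\varphi|$, and $r^{-1}|\bar\p\varphi|$ pairs against $|\p\varphi|$ along $\H_u$ via Cauchy--Schwarz to yield $m_0 M^{-1/2}\int u_+^{-1}E^{1/2}[\varphi](\H_u)E^{1/2}[\varphi](\H_u)\,du$. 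The boundary densities on $\H_u^\ub$ and $\Hb_\ub^u$ in (\ref{4.30.2.18}) and (\ref{4.30.1.18}) still produce the coercive flux $\int r(L(r\varphi))^2+r^2(h-H^{\Lb\Lb})(|\sn\varphi|^2+q\varphi^2)$ after using (\ref{6.10.4.18}); the error terms $\Er_1,\Er_2$ are now bounded using (\ref{7.26.11.18}) instead of (\ref{4.25.3.18}), picking up a harmless factor $m_0+\dn^{1/2}$.

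The main obstacle is bookkeeping the borderline contributions of $\cir{H}$ without losing any decay. In the quasilinear case the smallness factor $\dn^{1/2}M^{-1/2}R^{1/2-\ga_0/2}$ of (\ref{6.10.3.18}) absorbed the $\p H\c\p\varphi\c\p\varphi$ term outright; here that bound is no longer available because $\cir{H}$ does not decay in $u_+$. The workaround is the wave-coordinate cancellation (\ref{wave_coord}), which trades the $\Lb H^{\Lb\Lb}$ derivative (where $\cir{H}$ has no $u_+$ decay) for $\bar\p H$ and the cubic term $H\p H$, both of which either vanish on $\cir{H}$ along null directions or contribute an extra $m_0 r^{-1}$. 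Once this trade is done, the coefficient in every remaining bulk error is of size $m_0 M^{-1/2}\les 1$ times an $u_+$-integrable function, so the Gronwall step from the proof of Proposition \ref{4.29.5.18} closes verbatim.
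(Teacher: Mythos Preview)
Your proposal is correct and follows essentially the same route as the paper. You have identified the two key points: (i) for the weighted energy, go back through the terms $I,II,III,\Er_1,\Er_2$ and split every occurrence of $H,\p H$ into the $\bh^1$-part (handled exactly as in Section \ref{quasi}) and the static $\cir{H}$-part (which contributes an extra $m_0 r^{-1}$ or $m_0 r^{-2}$ and is bounded by direct integration of the flux); (ii) for the standard energy, replace the smallness assumption (\ref{6.10.3.18}) by the wave-coordinate identity (\ref{wave_coord}), which converts the dangerous $\Lb H^{\Lb\Lb}(\Lb\varphi)^2$ into terms involving $\bar\p H$ or the cubic $H\c\p H$, both carrying the needed extra $r^{-1}$ decay.

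Two minor inaccuracies to be aware of. First, your claimed bound $M^{-1/2}m_0\int u_+^{-(\ga_0+1)/2}E[\varphi](\H_u)\,du$ for the energy bulk term is not quite what comes out: the paper's pointwise bound (\ref{7.10.3.18}) is $|\Tr[\varphi]|\les(\dn^{1/2}+m_0)r^{-1}u_+^{-1}\big((u_+/r)^{1/2}|\ud\p\varphi|^2+|\p\varphi||\bar\p\varphi|\big)$, and after integration the smallness factor is of order $m_0^{1/2}{u}_+^{-1/2}$ or $m_0 u_+^{-1}$ rather than $u_+^{(1-\ga_0)/2}$; the $(u_+/r)^{1/2}$ weight is exactly what lets the $|\Lb\varphi|^2$ piece use the $\tfrac{M}{r}$-weighted flux on $\H_u$. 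Second, the Gronwall step in the weighted inequality does not close ``verbatim'': the static contribution to the $L(r\varphi)\c\p\varphi\c\p H$ term in $I$ produces an extra integrand $u_+^{\ga_0/2-2}$ in (\ref{7.26.1.18}) alongside $u_+^{-(\ga_0+1)/2}$; both are integrable (the former because $\ga_0<2$), so the argument still closes, but the modified bound (\ref{7.26.1.18}) replaces (\ref{6.20.1.18}).
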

\begin{proof}
We first confirm that if the background metric is the Einstein metric $\bg(\bh)$, the weighted energy estimate in Proposition \ref{4.29.5.18} holds.
The proof is carried out in two steps.

{\bf  Step 1}
For the terms $I$, $II$ and $III$ defined in (\ref{7.27.8.18}),  we will show there holds  for $(u_1, \ub_1)\in \I$ that
\begin{equation}\label{7.26.1.18}
\begin{split}
\int_{\D_{u_1}^{\ub_1}}|I|+|II|+|III|&\les (\dn^\f12 m_0^{-\f12}+m_0^\f12)\{{u_1}_+^{-\frac{3\ga_0}{2}+1} (\sup_{-\ub_1\le u\le u_1} E[\varphi](\H_u^{\ub_1}) u_+^{\ga_0}\\
&+ \sup_{-u_1\le \ub \le \ub_1}E[\varphi](\H_{\ub}^{u_1}){u_1}_+^{\ga_0})+ \int_{-\ub_1}^{u_1} ({u}_+^{-\frac{\ga_0+1}{2}}+u_+^{\frac{\ga_0}{2}-2})\W_1[\varphi](\H_{u}^{\ub_1}) du\}.
\end{split}
\end{equation}
This implies (\ref{5.5.2.18}) holds with the bound replaced by the righthand side of (\ref{7.26.1.18}).

We first give the following straightforward calculations.
\begin{align}
\int_{\D_{u_1}^{\ub_1}} r^{-2} (\ud \p \varphi)^2 &\les {u_1}_+^{-1}\sup_{-u_1\le \ub \le \ub_1} E[\varphi](\H_{\ub}^{u_1}),\label{7.27.2.18}\\
\int_{\D_{u_1}^{\ub_1}} r^{-1}|\p \varphi\c \bar \p \varphi|&\les \int_{\D_{u_1}^{\ub_1}} r^{-1} (|\bar\p\varphi||\ud \p \varphi|+|\bar \p \varphi|^2)\label{7.27.3.18}\\
&\les \|r^{-1}\ud \p \varphi \|_{L^2(\D_{u_1}^{\ub_1})}\|\bar\p \varphi\|_{L^2(\D_{u_1}^{\ub_1})}+\int_{-\ub_1}^{u_1} u_+^{-1} E[\varphi](\H_u^{\ub_1}) du \nn\\
&\les {u_1}_+^{-\ga_0}\big(\sup_{-u_1\le \ub\le \ub_1}E[\varphi](\H_{\ub}^{u_1}) {u_1}_+^{\ga_0}+\sup_{-\ub_1\le u\le u_1} E[\varphi](\H_u^{\ub_1}) u_+^{\ga_0} \big),\nn
\end{align}
\begin{align}
\int_{\D_{u_1}^{\ub_1}}r^{-2}| L(r\varphi) \p \varphi  |&\les \int_{-\ub_1}^{u_1}\|r^{-\f12} L (r\varphi)\|_{L^2(\H_u^{\ub_1})}\| r^{-\f12}\p\varphi \|_{L^2(\H_u^{\ub_1})} r^{-1} du  \label{7.27.6.18},\\
&\les m_0^{-\f12}\int_{-\ub_1}^{u_1} \{\W_1[\varphi](\H_u^{\ub_1}) u_+^{\frac{\ga_0}{2}-2}+  u_+^{-\frac{3\ga_0}{2}}E[\varphi](\H_u^{\ub_1}) u_+^{\ga_0} \}du.\nn
\end{align}

We also note that by using (\ref{7.26.11.18}) and  (\ref{7.26.12.18})
\begin{align}
&|H||\p H|\les r^{-1} (m_0+\dn^\f12)(r^{-2}m_0+|\p \bh^1|),\label{7.26.13.18}\\
&r( |LH|+|H||\p H|)\les r |L \bh^1|+(m_0+\dn^\f12)|\p \bh^1|+r^{-1}(m_0+\dn^\f12)\label{7.27.1.18}
\end{align}
The parts of $\p \bh^1$ will be  treated similar to Section \ref{quasi}.
We first control  $I$  and $II$ in view of (\ref{6.17.13.18}).  By using (\ref{7.27.2.18}),   (\ref{7.26.13.18}) and  (\ref{4.2.1.18}), we have
\begin{align*}
&\int_{\D_{u_1}^{\ub_1}} r |H||\p H| |\ud \p \varphi|^2\\
&\les (m_0+\dn^\f12) \big(\int_{\D_{u_1}^{\ub_1}}  |\p \bh^1||\ud\p \varphi|^2 +m_0 r^{-2} |\ud \p \varphi|^2 \big)\\
&\les (m_0+\dn^\f12)\{ \|r^\f12 \p \bh^1\|_{L_\ub^2 L^\infty(\D_{u_1}^{\ub_1})}\|r^{-\f12} \ud \p \varphi\|_{L^2(\D_{u_1}^{\ub_1})}\| r\ud \p \varphi\|_{L_\ub^\infty L_u^2 L_\omega^2(\D_{u_1}^{\ub_1})}\\
&\qquad \qquad \qquad\quad+m_0 {u_1}_+^{-1}\sup_{-u_1\le \ub \le \ub_1} E[\varphi](\H_{\ub}^{u_1})\}\\
&\les (m_0+\dn^\f12) (\dn^\f12 m_0^{-1}+m_0){u_1}_+^{-\frac{3}{2} \ga_0+\f12} (\sup_{-\ub_1 \le u\le u_1} E[\varphi](\H_u^{\ub_1}) u_+^{\ga_0}+\sup_{-u_1\le \ub\le \ub_1} E[\varphi](\Hb_\ub^{u_1}) {u_1}_+^{\ga_0}).
\end{align*}
Next by using (\ref{7.27.1.18}), (\ref{7.27.3.18}), (\ref{3.24.18.18}) and  (\ref{7.26.11.18}), we can derive
\begin{equation}\label{7.27.4.18}
\begin{split}
\int_{\D_{u_1}^{\ub_1}} &r(|L H|+|H||\p H| +|H|) |\p \varphi\c \bar \p \varphi|\\
&\les \int_{\D_{u_1}^{\ub_1}} (r |L \bh^1 |+r^{-1}(m_0+\dn^\f12))|\p \varphi\c \bar\p \varphi|\nn\\
&\les (\dn^\f12 m_0^{-\f12}+m_0) {u_1}_+^{-\frac{3}{2}\ga_0+1} (\sup_{-\ub_1 \le u\le u_1} E[\varphi](\H_u^{\ub_1}) u_+^{\ga_0}+\sup_{-u_1\le \ub\le \ub_1} E[\varphi](\Hb_\ub^{u_1}) {u_1}_+^{\ga_0}),\nn
\end{split}
\end{equation}
where we used the estimate (\ref{7.28.4.18}) to treat the product term with $L\bh^1$.

By using  (\ref{7.26.12.18})  and (\ref{7.27.5.18}), we have
\begin{align*}
\int_{\D_{u_1}^{\ub_1}} r|\p H| |\bar \p \varphi|^2 &\les (\dn^\f12+m_0)\int_{-\ub_1}^{u_1} \|\bar\p \varphi|^2_{L^2(\H_u^{\ub})}  du \\
&\les (\dn^\f12 +m_0) {u_1}_+^{-\ga_0} \sup_{-\ub_1 \le u \le u_1}u_+^{\ga_0}E[\varphi](\H_u^{\ub_1})
\end{align*}
and
\begin{align*}
\int_{\D_{u_1}^{\ub_1}} |L(r\varphi) \p \varphi\c \p H| &\les \int_{\D_{u_1}^{\ub_1}} | L(r\varphi) \c \p \varphi\c \p \bh^1|+\int_{\D_{u_1}^{\ub_1}}r^{-2} m_0 |L(r \varphi)| |\p\varphi|.
\end{align*}
Note that the first term on the right  can be treated as (\ref{7.27.7.18}) and  the second term  is treated  by using (\ref{7.27.6.18}). Thus
\begin{align}
\int_{\D_{u_1}^{\ub_1}} |L(r\varphi) \p \varphi\c \p H|&\les (\dn^\f12 m_0^{-\f12}+m_0^\f12 )\big(\int_{-\ub_1}^{u_1} ({u}_+^{-\frac{\ga_0+1}{2}}+u_+^{\frac{\ga_0}{2}-2})\W_1[\varphi](\H_{u}^{\ub_1}) du\nn\\
&+{u_1}_+^{-\frac{3}{2}\ga_0+1}\sup_{-\ub_1\le u\le u_1} E[\varphi](\H_{u}^{\ub_1})u_+^{\ga_0}\big)\nn.
\end{align}
Hence we completed the estimates for $I$ and $II$.

 Next we consider the term $III$ defined in (\ref{7.27.8.18}) with the bound given in (\ref{7.26.3.18}). The first term on the right of (\ref{7.26.3.18})  has been treated. We will treat the remaining terms in the sequel.
We note that by using Proposition \ref{Decay_eins},
\begin{equation}\label{7.27.9.18}
(r |\p H|+|H|)|H|\les r^{-2} (m_0+\dn^\f12)^2 +(m_0+\dn^\f12)|\p \bh^1|.
\end{equation}
Thus by using (\ref{3.24.18.18}) for $\p \bh^1$
\begin{align*}
\int_{\D_{u_1}^{\ub_1}} (r |\p H|+|H|)|H| |\p \varphi|^2& \les (m_0+\dn^\f12)\int_{\D_{u_1}^{\ub_1}} \left(r^{-1} (m_0+\dn^\f12)+\dn^\f12 u_+^{-\frac{\ga_0+1}{2}}\right)r^{-1} |\p \varphi|^2\\
&\les m_0^{-1} (m_0+\dn^\f12)^2 {u_1}_+^{-\ga_0} \sup_{-\ub_1\le u\le u_1}E[\varphi](\H_u^{\ub_1}) u_+^{\ga_0}.
\end{align*}
Similarly,
\begin{align*}
\int_{\D_{u_1}^{\ub_1}} (r |\p H|+|H|) q\varphi^2 &\les (m_0+\dn^\f12) \int_{\D_{u_1}^{\ub_1}} u_+^{-1} q\varphi^2\\
& \les (m_0+\dn^\f12) {u_1}_+^{-\ga_0} \sup_{-\ub_1\le u\le u_1} E[\varphi](\H_u^{\ub_1})u_+^{\ga_0}.
\end{align*}
Thus the estimate for $III$ is completed and  (\ref{7.26.1.18}) is proved.

{\bf Step 2}  We  give the estimates of $\Er_1$ defined in (\ref{4.30.6.18}) and  $\Er_2$ defined in (\ref{7.26.2.18}).

Noting that  (\ref{7.26.11.18}) implies  $r|H|\les m_0+\dn^\f12$,  we can follow the same treatment as in the proof of Proposition \ref{4.29.5.18} to  obtain
\begin{align*}
|\Er_1|&\les  (\dn ^\f12+m_0)\big({u_1}_+^{-1}\int_{\H_{u_1}^{\ub_1}} r(L(r\varphi)-H^{\Lb \Lb} \Lb \varphi)^2  du d\omega+E[\varphi](\H_{u_1}^{\ub_1})+E[\varphi](\H_{\ub_1}^{u_1})\\
&+\sup_{-\ub_1\le u\le u_1}\int_{S_{u, -u}} r\varphi^2 d\omega\big),\\
|\Er_2|& \les (\dn^\f12 m_0^{-\f12}+ m_0^ \f12)\big(\W_1[\varphi](\Hb_{\ub_1}^{u_1})+
E[\varphi](\Hb_{\ub_1}^{u_1})+\sup_{-\ub_1\le u\le u_1} \|r^{-\f12} \varphi\|^2_{L^2(S_{u,-u})}\big).
\end{align*}
By substituting these two estimates in to (\ref{6.22.1.18}) and noting that
\begin{align*}
\int_{\H_{u_1}^{\ub_1}} r^3 |H|^2 |\Lb \varphi|^2 d\ub d\omega& \les \int_{\H_{u_1}^{\ub_1}}  r (\dn^\f12+m_0)^2 |\Lb \varphi|^2 d\ub d\omega\\
&\les (\dn^\f12+m_0)^2 m_0^{-1} E[\varphi](\H_{u_1}^{\ub_1}),
\end{align*}
we have
\begin{align*}
&\W_1[\varphi](\D_{u_1}^{\ub_1})+ \W_1[\varphi](\H_{u_1}^{\ub_1})+\W_1[\varphi](\Hb_{\ub_1}^{u_1})\nn\\
&\les \| |r^\f12 \p \varphi|+r^{-\f12} |\varphi|+q^\f12_0 r^\f12|\varphi|\|^2_{L^2(\Sigma_0^{u_1, \ub_1})}+\int_{\D_{u_1}^{\ub_1}} |\sF (X\varphi+\varphi)| +\frac{q}{2} \varphi^2   \nn\\
&+ (m_0^\f12+\dn^\f12 m_0^{-\f12}) \big(E[\varphi](\H_{u_1}^{\ub_1})+E[\varphi](\Hb_{\ub_1}^{u_1})+\sup_{-\ub_1\le u\le u_1} \int_{S_{u, -u}} r \varphi^2 d\omega \big)\nn\\
&+ (\dn ^\f12 m_0^{-\f12}+m_0^\f12)\{{u_1}_+^{1-\frac{3\ga_0}{2}+2p} (\sup_{-\ub_1\le u\le u_1} E[\varphi](\H_u^{\ub_1})u_+^{\ga_0-2p}\\
&+{u_1}_+^{\ga_0-2p}\sup_{-u_1\le \ub\le \ub_1} E[\varphi](\Hb_{\ub}^{u_1}))+\int^{u_1}_{-\ub_1} (u_+^{-\frac{\ga_0+1}{2}}+u_+^{\frac{\ga_0}{2}-2}) \W_1[\varphi](\H_u^{\ub_1})du\},\nn
\end{align*}
where $p\le 0$ is any constant. By repeating the same treatment on  (\ref{7.12.1.18}), we can derive the same inequality in Proposition \ref{4.29.5.18}.

Next we show Proposition \ref{4.29.4.18} holds without the assumption of (\ref{6.10.3.18}).
 Indeed,
note that due to the wave coordinate condition (\ref{wave_coord}), we can derive in view of (\ref{3.24.18.18}), (\ref{7.26.12.18}), (\ref{7.27.5.18}) and (\ref{6.2.2.18}) that
\begin{equation}\label{7.10.3.18}
 |\Tr[\varphi]|\les (\dn^\f12+m_0) r^{-1} u_+^{-1}((\frac{u_+}{r})^\f12 |\ud\p \varphi|^2+| \p \varphi| |\bar \p \varphi|).
\end{equation}
Substituting the above inequality to (\ref{6.10.2.18}) followed with using Gronwall inequality, we can obtain Proposition \ref{4.29.4.18} without the assumption of (\ref{6.10.3.18}).
\end{proof}
\subsection{Error estimates}
The commutator $[\Box_\bg, Z\rp{n}]\varphi$ with $\varphi\in\{\bh^1,\phi\}$ contains borderline terms, which are treated in the following result.
\begin{lemma}\label{7.28.2.18}
For $\varphi\in \{\bh^1, \phi\}$ and $n\le 3$, there hold for $(u_1, \ub_1)\in \I $ that
\begin{align}
{u_1}_+^{-\zeta(Z^n)+\f12\ga_0+\f12}&\|r^\f12 [\widetilde{\Box}_\bg, Z\rp{n}] \varphi\|_{L^2(\D_{u_1}^{\ub_1})}\les (\dn m_0^{-\f12}+m_0){u_1}_+^{-\f12\ga_0+\f12}\nn\\
&+m_0^{\f12} \sup_{-\ub_1\le u\le u_1, Z^b \subset Z^{n-1}\subset Z^n} E[\p Z\rp{b} \varphi]^\f12(\H_u^{\ub_1}) u_+^{-\zeta(Z^b)+\frac{\ga_0}{2}+1},\label{7.26.7.18}\\
{u_1}_+^{-\zeta(Z^n)+\f12\ga_0-\f12}&\|r^\frac{3}{2} [\widetilde{\Box}_\bg, Z\rp{n}]\varphi\|_{L_u^1 L_\ub^2 L_\omega^2(\D_{u_1}^{\ub_1})}\les (\dn m_0^{-\f12}+m_0) {u_1}_+^{-\f12\ga_0}+\dn^\f12 m_0^\f12 {u_1}_+^{-\f12},\label{7.26.8.18}
\end{align}
where the last term in (\ref{7.26.7.18}) vanishes if $Z^n=\p^n$.
\end{lemma}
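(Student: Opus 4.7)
The plan is to start from the purely geometric commutator identity (\ref{7.6.1.18}) of Proposition \ref{6.3.1.18}, which gives $|[\widetilde{\Box}_\bg, Z\rp{n}]\varphi|\les I_n+II_n$ with
\[
I_n=\sum_{Z^1\sqcup Z^{n-1}=Z^n}\bigl(u_+^{\zeta(Z^1)}|H|+|Z\rp{1}H|\bigr)|\p^2 Z\rp{n-1}\varphi|,
\]
\[
II_n=\sum_{\substack{Z^a\sqcup Z^b\sqcup Z^c=Z^n\\ b\le n-2}}u_+^{\zeta(Z^c)}|Z\rp{a}H||\p^2 Z\rp{b}\varphi|.
\]
I would then decompose every factor $Z\rp{k}H$ via (\ref{7.29.1.18})--(\ref{7.27.10.18}) into a \emph{dynamic} part controlled by $|Z\rp{a}\bh^1|u_+^{\zeta(Z^b)}$ and a \emph{static} part originating in $\cir{\bh}=\frac{m_0}{r}\delta$. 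The key observation is that $\Omega_{ij}\cir{\bh}=0$, so the static contribution to $Z\rp{k}H$ is nonzero only when every factor of $Z^k$ is a coordinate derivative $\p$, and then it is bounded pointwise by $m_0 r^{-1-k}$.

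For the dynamic part, I would follow the proof of Proposition \ref{6.17.4.18} essentially verbatim, with $\Phi$ replaced by $\bh^1$ and Proposition \ref{Decay_eins} in place of Proposition \ref{decay_quasi}; since $\bh^1$ inherits from (\ref{7.26.9.18}) the same bootstrap as in Section \ref{quasi}, this produces the $\dn\,m_0^{-1/2}u_+^{-\ga_0/2+1/2}$ term of (\ref{7.26.7.18}) and its counterpart in (\ref{7.26.8.18}). The static contributions in $II_n$ are also harmless because $b\le n-2$ allows the pointwise $L^4_\omega$ and $L^\infty$ bounds (\ref{6.17.7.18})--(\ref{6.17.9.18}), and the $|Z\rp{1}H|_{\rm stat}\les m_0 r^{-2}$ piece in $I_n$ is sub-borderline after combining with (\ref{7.5.4.18}) to gain an extra $u_+^{-1}$, so that its contribution absorbs into the $m_0\,{u_1}_+^{-\ga_0/2+1/2}$ part of the right-hand side.

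The hard part will be the single genuinely borderline term $u_+^{\zeta(Z^1)}|H|_{\rm stat}|\p^2 Z\rp{n-1}\varphi|$ in $I_n$, for which $|H|_{\rm stat}\sim m_0/r$ carries no $u_+$-decay whatsoever. Refusing to consume the bootstrap at the top-order derivative, I would instead estimate
\[
\bigl\|r^{1/2}\,m_0 r^{-1}u_+^{\zeta(Z^1)}|\p^2 Z\rp{n-1}\varphi|\bigr\|_{L^2(\D_{u_1}^{\ub_1})}\les m_0^{1/2}u_+^{\zeta(Z^1)}\Bigl(\int_{-\ub_1}^{u_1}E[\p Z\rp{n-1}\varphi](\H_u^{\ub_1})\,du\Bigr)^{1/2}
\]
by factoring out $r^{-1/2}$ via (\ref{7.5.4.18}), and then extract $\sup_u E^{1/2}[\p Z\rp{n-1}\varphi]\,u_+^{-\zeta(Z^{n-1})+\ga_0/2+1}$ from the integral, so that the residual weight $u_+^{2\zeta(Z^{n-1})-\ga_0-2}$ integrates cleanly against ${u_1}_+^{-\zeta(Z^n)+\ga_0/2+1/2}$. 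This leaves the energy residue appearing on the right of (\ref{7.26.7.18}) with $Z^b=Z^{n-1}\subset Z^{n-1}$. Estimate (\ref{7.26.8.18}) follows by the same scheme, with the extra $L^1_u$ integration supplying an additional ${u_1}_+^{-1/2}$ so that the bootstrap may be invoked directly on $\p Z\rp{n-1}\varphi$, yielding the residue $\dn^{1/2}m_0^{1/2}{u_1}_+^{-1/2}$. The irreducible residue is the whole point of leaving the lemma in its stated asymmetric form: it cannot be absorbed into either $\dn\, m_0^{-1/2}$ or $m_0$, but it involves only energies at strictly smaller signature $\zeta(\p Z^b)<\zeta(Z^n)$, and will be closed later in Section \ref{Eins} by induction on the signature from its lowest value $-n$.
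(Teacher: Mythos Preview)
Your overall strategy is right and essentially matches the paper's: split every $Z\rp{k}H$ into a dynamic piece governed by $\bh^1$ and a static piece coming from $\cir{\bh}$, run the dynamic piece through the machinery of Proposition~\ref{6.17.4.18}, and leave the irreducible static borderline as a residue to be closed by induction on signature. Your handling of the $|Z\rp{1}H|_{\mathrm{stat}}\les m_0 r^{-2}$ piece in $I_n$ and of the top-order borderline $u_+^{\zeta(Z^1)}|H|_{\mathrm{stat}}|\p^2 Z\rp{n-1}\varphi|$ is correct.

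There is one genuine gap. Your $II_n$ (copied from (\ref{6.3.2.18})) carries no restriction $a\ge 1$, so it contains the terms $u_+^{\zeta(Z^c)}|H|\,|\p^2 Z\rp{b}\varphi|$ with $a=0$, $b\le n-2$, $c\ge 2$. For these the static factor is again $|H|_{\mathrm{stat}}\sim m_0/r$, with \emph{no} extra $r^{-1}$, and the resulting contribution is just as borderline as the one in $I_n$. The pointwise $L_\omega^4$/$L^\infty$ bounds (\ref{6.17.7.18})--(\ref{6.17.9.18}) you invoke do not rescue this: if you put $r\p^2 Z\rp{b}\varphi$ in $L^\infty$, the leftover weight $m_0 r^{-3/2}$ is not square-integrable uniformly in $\ub_1$ on $\D_{u_1}^{\ub_1}$; if instead you put $\p^2 Z\rp{b}\varphi$ in $L^2$ via (\ref{7.5.4.18}), you land on $m_0^{1/2}\dn^{1/2}$ after the weight, which is not absorbed by $(\dn m_0^{-1/2}+m_0){u_1}_+^{-\ga_0/2+1/2}$. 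These lower-$b$ terms must go into the residue as well, which is why the supremum in (\ref{7.26.7.18}) ranges over \emph{all} $Z^b\subset Z^{n-1}\subset Z^n$, not just $b=n-1$.

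The paper avoids this bookkeeping trap by regrouping the commutator differently: instead of your $I_n+II_n$, it writes (see (\ref{7.26.6.18}))
\[
|[\widetilde\Box_\bg,Z\rp{n}]\varphi|\les |H|\!\!\sum_{\substack{Z^a\sqcup Z^b=Z^n\\ b\le n-1}}\! u_+^{\zeta(Z^a)}|\p^2 Z\rp{b}\varphi|\;+\!\!\sum_{\substack{Z^a\sqcup Z^b\sqcup Z^c=Z^n\\ a\ge 1}}\! u_+^{\zeta(Z^c)}|Z\rp{a}H||\p^2 Z\rp{b}\varphi|,
\]
so that the \emph{entire} undifferentiated-$|H|$ contribution (for every $b\le n-1$) sits in the first sum and is treated uniformly as borderline, while in the second sum $a\ge1$ forces the static part to decay like $m_0 r^{-2}$ or better via (\ref{7.27.10.18}), making it genuinely sub-borderline. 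Your argument is salvaged simply by moving the $a=0$ piece of $II_n$ over to the borderline analysis; the cleaner regrouping (\ref{7.26.6.18}) does this automatically.
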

\begin{proof}
We first rewrite  the terms in (\ref{4.29.1.18})-(\ref{6.1.1.18}) into
\begin{equation}\label{7.26.6.18}
|[\widetilde \Box_\bg, Z\rp{n}]\varphi|\les |H|\sum_{Z^a\sqcup Z^b=Z^n, b\le n-1}u_+^{\zeta(Z^a)} |\p^2 Z\rp{b} \varphi|+ \sum_{Z^a\sqcup Z^b\sqcup Z^c=Z^n,  a\ge 1}u_+^{\zeta(Z^c)} |Z\rp{a} H|| \p^2 Z\rp{b}\varphi|,
\end{equation}
where  the first term on the right  vanishes completely  if $Z^n=\p^n$.

By using (\ref{7.27.10.18}), we have
\begin{equation}\label{7.27.11.18}
 \sum_{Z^a \sqcup Z^b \sqcup Z^c=Z^n, a\ge 1}|Z\rp{a} H|| \p^2 Z\rp{b}\varphi| \les\sum_{Z^a \sqcup Z^b \sqcup Z^c=Z^n, a\ge 1} u_+^{\zeta(Z^c)} (|Z\rp{a}\bh^1|+r^{-1-a}m_0)  |\p^2 Z\rp{b} \varphi|,
\end{equation}
where  $a\ge 1$ and $b\le n-1$.
Thus
\begin{align*}
 &\sum_{Z^a \sqcup Z^b \sqcup Z^c=Z^n, a\ge 1}{u_1}_+^{\zeta(Z^c)}\| r^\frac{1}{2} Z\rp{a} H \c \p^2 Z\rp{b}\varphi\|_{L^2(\D_{u_1}^{\ub_1})}\\
 &\les \sum_{Z^a \sqcup Z^b \sqcup Z^c=Z^n, a\ge 1}{u_1}_+^{\zeta(Z^c)} \big(\|r^{\f12} Z\rp{a} \bh^1 \c\p^2 Z\rp{b} \varphi\|_{L^2(\D_{u_1}^{\ub_1})}+\| r^{-\f12-a} m_0 \p^2 Z\rp{b} \varphi\|_{L^2(\D_{u_1}^{\ub_1})}\big).
\end{align*}
By the definition of the standard energies followed with  direct integrations,  we derive with $0\le \a<\f12$ and $a\ge 1$,
\begin{align}
{u_1}_+^{-\a}\|u_+^{\f12+\a} r^{-\f12-a} &m_0 \p^2 Z\rp{b} \varphi\|_{L^2(\D_{u_1}^{\ub_1})}+\| r^{-a+\f12} m_0 \p^2 Z\rp{b} \varphi\|_{L_u^1 L_\ub^2 L_\omega^2(\D_{u_1}^{\ub_1})}\nn\\
&\les m_0 {u_1}_+^{-a-\f12\ga_0-\f12+\zeta(Z^b)}\{\sup_{-\ub_1\le u\le u_1} u_+^{\frac{\ga_0}{2}+1-\zeta(Z^b)}E[\p Z\rp{b} \varphi]^\f12(\H_u^{\ub_1})\nn\\
&+{u_1}_+^{\frac{\ga_0}{2}+1-\zeta(Z^b)}\sup_{-u_1\le \ub\le \ub_1}E[\p Z\rp{b}\varphi]^\f12(\Hb_\ub^{u_1})\}.\label{7.27.12.18}
\end{align}
Since  $\bh^1$ verifies (\ref{6.7.1.18}) which is stronger than the estimate (\ref{4.25.3.18}) of  $H$ for (\ref{eqn_1}), moreover  $\varphi\in (\bh^1,  \phi)$ verifies all the estimates in Section \ref{decay}, by repeating the same procedure for proving (\ref{6.5.8.18}) and (\ref{6.9.3.18}) in Proposition \ref{6.17.4.18}, we have
\begin{align*}
\sum_{Z^a \sqcup Z^b \sqcup Z^c=Z^n, a\ge 1}&\|u_+^{\zeta(Z^c)} r^{\f12} Z\rp{a} \bh^1 \c\p^2 Z\rp{b} \varphi\|_{L^2(\D_{u_1}^{\ub_1})}\\
+\sum_{Z^a \sqcup Z^b \sqcup Z^c=Z^n, a\ge 1}&{u_1}_+^{-\f12}\|u_+^{\zeta(Z^c)} r^{\frac{3}{2}} Z\rp{a} \bh^1 \c\p^2 Z\rp{b} \varphi\|_{L_u^1L_\ub^2 L_\omega^2(\D_{u_1}^{\ub_1})}\\
&\les m_0^{-\f12} \dn {u_1}_+^{-\ga_0+\zeta(Z^n)}.
\end{align*}
By combining the above two sets of estimates and noting that $-a\le \zeta(Z^a)$, the terms contributed by $\cir{\bh}$ in (\ref{7.27.12.18})  decay better since $1<\ga_0<2.$
Thus we obtain
\begin{equation}\label{7.28.5.18}
\begin{split}
\sum_{Z^a \sqcup Z^b \sqcup Z^c=Z^n, a\ge 1}&\|u_+^{\zeta(Z^c)} r^{\f12} Z\rp{a} H \c\p^2 Z\rp{b} \varphi\|_{L^2(\D_{u_1}^{\ub_1})}\\
+\sum_{Z^a \sqcup Z^b \sqcup Z^c=Z^n, a\ge 1}&{u_1}_+^{-\f12}\|u_+^{\zeta(Z^c)} r^{\frac{3}{2}} Z\rp{a} H \c\p^2 Z\rp{b} \varphi\|_{L_u^1L_\ub^2 L_\omega^2(\D_{u_1}^{\ub_1})}\\
&\les (m_0^{-\f12} \dn+m_0) {u_1}_+^{-\ga_0+\zeta(Z^n)}.
\end{split}
\end{equation}
Next we consider the borderline terms in  (\ref{7.26.6.18}). By using (\ref{7.26.11.18}), we can derive
\begin{align*}
&\sum_{Z^a \sqcup Z^b=Z^n, b\le n-1}\|r^{\f12}u_+^{\zeta(Z^a)}H \p^2 Z\rp{b} \varphi\|_{L^2(\D_{u_1}^{\ub_1})}\\
 &\les \sum_{Z^a\sqcup Z^b=Z^n, b\le n-1} \| r^\frac{1}{2} (r^{-1} m_0 +r^{-1} u_+^{\frac{1-\ga_0}{2}}\dn^\f12) u_+^{\zeta(Z^a)} \p^2 Z\rp{b} \varphi\|_{L^2(\D_{u_1}^{\ub_1})}\\
 &\les {u_1}_+^{\zeta(Z^n)}(m_0^\f12 {u_1}_+^{-\frac{\ga_0}{2}-\f12}+\dn^\f12 m_0^{-\f12} {u_1}_+^{-\ga_0}) \sup_{-\ub_1\le u\le u_1, Z^b\subset Z^{n-1}}  E[\p Z\rp{b} \varphi]^\f12(\H_u^{\ub_1}) u_+^{-\zeta(Z^b)+\frac{\ga_0}{2}+1}.
\end{align*}
We then substitute  (\ref{7.26.9.18}) into the  inequality if the product contains $\dn^\f12 m_0^{-\f12}$.  Combining the results  with (\ref{7.28.5.18}) implies (\ref{7.26.7.18}).  Similarly,
\begin{align*}
&\sum_{Z^a \sqcup Z^b=Z^n, b\le n-1}{u_1}_+^{-\f12}\|r^{\frac{3}{2}}u_+^{\zeta(Z^a)}H \p^2 Z\rp{b} \varphi\|_{L_u^1 L_\ub^2 L_\omega^2(\D_{u_1}^{\ub_1})}\\
&\les \sum_{Z^a\sqcup Z^b=Z^n, b\le n-1} {u_1}_+^{-\f12}\| r^\frac{3}{2} (r^{-1} m_0 +r^{-1} u_+^{\frac{1-\ga_0}{2}}\dn^\f12) u_+^{\zeta(Z^a)} \p^2 Z\rp{b} \varphi\|_{L_u^1 L_\ub^2 L_\omega^2 (\D_{u_1}^{\ub_1})}\\
 &\les {u_1}_+^{\zeta(Z^n)}(m_0^\f12 {u_1}_+^{-\frac{\ga_0}{2}-\f12}+\dn^\f12 m_0^{-\f12} {u_1}_+^{-\ga_0}) \sup_{-\ub_1\le u\le u_1, Z^b\subset Z^{n-1}}  E[\p Z\rp{b} \varphi]^\f12(\H_u^{\ub_1}) u_+^{-\zeta(Z^b)+\frac{\ga_0}{2}+1}.
\end{align*}
(\ref{7.26.8.18}) then follows  with the substitution of  (\ref{7.26.9.18}).
\end{proof}

\begin{remark}\label{7.28.1.18}
If $Z\in \{\p, \Omega_{\mu\nu}, 0\le \mu<\nu\le 3\}$, the result in this lemma still holds. For the proof, we only need to modify   (\ref{7.27.11.18}) by separating the case that $Z^n=\p^n$  from the general case.
\end{remark}

\begin{lemma}\label{7.8.15.18}
  For $(u_1, \ub_1)\in \I$ and  $n\le 3$,
\begin{align}
&\|r Z\rp{n} \S\|_{L^1_u L_\ub^2 L_\omega^2(\D^{\ub_1}_{u_1})}+\|r Z\rp{n} \S\|_{L_\ub^1 L_u^2 L_\omega^2(\D_{u_1}^{\ub_1})}\les {u_1}_+^{-\frac{3}{2}+\zeta(Z^n)}(\dn+ m^2_0),\label{7.7.3.18}\\
&\| r^\frac{3}{2} Z\rp{n} \S\|_{L_u^1 L_\ub^2 L_\omega^2(\D^{\ub_1}_{u_1})}\les {u_1}_+^{-1+\zeta(Z^n)}(\dn+ m^2_0),\label{7.7.4.18}\\
&\|r^\f12 q_0 Z\rp{n}( \bg_{\mu\nu}  \phi^2)\|_{L^2(\D^{\ub_1}_{u_1})}\les \dn {u_1}_+^{-\ga_0+\zeta(Z^n)}, \label{7.8.1.18}\\
&\|r^\frac{3}{2} q_0 Z\rp{n} (\bg_{\mu\nu} \phi^2)\|_{L_u^1 L_\ub^2 L_\omega^2(\D_{u_1}^{\ub_1})}\les \dn {u_1}_+^{-\ga_0+\f12+\zeta(Z^n)}. \label{7.8.2.18}
\end{align}
For $\varphi\in(\bh^1, \phi)$,
\begin{equation}\label{7.8.16.18}
\begin{split}
\|r Z\rp{n} (\N(\bh) \p \cir{\bh} \p\varphi)&\|_{L_u^1 L_\ub^2 L_\omega^2(\D_{u_1}^{\ub_1})}+\|r Z\rp{n} (\N(h) \p \cir{\bh} \p\varphi)\|_{L_\ub^1 L_u^ 2 L_\omega^2(\D_{u_1}^{\ub_1})}\\
&\qquad\quad\les({u_1}_+^{-1}m_0+{u_1}_+^{-1-\frac{\ga_0}{2}}\dn^\f12){u_1}_+^{-\frac{\ga_0}{2}+\zeta(Z^n)}\dn^\f12,
\end{split}
\end{equation}
\begin{equation}\label{7.8.17.18}
\|r^\frac{3}{2} Z\rp{n} (\N(\bh) \p \cir{\bh} \p\varphi)\|_{L_u^1 L_\ub^2 L_\omega^2(\D_{u_1}^{\ub_1})}\les ({u_1}_+^{-\f12}m_0+{u_1}_+^{-\frac{\ga_0}{2}-\f12}\dn^\f12){u_1}_+^{-\frac{\ga_0}{2}+\zeta(Z^n)}\dn^\f12.
\end{equation}
\end{lemma}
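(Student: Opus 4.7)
\textbf{Proof proposal for Lemma \ref{7.8.15.18}.}

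The plan is to dispose of each of the three families of error terms separately, using Leibniz on $Z\rp{n}$ together with the pointwise decay of $\cir{\bh}$, the decay of $H$ and $Z\rp{a}H$ from Proposition~\ref{Decay_eins}, and the energy/weighted energy estimates from Section~\ref{decay} applied to $\varphi\in(\bh^1,\phi)$. The key observation that makes the $\S$ estimates possible is that $\Box_\bm(1/r)=0$ away from the origin, so that
\[
\widetilde{\Box}_\bg\cir{\bh}_{\mu\nu}=H^{\a\b}\p_\a\p_\b\cir{\bh}_{\mu\nu},
\]
which already carries an extra factor of $H$. Thus
$\S_{\mu\nu}=-H^{\a\b}\p_\a\p_\b\cir{\bh}_{\mu\nu}+\N(\bh)\p\cir{\bh}\p\cir{\bh}$, and both pieces are at worst quadratic in the small quantities $\{H,\bh^1,\cir{\bh}\}$ with a very strong pointwise $r$-decay coming from $|\p^k\cir{\bh}|\les m_0 r^{-1-k}$ (cf.\ (\ref{7.7.9.18})).

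For (\ref{7.7.3.18})--(\ref{7.7.4.18}), I expand $Z\rp{n}\S$ symbolically as
\[
|Z\rp{n}\S|\les\sum_{Z^a\sqcup Z^b=Z^n}\!\!|Z\rp{a}H|\,|\p^2 Z\rp{b}\cir{\bh}|
+\sum_{Z^a\sqcup Z^b\sqcup Z^c=Z^n}\!\!|Z\rp{a}\N(\bh)|\,|\p Z\rp{b}\cir{\bh}|\,|\p Z\rp{c}\cir{\bh}|,
\]
bound $|\p^j Z\rp{b}\cir{\bh}|\les m_0 r^{-1-j+\zeta(Z^b)}$ pointwise via (\ref{7.7.9.18}), and then use (\ref{7.29.1.18}) to split $Z\rp{a}H$ (and similarly $Z\rp{a}\N(\bh)$) into a pointwise $m_0 r^{-1}$ contribution and an $L^2$-controlled $Z\rp{a_1}\bh^1$ contribution. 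The purely static contributions produce integrands of size $m_0^2 r^{-4+\zeta(Z^n)}$, whose $L^1_u L^2_\ub L^2_\omega$ and $L^1_\ub L^2_u L^2_\omega$ norms (times $r$) are controlled directly using $r\approx\ub$ and $|u|\approx r_{\min}$. For the $\bh^1$ contributions I pair a pointwise bound (\ref{7.26.11.18}) or (\ref{7.7.9.18}) on $\cir{\bh}$ with either $\|r^{-1/2}\p^2 Z\rp{b}\bh^1\|_{L^2}$ or $\|r^{1/2}\p Z\rp{b}\bh^1\|_{L^2_\ub L^\infty}$-type bounds from Section~\ref{decay}, which yields the claimed $\dn$-factor; the extra $r^{1/2}$ in (\ref{7.7.4.18}) is absorbed by the extra $u_+^{1/2}$ on the right using Hölder in $u$.

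For (\ref{7.8.1.18})--(\ref{7.8.2.18}), I first write $\bg_{\mu\nu}=\bm_{\mu\nu}+\cir{\bh}_{\mu\nu}+\bh^1_{\mu\nu}$, so the products reduce to $q_0\,Z\rp{a}\bg\cdot Z\rp{b}\phi\cdot Z\rp{c}\phi$ with $a+b+c=n$; since $|Z\rp{a}\bg|\les 1$ (or $m_0 r^{-1+\zeta(Z^a)}$ for $a\geq1$) this is essentially $q_0 Z\rp{b}\phi\cdot Z\rp{c}\phi$. I distribute the $q_0$ as $q_0^{1/2}\cdot q_0^{1/2}$ onto the two $\phi$-factors. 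For the factor with the largest derivative count I use the energy norm, which controls $\|q_0^{1/2} r Z\rp{n}\phi\|_{L^2(\H_u)}$; for the other factor I use the improved sharp decay (\ref{7.8.3.18}), $q_0^{1/2}|r^{5/4}Z\rp{l}\phi|\les\dn^{1/2}u_+^{-\ga_0/2+\zeta(Z^l)+1/4}$ with $l\leq 1$. The case when both factors carry $\geq 2$ derivatives can only occur for $n=3$ with one factor of each order $\leq 2$, where the $L^4$-type estimate (\ref{6.3.11.18}) for $\phi$ together with $q_0$-weighted energy handles the product.

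For (\ref{7.8.16.18})--(\ref{7.8.17.18}), the term $\N(\bh)\p\cir{\bh}\p\varphi$ has the most favourable static factor, $|\p\cir{\bh}|\les m_0 r^{-2}$; more generally $|\p Z\rp{b}\cir{\bh}|\les m_0 r^{-2+\zeta(Z^b)}$. Expanding $Z\rp{n}$ via Leibniz, the top-order term is $\N(\bh)\p\cir{\bh}\cdot\p Z\rp{n}\varphi$, bounded by pulling out $m_0 r^{-2}$ pointwise and using $\|\p Z\rp{n}\varphi\|_{L^2(\H_u)}$. The remaining terms either put derivatives on $\cir{\bh}$ (which only improves the $r$-decay) or on $\N(\bh)$ (where (\ref{7.29.1.18}) gives $m_0 r^{-1}$-type pointwise bounds plus $\bh^1$ terms bounded by the energy estimates of Section~\ref{decay}). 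A routine Hölder argument then yields the two stated norms.

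The main obstacle I expect is the $q_0\bg\phi^2$ estimate: the potential term is borderline in that it couples the mass parameter to the decay of $\phi$, and one must carefully balance $q_0^{1/2}$ factors so as to exploit both the improved pointwise estimate (\ref{7.8.3.18}) (for low-order factors) and the $q_0$-weighted part of the standard/weighted energies (for the high-order factor). Everywhere else the smallness of $\cir{\bh}$ and the strong $r^{-1-k}$ pointwise decay of its derivatives make the estimates essentially mechanical given the machinery already developed.
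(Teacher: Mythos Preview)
Your proposal is correct and follows essentially the same route as the paper: for $\S$ you use $\Box_\bm(1/r)=0$ to write $\S=-H\p^2\cir{\bh}+\N(\bh)(\p\cir{\bh})^2$ and then split $Z\rp{a}H$ via (\ref{7.29.1.18}) into a static $m_0 r^{-1}$ piece and a $Z\rp{a_1}\bh^1$ piece; for $q_0\bg\phi^2$ you split $q_0=q_0^{1/2}\cdot q_0^{1/2}$ and exploit the improved decay (\ref{7.8.3.18}) on the low-order factor together with the $q_0$-weighted energy on the high-order factor; and for $\N(\bh)\p\cir{\bh}\p\varphi$ you use the $m_0 r^{-2}$ pointwise decay of $\p\cir{\bh}$ and energy-type bounds on $\p Z\rp{c}\varphi$. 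These are exactly the ingredients the paper uses.

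One small slip to correct: in the $\S$ estimate the $\bh^1$ contribution comes from the $Z\rp{a}H$ factor, so the relevant $L^2$-controlled quantity is $Z\rp{a_1}\bh^1$ itself (no extra $\p$'s), not $\p^2 Z\rp{b}\bh^1$ or $\p Z\rp{b}\bh^1$ as you wrote. Concretely, the paper controls this via $\|r^{-1/2}Z\rp{a}\bh^1\|_{L^2(S_{u,\ub})}\les u_+^{-\ga_0/2+\zeta(Z^a)}\dn^{1/2}$ (from (\ref{4.29.6.18}) and (\ref{6.23.7.18})), which works up to $a\le n\le 3$. Also, your worry about ``both $\phi$-factors carrying $\ge 2$ derivatives'' never actually arises: since $b+c\le n\le 3$ with $b\le c$, one always has $b\le 1$, so the improved pointwise bound (\ref{7.8.3.18}) is always available on one factor.
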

\begin{proof}
We  first  analyse  $\S$ which is defined in (\ref{7.9.1.18}), by using (\ref{7.7.7.18}) and (\ref{7.29.1.18}).
\begin{align*}
|Z\rp{n}(H^{\mu\nu}\p_\mu\p_\nu \cir{\bh})|&\les m_0\sum_{Z^a\sqcup Z^b\sqcup Z^c=Z^n } (|Z\rp{a}\bh^1|+|Z\rp{a}\cir{\bh}|) r^{-3+\zeta(Z^b)}u_+^{\zeta(Z^c)}\nn\\
&\les \sum_{Z^a\sqcup Z^b\sqcup Z^c=Z^n } m_0(|Z\rp{a}\bh^1|+m_0r^{-1+\zeta(Z^a)})  r^{-3+\zeta(Z^b)}u_+^{\zeta(Z^c)},
\end{align*}
\begin{align*}
|Z\rp{n} (G (\bh) \p \cir{\bh} \p \cir{\bh})|&\les\sum_{Z^a\sqcup Z^b \sqcup Z^c \sqcup Z^d=Z^n} |Z\rp{a}\bh|  |Z\rp{b}\p \cir{\bh}||Z\rp{c}\p\cir{\bh}|u_+^{\zeta(Z^d)}\\
&\les \sum_{Z^a\sqcup Z^b \sqcup Z^c \sqcup Z^d=Z^n }m_0^2 |Z\rp{a} \bh| r^{-4+\zeta(Z^b Z^c)} u_+^{\zeta(Z^d)}\\
&\les \sum_{Z^a\sqcup Z^b \sqcup Z^c \sqcup Z^d=Z^n} m_0^2(|Z\rp{a}\bh^1|+m_0r^{-1+\zeta(Z^a)})  r^{-4+\zeta(Z^b Z^c )}u_+^{\zeta(Z^d)},
\end{align*}
and
\begin{equation*}
|Z\rp{n}(\p \cir{\bh}  \p \cir{\bh})|\les m_0^2 r^{-4+\zeta(Z^n)}.
\end{equation*}
We combine the above calculations in view of $\N(\bh)=1+G(\bh)$,
\begin{align}
|Z\rp{n}\S|&\les \sum_{Z^a\sqcup Z^b\sqcup Z^c=Z^n } m_0(|Z\rp{a}\bh^1|+m_0r^{-1+\zeta(Z^a)})  r^{-3+\zeta(Z^b)}u_+^{\zeta(Z^c)}\label{7.7.8.18}.
\end{align}
 By using (\ref{4.29.6.18}) and (\ref{6.23.7.18}), we can obtain for $(u, \ub)\in \I$ that
\begin{equation*}
\| r^{-\f12} Z\rp{a} (\bh^1, \phi)\|_{L^2(S_{u, \ub})}\les u_+^{-\f12 \ga_0+\zeta(Z^a)}(\E^\f12_{a,\ga_0}+\dn^\f12).
\end{equation*}
We then combine the above two inequalities and integrate in $\D_{u_1}^{\ub_1}$ to obtain
\begin{align*}
&\|r Z\rp{n}\S\|_{L_u^1 L_\ub^2 L_\omega^2(\D_{u_1}^{\ub_1})}+\|r Z\rp{n}\S\|_{L_\ub^1 L_u^2 L_\omega^2(\D_{u_1}^{\ub_1})}\\
&\les m_0\sum_{Z^a\sqcup Z^b\sqcup Z^c=Z^n }\sup_{-\ub\le u\le u_1}\|r^{-\f12} Z\rp{a}\bh^1\|_{L^2(S_{u,\ub})}{u_1}_+^{-1+\zeta(Z^b Z^c)}+m_0^2{u_1}_+^{-\frac{3}{2}+\zeta(Z^n)}\\
&\les m_0(m_0 +\dn^\f12 {u_1}_+^{\f12-\frac{\ga_0}{2}}){u_1}_+^{-\frac{3}{2}+\zeta(Z^n)}.
\end{align*}
Similarly, we can obtain
\begin{equation*}
\| r^\frac{3}{2} Z\rp{n}\S\|_{L_u^1 L_\ub^2 L_\omega^2(\D_{u_1}^{\ub_1})}\les  m_0(m_0 +\dn^\f12 {u_1}_+^{\f12-\frac{\ga_0}{2}}){u_1}_+^{-1+\zeta(Z^n)}.
\end{equation*}
 Thus we have completed the proof of (\ref{7.7.3.18}) and (\ref{7.7.4.18}) for $\S$ defined in   (\ref{7.9.1.18}).

Next, by using (\ref{7.7.7.18})
\begin{align}
&|Z\rp{n}(\bg_{\mu\nu} \c \phi^2)|\nn\\
&\les \sum_{Z^a\sqcup Z^b =Z^n} |Z\rp{a} \phi||Z\rp{b} \phi|+\sum_{Z^a\sqcup Z^b\sqcup Z^c=Z^n} |Z\rp{a}\bh \c Z\rp{b} \phi \c  Z\rp{c}\phi|\nn\\
&\les \sum_{Z^a\sqcup Z^b=Z^n} |Z\rp{a} \phi||Z\rp{b}\phi|+ \sum_{Z^a\sqcup Z^b \sqcup Z^c= Z^n}(m_0r^{-1+\zeta(Z^a)}+|Z\rp{a} \bh^1|)||Z\rp{b}\phi||Z\rp{c}\phi|\nn\\
&\les \sum_{Z^a\sqcup Z^b\sqcup Z^c=Z^n} (r^{\zeta(Z^a)}+|Z\rp{a}\bh^1|) |Z\rp{b}\phi||Z\rp{c}\phi|. \label{7.29.2.18}
\end{align}
 Assume without loss of generality that $b\le c$. Thus $b\le 1$. By using (\ref{7.8.3.18}), we derive
\begin{align*}
q_0\| r^\frac{1}{2} Z\rp{b} \phi \c Z\rp{c}\phi\|_{L^2(\D_{u_1}^{\ub_1})}&\les \|r^\frac{5}{4} u_+^{-\frac{1}{4}+\frac{\ga_0}{2}}q_0^\f12 Z\rp{b}\phi\|_{L^\infty(\D_{u_1}^{\ub_1})}\|q_0^\f12 u_+^{\frac{1}{4} -\frac{\ga_0}{2}}r^{-\frac{3}{4}} Z\rp{c}\phi\|_{L^2(\D_{u_1}^{\ub_1})}\\
&\les (\int_{-\ub_1}^{u_1} E[Z\rp{c}\phi](\H_u^{\ub_1})u_+^{-\ga_0} du )^\f12 \dn^\f12 {u_1}_+^{\zeta(Z^b)-\f12}\\
&\les \dn {u_1}_+^{-\ga_0+\zeta(Z^b Z^c)}
\end{align*}
and
\begin{align*}
&q_0\| r^\frac{3}{2} Z\rp{b} \phi \c Z\rp{c}\phi\|_{L_u^1 L_\ub^2 L_\omega^2 (\D_{u_1}^{\ub_1})}\\
&\les \|r^\frac{5}{4} u_+^{-\frac{1}{4}+\frac{\ga_0}{2}}q_0^\f12 Z\rp{b}\phi\|_{L^\infty(\D_{u_1}^{\ub_1})}\|q_0^\f12 u_+^{\frac{1}{4} -\frac{\ga_0}{2}}r^\frac{1}{4} Z\rp{c}\phi\|_{L_u^1 L_\ub^2 L_\omega^2 (\D_{u_1}^{\ub_1})}\\
&\les \dn {u_1}_+^{-\ga_0+\f12+\zeta(Z^b Z^c)}.
\end{align*}
In view of $|r^{\zeta(Z^a)}|\les {u_1}_+^{\zeta(Z^a)}$, we can obtain the estimates for the first term on the right of (\ref{7.29.2.18}).
Next we consider the other term.  Since at least two of the indices $a,b,c$ are $\le 1$, and for convenience, we denote by $\Psi=(\bh^1, \phi)$ and  assume $a\le b\le c$. By using (\ref{5.1.1.18}) and (\ref{6.23.7.18}), for any $(u,\ub)\in \I$ there holds
\begin{align*}
\| r^{-1} Z\rp{c} \Psi\|_{L^2(\H_u^\ub)}\les u_+^{-\frac{\ga_0}{2}}(\E^\f12_{c,\ga_0}+\dn^\f12).
\end{align*}
We also note that in the symbolic notation $Z\rp{a}\Psi$, $Z\rp{b}\Psi$, at least one of the functions $\Psi$ is actually $\phi$.  In this case,
by using  (\ref{3.24.11.18}), (\ref{7.8.3.18}) and the above inequality, we have
\begin{align*}
\sum_{Z^a \sqcup Z^b\sqcup Z^c=Z^n}&q_0\|r^\frac{1}{2} Z\rp{a}\Psi\c Z\rp{b} \Psi\c Z\rp{c}\Psi\|_{L^2(\D_{u_1}^{\ub_1})}\\
&\les \sum_{Z^a \sqcup Z^b\sqcup Z^c=Z^n} q_0\|u_+^{\ga_0-\frac{3}{4}}r^{2+\frac{1}{4}}  Z\rp{a}\Psi Z\rp{b} \Psi\|_{L^\infty(\D_{u_1}^{\ub_1})}\| u_+^{\frac{3}{4}-\ga_0 }r^{-\frac{7}{4}} Z\rp{c}\Psi\|_{L^2 (\D_{u_1}^{\ub_1})}\\
&\les{u_1}_+^{\zeta(Z^n)+\f12-\frac{3}{2}\ga_0}\dn^\frac{3}{2}
\end{align*}
and
\begin{align*}
\sum_{Z^a \sqcup Z^b\sqcup Z^c=Z^n}&q_0\|r^\frac{3}{2} Z\rp{a}\Psi\c Z\rp{b} \Psi\c Z\rp{c}\Psi\|_{L_u^1 L_\ub^2 L_\omega^2(\D_{u_1}^{\ub_1})}\les {u_1}_+^{\zeta(Z^n)+1-\frac{3}{2}\ga_0}\dn^\frac{3}{2}.
\end{align*}
Thus we completed the proof of (\ref{7.8.1.18}) and (\ref{7.8.2.18}).

At last we prove (\ref{7.8.16.18}) and (\ref{7.8.17.18}).
By using (\ref{5.18.11.18}) and (\ref{7.7.7.18}), we derive
\begin{equation*}
| Z\rp{n} (\p \cir{\bh} \c \p \varphi)|\les \sum_{Z^b \sqcup Z^c =Z^n}m_0 r^{-2+\zeta(Z^b)}| \p Z\rp{c} \varphi|.
\end{equation*}
For  the cubic part $ Z\rp{n}(G(\bh) \p \cir{\bh} \p \varphi)$, we first derive by using (\ref{7.29.1.18}), (\ref{5.17.3.18}) and (\ref{7.7.7.18})
\begin{align*}
| Z\rp{n}(G(\bh) \p \cir{\bh} \p \varphi)|&\les \sum_{Z^a\sqcup Z^b \sqcup Z^c \sqcup Z^d=Z^n }u_+^{\zeta(Z^d)}(|Z\rp{a} \bh^1| +|Z\rp{a} \cir{h}|)|Z\rp{b}\p \cir{\bh}||\p Z\rp{c} \varphi|\nn\\
&\les \sum_{Z^a\sqcup Z^b \sqcup Z^c \sqcup Z^d =Z^n}m_0 u_+^{\zeta(Z^d)}r^{-2+\zeta(Z^b)} (| Z\rp{a} \bh^1|+m_0 r^{-1+\zeta(Z^a)})|\p Z\rp{c}\varphi|.
\end{align*}
Thus
\begin{align}
| Z\rp{n}(\N(\bh) \p \cir{\bh} \p \varphi)|&\les \sum_{Z^a\sqcup Z^b \sqcup Z^c \sqcup Z^d =Z^n}m_0 u_+^{\zeta(Z^d)}r^{-2+\zeta(Z^b)} (| Z\rp{a} \bh^1|+ r^{\zeta(Z^a)})|\p Z\rp{c}\varphi|.\label{7.8.6.18}
\end{align}
We first note that  with $0\le \a<\f12$ by (\ref{7.19.2.18}),
\begin{equation}\label{7.8.7.18}
\|r^{-1+\a}  \p Z\rp{c}\varphi\|_{L^2(\D_{u_1}^{\ub_1})}\les {u_1}_+^{\a-\f12-\frac{\ga_0}{2}+\zeta(Z^c)}\dn^\f12, \quad c\le 3.
\end{equation}
For simplicity, we denote $\|\cdot\|_\flat$ either the norm $\| \cdot\|_{L_u^1 L_\ub^2 L_\omega^2(\D_{u_1}^{\ub_1})}$ or $\| \cdot\|_{L_\ub^1 L_u^2 L_\omega^2(\D_{u_1}^{\ub_1})}$.
Thus, by using the above inequality with $\a=0$,
\begin{equation}\label{7.8.12.18}
\begin{split}
\sum_{Z^b \sqcup Z^c \sqcup Z^d =Z^n}&\|ru_+^{\zeta(Z^d)} m_0 r^{-2+\zeta(Z^b)} \p Z\rp{c} \varphi||_\flat\\
&\les\sum_{Z^{b'}\sqcup Z^c=Z^n} m_0{u_1}_+^{\zeta(Z^{b'})}\|r^{-1}\p Z\rp{c}\varphi\|_{L^2(\D_{u_1}^{\ub_1})}(\|r^{-1}\|_{L^2_\ub L^\infty (\D_{u_1}^{\ub_1})}+\|r^{-1}\|_{L_u^2 L^\infty(\D_{u_1}^{\ub_1})})\\
&\les{u_1}_+^{-1-\frac{\ga_0}{2}+\zeta(Z^n) }m_0 \dn^\f12.
\end{split}
\end{equation}
 With $0<\a<\f12$ in (\ref{7.8.7.18}) and $u_+\les r$
 \begin{align}
&\sum_{Z^b \sqcup Z^c \sqcup Z^d =Z^n}\|r^\frac{3}{2}u_+^{\zeta(Z^d)} m_0 r^{-2+\zeta(Z^b)} \p Z\rp{c} \varphi\|_{L_u^1 L_\ub^2 L_\omega^2(\D_{u_1}^{\ub_1})}\nn\\
&\les  m_0\sum_{Z^{b'}\sqcup Z^c=Z^n} {u_1}_+^{\zeta(Z^{b'})}\|u_+^\a r^{-1}\p Z\rp{c}\phi\|_{L^2(\D_{u_1}^{\ub_1})}\|r^{-\f12}u_+^{-\a}\|_{L^2_u L^\infty (\D_{u_1}^{\ub_1})}\nn\\
&\les{u_1}_+^{-\f12-\frac{\ga_0}{2}+\zeta(Z^n) }m_0 \dn^\f12.\label{7.8.13.18}
\end{align}

By the Sobolev embedding on the unit sphere and (\ref{7.8.7.18}), we have   for $c\le 2$  and $0\le \a <\f12$ that
\begin{equation}\label{7.8.8.18}
\begin{split}
\| r^{\a}\p Z\rp{c} \varphi\|_{L_u^2 L_\ub^2 L_\omega^4(\D_{u_1}^{\ub_1})}&\les \|r^{\a-1}\p \Omega\rp{\le 1} Z\rp{c}\varphi \|_{L^2(\D_{u_1}^{\ub_1})}\\
&\les {u_1}_+^{\a-\f12-\frac{\ga_0}{2}+\zeta(Z^c)} \dn^\f12.
\end{split}
\end{equation}
We  can also use (\ref{5.22.2.18}), (\ref{6.24.4.18})  and the bootstrap assumption to obtain for $a\le 3$ that
\begin{equation}\label{7.8.9.18}
\|r^\f12 Z\rp{a} \bh^1\|_{L_\omega^4(S_{u,\ub})}\les u_+^{-\frac{\ga_0}{2}+\zeta(Z^a)}\dn^\f12, \quad (u, \ub)\in \I.
\end{equation}
With $0<\a<\f12$,
we then employ (\ref{7.8.8.18}) and (\ref{7.8.9.18}) to treat the first term on the right if $c\le n-1$
\begin{align*}
 &\sum_{Z^a\sqcup Z^b \sqcup Z^c=Z^n, c\le n-1}{u_1}_+^{\zeta(Z^b)}m_0\| r^{-\f12+\a}u_+^\f12\p Z\rp{c}\varphi\c Z\rp{a} \bh^1\|_{L^2(\D_{u_1}^{\ub_1})}\\
 &\les  \sum_{Z^a\sqcup Z^b \sqcup Z^c=Z^n, c\le n-1}{u_1}_+^{\zeta(Z^b)}m_0\| r^\a \p Z\rp{c}\varphi \|_{L_u^2 L_\ub^2 L_\omega^4(\D_{u_1}^{\ub_1})} \|r^\f12 u_+^\f12 Z\rp{a} \bh^1\|_{L^\infty L_\omega^4(\D_{u_1}^{\ub_1})}\\
 &\les {u_1}_+^{\a-\ga_0+\zeta(Z^n)}m_0\dn.
\end{align*}
When $c=n$, by using (\ref{7.8.7.18}) and (\ref{3.24.11.18}) for $\bh^1$, we derive
\begin{align*}
 m_0\| r^{-\f12+\a}u_+^\f12\p Z\rp{n}\varphi\c  \bh^1\|_{L^2(\D_{u_1}^{\ub_1})}&\les m_0\| r^{-1+\a} \p Z\rp{n}\varphi \|_{L^2(\D_{u_1}^{\ub_1})} \|u_+^\f12r^\f12 \bh^1\|_{L^\infty(\D_{u_1}^{\ub_1})}\\
 &\les {u_1}_+^{\a-\ga_0+\zeta(Z^n)}m_0\dn.
\end{align*}
Thus
\begin{align*}
 &\sum_{Z^a\sqcup Z^b \sqcup Z^c=Z^n}{u_1}_+^{\zeta(Z^b)}m_0\| r^{-\f12+\a} u_+^\f12\p Z\rp{c}\varphi\c Z\rp{a} \bh^1\|_{L^2(\D_{u_1}^{\ub_1})}\les {u_1}_+^{\a-\ga_0+\zeta(Z^n)}m_0\dn.
\end{align*}
Also by using H\"{o}lder's inequality in $\D_{u_1}^{\ub_1}$,
\begin{align*}
&\sum_{Z^a\sqcup Z^b \sqcup Z^c=Z^n}{u_1}_+^{\zeta(Z^b)}m_0\| r^{-1}\p Z\rp{c}\varphi\c Z\rp{a} \bh^1\|_\flat\les {u_1}_+^{-1-\ga_0+\zeta(Z^n)}m_0\dn
\end{align*}
and
\begin{align*}
&\sum_{Z^a\sqcup Z^b \sqcup Z^c=Z^n}{u_1}_+^{\zeta(Z^b)}m_0\| r^{-\f12}\p Z\rp{c}\varphi\c Z\rp{a} \bh^1\|_{L_u^1 L_\ub^2 L_\omega^2(\D_{u_1}^{\ub_1})}\les {u_1}_+^{-\ga_0-\f12+\zeta(Z^n)}m_0\dn.
\end{align*}
In view of (\ref{7.8.6.18}), we combine these two estimates with (\ref{7.8.12.18})  and (\ref{7.8.13.18}) to obtain (\ref{7.8.16.18}) and (\ref{7.8.17.18}).
\end{proof}

\subsection{Boundedness of energy}\label{einsbd}

Note that, according to the notation in  Lemma \ref{6.3.5.18}, for the equation  (\ref{7.7.2.18}), we can set
\begin{equation*}
\F[\bh^1]= \N(\bh)\p \bh^1 \p \bh^1+ \p \phi\c \p \phi; \qquad  \F[\phi]=0.
\end{equation*}

When consider the energy of $\bh^1$, we decompose $\sF_{\bh^1}$ into two parts,
 \begin{equation*}
 \sF^\sharp_{\bh^1}= [\widetilde{\Box}_\bg, Z\rp{n}]\bh^1+\str{n}\F[\bh^1]+ q_0Z\rp{n}(\bg_{\mu\nu} \phi^2); \quad \sF^\flat_{\bh^1}=Z\rp{n}\S+ Z\rp{n}(\N(\bh) \p \cir{\bh} \p \bh^1)
 \end{equation*}
and for the energy of  the scalar field $\phi$, we have
\begin{equation*}
\sF_\phi=\sF^\sharp_\phi= [\widetilde{\Box}_\bg, Z\rp{n}] \phi.
\end{equation*}
Hence for  $\bh^1$,  we can treat $\sF_{\bh^1}^\flat $ by using (\ref{7.8.16.18}) and (\ref{7.7.3.18}). To treat $\sF_{\bh^1}^\sharp$, we can apply (\ref{6.8.2.18})-(\ref{6.9.9.18}) to treat $\str{n}\F[\bh^1]$ and treat the remaining terms  by using (\ref{7.8.1.18}), (\ref{7.8.2.18}) and  Lemma \ref{7.28.2.18}.
\begin{equation}\label{7.16.1.18}
\begin{split}
&\|r\sF^\flat_{\bh^1} \|_{L_u^1 L_\ub^2 L_\omega^2(\D_{u_1}^{\ub_1})}+\| r \sF^\flat_{\bh^1}\|_{L_\ub^1 L_u^2 L_\omega^2(\D_{u_1}^{\ub_1})}\les {u_1}_+^{-\frac{3}{2}+\zeta(Z^n)}(\dn+m_0^2)\\
&\| r^\frac{3}{2} \sF_{\bh^1}\|_{L_u^1 L_\ub^2 L_\omega^2(\D_{u_1}^{\ub_1})}\les (\dn m_0^{-\f12} {u_1}_+^{-\ga_0+\f12}+\dn {u_1}_+^{-1}) {u_1}_+^{\zeta(Z^n)}+{u_1}_+^{-\f12\ga_0 +\zeta(Z^n)}m_0^\f12\dn^\f12\\
&\qquad\qquad \qquad\qquad\quad\les (\dn m_0^{-\f12}+\dn^\f12 m_0^{\f12}) {u_1}_+^{-\f12 \ga_0+\zeta(Z^n)} \\
&\| r^\f12 \sF^\sharp_{\bh^1}\|_{L^2(\D_{u_1}^{\ub_1})}\les \dn m_0^{-\f12} {u_1}_+^{-\ga_0+\zeta(Z^n)}\\
&+{u_1}_+^{-\f12\ga_0-\f12 +\zeta(Z^n)}m_0^\f12\sup_{-\ub_1\le u \le u_1, Z^b \subset Z^{n-1}\subset Z^n} E[\p Z\rp{b} \bh^1]^\f12(\H_u^{\ub_1}) u_+^{-\zeta(Z^b)+\f12\ga_0+1}, \\
\end{split}
\end{equation}
where the last term in the last  inequality vanishes if $\zeta(Z^n)=-n$.

We use  Lemma \ref{7.28.2.18} to treat $\sF_\phi$,
\begin{equation}\label{7.16.2.18}
\begin{split}
&\|r^\frac{3}{2} \sF_\phi\|_{L_u^1 L_\ub^2 L_\omega^2(\D_{u_1}^{\ub_1})}\les {u_1}_+^{-\f12\ga_0+\zeta(Z^n)}(m_0^\f12\dn^\f12+\dn m_0^{-\f12})\\
&\|r^\f12 \sF_\phi\|_{L^2(\D_{u_1}^{\ub_1})}\les \dn m_0^{-\f12} {u_1}_+^{-\ga_0+\zeta(Z^n)}\\
&+{u_1}_+^{-\f12\ga_0-\f12 +\zeta(Z^n)}m_0^\f12\sup_{-\ub_1\le u \le u_1, Z^b \subset Z^{n-1}\subset Z^n} E[\p Z\rp{b} \phi]^\f12(\H_u^{\ub_1}) u_+^{-\zeta(Z^b)+\f12\ga_0+1},
\end{split}
\end{equation}
where the last terms in the above inequalities vanish if $\zeta(Z^n)=-n$.

   By a  substitution of (\ref{7.16.1.18}) and (\ref{7.16.2.18})  to the energy inequalities in Proposition   \ref{4.29.4.18} and Proposition \ref{4.29.5.18}, we can obtain the boundedness of energies for $\Psi= (\bh^1,\phi)$ on  $\I=\{(u,\ub):-\ub_1\le -\ub\le u\le u_1\}$
\begin{equation}\label{7.27.14.18}
\begin{split}
\sup_{(u,\ub)\in \I} & u_+^{-2\zeta(Z^n)+\ga_0}(E[Z\rp{n}\Psi](\H_u^{\ub})+ E[Z\rp{n}\Psi](\Hb_\ub^{u}))\\
 &\les \E_{n, \ga_0}(\bh^1, \phi) +{u_1}_+^{\ga_0-3}(\dn^2 +m_0^4)+ {u_1}_+^{1-\ga_0}m_0^{-2} \dn^2\\
&+\sup_{-\ub_1\le u \le u_1, Z^b \subset Z^{n-1}\subset Z^n} E[\p Z\rp{b} \Psi](\H_u^{\ub_1}) u_+^{-2\zeta(Z^b)+\ga_0+2},
% &\les \E_{n,\ga_0, q_0}[\phi]+\E_{n,\ga_0, 0}(\bh^1) + {u_1}_+^{1-\ga_0}M^{-1} \dn^2 (M^{-1}+1).
\end{split}
\end{equation}
 where  the last term in the above  inequality vanishes if $Z^n=\p^n$.  We then run the induction on the signature $\zeta(Z^n)$.
  When $\zeta(Z^n)=-n+l$, $1\le l\le n$, we substitute the estimates  for $Z\rp{n}\Psi$ with  $\zeta(Z^n)=-n+l-1$. This procedure allows us to conclude  that   (\ref{7.27.14.18})  can be bounded by
   \begin{equation}\label{7.31.1.18}
  C_3( m_0^4 {u_1}_+^{\ga_0-3}  + {u_1}_+^{1-\ga_0}m_0^{-2} \dn^2  +\E_{n,\ga_0}(\bh^1, \phi)).
  \end{equation}
 We also have the bound for $r$-weighted energy,
 \begin{equation}\label{7.27.15.18}
 \begin{split}
  \sup_{-\ub_1 \le u\le u_1} {u}_+^{\ga_0-1-2\zeta(Z^n)}&\big(\W_1[Z\rp{n}\Psi](\D_{u}^{\ub_1})+ \W_1[Z\rp{n}\Psi](\H_{u}^{\ub_1})+\W_1[Z\rp{n}\Psi](\Hb_{\ub_1}^{u})\big)\\
&\les {u_1}_+^{-1}(\dn m_0+\dn^2 m_0^{-1}) +\E_{n,\ga_0}(\bh^1, \phi)\\
&+ \sup_{ -\ub_1\le -\ub \le u\le u_1}\{ u_+^{\ga_0-2\zeta(Z^n)}  (E[Z\rp{n}\Phi](\H_u^\ub)+ E[Z\rp{n}\Phi](\Hb_\ub^u))\}.
\end{split}
  \end{equation}
 For the last line, we directly substitute the bound (\ref{7.31.1.18}),  thus we  conclude that (\ref{7.27.15.18}) is bounded by (\ref{7.31.1.18}) with a larger constant $C_3$.

Since $\E_{n, \ga_0}(\bh^1, \phi)\le C_0m_0^2$,  with $\dn =C_1 m_0^2$  and $C_0, C_1\ge 1$
we need
\begin{equation*}
C_3(C_0 C_1^{-1} \dn+u_+^{\ga_0-3} \dn^2 C_1^{-2}+u_+^{1-\ga_0}\dn C_1)<2\dn.
\end{equation*}
Let  $C_1=4 C_0 C_3$ and $\dn<1$, due to  $u_+^{-\ga_0+1}\le (\frac{R}{2})^{1-\ga_0}$, as long as
\begin{equation}\label{7.16.5.18}
2C_3 C_1 (\frac{R}{2})^{1-\ga_0}<\frac{7}{4},
\end{equation}
we can obtain
for $n\le 3 $ and $-\ub_*\le -\ub\le u\le u_0$  that
\begin{align*}
&E[Z\rp{n} (\bh^1, \phi)](\H_u^\ub)+E[Z\rp{n} (\bh^1, \phi)](\Hb_\ub^u)< 2\dn u_+^{-\ga_0+2\zeta(Z^n)},\\
&\W_1[Z\rp{n} (\bh^1, \phi)](\H_u^\ub)+\W_1[Z\rp{n} (\bh^1,\phi)](\Hb_\ub^u)+\W_1[Z\rp{n}\phi](\D_u^\ub)< 2\dn u_+^{-\ga_0+1+2\zeta(Z^n)}.
\end{align*}

It remains to improve (\ref{6.10.4.18}) with $\ge $ replaced by $>$. Due to (\ref{7.29.3.18}), (\ref{3.24.11.18}) and (\ref{7.26.11.18}),
\begin{equation*}
r|H^{\Lb \Lb }-\cir{H}{}^{\Lb \Lb}|\le  C(\dn^\f12u_+^{-\frac{\ga_0}{2}+\f12}+r^{-1}(m_0+\dn^\f12)^2 ).
\end{equation*}
Similar to Lemma \ref{7.29.4.18}, with $0<m_0<1$, we choose
\begin{equation}\label{7.29.5.18}
C(C_1^\f12 (\frac{R}{2})^{\frac{1-\ga_0}{2}}+R^{-1} (1+C_1^\f12)^2)<\frac{1}{10}
\end{equation}
which implies
\begin{align*}
r(h-H^{\Lb\Lb}),\quad r (h-H^{LL})>\frac{ 7 m_0}{20 }
\end{align*}
which improves (\ref{6.10.4.18}). By choosing  the lower bound $R(\ga_0, C_0)$ of $R$ such that  (\ref{7.29.5.18}),  (\ref{7.16.5.18}) and  $ \ti C_1 R^{-\frac{\ga_0}{2}+\f12}\le 1$ as requested in Lemma \ref{7.29.4.18}  hold,  the proof of Theorem \ref{eins_thm} is completed.

\end{document}